\newtheorem{lemma}{Lemma}
\newtheorem{remark}{Remark}
\let\NAT@parse\undefined
\newlength\longest
\def\blfootnote{\xdef\@thefnmark{}\@footnotetext}
\begin{document}

\begin{titlepage}
\null\setcounter{page}{1}\vfill
  \vbox to 0pt{\vss\vbox to 8.75in{\parskip 0pt \parindent 0pt\centering
      {\large \uppercase\expandafter{Texas A\&M University} \par} \vskip 0pt plus 3fil
      {\Large \bf On Maximizing Weighted Algebraic Connectivity for Synthesizing Robust Networks \par} \vskip 0pt plus 1.3fil
      {\large by \par} \vskip 0pt plus 1fil
      {\large \bf Harsha Nagarajan \par} \vskip 0pt plus 3fil
      {\large \bf Chair: Dr. Swaroop Darbha, Co-chair: Dr. Sivakumar Rathinam \par} 
      {\large \bf Members: Dr. K.R. Rajagopal, Dr. Sergiy Butenko \par} \vskip 0pt plus 3fil
      
%      {\large \sc A Thesis Submitted \\
%        in Partial Fulfillment of the \\
%        Requirements for the Degree \par} \vskip 0pt plus 1.1fil
%      {\large \bf Doctor of Philosophy \par} \vskip 0pt plus 4.5fil
%      \begingroup
%      \leftskip .5\textwidth \rightskip 0pt plus 1fil
%      {\sc Approved, Thesis Committee: \par} \vskip 2.5em
%      \moveright\leftskip\vbox{\hrule width \leftskip}
%      \def\and{\par \vskip 2.25em minus 1em\moveright\leftskip%
%        \vbox{\hrule width \leftskip}}
%      Dr.\ Swaroop Darbha, Chair\\Professor\\ Mechanical Engineering \par
%      \vskip 2.5em
%       \moveright\leftskip\vbox{\hrule width \leftskip}
%      \def\and{\par \vskip 2.25em minus 1em\moveright\leftskip%
%        \vbox{\hrule width \leftskip}}
%      Dr.\ Sivakumar Rathinam, Co-Chair\\ Assistant Professor\\ Mechanical Engineering \par
%      \vskip 2.5em
%       \moveright\leftskip\vbox{\hrule width \leftskip}
%      \def\and{\par \vskip 2.25em minus 1em\moveright\leftskip%
%        \vbox{\hrule width \leftskip}}
%      Dr.\ K. \ R.\ Rajagopal\\ Distinguished Professor\\ Mechanical Engineering \par
%      \vskip 2.5em
%       \moveright\leftskip\vbox{\hrule width \leftskip}
%      \def\and{\par \vskip 2.25em minus 1em\moveright\leftskip%
%        \vbox{\hrule width \leftskip}}
%      Dr.\ Sergiy Butenko\\ Associate Professor\\ Industrial and Systems Engineering \par
%      \endgroup \vskip 0pt plus 2fil
%      {\large \sc College station, Texas \par} \vskip 0pt plus .8fil
%      {\large \sc May 2014 \par}
   }}%\cleardoublepage
\end{titlepage}

%!TEX root = ../tamuthesis.tex
%%%%%%%%%%%%%%%%%%%%%%%%%%%%%%%%%%%%%%%%%%%%%%%%%%%
%
%  New template code for TAMU Theses and Dissertations starting Fall 2012.  
%  For more info about this template or the 
%  TAMU LaTeX User's Group, see http://www.howdy.me/.
%
%  Author: Wendy Lynn Turner 
%	 Version 1.0 
%  Last updated 8/5/2012
%
%%%%%%%%%%%%%%%%%%%%%%%%%%%%%%%%%%%%%%%%%%%%%%%%%%%
%%%%%%%%%%%%%%%%%%%%%%%%%%%%%%%%%%%%%%%%%%%%%%%%%%%%%%%%%%%%%%%%%%%%%
%%                           ABSTRACT 
%%%%%%%%%%%%%%%%%%%%%%%%%%%%%%%%%%%%%%%%%%%%%%%%%%%%%%%%%%%%%%%%%%%%%

\chapter*{ABSTRACT}
\addcontentsline{toc}{chapter}{ABSTRACT} % Needs to be set to part, so the TOC doesnt add 'CHAPTER ' prefix in the TOC.

\pagestyle{plain} % No headers, just page numbers
\pagenumbering{roman} % Roman numerals
\setcounter{page}{2}

\indent This dissertation deals with the following simpler version of an open problem in 
system realization theory which has several important engineering applications: 
Given a collection of masses and a set of linear springs with a specified cost 
and stiffness, a resource constraint in terms of a budget on the total cost, 
the problem is to determine an optimal connection of masses and springs so 
that the resulting structure is as stiff as possible, i.e., the structure 
is connected and its smallest non-zero natural frequency is as large as possible. 

One often encounters variants of this problem in deploying Unmanned Aerial 
Vehicles (UAVs) for civilian and military applications. In such problems, 
one must determine the pairs of UAVs that must maintain a communication 
link so that constraints on resources and performance, such as a limit on 
the maximum number of communication links deployed, power consumed and 
maximum latency in routing information from one UAV to the other, are met 
and a performance objective is maximized. In this dissertation, 
algebraic connectivity, or its mechanical analog - the smallest non-zero 
natural frequency of a connected structure was chosen as a performance 
objective.  Algebraic connectivity determines the convergence rate of 
consensus protocols and error attenuation in UAV formations and is chosen to be a 
performance objective as it can be viewed as a measure of robustness in 
UAV communication networks to random node failures. 

Underlying the mechanical and UAV network synthesis problems is a Mixed 
Integer Semi-Definite Program (MISDP), which was  recently shown to be a 
NP-hard problem. There has not been any systematic procedure in the 
literature to solve this problem. This dissertation is aimed at addressing 
this void in the literature. The {\it novel} contributions of this 
dissertation to the literature are as follows:
a) An iterative primal-dual algorithm and an algorithm based on the 
outer approximation of the semi-definite constraint utilizing a cutting 
plane technique were developed for computing optimal algebraic connectivity. 
These algorithms are based on a polyhedral approximation of the feasible set of MISDP, 
b) A bisection algorithm was developed to reduce the MISDP to a Binary 
Semi-Definite Program (BSDP) to achieve better computational efficiency, 
c) The feasible set of the MISDP was efficiently relaxed by replacing the 
positive semi-definite constraint with linear inequalities associated with a family of Fiedler vectors 
to compute {\it  a tighter upper bound for algebraic connectivity}, 
d) Efficient neighborhood search heuristics based on greedy methods 
such as the $k$-opt and improved $k$-opt heuristics were developed,
e) Variants of the problem occurring in UAV backbone networks and 
Air Transportation Management were considered in the dissertation 
along with numerical simulations corroborating the methodologies 
developed in this dissertation. 

\pagebreak{}

\pagenumbering{roman}
\pagestyle{plain}
   \tableofcontents
   \newpage
   \listoffigures
   \newpage
   \listoftables
   %\newpage
   %\listofalgorithms

\newpage

\pagenumbering{arabic}
\pagestyle{fancy}

\sloppy
%\include{Chapters/lists}  % This is simply a file that formats and adds your toc, lof, and lot, please do not edit this unless you have a specific need. .

%\part{Nonlinear Signal Models}

%!TEX root = ../tamuthesis.tex

%%%%%%%%%%%%%%%%%%%%%%%%%%%%%%%%%%%%%%%%%%%%%%%%%%%
%
%  New template code for TAMU Theses and Dissertations starting Fall 2012.  
%  For more info about this template or the 
%  TAMU LaTeX User's Group, see http://www.howdy.me/.
%
%  Author: Wendy Lynn Turner 
%	 Version 1.0 
%  Last updated 8/5/2012
%
%%%%%%%%%%%%%%%%%%%%%%%%%%%%%%%%%%%%%%%%%%%%%%%%%%%

%%%%%%%%%%%%%%%%%%%%%%%%%%%%%%%%%%%%%%%%%%%%%%%%%%%%%%%%%%%%%%%%%%%%%%
%%                           SECTION I
%%%%%%%%%%%%%%%%%%%%%%%%%%%%%%%%%%%%%%%%%%%%%%%%%%%%%%%%%%%%%%%%%%%%%
%\pagestyle{plain} % No headers, just page numbers
%\pagenumbering{arabic} % Arabic numerals
%\setcounter{page}{1}

% \chapter{\uppercase {Introduction}}
\chapter{{Introduction}}
\label{sec:intro}
%++++++++++++++++++++++++++++++++++++++++++++++++++;
%  Section: Literature ;
%++++++++++++++++++++++++++++++++++++++++++++++++++;
%\section{Literature review}
%\label{Sec:lit}

This dissertation\footnote{Online source TAMU archives: \cite{nagarajan2014thesis}} deals with the development of {\it novel} tools for addressing an {\it open}
problem in system realization theory which has relevance to several important problems in
biomedicine, altering the dynamic response of discrete and continuous systems, connectivity
of Very Large Scale Integrated (VLSI) circuits, as well as the co-ordination of Unmanned
Aerial/Ground vehicles. The simplest case of this open problem, referred to as the {\it Basic Problem}
(or simply, {\bf BP}) is the following:  {\it Given a {\it finite} set of masses, a set of linear springs
and dampers,  a given subset of springs or dampers that may only be connected between a 
specified pair of masses,  a transfer function to be realized with a {\it subset} of these
components by connecting them appropriately, the decision problem is to determine if there
is an interconnection which can accomplish this objective}. The resolution of {\bf BP} is
open and far from simple.

If we restrict ourselves to mechanical systems with springs and masses, 
and require further that the interconnections should be made so that the resulting structure
is one-dimensional,  the resulting problem has a nice connection to Graph Laplacians in
graph theory. Graph Laplacians (or simply Laplacians) play an important role in assessing 
robustness of connectivity and are similar
to stiffness matrices in discrete structural mechanical systems. The analogy may be made as
follows: a mass serves the role of a node and a spring serves the role of an edge that 
connects two nodes in a graph. If one assigns the cost of the edge to be the stiffness 
of the corresponding spring, the resulting Graph Laplacian is the same as the stiffness 
matrix that one obtains for the corresponding structural mechanical system. A brief
overview of Laplacian matrices is given in section \ref{Sec:laplacian}.

A typical transfer function in structural systems relates the input displacement or force
acting on a mass to the displacement of another mass in the structural network. In the 
absence of any damping in the structural systems, only transfer functions that have purely
imaginary zeros and poles can be realized. The poles of the system correspond to the natural
frequencies of the interconnected system, the interconnections being the sought quantities.
The zeros of the system can be thought of as the natural frequencies of an associated
constrained system obtained by setting the output displacement to be identically zero,
that is by constraining the appropriate mass to be stationary.  Essentially, the fundamental
problem of system realizability with a collection of springs and masses reduces to the 
following variant: Given a set $V$ of masses and $E$ of springs, and a set of bounds on
natural frequencies $w_{1l}, w_{1u}, w_{2l}, w_{2u}, \ldots, w_{pl}, w_{pu}$, (with $p \leq |V|$),
is there a connection of masses in which at most $q$  springs are used which results in the
interconnected structure having natural frequencies that lie between
$[w_{1l}, w_{1u}], [w_{2l}, w_{2u}], \ldots, [w_{pl}, w_{pu}]$? This is a reasonable relaxation
of the original problem which requires $w_{1l} = w_{1u}, \; w_{2l} = w_{2u}, \; \ldots, w_{pl} = w_{pu}$
in the following sense. The feasible set of the relaxed problem is bigger and the possibility 
of finding a solution should be better.

When $w_{il} = t, \; w_{iu} = \infty$ for all $i= 1, \ldots, p$, one obtains a decision
problem for a related problem involving the maximization of {\it augmented} algebraic 
connectivity in graphs. The difference between the {\bf BP} and the {\it augmented} 
algebraic connectivity maximization problem is as follows: In the {\bf BP}, none of 
the masses are connected to any springs initially, whereas in the {\it augmented} 
algebraic connectivity problem, the masses may initially be connected partially and 
one is seeking {\it additional} edges to maximize the algebraic connectivity. This 
problem was only recently shown to be NP-hard~\cite{mosk2008maximum} and may be 
stated as follows: Given an interconnected system of springs and masses, a prescribed 
number $q$ and a positive number, $t$, the decision problem is to determine if one 
can find at most $q$ {\it additional} springs that have not yet been used so as to 
make the second smallest natural frequency (which is also known as the algebraic 
connectivity for the associated Laplacian)
to be greater than $t$.  We will recall that the second smallest natural frequency is a
measure of the ``stiffness'' of the structure and the smallest natural frequency 
is always zero corresponding to the rigid body mode admitted by the structure 
(a detailed discussion on the measure for stiffness of mechanical systems can 
be found in section \ref{sec:intro_mech_systems}). 
Since every instance of the maximum augmented algebraic connectivity problem
is an instance of the system realization problem, NP-hardness of the former 
problem implies the NP-hardness of the latter problem and hence, non-trivial.

The problem of maximizing the augmented algebraic connectivity has applications to
stiffening existing or damaged structures. The need for repair and strengthening of 
damaged or deteriorated structures subject
to tight budget constraints has been an important challenge all over the world. As
discussed in \cite{bob2001rehabilitation}, there are many seismic resistant structures
built before the 1970s which are still in service beyond their design life. These existing
structures were designed with inadequate lateral load resistance because earlier building
codes  specified lower levels of seismic loads. Currently, it has been a topic of great
interest to address the problem of deficiency in the structural system by adequately 
strengthening the structural system in order to attain the desired level of seismic resistance.
Structural strengthening or rehabilitation, as defined in UNIDO (United Nations Industrial
Development Organization) manual, may consist of modification of the existing structural
members or addition of new structural members so that their structural strength, stiffness
and/or ductility are improved. An improvement in the overall stiffness of a structure can
be achieved through the addition of new structural members of known stiffness values to
increase the respective characteristics of the structure like bracing in a frame or skeleton
structure or new shear walls in a shear wall structure.  One must also take into consideration
the constraints on total budget/cost, while improving the overall stiffness of the structure.
One may abstract the problem of strengthening structures as follows: Given a budget,
a list of additional structural members to choose from along with their costs, the problem
is to augment the structure so as to make it as strong (stiff) as possible by retrofitting
additional structural members to the existing structure within the specified budget.

Algebraic connectivity, as the name indicates, is a measure of connectivity. In
the structural context, a single dimensional structure is connected if a force applied at almost any point on the
structure will influence the displacement of structure or the stress at almost all other locations
and hence, cannot admit more than a single rigid body mode. Moreover, if the structure may
be thought of as a linear discrete structural mechanical system, the structure is  tightly
connected if the stiffness of every spring is sufficiently high or equivalently, all its 
non-zero natural frequencies are sufficiently high. We may carry this analogy to 
applications involving a formation or collection of UAVs. 

%connectivity among a collection of Unmanned Aerial Vehicles (UAVs). 

%-----------------------------------------------------------;
%   Figure 1: Concept figure for the motivation		;
%-----------------------------------------------------------;
\begin{figure}[htp]
	\centering
	\subfigure[Initial configuration]{
	\includegraphics[scale=0.52]{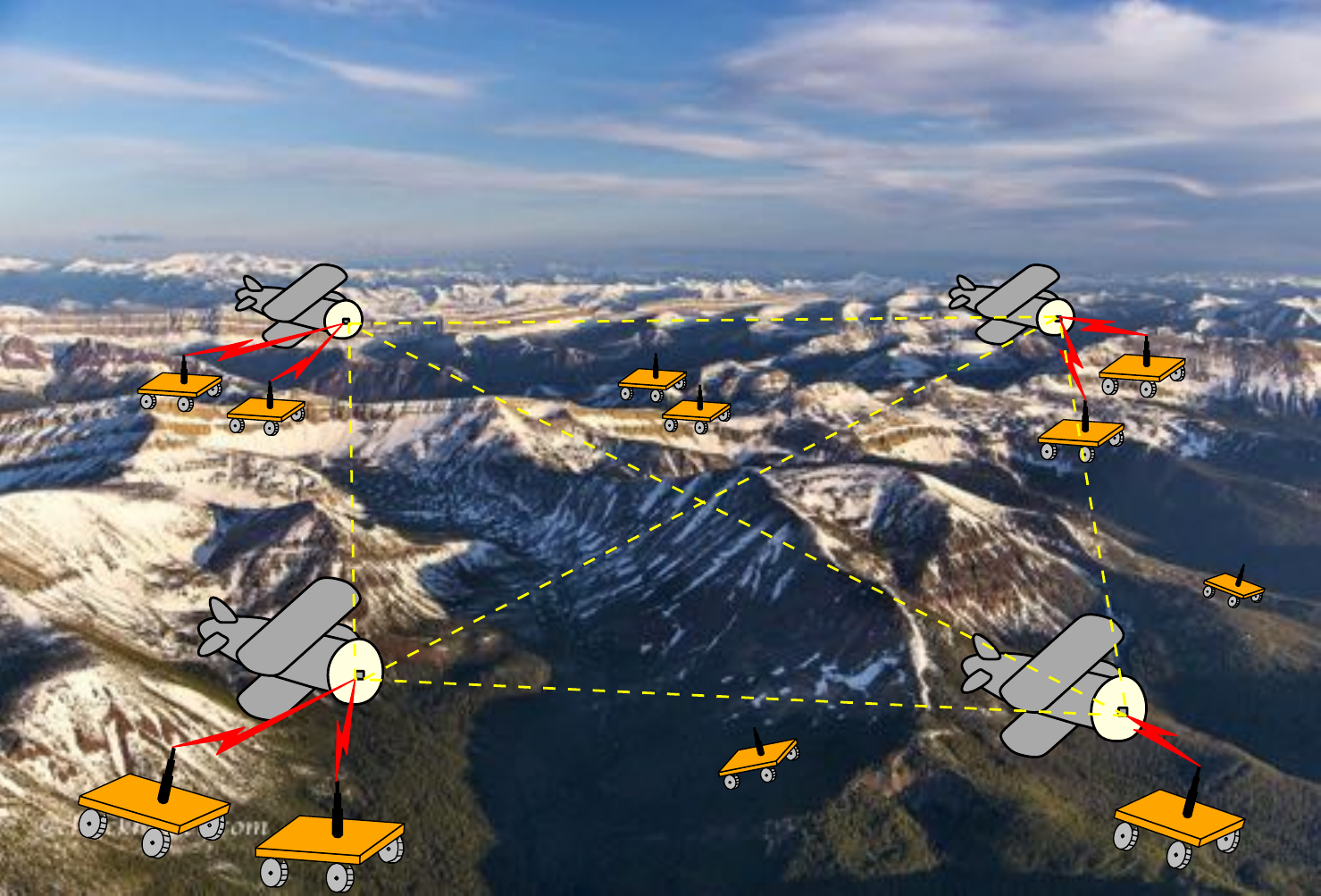}}
	\subfigure[Configuration after rigid body rotation]{
			  \includegraphics[scale=0.52]{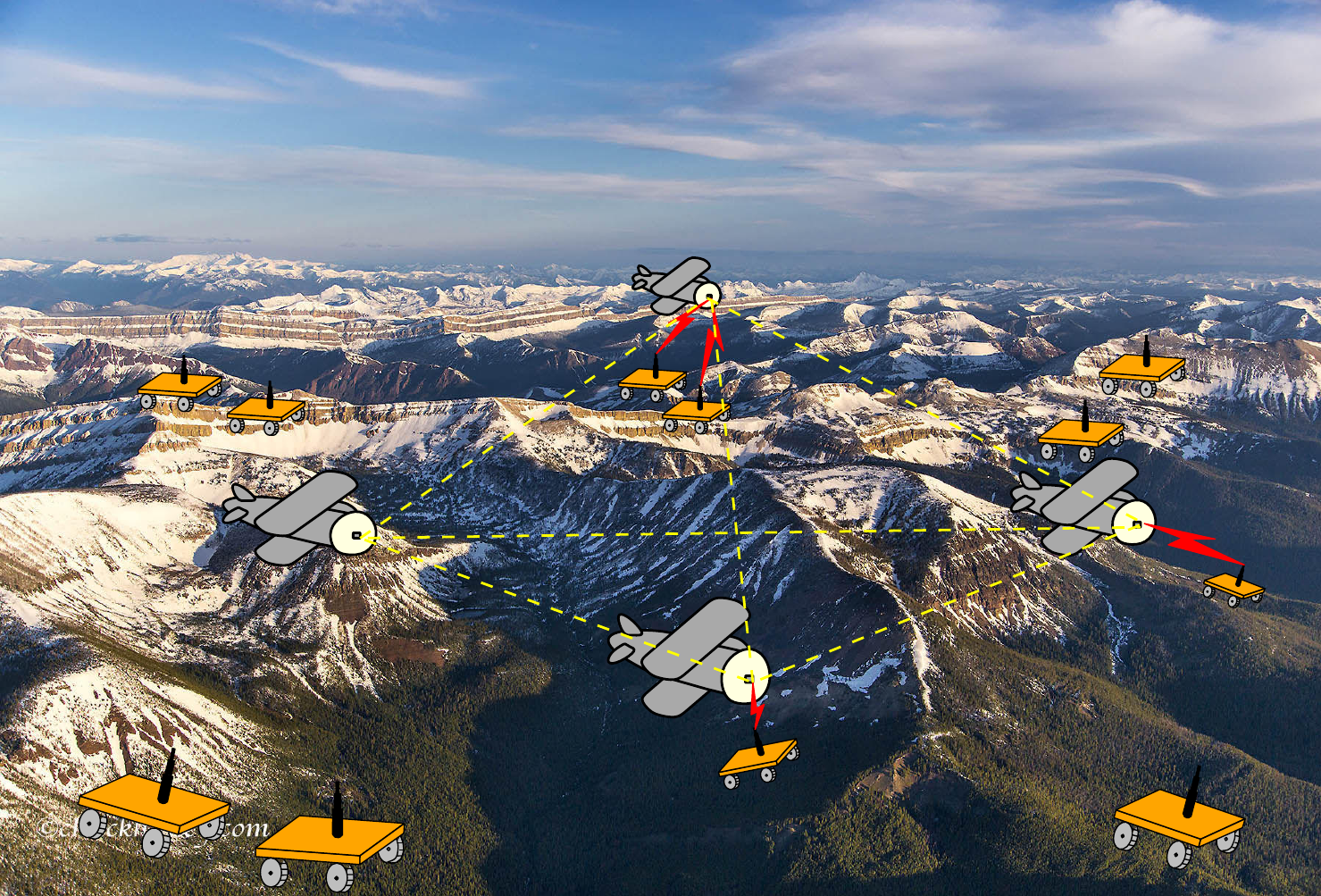}}
	\caption{In this figure, part (a) represents an initial configuration of backbone UAVs communicating with
	ground robots in disparate regions. As shown with the coloring of robots, not all robots are able to maintain
	a ground-to-air communication link with the UAVs. But in part (b), after a rigid body rotation of the backbone
	network about the centroid, the remaining ground robots are able to maintain a ground-to-air communication link with the UAVs. Source: \cite{nagarajan2015synthesizing}.}
   \label{fig:concept}
\end{figure}

This dissertation is also motivated by a scenario as shown in the figure \ref{fig:concept}.
In this scenario, there are clusters of ground robots moving in disparate regions 
that need to communicate their data and information amongst themselves. It is 
known a priori that the clusters will move slowly, and are known to be within a 
radius $R_{max}$ of their centroid. The ground-to-ground communication between 
these clusters may be hampered by obstacles such as mountains or tall buildings 
that prevent line-of-sight communications.  Since the power of a signal attenuates 
as the fourth power of distance in ground-to-ground communication, while it only 
decreases as the second power of the distance in ground-to-air and air-to-air 
communication \cite{Lehpamer,jakesmicrowave}, an ad-hoc network of UAVs is envisioned. 
The UAVs serve as backbone nodes and serve to establish communication between the 
clusters from ground. The robots use ground-to-air communication with the UAVs and the 
UAVs utilize air-to-air communication amongst themselves to reduce the overall 
power consumption for maintaining communication and transferring data.

The operational concept is as follows: The collection of UAVs maintain a 
fixed distance between them. The collection rotates about the centroid as a 
rigid body through an unit angle, stop at that configuration to facilitate 
communication with robots and then step through another unit angle and this 
procedure continues.  Associated with each UAV, one may associate a circular 
footprint on the ground; robots in the footprint can utilize ground-to-air 
communication with the UAV. As the collection of UAVs rotates about the 
centroid, the footprints of UAVs sweep/cover the area of the footprint on the ground. 
Rotating the UAVs about the centroid helps in providing a time window for 
ground-to-air communication between the robots and the UAVs without the UAVs 
having to track the robots.

Maintaining a rigid formation of UAVs provides a convenient way of 
maintaining the backbone UAV network. 
An important problem of maintaining a rigid formation is 
the problem of determining the underlying information flow graph, i.e., the 
determination of the pairs of UAVs that are maintaining communication. 
It is well-known that with a given decentralized controller as in 
\cite{yadlapalli2006information}, the convergence rate of the error in 
maintaining a desired constant spacing with respect to other UAVs in the 
formation is influenced by the algebraic connectivity of the (unweighted) 
information flow graph; if the algebraic connectivity is higher, the 
convergence is faster. In essence, this problem is identical to {\bf BP}. 

Another variant of {\bf BP} arises in the same scenario when we deal with the 
construction of an adhoc infrastructure network with UAVs,  $i.e.$, the 
determination of the relative location of UAVs as well as the pairs of UAVs 
that must maintain air-to-air communication. Since UAVs have limited battery 
power on-board, power consumption is an important issue. We use the following 
model of power consumption: to maintain a connection (or a communication link) 
between the $i^{th}$ and $j^{th}$ UAVs, the power consumed is given by 
$\alpha_{ij} d_{ij}^2$, where $d_{ij}$ is the distance between the UAVs and 
$\alpha_{ij}$ is the coefficient of proportionality and is dependent on the 
product of antenna gains of the transmitting and receiving UAVs. The 
coefficient $\alpha_{ij}$ may also be viewed as a strength of the communication 
link. If $\alpha_{ij}$ is higher, then the data rate that can be transmitted 
across the link is correspondingly higher. From the point of reducing 
interference in communication, there is an upper bound on the transmitted 
power by every UAV. This constraint limits communication between UAVs that 
are sufficiently far apart. The antennas and associated signal processing 
circuitry is typically powered by batteries on-board a UAV and this further 
limits the power that can be consumed in transmitting signals by every UAV. 
Instead of dealing with this constraint at the individual UAV level, we 
consider a surrogate constraint on the power consumption of the system as a whole. 
The total power consumed by the UAVs for maintaining air-to-air connectivity 
(or simply connectivity) is the sum of the power consumption associated with 
all the employed communication links. The total power consumption affects 
the cost of operation of the network and hence, can be treated as a resource.

One can naturally associate a graph with the network of backbone UAVs, 
with the UAVs serving as nodes, communication links being edges and a 
weight, $\alpha_{ij}$ associated with the communication link between the 
$i^{th}$ and $j^{th}$ UAVs. The desirable attributes of a communication 
network are: lower diameter so as to minimize latency in communicating 
data/information across the network, high isoperimetric number so that 
the bottlenecking in a network can only occur at higher data rates and 
robustness to node and link failures. It is known that a higher value 
of algebraic connectivity of a network is associated with a network 
with the previously mentioned desirable attributes\cite{HuijuanWang}. 
In relation to graph theory, algebraic connectivity  provides a measure 
of how weakly any subset of vertices is connected to the remaining graph. 
In this measure, a subset of vertices is considered to
be weakly connected if a normalized cut (sum of the number of edges 
leaving the subset) of the subset has a low value. Essentially, a tightly 
connected network with a larger normalized cut corresponds to a network 
with a higher algebraic connectivity. Algebraic connectivity as a measure 
of network connectivity is also superior to other measures such as the 
node or the link connectivity of a network ; for example, any (unweighted) 
spanning tree has a node or a link connectivity of one. On the other hand, 
it is known that a star network has a higher algebraic connectivity 
compared to that of any (Hamiltonian) path in the network. A star network, 
for instance, is considered to be more robust against a random removal 
of a node in the network as opposed to a path which gets disconnected 
upon the removal of any intermediate node \cite{HuijuanWang}. For this 
reason, we pose the network synthesis problem as that of determining 
the network with the maximum algebraic connectivity over all possible 
networks satisfying the given resource and operational constraints.

Simply put, a variant of the {\bf BP} that arises in this application 
is as follows: Given a collection of UAVs which can serve as backbone nodes, 
how should they be arranged and connected so that
\begin{itemize}
\item[(i)] the convex hull of the projections of their locations
on the ground spans a prespecified area of coverage,
\item[(ii)] the resources such as the total UAV power consumption for 
maintaining  connectivity  and the total number of communication
links employed are within their respective prescribed bounds, and
\item[(iii)] algebraic connectivity of the network is maximum among all 
possible networks satisfying the constraints (i) and (ii).
\end{itemize}

Variants of {\bf BP} have recently received attention in the UAV literature, 
for example, a few of the relevant references are ~\cite{kim2006maximizing}, \cite{martinez2007motion},
\cite{zavlanos2007potential}, \cite{yadlapalli2006information}, \cite{MRSR2009max_alg_conn}.
However, prior to this dissertation, a systematic and computationally efficient method for solving the
problem exactly was lacking.

Apart from mechanical systems, similar problems appear in disparate research areas including
biomedicine and VLSI circuit design. In biomedicine, of particular relevance is the field
of systems biology which aims to study the interplay between proteins, nucleic acids and 
other cellular components at the global level. In this research area, one is interested 
in engineering and achieving a desired output by either allowing certain new interactions 
or disallowing some interactions from taking place. In the simplest form, these interactions
may be modeled by systems of coupled ordinary differential equations and in more complicated
situations such as cascades of biochemical reactions that need to be controlled, the
interactions can be modeled by a system of coupled partial differential equations.

A similar problem is also encountered in VLSI circuit design\cite{CongVLSI}. Due to steady
miniaturization of VLSI devices and a quest for faster communication rates, there are
critical performance objectives placed on the design of interconnects \cite{chenVLSI},\cite{AcharVLSI}
between the components of a VLSI circuit including minimization of interconnect delays and
signal distortion, minimization of signal delays between time-critical components, minimization
of total wire length etc. A fundamental problem in VLSI circuit design \cite{CongVLSI} 
deals with designing a suitable network topology ($i.e.$, the  interconnects between the 
components) such that the specified performance objective is realized. The same problem also 
appears in disparate disciplines such as coding theory\cite{varshney2010distributed}, image
webs \cite{heath2010image}, air traffic management \cite{wei2010airport_transport,nagarajan2012icrat} 
and free space optical and communication networks \cite{son2010design},\cite{liu2009utility}.

Solving the {\bf BP}, i.e., finding the optimal network corresponding 
to the maximum value of augmented algebraic connectivity is  non-trivial. 
It is further compounded by the rapid increase in the size of the problem 
with an increase in the number of nodes (for example, masses). Even for 
instances of moderate size involving 8 identical masses, if one were 
asked to pick only 7 springs to form a connected structure, 
there are $8^6\approx 262144$ combinatorial possibilities (for a graph with $n$ masses, 
there are $n^{n-2}$ connected structures with $n-1$ springs). The difficulty is further
accentuated by the non-smooth and nonlinear nature of the objective function.
% In this dissertation, 
%a Mixed Integer, Semi-Definite Programming (MISDP) formulation for the augmented algebraic connectivity 
%problem was developed. 
\emph{The focus of the dissertation is to develop numerical algorithms for 
computing optimal networks  as well as for computing sub-optimal 
networks along with a bound on their suboptimality.}

%\textbf{Extra references:} , \cite{imamdi2010robustness}, ,  

%++++++++++++++++++++++++++++++++++++++++++++++++++;
%  Section: Note on Laplacian matrix ;
%++++++++++++++++++++++++++++++++++++++++++++++++++;
\section{A note on Laplacian matrix}
\label{Sec:laplacian}
A graph $G$ is specified by a set of vertices $V$, a set of edges 
$E \subset V \times V$ and a cost function $c: E \rightarrow \Re_{+}$. 
A graph $G$ is compactly represented as $G(V, E, c)$.
%% Two vertices $u,v \in V$ are adjacent if $ \{u, v\} \in E$. Associated with a set $S \subset V$, one can associate a cut-set, $\delta(S)$, of edges as shown in the figure \ref{fig:cut-set}:
%$$\delta(S):=\{ \{u, v\}: u \in S, v \in V - S, \{u, v\} \in E\}. $$
%The value of cut-set is given by $c(\delta(S)) = \sum_{e \in \delta(S)} c_e$. 
Let $n$ denote the cardinality of $V$ and let $I_n$ be the identity 
matrix of dimension $n$. Without any loss of generality, we can 
arbitrarily number the vertices and associate the numbers with the 
vertices. Let $i, j \in V$ and let $e_i, e_j$ correspond to the $i^{th}$ 
columns of $I_n$. If $a, b \in \Re^n$, let $a \otimes b$ denote the 
tensor product of $a$ and $b$. Let $c_{ij}$ denote the cost of the edge $\{i,j\}$.

The graph Laplacian of $G(V,E,c)$ is defined as:
$$ L := \sum_{e= \{i,j\} \in E} c_{ij} (e_i-e_j) \otimes (e_i-e_j). $$ 
The component of $L$ in the $i^{th}$ row and $j^{th}$ column is 
given by $L_{ij}$ and is as follows:

% \left\{ \begin{array}{cc}
% -c_{ij}&  i \ne j, \;  \{i,j\} \in E, \\
% \sum_{j: \{i,j\} \in E} c_{ij}& i = j, \\
% 0& \textit{otherwise.} 
% \end{array} \right

$$L_{ij} = \begin{cases}
    -c_{ij},& \text{if } i \ne j, \;  \{i,j\} \in E,\\
	\sum_{j: \{i,j\} \in E} c_{ij}, & \text{if} \  i = j, \\
    0,  & \text{otherwise}
\end{cases}
$$
As an example, Laplacian matrix for the graph shown in Figure \ref{Fig:laplacian1}(a) is as follows: 
\begin{align}
	\label{eq:laplacian_mat}
		L = \begin{pmatrix}
	 	1 & -1 & 0 & 0 & 0 & 0 \\
		-1 & 2 & -1 & 0 & 0  & 0\\
		0 & -1 & 2 & -1 & 0 & 0\\
		0 & 0 & -1 & 2 & -1 & 0\\
		0 & 0 & 0 & -1 & 2 & -1\\
		0 & 0 & 0 & 0 & -1 & 1 \\
		\end{pmatrix}
\end{align}

There are other variants of Laplacians that are used; this 
dissertation primarily focuses on the graph Laplacian.

\subsection{Relationship between Laplace's equation and graph Laplacian}
Consider the following one-dimensional Laplace's equation: 
%---------------------------------;
%   Equation :		;
%---------------------------------;
\begin{align}
\label{eq:poisson}
-\frac{d^2 u}{dx^2} = f(x),
\end{align}
where $f(x)$ is the source, $u(x)$ is the response and $\frac{d^2}{dx^2}(\cdot)$ is the Laplacian operator. 
As shown in Figure \ref{Fig:laplacian1}(b), consider a discretized space such that the given domain 
$\Omega = [0,X]$ is discretized with equally spaced points $x_i, i=1\ldots 6$ 
and the grid size $h=1$. Hence, a one-dimensional stencil using a second order central differencing which approximates the Laplacian operator at point $x_i$ is given as follows: 

$$ \frac{d^2 u}{dx^2} \Big \rvert_{x=x_i} \approx \frac{u_{i+1} - 2u_i + u_{i-1} }{h^2} = u_{i+1} - 2u_i + u_{i-1}$$
where, $u_i \approx u(x_i)$. 
%---------------------------------; 
%   Figure :		; 
%---------------------------------; 
\begin{figure}[htp] \centering
	\subfigure[A graph with unit weights]{
			  \includegraphics[scale=1.09]{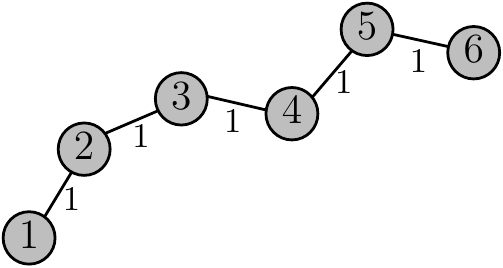}}       
	\subfigure[Finite difference grid]{
			  \includegraphics[scale=1.4]{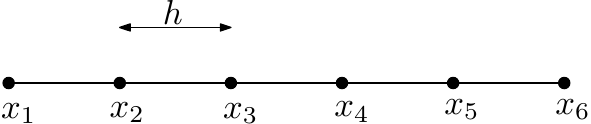} }
	\caption{A graph with unit weights and its equivalent finite difference grid.} 
	\label{Fig:laplacian1}
\end{figure}

In order to represent the discretized Laplace's equation in the matrix form,
we consider the following cases: \\
\noindent
\textbf{Dirichlet boundary condition:} 
For the Laplace's equation in \eqref{eq:poisson}, let the Dirichlet boundary 
conditions be as follows: 
$$u_1 = u(x_1) = \alpha_1, u_6 = u(x_6) = \alpha_6.$$
Under these boundary conditions, discretizing the Laplace's equation in \eqref{eq:poisson} over the grid shown in 
Figure \ref{Fig:laplacian1}(b), we obtain the following matrix form: 

\begin{align}
		\begin{pmatrix}
	 	2 & -1 & 0 & 0 \\
		-1 & 2 & -1 & 0 \\
		0 & -1 & 2 & -1 \\
		0 & 0 & -1 & 2 \\
		\end{pmatrix} 
		\begin{pmatrix}
				 {u}_2 \\ {u}_3 \\ {u}_4 \\ {u}_5
		\end{pmatrix}
		&= 
		\begin{pmatrix}
		f_2 - \alpha_1 \\ f_3 \\ f_4 \\ f_5- \alpha_6
		\end{pmatrix}
		\label{eq:finite_diff_1} 
\end{align}
It is evident that the square sub-matrix obtained by dropping the first and the last rows and columns of the 
Laplacian matrix in equation \eqref{eq:laplacian_mat} is the same as the coefficient matrix in equation \eqref{eq:finite_diff_1}.

\noindent
\textbf{A combination of Neumann and Dirichlet boundary conditions:} 
For the Laplace's equation in \eqref{eq:poisson}, let the Neumann boundary 
condition be: 
$$\frac{du}{dx} \Big \rvert_{x=x_1} = \beta_1$$
and the Dirichlet boundary condition be 
$$u_6 = u(x_6) = \alpha_6.$$
A natural first order approximation to the derivative at $x_1$ is a one sided difference
$$\frac{du}{dx} \Big \rvert_{x=x_1} \approx \frac{u_1 - u_2}{h} = \beta_1.$$
Under these boundary conditions, discretizing the Laplace's equation in \eqref{eq:poisson} over the grid shown in 
Figure \ref{Fig:laplacian1}(b), we obtain the following matrix form: 

\begin{align}
		\begin{pmatrix}
	 	1 & -1 & 0 & 0 \\
		-1 & 2 & -1 & 0 \\
		0 & -1 & 2 & -1 \\
		0 & 0 & -1 & 2 \\
		\end{pmatrix} 
		\begin{pmatrix}
				 {u}_2 \\ {u}_3 \\ {u}_4 \\ {u}_5
		\end{pmatrix}
		&= 
		\begin{pmatrix}
		f_2 - \beta_1 \\ f_3 \\ f_4 \\ f_5- \alpha_6
		\end{pmatrix}
		\label{eq:finite_diff_2} 
\end{align}
It is evident that the square sub-matrix obtained by dropping the the last two rows 
and columns of the Laplacian matrix in equation \eqref{eq:laplacian_mat} is the same as the 
coefficient matrix in equation \eqref{eq:finite_diff_2}.

% \begin{align}
% 		\begin{pmatrix}
% 	 	1 & -1 & 0 & 0 & 0 \\
% 		-1 & 2 & -1 & 0 & 0 \\
% 		0 & -1 & 2 & -1 & 0\\
% 		0 & 0 & -1 & 2 & -1 \\
% 		0 & 0 & 0 & -1 & 1 \\
% 		\end{pmatrix} 
% 		\begin{pmatrix}
% 				 {u}_1 \\ {u}_2 \\ {u}_3 \\ {u}_4 \\ {u}_5
% 		\end{pmatrix}
% 		&= 
% 		\begin{pmatrix}
% 		f_1 \\ f_2 \\ f_3 \\ f_4 \\ f_5
% 		\end{pmatrix}
% 		\label{eq:finite_diff} 
% \end{align}

The graph interpretation of the discretized problem is shown in Figure \ref{Fig:laplacian1}(a). 
In this interpretation, every graph vertex in Figure \ref{Fig:laplacian1}(a) can be treated as a grid point;
the edges of the graph shown in Figure \ref{Fig:laplacian1}(a) have a cost of $1$ unit.  
The finite difference stencil at the grid point can be treated as the local Laplacian 
matrix and the unit edge cost corresponds to the homogeneous material with a unit 
thermal conductivity in the case of heat conduction equation.

\subsection{Graph Laplacian and electrical systems}
Consider a simple electrical network with four resistors and five junctions
as shown in Figure \ref{Fig:electrical}. In graph theoretic terms, 
the junctions represent the vertices of the graph, the resistors 
represent the edges in the graph and the corresponding conductance values represent the 
edge weights. For the convenience in the notation, we describe 
each resistor by it's conductance values, which is  the inverse of its 
resistive values. As an example, if the resistance between vertices two 
and five is $\frac{1}{c_{25}} \Omega$, then it's conductance value is 
equal to $c_{25} \mho$.

%---------------------------------; 
%   Figure: Electrical network		; 
%---------------------------------; 
\begin{figure}[!h] 
	\centering
	\includegraphics[scale=1.0]{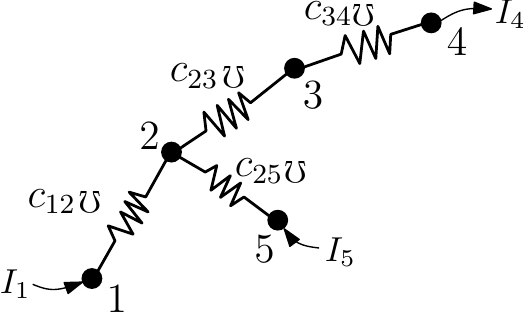}       
	\caption{An electric network with resistors labeled by their conductance values ($\mho$)} 
	\label{Fig:electrical}
\end{figure}

The problem of interest is to find the voltages $V_1,V_2,V_3,V_4$ and $V_5$ at all the vertices 
in the electrical resistive network given that $I_1$ and $I_5$ units of current enters
at vertices one and five respectively and $I_4$ units of current leaves 
from the fourth vertex. 

From Ohm's law, we know that the current flowing across the edges
$1 \rightarrow 2$, $2 \rightarrow 3$, $5 \rightarrow 2$ and
$3 \rightarrow 4$ are $c_{12}(V_1-V_2)$, $c_{23}(V_2-V_3)$, $c_{25}(V_5-V_2)$ and
$c_{34}(V_3-V_4)$ respectively. By applying Kirchoff's current balance 
law at all the vertices, we have the following set of linear equations: 
\begin{align*}
c_{12} (V_1 - V_2) &= I_1, \\
-c_{12} (V_1 - V_2) + c_{25} (V_2 - V_5) + c_{23} (V_2 - V_3)&= 0, \\
-c_{23} (V_2 - V_3) + c_{34} (V_3 - V_4)&= 0, \\
-c_{34} (V_3 - V_4) &= -I_4, \\
-c_{25} (V_2 - V_5) &= I_5.
\end{align*}
The above system can be expressed in matrix form as follows: 
\begin{align}
	\begin{split}
		\underbrace{\begin{pmatrix}
	 	c_{12}& -c_{12} & 0 & 0 & 0 \\
		-c_{12} & c_{12} + c_{23} +c_{25} & -c_{23} & 0 & -c_{25} \\
		0 & -c_{23} & c_{23} + c_{34} & -c_{34} & 0\\
		0 & 0 & -c_{34} & c_{34} & 0 \\
		0 & -c_{25} & 0 & 0 & c_{25} \\
		\end{pmatrix}}_{\mathrm{Admittance \  matrix}} 
		\begin{pmatrix}
		V_1 \\ V_2 \\ V_3 \\ V_4 \\ V_5
		\end{pmatrix} 
		&=
		\begin{pmatrix}
		I_1 \\ 0 \\ 0 \\ -I_4 \\ I_5
		\end{pmatrix}
		\label{eq:stiffness} 
	\end{split}
\end{align}
It can be noted from the above admittance matrix that
the $(i,j)^{th}$ entry is the negation of the conductance 
between the vertices $i$ and $j$ and the $i^{th}$ diagonal entry
is the sum of the conductances of all the resistors incident at 
$i^{th}$ vertex. Hence, this matrix is same as the Laplacian 
matrix of the weighted graph shown in \ref{Fig:laplacian}(a) and 
also the stiffness matrix shown in \eqref{eq:stiffness}.

\subsection{Graph Laplacian and discrete mechanical systems}

% Indeed, it is 
%no surprise that this matrix is known as the discrete Laplacian matrix in the literature.  
%In the graph theory literature \cite{chung1996spectral}, the definition of a Laplacian matrix($L$) for a weighted simple 
%graph is given by: 
%$$L = \mathrm{Weighted \ adjacency \ matrix- Weighted \ degree \ matrix}$$

%---------------------------------; 
%   Figure :		; 
%---------------------------------; 
\begin{figure}[htp] \centering
	\subfigure[Weighted graph]{
			  \includegraphics[scale=1]{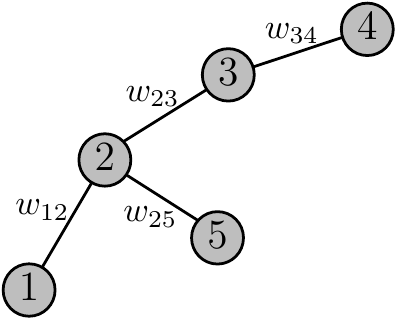}}       
	\subfigure[Spring mass system]{
			  \includegraphics[scale=0.7]{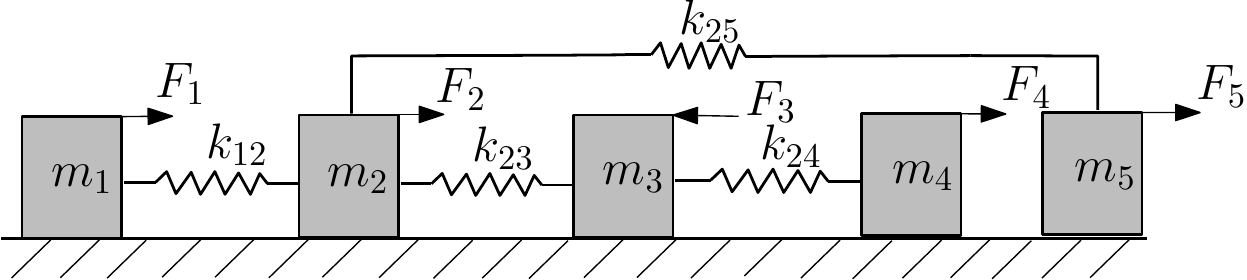} }
	\caption{A weighted graph and its equivalent form as a spring-mass system.} 
	\label{Fig:laplacian}
\end{figure}
Consider a simple five degree of freedom vibratory system shown 
in Figure \ref{Fig:laplacian}(b). At equilibrium, the springs are 
unstretched. The springs are assumed to be linear with a stiffness 
coefficient $k_{ij}$ if the spring connects masses $m_i$ and $m_j$. 
An application of Newton's laws yields the following governing 
equations of motion:

\begin{align}
	\begin{split}
		\begin{pmatrix}
	 	m_1 & 0 & 0 & 0 & 0 \\
		0 & m_2 & 0 & 0 & 0 \\
		0 & 0 & m_3 & 0 & 0\\
		0 & 0 & 0 & m_4 & 0 \\
		0 & 0 & 0 & 0 & m_5 \\
		\end{pmatrix}
		\begin{pmatrix}
				  \ddot{x}_1 \\ \ddot{x}_2 \\ \ddot{x}_3 \\ \ddot{x}_4 \\ \ddot{x}_5
		\end{pmatrix}
		+ \\
		\underbrace{\begin{pmatrix}
	 	k_{12}& -k_{12} & 0 & 0 & 0 \\
		-k_{12} & k_{12} + k_{23} +k_{25} & -k_{23} & 0 & -k_{25} \\
		0 & -k_{23} & k_{23} + k_{34} & -k_{34} & 0\\
		0 & 0 & -k_{34} & k_{34} & 0 \\
		0 & -k_{25} & 0 & 0 & k_{25} \\
		\end{pmatrix}}_{\mathrm{Stiffness \  matrix}} 
		\begin{pmatrix}
		x_1 \\ x_2 \\ x_3 \\ x_4 \\ x_5
		\end{pmatrix} 
		&=
		\begin{pmatrix}
		F_1 \\ F_2 \\ -F_3 \\ F_4 \\ F_5
		\end{pmatrix}
		\label{eq:stiffness} 
	\end{split}
\end{align}

Hence, for the example shown in Figure \ref{Fig:laplacian}(a), 
the corresponding Laplacian matrix would be: 
\begin{align}
		L &= \begin{pmatrix}
	 	k_{12}& -k_{12} & 0 & 0 & 0 \\
		-k_{12} & k_{12} + k_{23} +k_{25} & -k_{23} & 0 & -k_{25} \\
		0 & -k_{23} & k_{23} + k_{34} & -k_{34} & 0\\
		0 & 0 & -k_{34} & k_{34} & 0 \\
		0 & -k_{25} & 0 & 0 & k_{25} \\
		\end{pmatrix}
		\label{eq:lap}
\end{align}
Clearly, the stiffness matrix in equation \eqref{eq:stiffness} 
is the same as the Laplacian matrix in equation \eqref{eq:lap}.

%\subsection{Other variants of Laplacians}
%\textcolor{red}{(2) Combinatorial Laplacian (??) $M^{-1}L$}. 

\section{Algebraic connectivity as an objective of maximization}
Since algebraic connectivity is chosen as an objective of 
maximization in this dissertation, a motivation for the choice 
of the objective is in order. In this section, the motivation is 
provided through three different applications where maximizing 
algebraic connectivity is meaningful.

\subsection{Linear mechanical systems}
\label{sec:intro_mech_systems}
Let the mechanical system consist of $n$ identical masses and $|E|$ springs. If masses 
were to be treated as nodes, the linear springs as edges and stiffness coefficients of 
the  springs as the ``weight'' associated with each edge (spring), then the algebraic 
connectivity of the graph corresponds to the smallest non-zero natural frequency of the 
discrete mechanical system. Let $M, L$ respectively represent the mass and stiffness 
matrices respectively. The components of $L$ depend on the topology, $x$, of connections 
of masses with the aid of  springs.  Let $e_0$ denote a vector with every component being 
unity. If $\delta, f$ represent respectively the vectors of displacements 
and forces acting on the masses, then the governing equations corresponding to a 
given topology $x$ may be compactly expressed as 
$$M \ddot \delta + L(x) \delta = f. $$
Let $\|\delta \|_2, \|f \|_2$ represent the 2-norm of $\delta$ and $f$ respectively.
Let ${\cal F} = \{ f:  \| f \|_2 \leq 1, \; f \cdot e_0 = 0. \}$ The condition 
$f \cdot e_0 = 0$ implies that the net force acting on the system of masses is zero 
and hence, the centroid of the masses remains stationary if it is stationary initially.

 For a given topology $x$,  let $v_1, v_2, \ldots v_n$ be the unit eigenvectors of 
$L(x)$ corresponding to eigenvalues $\lambda_1 \le \lambda_2 \leq \cdots \lambda_n$. 
$$L(x) = \sum_{i=1}^n \lambda_i v_i \otimes v_i. $$
Since $L(x) e_0  = 0$, it implies that $\lambda_1 = 0, v_1 = \frac{e_0}{\sqrt{e_0 \cdot e_0}}$. Clearly,
when the displacements of all the masses are the same, the deflections in the springs 
are zero and this eigenvector corresponds to  a ``rigid-body'' mode. If the set of 
masses is connected, then it will admit only one rigid body mode; in this case 
$\lambda_2 >0$. Otherwise, $\lambda_2 = 0$ suggesting that there is another rigid 
body mode; in this case, one can find two disjoint sets of masses $S_1$ and $S_2$ 
that are not connected by any spring and correspondingly, the governing equations 
of motion in this case can be recast as: 
$$\underbrace{\left[ \begin{array}{cc} M_1 &0 \\0&M_2 \end{array} \right]}_{M} \underbrace{\left[ \begin{array}{c} \ddot \delta_1 \\ \ddot \delta_2 \end{array} \right]}_{ \ddot \delta} + \underbrace{\left[ \begin{array}{cc} L_1(x) &0 \\0&L_2(x) \end{array} \right]}_{L(x)} \underbrace{\left[ \begin{array}{c} \ddot \delta_1\\ \ddot \delta_2 \end{array} \right]}_{ \delta}  = \underbrace{\left[ \begin{array}{c} f_1\\ f_2 \end{array} \right]}_{f}. $$
We can construct a force $f \in {\cal F}$ as follows: the forces on the 
masses in $S_1$  and $S_2$ are  $\frac{1}{\sqrt{n}} \sqrt{\frac{|S_2|}{|S_1|}}$ and 
$-\frac{1}{\sqrt{n}} \sqrt{\frac{|S_1|}{|S_2|}}$ units respectively. Clearly $\|f\|_2 =1$ 
and the sum of the forces acting on the masses is zero. However, the masses in $S_1$ 
and $S_2$ move as if they are independent masses with a constant acceleration. Hence, 
in this case, the difference in the steady state displacements among the masses is 
unbounded. Since such a topology of connections is not desirable, let $\cal{X}$ 
denote the topology of connection of masses with springs which admits only a 
single rigid body mode. 

\begin{lemma}
Let $\delta_s$ be the vector of displacements of masses of the mechanical 
system due to the forcing function $f$. If $x \in {\cal X}$, and the 
initial value of average displacement and velocity of all masses is zero,  then 
$$\max_{f \in \cal{F}} \; \;  \|\delta_{s}\|_2  = \frac{1}{\lambda_2(L(x))}. $$
\end{lemma}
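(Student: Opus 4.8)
The plan is to identify $\delta_s$ with the equilibrium (constant particular) solution of $L(x)\delta_s = f$ that is orthogonal to the rigid-body mode, and then reduce the maximization over $f \in \mathcal{F}$ to a one-line eigenvalue estimate. First, since $x \in \mathcal{X}$ admits only a single rigid-body mode, we have $\lambda_2(L(x)) > 0$ as noted above, so $L(x)$ is invertible on the $(n-1)$-dimensional subspace $W := \mathrm{span}\{v_2,\dots,v_n\} = \{w : w\cdot e_0 = 0\}$. Decompose the solution of $M\ddot\delta + L(x)\delta = f$ into its component along $v_1 = e_0/\sqrt{e_0\cdot e_0}$ and its component in $W$. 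The $v_1$-component is governed by an equation whose forcing is $v_1\cdot f = \tfrac{1}{\sqrt{e_0\cdot e_0}}\,(f\cdot e_0) = 0$; with the average initial displacement and velocity equal to zero, this component vanishes for all time. The $W$-component oscillates about the equilibrium $\delta_s$ fixed by $L(x)\delta_s = f$, $\delta_s \in W$, so
\[
\delta_s \;=\; \sum_{i=2}^{n} \frac{v_i\cdot f}{\lambda_i}\, v_i .
\]

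Next I would bound $\|\delta_s\|_2$. By orthonormality of the $v_i$,
\[
\|\delta_s\|_2^2 \;=\; \sum_{i=2}^n \frac{(v_i\cdot f)^2}{\lambda_i^2} \;\le\; \frac{1}{\lambda_2^2}\sum_{i=2}^n (v_i\cdot f)^2 \;\le\; \frac{1}{\lambda_2^2},
\]
where the first inequality uses $0 < \lambda_2 \le \lambda_i$ for all $i\ge 2$, and the second uses that $v_1\cdot f = 0$ (from $f\cdot e_0 = 0$) together with $\|f\|_2^2 = \sum_{i=2}^n (v_i\cdot f)^2 \le 1$ for $f\in\mathcal{F}$. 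Hence $\|\delta_s\|_2 \le 1/\lambda_2(L(x))$ for every admissible forcing. To see the bound is attained, take $f = v_2$: then $\|f\|_2 = 1$ and $f\cdot e_0 = \sqrt{e_0\cdot e_0}\,(v_2\cdot v_1) = 0$, so $f \in \mathcal{F}$, and $\delta_s = \tfrac{1}{\lambda_2} v_2$, giving $\|\delta_s\|_2 = 1/\lambda_2(L(x))$. Combining the two statements yields the claimed equality.

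The genuinely delicate step is not the optimization (which merely says the worst input excites the slowest nontrivial mode) but the modeling step: making precise what ``steady-state displacement'' means for an undamped system and justifying that the rigid-body component is killed by the zero-average initial conditions. I expect to handle this cleanly by first adding an infinitesimal viscous term, solving $M\ddot\delta + C\dot\delta + L(x)\delta = f$, letting $t\to\infty$ to obtain $L(x)\delta_s = f$ on $W$ (with the rigid-body coordinate decaying to $0$ because its forcing and its initial mean data vanish), and then letting $C\to 0$; the resulting value $1/\lambda_2(L(x))$ is independent of this regularization, and everything downstream is the routine spectral computation above.
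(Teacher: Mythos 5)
Your proof is correct and follows essentially the same route as the paper's: reduce to the static equation $L(x)\delta_s = f$ with $\delta_s$ orthogonal to $e_0$, expand in the eigenbasis to get $\|\delta_s\|_2^2 = \sum_{j\ge 2} (v_j\cdot f)^2/\lambda_j^2 \le 1/\lambda_2^2$, and attain the bound with $f = v_2$. The only addition is your viscous-damping regularization to justify the meaning of ``steady state''; the paper skips this and simply takes $\delta_s$ to be the constant particular solution, which your argument recovers in the limit.
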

\begin{proof}
Since $f$ is a constant force, $\delta_s$ is a vector of constants and hence satisfies
$$L(x) \delta_s = f. $$
Let $f$ be decomposed along the eigenvectors $v_2, \ldots, v_n$ as 
$$f = \sum_{i=2}^n\alpha_i v_i,$$ so that 
$$\alpha_j = v_j \cdot f = v_j \cdot L(x) \delta_s = L(x)v_j \cdot \delta_s = \lambda_j v_j \cdot \delta_s. $$
From the assumption that the initial average displacement and velocity 
of all masses is zero,  it follows that the average displacement and 
velocity of masses is zero throughout as:
$$e_0 \cdot [M \ddot \delta + L(x) \delta] = e_0 \cdot f= 0, \Rightarrow e_0 \cdot \ddot \delta = 0. $$
Hence, $\delta_s$ cannot have a component along $v_1$ or equivalently 
along $e_0$. Since $x \in \cal{X}$,
$$\delta_s = \sum_{j=2}^n\frac{v_j \cdot f}{\lambda_j} v_j \Rightarrow \|\delta_s\|_2^2 = \sum_{j=2}^n \frac{\alpha_j^2}{\lambda_j^2}  \leq \frac{\sum_{j=2}^n \alpha_j^2}{\lambda_2^2} = \frac{1}{\lambda_2^2}.$$
Since the maximum is achieved when $f = v_2$, it follows that 
$$\max_{f \in \cal{F}} \|\delta_s\|_2 = \frac{1}{\lambda_2}. $$
%\hfill $\blacksquare$
\end{proof}

Clearly, the maximum value of the 2-norm of forced response of the mechanical 
system can be minimized when $\lambda_2(L(x))$ is a maximum. It is for this 
reason that algebraic connectivity (or the second smallest eigenvalue of $L(x)$ is maximized. 

\subsection{Application to rigid formations}
Consider a formation of $n$ identical UAVs in a single dimension trying to 
maintain a fixed distance from each other throughout their motion. Suppose 
the motion of the $i^{th}$ UAV is given by:
\begin{eqnarray}
\label{eq:Formation1}
X_i(s) = \frac{1}{s^2}[P(s) U_i(s)- D_i(s)], 
\end{eqnarray}
where $P(s)$ is a proper, rational transfer function, $X_i(s), U_i(s)$  
and $D_i(s)$ are respectively the Laplace transformation of the position of, 
control input to and disturbing force acting on the $i^{th}$ UAV. 
The term $P(s)$ represents the actuator transfer function and relates 
the control input to a UAV with the actuation force generated by the UAV. 

Suppose the UAVs desire to maintain a constant relative separation 
with respect to each other with the help of an identical on-board 
controller represented by the transfer function $C(s)$. Aiding the 
UAVs in accomplishing this task is a set of communication and sensing 
devices. An underlying information flow graph indicates the information 
available to each UAV. For example, if the $i^{th}$ UAV has the position 
and velocity information of the $j^{th}$ UAV in the collection, 
the $i^{th}$ and $j^{th}$ UAVs are considered adjacent or neighbors in 
the information flow graph. For the sake of simplicity of exposition, 
if $i^{th}$ UAV has the information of $j^{th}$ UAV,  it will be assumed 
that the converse also holds. Let ${\cal S}_i$ be the set of neighbors 
of the $i^{th}$ UAV. 

With this set up, one may associate a Laplacian with the information 
flow graph. For $i \neq j$, the component of Laplacian in the $i^{th}$ 
row and $j^{th}$ column is $-1$ if the $i^{th}$ and $j^{th}$ UAVs are 
neighbors and is zero otherwise. For each $i$, the $i^{th}$ diagonal 
element is the number of UAVs, $|S_i|$, that are neighbors of the 
$i^{th}$ UAV. Clearly,  the sum of the components of the 
corresponding row (and column) is zero.

Let $\bar X(s)$ denote the Laplace transformation of the position 
of the centroid of the formation and let the error in spacing 
$E_i(s) := X_i(s)  - \bar X(s) - \frac{L_i}{s}$, where $L_i$ is the 
desired position of the $i^{th}$ UAV from the centroid. The input to 
the controller is the aggregate error in maintaining a desired spacing 
relative to its neighbors and may be described as:
\begin{eqnarray}
\label{eq:Formation2}
U_i(s) = - C(s) \sum_{j \in S_i} [X_i(s) - X_j(s) - \frac{L_i-L_j}{s}] = -C(s) \sum_{j \in S_i} [E_i(s) - E_j(s)].
\end{eqnarray}

In this case, the error evolution equation can be described by:
\begin{eqnarray}
\label{eq:Formation3}
[s^2I_n + P(s)C(s)L] E(s) = -D(s) - s^2 \bar X(s) -s \ell,
\end{eqnarray}
where $E(s), D(s)$ are respectively the Laplace transformation of 
the vector of errors in spacing and disturbing forces acting on the UAVs. 
The term $\ell$ represents the vector of desired distances of the 
UAVs from the centroid and $L$ represents the Laplacian 
associated with the information flow graph.

 The associated characteristic equation is given by 
$$\Pi_{i=2}^n (s^2+ \lambda_i(L) P(s)C(s)) = 0. $$
The stability of motion of the formation of UAVs is governed by 
the eigenvalues of the Laplacian. Clearly, $P(0) C(0) \ne 0$; otherwise, 
the errors do not decay to zero as $0$ is one of the roots of the 
characteristic equation. The term $P(0)C(0) >0$ indicating that the 
steady state gain from the aggregate error to the force supplied by the 
actuation system is positive. Since $\lambda_i$ are the non-zero 
eigenvalues of the Laplacian, $\lambda_i >0$. The high frequency gain 
is positive as it is the coefficient of $s^2$. If $P(0)C(0) <0$, 
then for $\lambda >0$, the highest and lowest degree terms of the 
characteristic polynomial will be of opposite signs indicating instability 
of motion.  Physically, if $P(0)C(0)<0$ then a UAV  lagging behind will 
lag further behind and the motion of the UAVs will be unstable. 
Since $P(0)C(0)>0$, the sensitivity to low frequency disturbances is 
higher if $\lambda$ is lower.  This can be seen as follows: Let 
$E_v(s) := v \cdot E(s), D_v(s) := v \cdot D(s)$, where $v$ is an eigenvector 
of Laplacian corresponding to one its non-zero eigenvalues, $\lambda$. Then:
$$v \cdot [ s^2 I_n + P(s) C(s) E(s)] = - v \cdot D(s) - s (v \cdot \ell), $$
which simplifies to 
$$(s^2 + \lambda  P(s) C(s)) E_v(s) = - D_v(s) - s (v \cdot \ell). $$
Correspondingly, the disturbance attenuation transfer function is given by 
$\frac{-1}{s^2 + \lambda P(s) C(s)}$. At a low frequency, $w$, the 
low frequency attenuation is governed by  $|\frac{1}{-w^2 + \lambda P(0)C(0)}|$ 
since $P(0)C(0)>0$. Clearly, higher the value of $\lambda$, the better is the 
attenuation. Since disturbance attenuation at low frequencies is important, 
it is reasonable to maximize $\lambda_2$, the lowest non-zero eigenvalue 
of the Laplacian.

\subsection{Application to UAV network synthesis}
Earlier in this section, a variant of {\bf BP} involving the construction 
of an adhoc infrastructure network with UAVs has been alluded to. It 
required the maintenance of a rigid formation and a control law for 
maintaining such a formation can be constructed along the lines 
mentioned in the earlier subsection. The networking aspect of this 
problem also has relevance to the maximization of algebraic connectivity 
of the underlying communication graph. Since UAVs form a backbone network, 
each UAV must be able to transmit data at a constant rate, say 
$r \; bits/sec$ to every other UAV in the network; however, each UAV 
may be receiving data at a rate of $R \; bits/sec$ to be transmitted 
to other UAVs. In order to maintain a desirable quality of service, 
one may be interested in finding out the maximum value of $R$ that 
is allowable so that the network is not congested or ``bottlenecked''; 
from the viewpoint of designing a network, one would be interested in 
designing a UAV adhoc network so that $R$ is maximized subject to 
other constraints on resources. 

One may associate a vertex, $v$, with each UAV, an edge, $e$ with 
a communication link and a ``weight'' $\alpha_{ij}$ associated with 
the edge (communication link) connecting the $i^{th}$ and $j^{th}$ UAVs. 
Let $V, E$ represent the set of vertices and edges and let the 
underlying communication graph be represented as $G(V, E, \alpha)$. 
The term $\alpha_{ij}$ is proportional to the product of the antenna 
gains of the $i^{th}$ and $j^{th}$ UAVs and is reflective of the data 
rate that can be communicated across the link.  An important concept 
in addressing this issue is the value of a cut. A cut may be 
identified by a set $S \subset V, \; S \ne V$. The cut $\delta(S)$ is 
the set of edges with exactly one end in $S$. The value of the cut is 
the sum of the weights of the edges in $\delta(S)$ and is represented 
by $w(\delta(S))$. Let $\bar{S}$ be the complement of $S$ in $V$; 
i.e., $v \notin S \iff v \in \bar{S}$. Clearly, 
$\delta(S) = \delta(\bar{S})$ and the value of the cut is the same.

Suppose the network must be designed so that there is a guaranteed 
data rate of $R$ bits/second that every UAV can transmit without 
the network getting congested. Given a set $S \subset V$ and its 
complement $\bar{S} \subset V$, let $r(S)$ be the pairwise data 
exchange between every pair of nodes in $S$ and $\bar S$. Then, 
the total data transmission across the cut of $S$ 
is $r(S) |S| |\bar{S}|$ and must be no more than the value of 
the cut, $w(\delta(S))$ and hence:
$$r(S) \leq \frac{w(\delta(S))}{|S| \cdot |\bar{S}|}. $$
If $S$ is of lower cardinality than $\bar{S}$, then nodes in $S$ can 
transmit data at a rate of $r(S)$ to each node in $\bar{S}$ and consequently,
$$r(S) \max\{|S|, |\bar{S}|\} \leq \frac{w(\delta(S))}{|S| \cdot|\bar{S}|} \max\{|S|, |\bar{S}|\} = \frac{w(\delta(S))}{\min\{|S|, |\bar{S}|\}}. $$
Since 
$$ R = \min_{S \subset V} \; \; r(S) \max\{|S|, |\bar{S}|\}, $$ it follows that 
$$R \le \min_{S \subset V} \frac{w(\delta(S))}{\min\{|S|, |\bar{S}|\}}.$$
In fact, $R$ can be set to the minimum value of the right-hand side, 
which is referred to as the Cheeger constant or Cheeger number, $h$, 
or the isoperimetric number of a graph. If a link (or an edge) in 
$\delta(S)$ were to fail, the capacity of a cut decreases and the 
above inequality relates how the guaranteed data rate of transmission 
is reduced for every UAV in the network. In the problem of UAV network 
synthesis, it is desirable to maximize $R$ over all allowable ways 
of connecting the UAVs. 

The Cheeger number is difficult to compute for a sufficiently large 
size graph; for this reason, algebraic connectivity of a graph is 
used as its surrogate. The justification for using algebraic 
connectivity, $\lambda_2$, of a graph as a surrogate stems from the 
following inequality connecting Cheeger constant and algebraic connectivity:
$$h \le \lambda_2 \le \frac{h^2}{4}. $$
Clearly, if $h$ is small, $\lambda_2$ is small because of the upper 
bounding inequality and if $h$ is large, $\lambda_2$ is also large 
because of the lower bounding inequality. 

\subsection{Application to air transportation networks}
The fundamental objective of an air transportation network is to 
transport passengers from one airport to another in as efficient 
a manner as possible while meeting quality of service requirements 
of the passengers. In this dissertation,  a very simple model of an 
air transportation network will be considered, where every airport 
serves the role of a node, a {\it direct} route between two airports 
serves the role of an edge between the two nodes. The main issue of 
interest in this dissertation is  the sensitivity of ``connectivity'' 
of air transportation network to some edges being not operational 
due to a variety of factors such as weather etc. Here, the term 
``connectivity'' is used loosely. Connectivity is meant to mean the 
ability to transport the passengers from their respective origin to 
their intended destination. Connectivity can be affected by weather 
resulting in an edge connecting two nodes being deleted (i.e., the 
route being out of operation temporarily or a flight being cancelled). 
For example, a reduction in connectivity can result in passengers 
being stranded at an airport and the undesirable consequence of 
reduction in the quality of service to the passengers. 

Given that the motivation is to address connectivity, each edge 
will be ``weighed'' according to the passengers that {\it can be} 
transported across that edge or even more simply as the number of 
passenger flights flying that {\it can be flown} on the route in a 
day. This is a gross simplification; however, this is a first step 
towards a more complicated model of an air transportation network. 
The underlying assumption is that each flight carries the same number 
of passengers. Since the quality of service is associated with the 
passengers transported from a node, let $R(i)$ be the total number of 
passengers transported from node $i$. It is assumed that the travel 
demand from node $i$ to all other nodes is the same, i.e., the number 
of passengers desiring to travel from node $i$ to any other node is 
exactly the same. The {\it node capacity} of the network may be 
defined to the maximum value of $\min_i R(i)$ for which the network 
is congested, i.e., for some cut, $S \subset V$, the value of the 
cut equals the travel demand across $S$, that is, the number of 
passengers starting from $S$ and intending to reach some node in ${\bar S}$. 

A preliminary design of an air transportation network can be posed 
as follows: Suppose a graph, $G(V, E,w)$ of the nodes/vertices (airports), 
the set of edges $E$ (routes connecting a pair of airports) and the 
associated weights. Suppose further that the cost of operating a route 
is known a priori; one may even associate a priority/importance of the 
route as a cost. The problem is to find a network so that the minimum serving capacity 
(i.e., the number of passengers that can be transported from any node 
in the network) is maximized without the network getting congested 
and the sum of cost of operation of the routes is within a specified budget. 

This problem is analogous to the UAV adhoc infrastructure network 
design problem, where the objective is to maximize the Cheeger 
number of the network subject to resource constraint. Since Cheeger 
number is difficult to deal with, one can pose the closely related 
problem of maximizing algebraic connectivity subjected to 
resource constraints.

%++++++++++++++++++++++++++++++;
%  Section: LITERATURE REVIEW  ;
%++++++++++++++++++++++++++++++;
\section{Literature review}
\label{Sec:ch1_literature}
\subsection{Relation to current state of knowledge in system theory}
The problem of system realization considered in this dissertation was the topic of a plenary talk by Kalman in
an IFAC meeting \cite{kalman2005} in 2005. While the relevant reference to this work
is \cite{Seshu_Reed}, there has not been a formally written problem statement to this
effect to the best of the knowledge of the author. Neither has there been a resolution
of the problem. 

It is known that the algebraic connectivity of a structure is non-zero if and only if 
the structure or graph is connected \cite{fiedler1973algebraic}, \cite{merris1995survey}, 
\cite{mohar1991laplacian}. The problem of maximum algebraic connectivity has been
considered in \cite{maas1987transportation} for reducing the heights of the water 
columns at the junction in a network of pipes connecting them. The relevance of the
maximum algebraic connectivity to mixing of Markov Chains is shown in \cite{ghosh2006upper}.
The work in \cite{ghosh2006upper} is concerned only with unweighted graphs and provides
bounds on the maximum algebraic connectivity by exploiting the symmetry of the Laplacian
under the action of permutations. This problem is also relevant to information flow 
and motion planning of UAVs as considered in the works \cite{kim2006maximizing},
\cite{zavlanos2007potential}. However, none of them solve the mixed integer semi-definite program.
Recently, for the special case of the maximum augmented algebraic connectivity problem
where only one edge must be added, a bisection algorithm has been presented in \cite{kim2010}. 

The problem of maximizing augmented algebraic
connectivity was considered by Maas~\cite{maas1987transportation}; however, a systematic procedure
to solving for the maximum augmented algebraic connectivity is still lacking. It may be posed compactly
as a mixed-integer, semi-definite program; initial efforts to compute the upper bounds of the
maximum algebraic connectivity (which is the second smallest natural frequency in structural
systems) may be found in \cite{ghosh2006upper} and also in the recent work of the authors
\cite{MRSR2009max_alg_conn}.

\subsection{Relation to current state of knowledge in discrete optimization}
The problem of determining whether one can construct a constant factor approximation algorithm for
this problem is still open. From the viewpoint of constructing cuts for the semi-definite
integer programs, Atamturk and Narayanan recently developed non-linear cuts for conic 
programs\cite{atamturk2010lifting}, \cite{atamturk2010conic}. Since conic programs are
special instances of semi-definite programs, the general problem of constructing 
efficient cuts for semi-definite programs is still open. The recent work in \cite{kartikRPI}
develops efficient interior-point algorithms for infinite linear programs. Their work was
motivated by the need to solve mixed-integer, semi-definite programs through polyhedral
approximations. The book on convex optimization \cite{Boyd_convex_optimization} provides
an excellent overview of the algorithms required to solve linear semi-definite programs.

%++++++++++++++++++++++++++++++++++++++++++++++++++;
%  Section: Summary of contributions ;
%++++++++++++++++++++++++++++++++++++++++++++++++++;
\section{Summary of contributions}
\label{Sec:summary_contri}
In the context of the problem of maximizing the algebraic connectivity
of networks under resource constraints, our contributions, as presented in
sections 2 and 3, are as follows:

\noindent a) Understanding the relevance of  {\bf BP} in the context 
of disparate fields of research and providing algorithms for solving {\bf BP}
to optimality, methods to obtain upper bounds and quick heuristics to obtain
sub-optimal solutions.

\noindent b) Providing algorithms for solving a variant of {\bf BP} that 
arises in the synthesis of robust UAV communication networks under 
resource constraints such as the total number of communication
links and the diameter of the network.

\noindent c) Providing algorithms for solving  a second variant of {\bf BP} that arises in synthesizing robust UAV communication networks under resource constraints such as the total number of communication
links and the power consumption constraint. 

\subsection{Organization of the dissertation}
This dissertation is organized as follows: 

In section \ref{ch2}, we pose the problem of
maximizing the algebraic connectivity as three equivalent 
formulations; Mixed Integer Semi-Definite Program (MISDP),
MISDP with connectivity constraints and the Fiedler vector formulation as 
an MILP and discuss the relative strengths and useful features of the proposed formulations.
Further, we study the importance of the choice of 
an appropriate family of finite number of vectors used to relax the semi-definite constraint
and discuss the quality of the associated upper bounds due to the 
relaxation. 

In section \ref{ch2}, we propose three cutting plane based algorithms to 
solve the proposed MISDP to optimality, namely: an algorithm based on the 
polyhedral approximation of the 
semi-definite constraint, an iterative primal-dual
algorithm that considers the Lagrangian relaxation of the semi-definite 
constraint and an algorithm based on the Binary Semi-Definite Program (BSDP) approach 
in conjunction with cutting plane and bisection techniques. Further, by an improved relaxation 
of the semi-definite constraint, we discuss the computational efficacy of the cutting 
plane algorithm in comparison with the state-of-the-art MISDP 
solvers in Matlab. Also, by adopting the BSDP approach and implementing the 
algorithm in CPLEX, we discuss the gain in the computation time. 
Section \ref{ch2} concludes with heuristics to synthesize feasible solutions for the \textbf{BP}. 
The proposed heuristics are based on neighborhood search, namely 
$k$-opt and an improved $k$-opt heuristic with a reduced search space. 
We corroborate the quality of the heuristic solutions with respect to 
optimal solutions for small instances and present the numerical results for 
large instances (up to sixty nodes). 

In section \ref{ch3}, we mathematically formulate various resource constraints such as the 
diameter constraint and the power consumption constraint. For the problem of maximizing 
algebraic connectivity under these resource constraints, we propose modified versions of the 
cutting plane algorithms and discuss their computational performance for relatively small instances. 
In the context of the problem with power consumption constraint, we extend the 
BSDP approach to obtain feasible solutions and discuss the quality of the associated lower bounds
for relatively large instances (up to ten nodes). Finally, we discuss the performance 
of the $k$-opt heuristic in the context of solving \textbf{BP} under resource constraints.

Lastly, in section \ref{ch4}, we summarize the results of the work and discuss 
possible directions to further develop this field of research.

%!TEX root = ../tamuthesis.tex
%%%%%%%%%%%%%%%%%%%%%%%%%%%%%%%%%%%%%%%%%%%%%%%%%%%
%
%  New template code for TAMU Theses and Dissertations starting Fall 2012.  
%  For more info about this template or the 
%  TAMU LaTeX User's Group, see http://www.howdy.me/.
%
%  Author: Wendy Lynn Turner 
%	 Version 1.0 
%  Last updated 8/5/2012
%
%%%%%%%%%%%%%%%%%%%%%%%%%%%%%%%%%%%%%%%%%%%%%%%%%%%

%%%%%%%%%%%%%%%%%%%%%%%%%%%%%%%%%%%%%%%%%%%%%%%%%%%%%%%%%%%%%%%%%%%%%%%
%%%                           SECTION II
%%%%%%%%%%%%%%%%%%%%%%%%%%%%%%%%%%%%%%%%%%%%%%%%%%%%%%%%%%%%%%%%%%%%%%

% \chapter{\uppercase {Algorithms for the maximization of algebraic connectivity}}
%--------------------------------------------------------------------------
% \newcommand\blfootnote[1]{%
% \begingroup
% \renewcommand\thefootnote{}\footnote{#1}%
% \addtocounter{footnote}{-1}%
% \endgroup
% }
\chapter{\uppercase {Maximization of algebraic connectivity}}
\label{ch2}
% \chapter[\uppercase{Chapter Title}]{
% \uppercase{*Cutting plane algorithms for maximizing the robustness of networks} }
% \label{Chap:thesis}
% \blfootnote{*Reprinted with permission from 1) Algorithms for synthesizing mechanical systems with maximal 
% natural frequencies by H.Nagarajan, S.Rathinam, S.Darbha and K.R.Rajagopal, 2012. 
% \emph{Nonlinear Analysis: Real World Applications}, 13(5):2154-2162, Copyright 2012 by Elsevier Ltd. \\
% 2. Synthesizing robust communication networks for UAVs by H.Nagarajan, S.Rathinam, S.Darbha and K.R.Rajagopal, 2012. 
% \emph{American Control Conference, 2012}. 3730--3735, Copyright 2012 by IEEE. \\
% 3. Heuristics for synthesizing robust networks with a diameter constraint by H.Nagarajan, P.Wei, S.Rathinam and D.Sun, 2014. 
% \emph{Mathematical Problems in Engineering}, Copyright 2014 by Hindawi Publishing Corporation.}
%-----------------------------------------------------------------------------------------------------

In this chapter, the {\bf BP} of maximizing the algebraic connectivity of graphs is considered. 
The rationale for considering algebraic connectivity of graphs as an objective for optimization 
is illustrated via various applications, including a discrete mechanical system, 
air transportation network and UAV ad-hoc networks. Since the computation of solutions for combinatorial 
problems can be sensitive to mathematical formulation of the problem, different mathematical 
formulations of {\bf BP} are presented along with their features.  The rest of the section is 
focused on (1) developing upper bounds for the optimal value of algebraic connectivity using 
relaxation and cutting plane techniques, (2) developing techniques for computing the optimal 
value of algebraic connectivity and (3) to provide heuristic techniques for synthesizing 
sub-optimal graphs along with the percentage  deviation of their algebraic connectivity 
from the optimal value.

\section{Problem formulation for the basic problem of maximizing algebraic connectivity}
%================================================;
%  Subsection: Notation		  ;
%================================================;
\label{Sec:ch1_notation}
Let $(V, E, w)$ represent a graph. Without any loss of generality, we will simplify
the problem by allowing at most one edge to be connected between any pair of nodes
in the graph. Let $w_{ij}$ represent the edge weight with the edge $e = \{i,j\} \in E$ and let
$x_{ij} \in \{0, 1\}$ represent the choice variable for every $\{i,j\} \in E$. 
Let $x$ be the vector of choice variables, $x_{ij}$. If $x_{ij} = 1$, it implies that the edge is chosen
in the construction of the network; otherwise, it is not. In the context of UAVs, 
vertices of the graph correspond to UAVs and the edges correspond to the communication links
between them. The edge weight corresponds to the strength of the communication link between 
a given pair of UAVs. 

If $v_1, v_2$ are two vectors in the same vector
space, we denote their tensor product by $v_1 \otimes v_2$
and their scalar or dot product by $v_1 \cdot v_2$. If $e = \{i,j\}$ connects UAVs 
$i$ and $j$, then the effective communication between 
that pair of UAVs may be expressed as $w_{ij} x_{ij}$. Let $e_i$ denote the $i^{th}$
column of the identity matrix $I_n$ of size $|V| = n$.
We may define $L_{ij} = w_{ij}(e_i-e_j)\otimes(e_i-e_j)$, and
correspondingly, the weighted Laplacian matrix (in the remainder of this dissertation, 
the usage of Laplacian matrix implies weighted Laplacian matrix unless specified) 
may be expressed as:
$$ L(x) = \sum_{i<j, \{i,j\} \in E}  x_{ij} L_{ij}.$$
Note that, for a given connected network, $L(x)$ is a symmetric,  positive 
semi-definite matrix, that is, 
$$ v \cdot L(x) v \geq 0 \ \forall v.$$
Let $(\lambda_1(L(x)) = 0) < \lambda_2(L(x)) \leq \lambda_3(L(x)) \ldots \leq \lambda_n(L(x))$  
be the eigenvalues of $L(x)$ and let $v_1, v_2, \ldots v_n$ be the corresponding eigenvectors 
of $L(x)$.

%================================================;
% Subsection: Problem formulation		  ;
%================================================;
%\subsection{Problem formulation as a Mixed Integer Semi-Definite Program (MISDP)}
%\label{Subsec:form}
The {\bf BP} can be expressed as:
\begin{equation}
		\begin{array}{ll}
	  		\gamma^* = &\max  \lambda_2(L(x)), \\
			\text{s.t.} & \sum_{i<j, \; \{i,j\} \in E} x_{ij} \leq q, \\
			& x_{ij} \in \{0, 1\}^{|E|}
		\end{array}
		\label{eq:BP_form}
\end{equation}
where $q$ is some positive integer which is an upper bound on the number of edges to be chosen.
Since this is a non-linear binary program, it is paramount to develop efficient ways of formulate this 
problem. In the remainder of this section, we shall focus on developing various equivalent 
formulations for \textbf{BP}.

%======================================;
%  Subsection: MISDP formulation 		;
%======================================;
\subsection{Mixed integer semi-definite program}
\label{Subsec:F1}
The \textbf{BP} formulation in \eqref{eq:BP_form} may be equivalently expressed as a Mixed Integer
Semi-Definite Program (MISDP) as follows: let $e_0 = \frac{1}{\sqrt{n}}\sum_{i=1}^n e_i$
so that $e_0 \cdot e_0 = 1$. Then, formulation \eqref{eq:BP_form} may be expressed as:

\begin{equation}
		\label{eq:F1}
		\begin{array}{ll}
	  		\gamma^* = &\max  \gamma, \\
		\text{s.t.} &  \sum_{ i<j, \; \{i,j \}\in E} x_{ij} L_{ij} \succeq \gamma (I_n - e_0 \otimes e_0),\\
			& \sum_{i<j, \; \{i,j\} \in E} x_{ij} \leq q, \\
			& x_{ij} \in \{0, 1\}^{|E|}.
		\end{array}
\end{equation}

%Given any two square symmetric matrices $A$ and $B$,
%$A \succeq B$ $\Rightarrow A-B \succeq 0$ or $A-B$ is a positive semi-definite
%matrix. 
We will refer to it as the formulation ${\cal F}_1$. We first show that
this formulation correctly solves the algebraic connectivity problem.

%==========================;
%   Proof: Problem formulation         ;
%==========================;
\begin{lemma}
Let an optimal solution corresponding to the formulation ${\cal F}_1$ 
be $\gamma^*$ and $x^*$. Then, $x^*$ is a network that solves {\bf BP} to optimality with $\gamma^*$
being the second eigenvalue of $L(x^*)$.
\end{lemma}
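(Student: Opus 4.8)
The plan is to show that the semidefinite constraint in $\mathcal{F}_1$, together with the maximization of $\gamma$, precisely encodes the condition $\gamma \le \lambda_2(L(x))$, so that the two optimization problems have the same feasible objective values and hence the same optimum. The key fact I would use is the variational (Courant--Fischer / Rayleigh quotient) characterization of $\lambda_2$ restricted to the subspace orthogonal to the rigid-body mode $e_0$: since $L(x) e_0 = 0$ and $L(x)$ is symmetric positive semi-definite with $e_0$ spanning the eigenspace of the eigenvalue $0$ when the graph is connected, we have $\lambda_2(L(x)) = \min\{\, v \cdot L(x) v : \|v\| = 1,\ v \cdot e_0 = 0 \,\}$. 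More generally, regardless of connectivity, $\lambda_2(L(x))$ equals the smallest eigenvalue of $L(x)$ on the orthogonal complement of $e_0$.

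First I would establish the equivalence of the matrix inequality with a scalar eigenvalue bound. The matrix $I_n - e_0 \otimes e_0$ is the orthogonal projector onto $e_0^\perp$: it acts as the identity on $e_0^\perp$ and annihilates $e_0$. I claim that for a fixed $x$,
\[
L(x) \succeq \gamma (I_n - e_0 \otimes e_0) \quad\Longleftrightarrow\quad \gamma \le \lambda_2(L(x)).
\]
For the forward direction: testing the inequality against the unit eigenvector $v_2$ of $L(x)$ associated with $\lambda_2(L(x))$ (which may be chosen orthogonal to $e_0$, since $e_0$ is already an eigenvector for eigenvalue $0 \le \lambda_2$) gives $\lambda_2(L(x)) = v_2 \cdot L(x) v_2 \ge \gamma\, v_2 \cdot (I_n - e_0\otimes e_0) v_2 = \gamma$. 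For the reverse direction: decompose any $v$ as $v = (v\cdot e_0)\, e_0 + w$ with $w \in e_0^\perp$. Then $v \cdot L(x) v = w \cdot L(x) w \ge \lambda_2(L(x))\, \|w\|^2 \ge \gamma \|w\|^2 = \gamma\, v\cdot(I_n - e_0\otimes e_0) v$, using $L(x)e_0 = 0$ and the variational bound on $e_0^\perp$. This proves the equivalence.

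Given this equivalence, the feasible region of $\mathcal{F}_1$ in the variables $(x,\gamma)$ is exactly $\{(x,\gamma) : \gamma \le \lambda_2(L(x)),\ \sum x_{ij} \le q,\ x \in \{0,1\}^{|E|}\}$, so maximizing $\gamma$ over this set yields $\gamma^* = \max_x \lambda_2(L(x))$ subject to the edge-budget and binary constraints, which is exactly the value of \eqref{eq:BP_form}; and at an optimal $(x^*,\gamma^*)$ one must have $\gamma^* = \lambda_2(L(x^*))$, because otherwise $\gamma$ could be increased. Thus $x^*$ is optimal for {\bf BP} with objective value equal to $\lambda_2(L(x^*))$, as claimed.

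The only subtlety — and the step I would be most careful about — is whether an optimal $x^*$ yields a \emph{connected} graph, i.e., whether $\lambda_2(L(x^*)) > 0$, since the phrase ``second eigenvalue'' should not be vacuous. This is not really an obstacle to the stated lemma (which only asserts that $x^*$ solves {\bf BP} and that $\gamma^*$ is the second eigenvalue, both of which hold even if $\gamma^* = 0$), but I would note that if the budget $q$ admits any spanning connected subgraph then the optimum is strictly positive, so the claim is non-trivial in the intended regime. Everything else is routine linear algebra once the projector identity for $I_n - e_0\otimes e_0$ and the Rayleigh-quotient characterization of $\lambda_2$ on $e_0^\perp$ are in hand.
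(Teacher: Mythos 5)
Your proof is correct and uses essentially the same machinery as the paper: the spectral facts $L(x)e_0=0$ and $\lambda_i \ge \lambda_2$ for $i\ge 2$ to show feasibility of $(x,\lambda_2(L(x)))$, and the Rayleigh-quotient characterization on $e_0^\perp$ to show $\gamma^* \le \lambda_2(L(x^*))$. The only difference is organizational --- you package both directions into a single equivalence $L(x)\succeq\gamma(I_n-e_0\otimes e_0)\Leftrightarrow\gamma\le\lambda_2(L(x))$ before optimizing, whereas the paper proves feasibility and the two inequalities separately --- and your closing remark on connectivity is a sensible but inessential addition.
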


\begin{proof}
Let the eigenvalues of the positive semi-definite matrix, $L(x)$
be given by
$(0=\lambda_1(L(x))) < \lambda_2(L(x)) \leq \ldots \leq \lambda_n(L(x))$.
We first show that for any connected network $x$, $L(x)$ and
$\gamma=\lambda_2(L(x))$ satisfy the constraints in the formulation
${\cal F}_1$. Let $e_0$ be an eigenvector corresponding to $\lambda_1(L(x)) =0$.
Then, $L(x)$ admits a spectral decomposition of the form

\begin{align}
\label{Eqn:proof1}
L(x) = \sum_{i=1}^{n} \lambda_{i}(L(x)) ~(v_i \otimes v_i),
\end{align}
where $v_{i}$ is the eigenvector corresponding to eigenvalue, $\lambda_{i}(L(x))$.
Since $\lambda_1(L(x)) =0$, and $v_1 = e_0$, the equation \eqref{Eqn:proof1} reduces to

\begin{align}
\label{Eqn:proof2a}
L(x) &= \sum_{i=2}^{n} \lambda_{i}(L(x))~ (v_i\otimes v_i).
\end{align}
Adding $\lambda_{2}(L(x))~ (e_0 \otimes e_0)$ to both sides of the equation
\eqref{Eqn:proof2a},

\begin{align}
\label{Eqn:proof2b}
%L(x) +  \lambda_{2}(L(x))~ e_0 \otimes e_0&= \lambda_{2}(L(x))~ e_0 \otimes e_0 \nonumber \\ & + \sum_{i=1}^{n-1} \lambda_{i+1}(L(x))~ e_i \otimes e_i.
L(x) +  \lambda_{2}(L(x))~ (e_0 \otimes e_0) &= \lambda_{2}(L(x))~ (e_0 \otimes e_0) + \sum_{i=2}^{n} \lambda_{i}(L(x))~ (v_i \otimes v_i).
\end{align}
Since $\lambda_i(L(x)) \geq \lambda_2(L(x)), \ \forall \ i \geq 2$, equation \eqref{Eqn:proof2b}
reduces to the following inequality:

\begin{align}
\label{Eqn:proof3}
L(x) +  \lambda_{2}(L(x))~ (e_0 \otimes e_0) &\succeq \lambda_{2}(L(x))  \underbrace{\left(\sum_{i=1}^{n} (v_i \otimes v_i)\right)}_{I_n},\\
L(x) &\succeq \lambda_{2}(L(x))~(I_n- e_0 \otimes e_0 ).
\end{align}
Therefore, for any connected network $x$, $L(x)$ and
$\gamma=\lambda_2(L(x))$ satisfy the constraints in the formulation
${\cal F}_1$. Now, to show that $\gamma^*$ =$\lambda_2(L(x^*))$,
it is enough to prove that $\gamma^* \geq \lambda_2(L(x^*))$ and
$\gamma^* \leq \lambda_2(L(x^*))$. \\

{\it Proof for $\gamma^* \geq \lambda_2(L(x^*))$}: We know that
$x^*$ is a feasible solution to formulation ${\cal F}_1$ with 
second eigenvalue, $\lambda_2(L(x^*))$. Since this is a maximization
problem, $\gamma^*$ must be an upper bound on $\lambda_2(L(x))$
for all possible feasible solutions. Hence,  $\gamma^* \geq \lambda_2(L(x^*))$. \\

{\it Proof for $\gamma^* \leq \lambda_2(L(x^*))$}: Since
($x^*,\gamma^*$) is a feasible solution, we have
\begin{align}
\label{Eqn:proof4}
L(x^*) &\succeq \gamma^*(I_n- e_0 \otimes e_0 ).
\end{align}
Let $\hat{v}$ be any unit vector perpendicular to $e_0$. Then
\begin{align}
\label{Eqn:proof5}
\hat{v} \cdot  L(x^*) \hat{v} &\geq \gamma^*.
\end{align}
Hence, from the Rayleigh quotient characterization of the
second eigenvalue, it follows that $\lambda_2(L(x^*)) \geq \gamma^* $.
%\hfill $\blacksquare$
\end{proof}

\noindent 
\textbf{Approximations of the feasible set of $\mathcal{F}_1$:}
One can approximate the feasible set of $\mathcal{F}_1$ in at least 
two different ways: 

\begin{itemize}
\item[(a)] \emph{Binary relaxation}: In this type of relaxation, 
the feasible set of the formulation $\mathcal{F}_1$ is 
expanded by replacing the integer constraint, $x_{ij} \in \{0, 1\}^{|E|}$ 
with $0\leq x_{ij} \leq 1, \ \ \forall i < j, \{i,j\} \in E$. 
\item[(b)] \emph{Relaxation of the semi-definite constraint}: 
The semi-definite constraint can be equivalently expressed as a family of linear 
inequalities parameterized as follows: 
$$v \cdot \left( \sum_{ i<j, \; \{i,j \}\in E} x_{ij} L_{ij}  - \gamma (I_n - e_0 \otimes e_0) \right) v \geq 0 \ \ \forall v.$$
where $v$ is any unit vector. One can relax the semi-definite constraint by picking a finite number of unit vectors,
say $v_1, v_2, \ldots , v_N$, and replacing the semi-definite constraint with the following 
linear inequalities: 
$$v_k \cdot \left( \sum_{ i<j, \; \{i,j \}\in E} x_{ij} L_{ij}  - \gamma (I_n - e_0 \otimes e_0) \right) v_k \geq 0 \ \ \forall k=1,\ldots, N .$$
\end{itemize}
Naturally, by solving formulation $\mathcal{F}_1$ with either of these relaxations, 
we are guaranteed to obtain an upper bound on the maximum algebraic connectivity. 
Hence, in the remainder of this section, we discuss the 
quality of the upper bounds obtained by considering the binary relaxation
of formulation $\mathcal{F}_1$ and its variants.
Later, in section \ref{sec:UB}, we discuss the quality of the 
upper bounds based on the relaxation of the semi-definite constraint 
with a finite number of unit vectors without relaxing the binary constraints.

\vspace{0.5cm}
\noindent 
\textbf{Performance of formulation $\mathcal{F}_1$:}
For the purposes of implementation, we restrict our 
feasible solutions to a set of undirected spanning trees since they
serve as minimally connected structures. Hence, we solve the following
version of formulation ${\cal F}_1$. 

% Formulation F1 with Spanning tree constraints.
\begin{equation}
		\label{eq:F1_tree}
		\begin{array}{ll}
	  		\gamma^* = &\max  \gamma, \\
		\text{s.t.} &  \sum_{ i<j, \; \{i,j \}\in E} x_{ij} L_{ij} \succeq \gamma (I_n - e_0 \otimes e_0),\\
			& \sum_{i<j, \; \{i,j\} \in E} x_{ij} = n-1, \\
			%& \sum_{i<j, \; i \in S, j \in \bar{S}} x_{ij} \geq 1 \ \ \forall S \subset V, \\
			& x_{ij} \in \{0, 1\}^{|E|}.
		\end{array}
\end{equation}

From here on, we will prefix a formulation with ${\cal R}$ 
to indicate the relaxation of binary constraints (i.e., replacing the constraint 
$x_{ij} \in \{0, 1\}^{|E|}$ with $0\leq x_{ij} \leq 1, \ \ \forall i < j, \{i,j\} \in E$)
associated with the formulation.
Note that the feasible solutions
of  ${\cal F}_1$ are also feasible for  ${\cal RF}_1$; the optimal value of  ${\cal RF}_1$
is an upper bound represented by $\gamma_{RF_1}^*$.
% Our initial implementation of the formulation ${\cal F}_1$ showed us
% that the binary relaxation  
% of the above problem turn out to be very weak.  
A summary of  ${\cal RF}_1$'s solutions for various problem sizes 
is shown in Table \ref{Tab:summary_formulations}. It is clear from 
the table that the percentage deviation of the upper bound ($\gamma_{RF_1}^*$) from the 
best known feasible solution is unsatisfactory even for problems with small sizes (103.2\% gap for
five nodes problem). Also, one can observe that the percent 
deviation of the upper bound increases with the size of the problem (maximum gap up to 
181.9\% for twelve nodes problem), which is an undesirable feature.
However, having formulations with better upper bounds due to binary
relaxations are useful which can in turn reduce the computational
time of the Branch and Bound (B\&B) solver for solving the problem to optimality.
For example, any B\&B solver requires upper bounds on the optimal $\gamma^*$ and
one of the ways it generates this bound is by relaxing the binary
constraint on $x_e$. 

Also, since the binary relaxation of ${\cal F}_1$ 
allows for fractional values of $x_e$, it can
violate the following fundamental property of connectivity: If $S$ is a
strict subset of $V$, then there must be at least one edge between
the set of nodes in $S$ and $V-S$. The relaxation sometimes
allows the sum of the fractional values of the edges between
$S$ and $V-S$ to be less than unity. As an example, for a 
random cost matrix (Appendix \ref{ch:appendix}),  
a support graph constructed based on the binary relaxation 
solution of the ${\cal RF}_1$ is shown in Figure \ref{Fig:support_graph}.
It is clear from the figure that the min-cut ($x_{38} + x_{78}$) value
is equal to 0.805 and hence violates the connectivity property. 

%---------------------------------;
%   Figure :		;
%---------------------------------;
\begin{figure}[!h]
	\centering
	\includegraphics[scale=0.7]{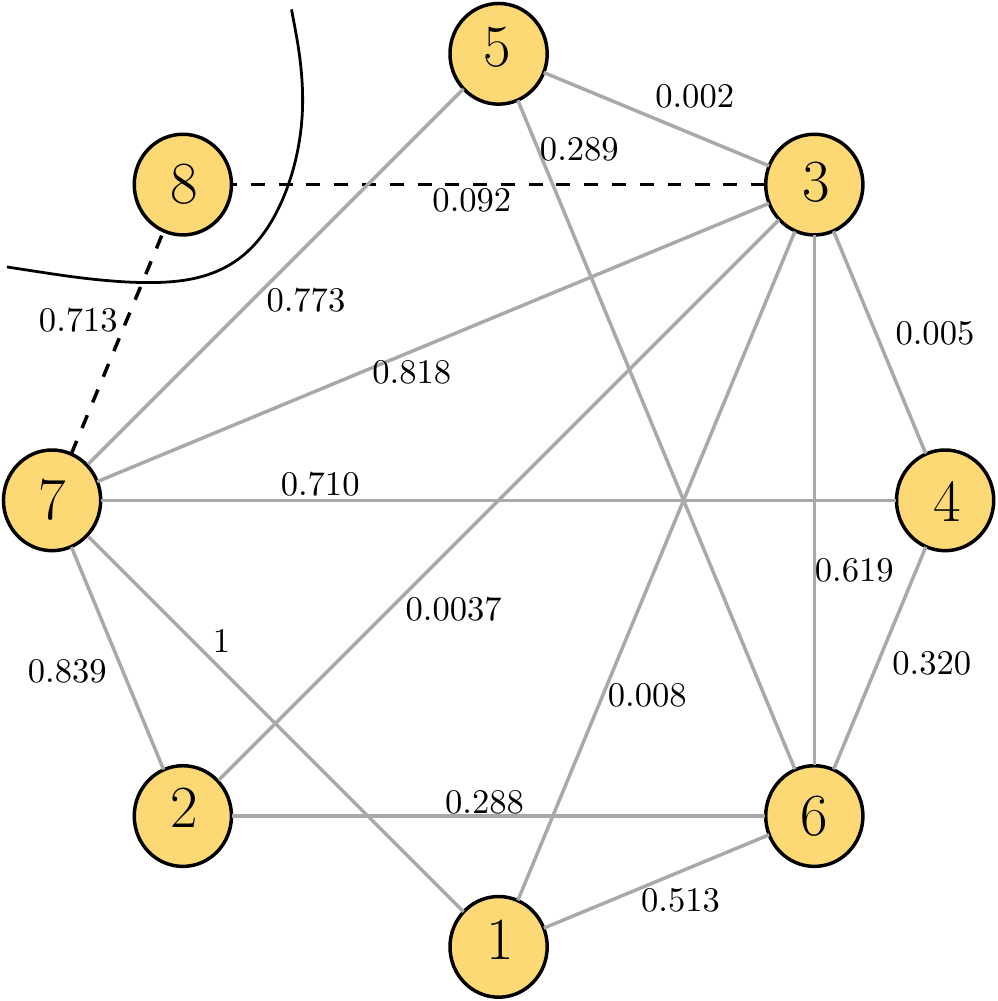}
	\caption{Support graph for a random instance where the connectivity constraints are violated. 
	The edges in the violated cutset are shown in dashed lines.} 
	\label{Fig:support_graph}
\end{figure}

A natural way to incorporate the connectivity constraints 
into the formulation is through the augmentation 
of flow cuts for the violated cutsets of the following form: 
$$\sum_{i<j \{i,j\} \in \delta(S)} x_{ij} \geq 1,$$
where $\delta(S)$ represents the edges in the cutset for the 
which the connectivity requirement is not satisfied. From
the implementation point of view, this may not be an efficient 
way of enforcing connectivity since the number of the violated 
cutsets may be exponential for large problems. 
Alternatively, one can also use the flow formulation of
Magnanti and Wong \cite{magnanti1995optimal} to obtain an
equivalently strong lifted formulation with a polynomial
number of constraints.

In the next subsection, we discuss a compact representation of 
connectivity using the flow formulation and study the performance 
of the MISDP with connectivity constraints.

%===================================================;
%  Subsection: MISDP with connectivity constraints  ;
%===================================================;
\subsection{Mixed integer semi-definite problem with connectivity constraints}
\label{Subsec:F2}
As we discussed in the earlier subsection, though the formulation
${\cal F}_1$ with binary requirements on $x_{ij}$ enforces connectivity,
the relaxed problem ${\cal RF}_1$ does not ensure connectivity due to the 
fractional solutions.

MISDP formulation with cutset constraints which enforces the 
requirement of spanning trees as feasible solutions is as follows: 

% Formulation F2 with Spanning tree constraints and cutset formulation.
\begin{equation}
		\label{eq:F2_tree}
		\begin{array}{ll}
	  		\gamma^* = &\max  \gamma, \\
		\text{s.t.} &  \sum_{ i<j, \; \{i,j \}\in E} x_{ij} L_{ij} \succeq \gamma (I_n - e_0 \otimes e_0),\\
			& \sum_{i<j, \; \{i,j\} \in E} x_{ij} = n-1, \\
			& \sum_{i<j, \; \{i,j\} \in \delta(S)} x_{ij} \geq 1 \ \ \forall S \subset V, \\
			& x_{ij} \in \{0, 1\}^{|E|}.
		\end{array}
\end{equation}

Clearly, if we ignore the integrality restrictions on $x_{ij}$ variables,
the fractional solutions still satisfy the connectivity requirements.
However, the main drawback of formulation \eqref{eq:F2_tree} 
is that the number of cutset constraints 
are exponential in the number of the nodes in the network. Hence,
we discuss an alternative formulation based on multicommodity
flow model proposed by Magnanti and Wolsey in \cite{magnanti1995optimal}.

In summary, in order to impose the connectivity constraints, 
the idea of the multicommodity flow model is as follows: Fix any
vertex in the graph as a source vertex $s$. Then construct the network
$x_{ij}$ such that a distinct unit commodity is shipped from $s$ to 
each of the vertices in $V$ while satisfying the flow and capacity
constraints. Flow constraints ensure that every distinct commodity 
indeed reaches its terminal vertex by satisfying the mass balance 
at every intermediate vertex. Capacity constraints ensure that sa
the flow of commodity across an edge occurs only if the capacity
of the edge is greater than or equal to the amount of the commodity shipped.
In the multicommodity flow formulation, let $f_{ij}^k$ 
be the the $k^{th}$ commodity flowing from $i$ to $j$. In this formulation,
although the edge variables are undirected, the flow variables will be directed.
Then, the MISDP formulation with multicommodity flow constraints, which we shall refer 
as ${\cal F}_2$ is as follows:

% Formulation F2 with Spanning tree constraints and flow formulation.
\begin{equation}
		\label{eq:F2_flow}
		\begin{array}{ll}
	  		\gamma^* = &\max  \gamma, \\
		\text{s.t.} &  \sum_{ i<j, \; \{i,j \}\in E} x_{ij} L_{ij} \succeq \gamma (I_n - e_0 \otimes e_0),\\
		%\label{eq:F2_flow_2}
		&\sum_{j \in V \setminus \{s\}} (f^k_{ij}-f^k_{ji}) = 1, ~~ \forall k \in V ~ \text{and} ~ i = s, \\
		%\label{eq:F2_flow_3}
		&\sum_{j \in V} (f^k_{ij}-f^k_{ji}) = 0, ~~ \forall i,k \in V ~ \text{and} ~ i \neq  k, \\
		%\label{eq:F2_flow_4}
		&\sum_{j \in V} (f^k_{ij}-f^k_{ji}) = -1, ~~ \forall i,k \in V ~ \text{and} ~ i =  k, \\
		%\label{eq:F2_flow_5}
		 &f^k_{ij} + f^k_{ji} \leq x_{ij}, ~~ \forall  ~ \{i,j\} \in E, \forall k \in V, \\
		%\label{eq:F2_flow_6}
		& 0 \leq ~  f^k_{ij} ~ \leq 1, ~~ \forall i,j \in V,\forall k \in V, \\
		& \sum_{i<j, \; \{i,j\} \in E} x_{ij} = n-1, \\
		& x_{ij} \in \{0, 1\}^{|E|}.
		\end{array}
\end{equation}

\noindent 
\textbf{Performance of formulation $\mathcal{F}_2$:}
Although the MISDP formulation with multicommodity flow constraints ($\mathcal{F}_2$) 
circumvents the enumeration of exponential number of cutset constraints, the
computational performance of this formulation is very poor. With the 
integrality constraints, state-of-the-art MISDP solvers like Sedumi in 
Matlab \cite{sturm1999using} on a reasonably powerful workstation could not handle 
problems with five vertices and ten edge variables. One of the main 
reasons for the poor performance is the addition of $O(|V|^3)$ flow 
variables in addition to the $O(|V|^2)$ edge variables.

By relaxing the integrality constraints on $\mathcal{F}_2$ and solving 
$\mathcal{RF}_2$, the computational results are summarized in 
Table \ref{Tab:summary_formulations}. Again, the performance 
of $\mathcal{RF}_2$ was very poor and the solvers in Matlab
crashed for instances with more than six vertices and fifteen edge variables. 
For the case of five vertices, there is a slight improvement in the 
upper bound in comparison with the solution for $\mathcal{RF}_1$.

%===================================================;
%  Subsection: Fiedler vector formulation  ;
%===================================================;
\subsection{Fiedler vector formulation}
\label{Subsec:F3}
There has been a great deal of interest in developing high performance solvers
for solving LPs and MILPs to optimality. Recently, there has also been 
a progress in the development of efficient programs for solving semi-definite problems 
and its variants with additional constraints such as polynomial constraints, 
second order conic constraints, etc. Here is a link to a comprehensive list of 
state-of-the-art semi-definite solvers:
\verb|http://www-user.tu-chemnitz.de/~helmberg/sdp_software.html|. \\
However, there has not been much focus on developing generic solvers for
solving MISDP problems. In order to utilize the available high performance MILP solvers, 
we present an equivalent formulation for $\mathcal{F}_1$ in the form of an MILP and 
study its performance in this subsection. 

We define the following notation before discussing the formulation based on the Fiedler vectors 
of feasible solutions:
Let $\Gamma$ represent the set of all feasible solutions to formulation $\mathcal{F}_1$
and
$$V_f := \{v \in \mathbb{R}^n : v \ \textrm{is a Fiedler vector for a feasible solution,} \ x \in \Gamma \}.$$
For the case of spanning trees as feasible solutions, $V_f$ contains the Fiedler vectors 
corresponding to the  $n^{n-2}$ spanning trees. 

The Fiedler vector formulation, which we shall refer as $\mathcal{F}_3$ is as follows:
\begin{equation}
		\label{eq:F3}
		\begin{array}{ll}
	  	\gamma^* =  &\max  \gamma, \\
		\text{s.t.} &  v \cdot (\sum_{ i<j, \; \{i,j \}\in E} x_{ij} L_{ij} ) v \succeq \gamma,  \ \ \forall v \in V_f, \\
			& \sum_{i<j, \; \{i,j\} \in E} x_{ij} \leq q, \\
			& x_{ij} \in \{0, 1\}^{|E|}.
		\end{array}
\end{equation}
We prove the following lemma to show the equivalence of formulations $\mathcal{F}_1$ and $\mathcal{F}_3$.
\begin{lemma}
\label{lemma:F3}
Let $(x_{F3}^*, \gamma_{F3}^*)$ be an optimal solution to $\mathcal{F}_3$ and
let $(x_{F1}^*, \gamma_{F1}^*)$ be an optimal solution to $\mathcal{F}_1$. Then, $\gamma_{F1}^* = \gamma_{F3}^*$.
\end{lemma}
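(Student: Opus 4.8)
The plan is to show the two optimal values coincide by establishing the two inequalities $\gamma_{F1}^* \le \gamma_{F3}^*$ and $\gamma_{F3}^* \le \gamma_{F1}^*$ separately, exploiting the fact that both formulations share the same binary feasible region $\Gamma$ (the spanning trees, or more generally all $x$ with $\sum x_{ij}\le q$) and differ only in how the spectral objective is encoded. The key observation is the Rayleigh-quotient characterization of $\lambda_2$: for any connected $x$, $\lambda_2(L(x)) = \min\{ v\cdot L(x) v : \|v\|=1,\ v\perp e_0\}$, and the minimum is attained precisely at the Fiedler vector(s) of $x$.

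First I would prove $\gamma_{F1}^* \le \gamma_{F3}^*$. Take the $\mathcal{F}_1$-optimal $x_{F1}^*$, so by the earlier lemma $\gamma_{F1}^* = \lambda_2(L(x_{F1}^*))$. I claim $(x_{F1}^*, \gamma_{F1}^*)$ is feasible for $\mathcal{F}_3$: the cardinality and binary constraints are identical, so I only need the Fiedler inequalities $v\cdot L(x_{F1}^*) v \ge \gamma_{F1}^*$ for every $v \in V_f$. Now every $v\in V_f$ is a unit Fiedler vector of \emph{some} feasible $x\in\Gamma$, hence $v\perp e_0$ and $\|v\|=1$; therefore $v\cdot L(x_{F1}^*) v \ge \min\{u\cdot L(x_{F1}^*)u : \|u\|=1, u\perp e_0\} = \lambda_2(L(x_{F1}^*)) = \gamma_{F1}^*$. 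So $(x_{F1}^*,\gamma_{F1}^*)$ is $\mathcal{F}_3$-feasible and $\gamma_{F3}^* \ge \gamma_{F1}^*$.

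Next I would prove $\gamma_{F3}^* \le \gamma_{F1}^*$. Take the $\mathcal{F}_3$-optimal $(x_{F3}^*,\gamma_{F3}^*)$. The vector $v^\star$, a unit Fiedler vector of $x_{F3}^*$ itself, lies in $V_f$ since $x_{F3}^*\in\Gamma$; hence the corresponding $\mathcal{F}_3$ constraint gives $\gamma_{F3}^* \le v^\star \cdot L(x_{F3}^*) v^\star = \lambda_2(L(x_{F3}^*))$. But $\lambda_2(L(x_{F3}^*))$ is the $\mathcal{F}_1$-objective value at the feasible point $x_{F3}^*$, so $\lambda_2(L(x_{F3}^*)) \le \gamma_{F1}^*$. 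Chaining these, $\gamma_{F3}^* \le \gamma_{F1}^*$. Combining with the previous paragraph yields $\gamma_{F1}^* = \gamma_{F3}^*$.

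The only subtlety — the main obstacle, such as it is — is handling disconnected or degenerate $x$ cleanly: if $\Gamma$ is taken to be all $x$ with $\sum x_{ij}\le q$ rather than spanning trees, one must be careful that $\lambda_2(L(x))=0$ for disconnected $x$ and that "Fiedler vector" is well-defined (any unit eigenvector for $\lambda_2$, which may be degenerate). Restricting to spanning trees, as the paper does for implementation, sidesteps this since every feasible $x$ is then connected with simple $\lambda_2>0$; in the general case one checks that the argument above only ever uses $v\perp e_0$, $\|v\|=1$ and the variational characterization of $\lambda_2$, both of which remain valid, so the proof goes through verbatim.
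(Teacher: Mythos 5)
Your proof is correct and follows essentially the same route as the paper's: the forward inequality comes from observing that the finitely many Fiedler-vector constraints are implied by the semi-definite constraint (so $\mathcal{F}_3$ is a relaxation of $\mathcal{F}_1$), and the reverse inequality comes from the single constraint indexed by the Fiedler vector of $x_{F3}^*$ itself, which bounds $\gamma_{F3}^*$ by $\lambda_2(L(x_{F3}^*)) \le \gamma_{F1}^*$. Your explicit Rayleigh-quotient verification of $\mathcal{F}_3$-feasibility of $(x_{F1}^*,\gamma_{F1}^*)$ is just a more detailed statement of the paper's subset-of-feasible-sets remark.
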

\begin{proof}
  Clearly, the feasible set for $\mathcal{F}_1$ is a subset of the feasible set 
  for $\mathcal{F}_3$ since we replace the original semi-definite constraint with a finite number of constraints
  in $\mathcal{F}_3$. Hence, we have 
  $$\gamma_{F3}^* \geq  \gamma_{F1}^*  = \lambda_2(L(x_{F1}^*)).$$
	
  Let $v_{F3}$ represent the Fiedler vector of $x_{F3}^*$. From the definition of $V_f$, we
  know that $v_{F3}$ belongs to the set $V_f$. However, $x_{F3}^*$ is feasible for $\mathcal{F}_3$. Hence, 
  we have 
  $$v_{F3} \cdot L(x_{F3}^*) v_{F3} \geq \gamma_{F3}^*.$$
  that is,
  $$\lambda_2(L(x_{F3}^*)) \geq \gamma_{F3}^*.$$
Combining all the inequalities, we have 
  $$\gamma_{F1}^* = \lambda_2(L(x_{F1}^*)) \geq \lambda_2(L(x_{F3}^*)) \geq \gamma_{F3}^* \geq \lambda_2(L(x_{F1}^*)) = \gamma_{F1}^*.$$
 It follows that  $\gamma_{F1}^* = \gamma_{F3}^*$.
  %\hfill$\blacksquare$
\end{proof}

\noindent
\textbf{Performance of Fiedler vector formulation:}
For the implementation purposes, we restrict the feasible solutions of formulation
${\cal F}_3$ to undirected spanning trees. Solving ${\cal F}_3$ with 
binary constraints is computationally very inefficient since
the number of the constraints are $8^6+1$ (262,145) even for the case
of eight nodes. Hence, we solve ${\cal RF}_3$ by relaxing the binary 
constraints on $x_{ij}$. From table \ref{Tab:summary_formulations}, it 
is clear that the upper bounds obtained are orders of magnitude 
higher than the optimal solutions. 

\vspace{1cm}

\noindent
\textbf{Relative strengths of the proposed formulations:}
Understanding the relative strengths of the formulations is 
easier by fixing the continuous variable $\gamma$ to a constant 
non-negative value, which shall be $\bar{\gamma}$ in all the three formulations. 
Since $\bar{\gamma}$ is chosen arbitrarily, the results 
hold true for any $\gamma$. 

For a given complete graph $G = (V,E)$, let $S$ denote the set 
of incidence vectors of spanning trees $s^j, \ j=1,\ldots, (N=|V|^{|V|-2})$. 
Let $conv(S)$ denote the convex hull of $S$, that is,
$$conv(S) := \{\sum_{j=1}^N \mu_j s^j:  \sum_{j=1}^N \mu_j = 1, \mu_j \geq 0 \ \forall j=1,\ldots, N \}$$

Since we are interested in undirected edges, we use the 
following notation for simplicity where $x_e$ represents an edge variable 
corresponding to edge $e:= \{i,j\}$. 

Based on our earlier discussion, $S$ can be defined for each of the formulations as 
follows: 
$$S_{F1} := \{x_e \in \{0,1\}^{|E|}:  \sum_{e \in E} x_{e} L_{e} \succeq \bar{\gamma} (I_n - e_0 \otimes e_0), \sum_{e \in E} x_e = n-1\},$$
$$S_{F2} := \{x_e \in \{0,1\}^{|E|}:  \sum_{e \in E} x_{e} L_{e} \succeq \bar{\gamma} (I_n - e_0 \otimes e_0), \sum_{e \in E} x_e = n-1, \; \sum_{e \in \delta(S)} x_e \geq 1 \ \forall S \subset V \}, $$
$$S_{F3} := \{x_e \in \{0,1\}^{|E|}:  v \cdot (\sum_{e \in E} x_{e} L_{e}) v  \succeq \bar{\gamma}, \sum_{e \in E} x_e = n-1\} .$$
We have shown that
$$conv(S_{F1}) = conv(S_{F2})= conv(S_{F3}).$$
Let $P_{F1},P_{F2}$ and $P_{F3}$ denote the polyhedrons 
obtained by relaxing the binary constraints on formulations ${\cal F}_1$, ${\cal F}_2$
and ${\cal F}_3$ respectively.

\begin{lemma} $P_{F2} \subseteq P_{F1} \subseteq P_{F3}$
\end{lemma}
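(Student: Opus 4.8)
The plan is to establish the two inclusions $P_{F2}\subseteq P_{F1}$ and $P_{F1}\subseteq P_{F3}$ separately, working throughout in the space of edge variables $x_e$ and treating the box constraints $x_e\in[0,1]^{|E|}$ and the spanning-tree cardinality constraint $\sum_{e\in E}x_e=n-1$ as common to all three descriptions, so that the only genuine difference lies in the way the (algebraic) connectivity requirement is relaxed.

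For $P_{F2}\subseteq P_{F1}$: the relaxed formulation $\mathcal{RF}_2$ retains every defining inequality of $\mathcal{RF}_1$ — the semi-definite constraint $\sum_{e\in E}x_e L_e\succeq\bar{\gamma}(I_n-e_0\otimes e_0)$, the cardinality equality, and the box constraints — and only adds more. Projecting out the multicommodity flow variables $f^k_{ij}$, a fractional $x\ge 0$ admits the required unit $s$-to-$k$ flows for every sink $k$ (subject to the per-commodity capacities $f^k_{ij}+f^k_{ji}\le x_{ij}$) if and only if every $s$–$k$ cut has capacity at least one, i.e. $\sum_{e\in\delta(S)}x_e\ge 1$ for all $S\subset V$ with $s\in S$, $S\neq V$; since $\delta(S)=\delta(V\setminus S)$ these are exactly the cutset inequalities defining $S_{F2}$. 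Hence the $x$-projection of $P_{F2}$ equals $P_{F1}$ intersected with the cutset inequalities, and in particular $P_{F2}\subseteq P_{F1}$. (I would include this max-flow/min-cut reduction only as a short remark, since the containment itself just needs that $\mathcal{RF}_2$ has all the constraints of $\mathcal{RF}_1$.)

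For $P_{F1}\subseteq P_{F3}$: let $x\in P_{F1}$. The key observation is that any $v\in V_f$, being a Fiedler vector of a connected feasible solution, is a unit vector orthogonal to $e_0$, so
$$v\cdot(I_n-e_0\otimes e_0)v=v\cdot v-(v\cdot e_0)^2=1-0=1.$$
Testing the positive-semidefinite inequality $\sum_{e\in E}x_e L_e-\bar{\gamma}(I_n-e_0\otimes e_0)\succeq 0$ against $v$ then gives $v\cdot\bigl(\sum_{e\in E}x_e L_e\bigr)v\ge\bar{\gamma}$ for every $v\in V_f$, which are precisely the inequalities defining $P_{F3}$; the box and cardinality constraints transfer unchanged. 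Thus $x\in P_{F3}$, proving $P_{F1}\subseteq P_{F3}$.

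I expect no serious obstacle: both inclusions are essentially bookkeeping, the two substantive points being (i) the orthogonality $v\perp e_0$ for Fiedler vectors, which makes the right-hand side of each relaxed scalar inequality equal exactly $\bar{\gamma}$, and (ii) the classical identification of the $x$-projection of the multicommodity-flow polytope with the cut polytope. If one wished to strengthen the lemma to strict inclusions in general, explicit fractional certificates would be required — for example the support graph of Figure~\ref{Fig:support_graph}, whose minimum cut value $0.805<1$ exhibits a point of $P_{F1}\setminus P_{F2}$ — but the statement as given only asks for the (non-strict) containments.
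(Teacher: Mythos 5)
Your proof is correct and follows essentially the same route as the paper's: $P_{F2}\subseteq P_{F1}$ because the second description only adds constraints, and $P_{F1}\subseteq P_{F3}$ because the scalar inequalities indexed by $V_f$ are a finite subfamily of the linear inequalities equivalent to the semi-definite constraint. Your added care in checking $v\cdot(I_n-e_0\otimes e_0)v=1$ for Fiedler vectors, and the max-flow/min-cut remark reconciling the flow and cutset descriptions of $P_{F2}$, are welcome refinements but not a different argument.
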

\begin{proof}
A simple argument to prove this lemma is as follows.

From the definition of $P_{F1}$ and $P_{F2}$, we know that $P_{F2}$ 
has all the constraints of $P_{F1}$ in addition to the 
cutset constraints. Hence $P_{F2} \subseteq P_{F1}$.

Based on the definition of positive semi-definite matrices, 
the semi-definite constraint defining $P_{F1}$ can be replaced with
infinite linear constraints, that is,
$$v \cdot \left( \sum_{e \in E} x_{e} L_{e}  - \bar{\gamma} (I_n - e_0 \otimes e_0) \right) v \geq 0 \ \ \forall v.$$
Since this representation is true for any $v \in \mathbb{R}^n$, the set $P_{F1}$ can be 
relaxed by picking only a finite number of vectors, particularly  $v \in V_f$.
However, based on the definition, this relaxed set is also $P_{F3}$. 
Hence $P_{F1} \subseteq P_{F3}$. 

Combining the above two results, we have $P_{F2} \subseteq P_{F1} \subseteq P_{F3}$.
%\hfill $\blacksquare$
\end{proof}

The computational results summarizing the strengths of the formulations 
shown in Table \ref{Tab:summary_formulations} also matches well with the above lemma.

\begin{table}[htbp]
\caption{Summary of the binary relaxation solutions of proposed formulations.  The entries in the table represent the upper bounds due to binary relaxations and their corresponding percent  gaps from the best 
		  known feasible solution (Best FS). N/A implies that the Matlab's MISDP/MILP
		  solver could not handle those instances. For every $n$, the values 
	shown are averaged over ten random instances.}
\footnotesize
\begin{center}
\begin{tabular}{cccccccc}
		  \toprule
& & \multicolumn{2}{c}{}  \textbf{MISDP ($\mathcal{F}_1$)} & \multicolumn{2}{c}{} \textbf{MISDP with flow ($\mathcal{F}_2$)} & \multicolumn{2}{c}{}\textbf{Fiedler vector formulation ($\mathcal{F}_3$)}  \\ 
\cmidrule(l{0.25em}r{0.25em}){3-4}  \cmidrule(l{0.25em}r{0.25em}){5-6} \cmidrule(l{0.25em}r{0.25em}){7-8}
\textbf{$n$} & {Best FS} & \textbf{$\gamma_{RF_1}^*$} & {{\% gap}} & \textbf{$\gamma_{RF_2}^*$} & {{\% gap}} & \textbf{$\gamma_{RF_3}^*$} & {{\% gap}} \\ 
\cmidrule(r){1-8}
5 & 10.9751 & 22.1131 & 103.2 & 22.0626 & 102.8 & 23.6332 & 116.5 \\ 
6 & 15.7173 & 33.3133 & 113.6 & N/A & N/A & 33.8767 & 118.1 \\ 
7 & 17.6994 & 42.7876 & 144.5 & N/A & N/A & 43.2787 & 147.4 \\ 
8 & 25.5552 & 56.9862 & 123.1 & N/A & N/A & 58.2514 & 128.1 \\ 
9 & 28.0676 & 77.5151 & 177.8 & N/A & N/A & N/A & N/A \\ 
10 & 38.1984 & 105.7051 & 178.5 & N/A & N/A & N/A & N/A \\
12 & 52.0502 & 146.2322 & 181.9 & N/A & N/A & N/A & N/A \\ 
\bottomrule
\end{tabular}
\end{center}
\label{Tab:summary_formulations}
\end{table}

\vspace{0.8cm}

\noindent
\textbf{Useful features of the proposed formulations:}

\begin{itemize}
\item Formulation ${\cal F}_1$ provides a compact representation
of maximizing the algebraic connectivity via it's semi-definite constraint.
Since maximizing algebraic connectivity automatically ensures connectedness
in graphs, additional connectivity constraints can be omitted. 

\item As it is, solving  ${\cal F}_1$ to optimality 
is computationally inefficient since the available MISDP solvers have
limited features. However, by relaxing the semi-definite
constraint in ${\cal F}_2$ using a finite number of vectors, one can readily have
a tighter upper bound by solving the corresponding MILP. Of course, the quality 
of the upper bound depends on the number and the type of the vectors chosen.

\item The multicommodity flow constraints in ${\cal F}_2$ come in 
handy to enforce connectedness in feasible solutions while solving MILP with
relaxed semi-definite constraints. 

\item The Fiedler vectors of feasible solutions used in formulation ${\cal F}_3$ can 
be readily used to relax the semi-definite constraint by choosing 
a few of the many vectors from the set $V_f$.

\item The solution to ${\cal F}_2$ with relaxed semi-definite constraints need not be feasible
for ${\cal F}_1$, that is, the solution ($x_{e}^*,\gamma^*_{RF_1}$) need not satisfy the 
semi-definite constraint. This implies that the matrix 
$\sum_{e \in E} x_{e}^* L_{e}  - {\gamma^*_{RF_1}} (I_n - e_0 \otimes e_0)$ will have a negative 
eigenvalue. Hence, based on the eigenvector corresponding to the negative eigenvalue, 
one can develop a cutting plane\footnote{A brief discussion on the concept of 
cutting planes can be found in section \ref{sec:exact_algo}.} 
to eliminate the current solution and possibly many
other non-optimal solutions. Similarly, a sequence of 
cutting planes  can be generated until an optimal solution is obtained. 
This summarizes the basic idea of the cutting plane algorithms which are discussed 
in detail in section \ref{sec:exact_algo}.
\end{itemize}

In summary, this section has basically dealt with development of three equivalent formulations for the
\textbf{BP} and summarized the quality of upper bounds and the strengths of formulations 
obtained by considering their respective binary relaxations. In the next section,
we utilize the various features of these formulations as discussed 
to develop tighter upper bounds and ultimately obtain optimal solutions
asymptotically.

%\textcolor{red}{Features of this formulation should appear here}\\
%\textcolor{red}{ (a) If the graph is disconnected, then $\lambda_2(L(x)) = 0$ implying the 
%optimal graph will always be connected. Enforcement mechanism is by ensuring that the second smallest
%natural frequency is as high as possible and forcing the system to have exactly one rigid body mode. \\
%Disadvantages: it is known from the results of Goemans that it is a weak way to ensure connectedness of a graph. Example: Take a simple spanning tree and write the corresponding Laplacian. Take a feasible solution obtained by integral relaxation. Show a solution that is feasible for the integral relaxation but not for the original one and show that the solution for the integral relaxation violates cut constraints. \\
%Advantages: Clean eigenvalue cut\\
%Explaining this will require the introduction of a cutting plane, the ideas of relaxation etc and how formulations can be different and how their strengths can be compared}. 

\section{Upper bounds on algebraic connectivity}
\label{sec:UB}
We discussed in the earlier section that the quality of the binary 
relaxations for all the three formulations was poor and became 
worse with an increase in the problem size.
In this section, we mainly focus on developing techniques 
to obtain tight upper bounds for the \textbf{BP}. For
any spanning tree as a feasible solution, we first develop 
a method to relax the semi-definite constraint 
using Fiedler vectors; this relaxation seems to provide better 
bounds than binary relaxations. 

% Later, for spanning trees with special structures, 
% we develop an outer approximation technique by discretizing the 
% Fiedler vectors. Further, we refine the bounds via constraint
% generation algorithm to add the most violated constraints. 

\subsection{Relaxation of the semi-definite constraint using Fiedler vectors}
\label{subsec:UB_1}
From our earlier discussion in section \ref{Subsec:F3}, 
we know that the MILP formulation in $\mathcal{F}_3$ is equivalent to solving the MISDP 
formulation in $\mathcal{F}_1$. However, even for problems of moderate sizes ($n \geq 8$), 
it would be impractical to enumerate
all the Fiedler vectors of feasible solutions in $\mathcal{F}_3$. 
However, by considering only a few vectors from the many Fiedler vectors 
of the set $V_f$ and maintaining the integrality constraints, 
one can readily obtain upper bounds on the algebraic connectivity due to 
the relaxation of the feasible set. 
Earlier, in section \ref{Subsec:F1}, we had briefly alluded to the concept of approximating
the feasible set of $\mathcal{F}_1$ by relaxing the semi-definite 
constraint with finite set of linear inequalities with each inequality
identified with an appropriate unit vector. However, in this section, we 
restrict the relaxation of the feasible set using the Fiedler vectors and discuss the 
quality of the bounds obtained from such relaxations.

% This process of using a few Fiedler vectors to relax 
% the semi-definite constraint in $\mathcal{F}_1$ is termed as 
% the ``outer-approximation" or the ``polyhedral approximation" of the 
% feasible set. 

The quality of the upper bound from relaxing the semi-definite constraint
using Fiedler vectors depends on the following two factors: 1) Type
of feasible solutions whose Fiedler vectors are considered, 
2) The number of Fiedler vectors considered. 
Hence, the main focus of this section 
will be to construct appropriate Fiedler vectors which provide tight upper bounds  
and study their quality.

\vspace{0.5cm}
\noindent
\textbf{Choosing the type of Fiedler vectors to relax the semi-definite constraint:}
We observed that the relaxation of the semi-definite  
constraint with the Fiedler vectors of spanning trees with higher value of
algebraic connectivity gives very good upper bounds. A simple, but a 
rough geometric interpretation of this hypothesis is as follows: 
From Figure \ref{Fig:outer_approx_concept}, it is clear that the relaxation of 
the feasible semi-definite set with the Fiedler vectors corresponding to the
spanning trees with higher algebraic connectivity ($\gamma$) gives better upper bound.
However, it can also be observed that, without the Fiedler vector 
corresponding to the optimal solution, $\gamma^*_{UB1}$ will be
strictly greater than $\gamma^*$ irrespective of the number of Fiedler vectors 
used for the relaxation. This can also be easily deduced from
Lemma \ref{lemma:F3}.

%-----------------------------------------------------;
%   Figure : Outer approximation concept	;
%-----------------------------------------------------;
\begin{figure}[!h]
	\centering
	\subfigure[Tighter relaxation]{
	\includegraphics[scale=0.69]{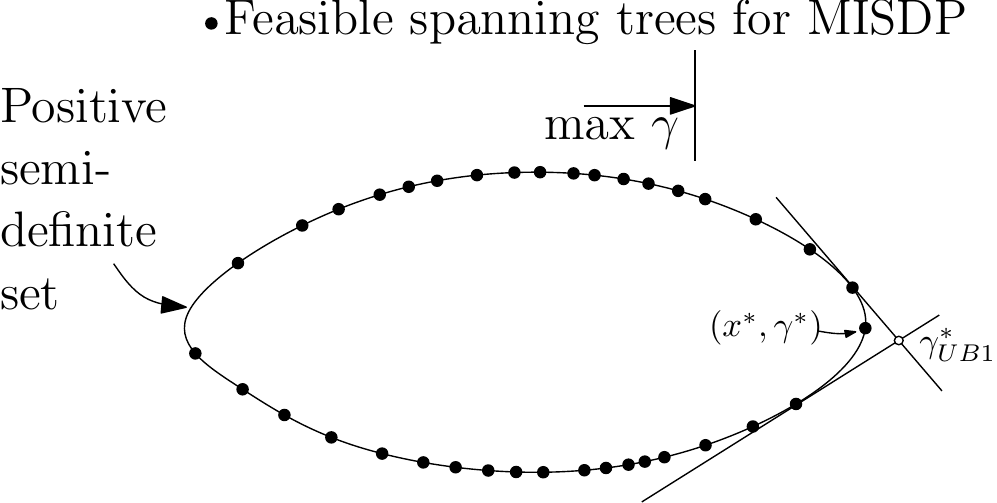}}
	\subfigure[Weaker relaxation]{
	\includegraphics[scale=0.69]{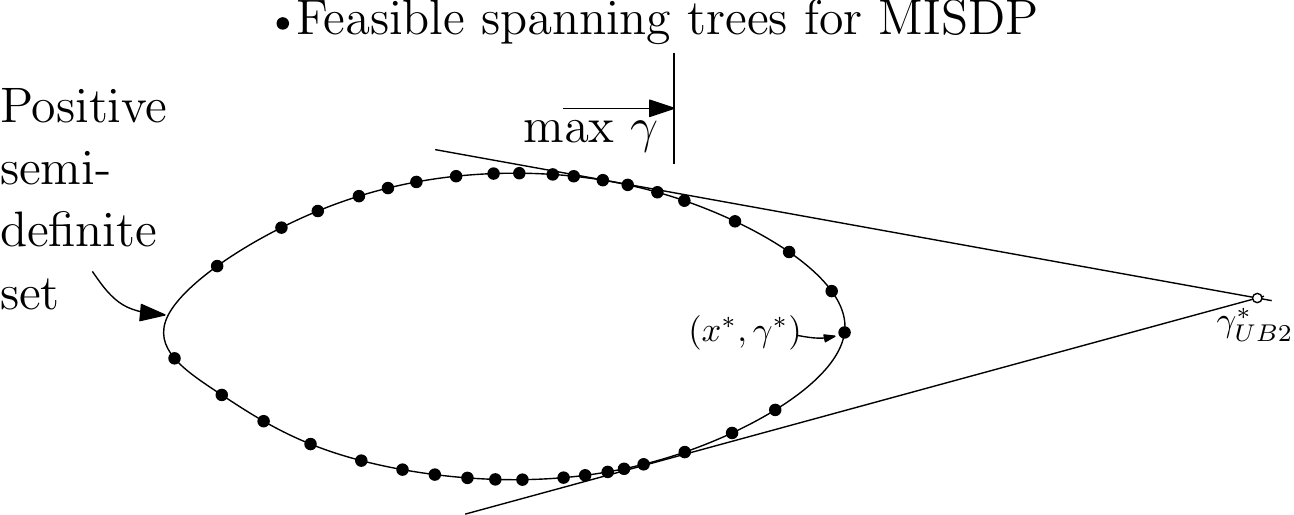}}
	\caption{A geometric interpretation of the relaxation of the 
	 semi-definite constraint using Fiedler vectors of feasible solutions. In (a), the upper bound obtained ($\gamma^*_{UB1}$)
        from Fiedler vectors of spanning trees with higher algebraic connectivity is tighter than the upper
	bound obtained ($\gamma^*_{UB2}$) from Fiedler vectors of spanning trees with lower algebraic connectivity.} 
	\label{Fig:outer_approx_concept}
\end{figure}

\vspace{0.5cm}
\noindent
\textbf{Constructing good feasible solutions:}
A priori, for a given complete weighted graph, we neither know the optimal spanning tree 
with maximum algebraic connectivity nor the sub-optimal spanning trees with 
higher algebraic connectivities. However, by enumerating all the spanning trees for small
instances, one can observe that the spanning trees with higher
algebraic connectivity tend to have larger values of the sum of the weights of the edges 
in the tree. This trend can be clearly observed in Figure \ref{Fig:enumeration} 
for instances with six and seven nodes. It can also be noted from the 
figure that the maximum spanning tree is not necessarily the spanning tree with maximum 
algebraic connectivity. However, from Table \ref{Tab:tree_ranks}, 
we can see that the tree with maximum algebraic connectivity occurs in
the first few thousands (up to eight nodes) while enumerating 
all the spanning trees in the decreasing order of the sum of the weights
of the edges in the tree. The enumeration of all the spanning trees in 
Figures \ref{Fig:enumeration}(a) and \ref{Fig:enumeration}(b) 
corresponds to the third and the first instance in Table \ref{Tab:tree_ranks} respectively. 
These are the worst case instances where the optimal spanning tree is farthest from the 
maximum spanning tree.

Based on these ideas, we now present a systematic 
procedure to construct the Fiedler vectors used for relaxation of the semi-definite constraint 
and discuss the quality of the upper bounds obtained. 

%-----------------------------------------------;
%   Figure : Enumeration of all spanning trees	;
%-----------------------------------------------;
\begin{figure}[htp]
	\centering
	\subfigure[6 nodes]{
	\includegraphics[scale=0.44]{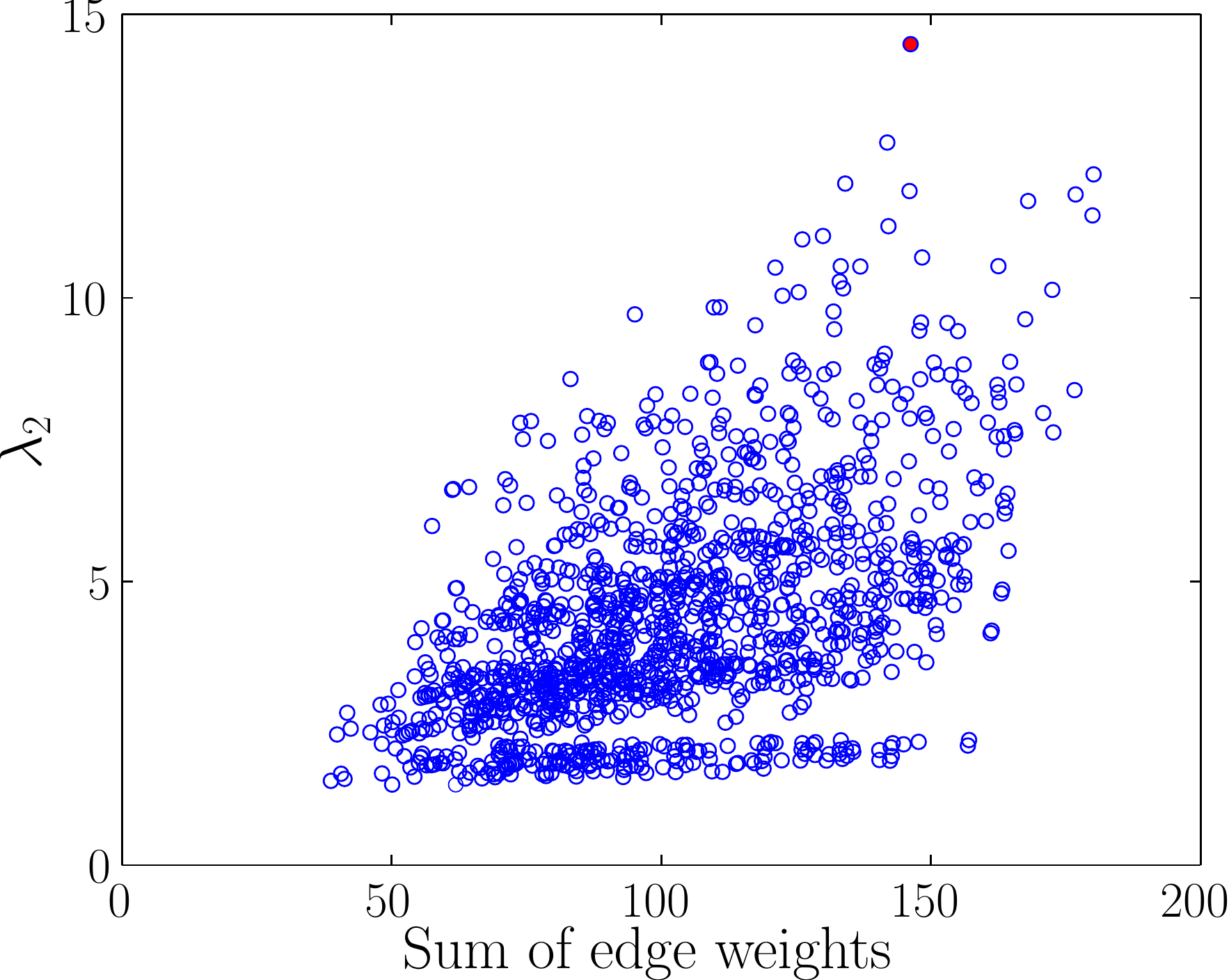}}
	\subfigure[7 nodes]{
	\includegraphics[scale=0.44]{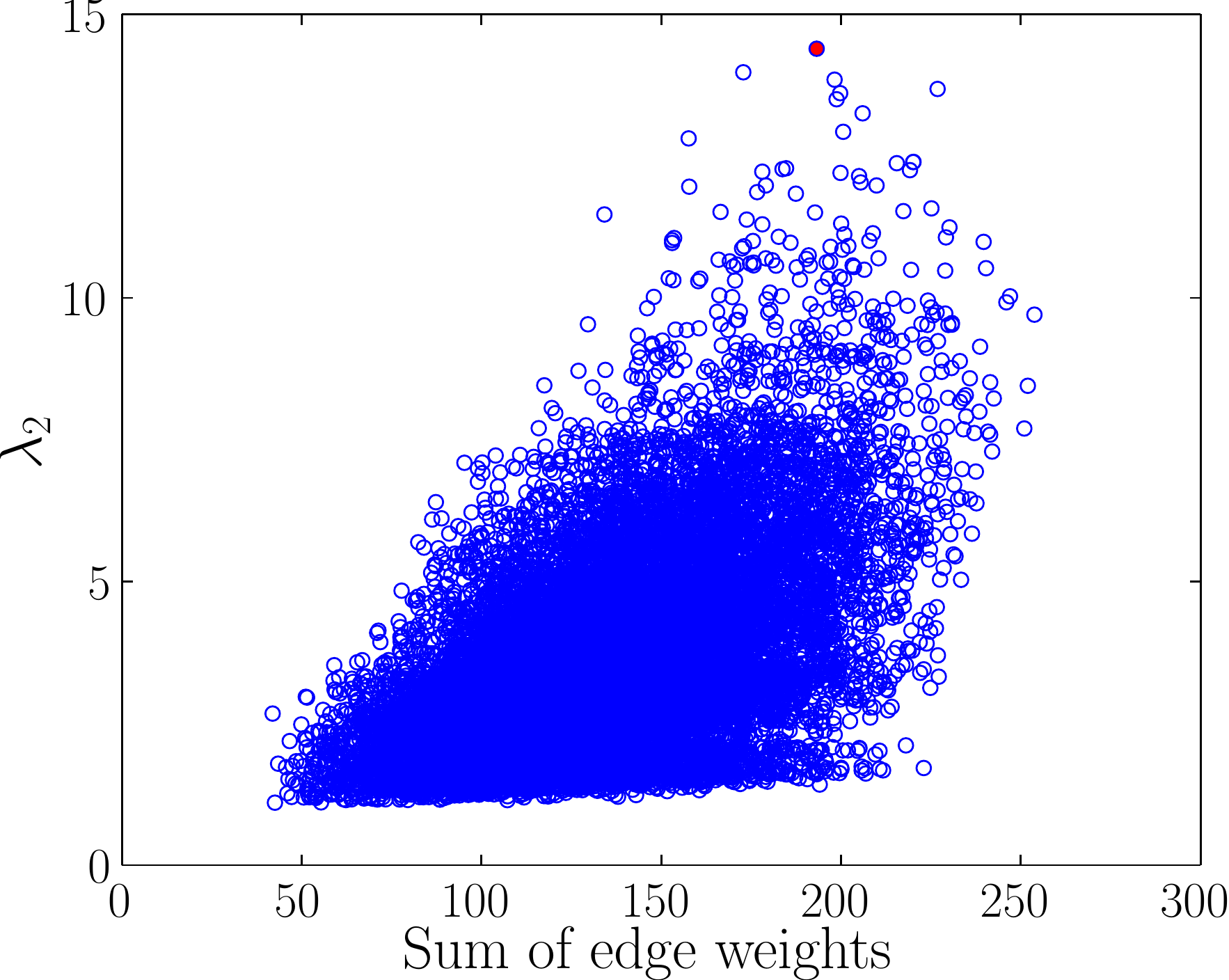}}
	\caption{Graphical representation of the distribution of algebraic
	connectivity ($\lambda_2$) values for all the spanning trees over a 
	random complete graph. The tree with 
	maximum $\lambda_2$ is indicated by the circle filled with red color. 
	It can be observed that the trees with larger values of $\lambda_2$ 
	tend to have larger sum of the edge weights.}
	\label{Fig:enumeration}
\end{figure}

%----------------------------------------------------;
%   Table : Ranking of optimal spanning tree 	     ;
%----------------------------------------------------;
\begin{table}[htbp]
\caption{The entries of this table represent the position of the spanning tree
with maximum algebraic connectivity (optimal solution) in the enumerated list of 
spanning trees, where the enumeration is in the decreasing order of the sum
of the edge weights.}
\footnotesize
\begin{center}
\begin{tabular}{cccc}
\toprule
\textbf{Instances}  & \textbf{6 nodes}& \textbf{7 nodes} & \textbf{8 nodes} \\ 
\cmidrule(r){1-4}
1 &39 &819 &5126 \\  
2 &21 &47 &530 \\  
3 &97 &48 &81 \\  
4 &2 &466 &1058 \\  
5 &3 &109 &704 \\  
6 &30 &10 &9312 \\  
7 &11 &243 &398 \\  
8 &2 &3 &12805 \\  
9 &92 &189 &11991 \\  
10 &2 &312 &1225 \\  
\cmidrule(r){1-4}
Average & 30 & 225 & 4323 \\
\bottomrule
\end{tabular}
\end{center}
\label{Tab:tree_ranks}
\end{table}

% \noindent
% \textbf{Construction of outer-approximation vectors:}
\begin{enumerate}[(a)]
\item Enumerate a fixed number of spanning trees in the
decreasing order of the sum of the weights of the edges in the tree, where the 
first tree in the enumerated list will be a maximum spanning tree.  
\item Rank the enumerated spanning trees in the decreasing order of their algebraic connectivity
values, that is, the tree with rank one will have the maximum value of 
algebraic connectivity among the enumerated spanning trees. 
\item Pick a fixed number of the first few ranked spanning trees in the
decreasing order of the algebraic connectivity values. 
Their Fiedler vectors can be used to relax the semi-definite constraint. 
\end{enumerate}

\noindent
\textbf{Quality of the upper bounds:}
In order to enumerate a fixed number of spanning trees from
the maximum spanning tree, a standard enumeration
algorithm for weighted graphs as given in \cite{gabow1986efficient} was implemented in Matlab.
The optimal spanning tree for up to eight nodes was within 4,323 trees 
while averaged over ten instances and the worst case being 12,805 as shown in 
Table \ref{Tab:tree_ranks}. Hence, from the maximum spanning tree, 
we enumerated 15,000 spanning trees for every random instance up to twelve nodes.
The computation time for enumerating up to 15,000 spanning trees for graphs of 
sizes up to twelve nodes was less than ten minutes.

The performance of the relaxation of formulation $\mathcal{F}_3$ with various number
the Fiedler vectors used for relaxation is shown in Figure \ref{Fig:UB_performance_1}.
The percent gap shown in Figure \ref{Fig:UB_performance_1} is defined as follows: 
$$\mathrm{percent \ gap} = \frac{\gamma^*_{UB} - \gamma_{bfs}}{\gamma_{bfs}} * 100, $$
where $\gamma^*_{UB}$ is the upper bound obtained by solving the relaxed formulation 
$\mathcal{F}_3$ and $\gamma_{bfs}$ is the algebraic connectivity 
of the best feasible solution known. The best feasible solution in this case 
will be the spanning tree with maximum algebraic connectivity among the 15,000
enumerated trees.

In Figure \ref{Fig:UB_performance_1}, it can be noted that the average 
percent gap obtained by relaxing 
with thousand Fiedler vectors for instances of eight, nine, ten and twelve nodes are 
4.13\%, 42.9\%, 65.7\% and 116.1\% respectively. Though the gaps grow with the 
increase in problem size, they are orders of magnitude
better than the binary relaxation gaps shown in Table \ref{Tab:summary_formulations}.
Also, for the best instances shown in Figure \ref{Fig:UB_performance_1}, 
the optimal solution was obtained with just eight hundred Fiedler vectors for the case of 
eight nodes.

%-----------------------------------------------------;
%   Figure : Performance of outer approximation vectors	;
%-----------------------------------------------------;
\begin{figure}[!h]
	\centering
	\subfigure[8 nodes]{
	\includegraphics[scale=0.42]{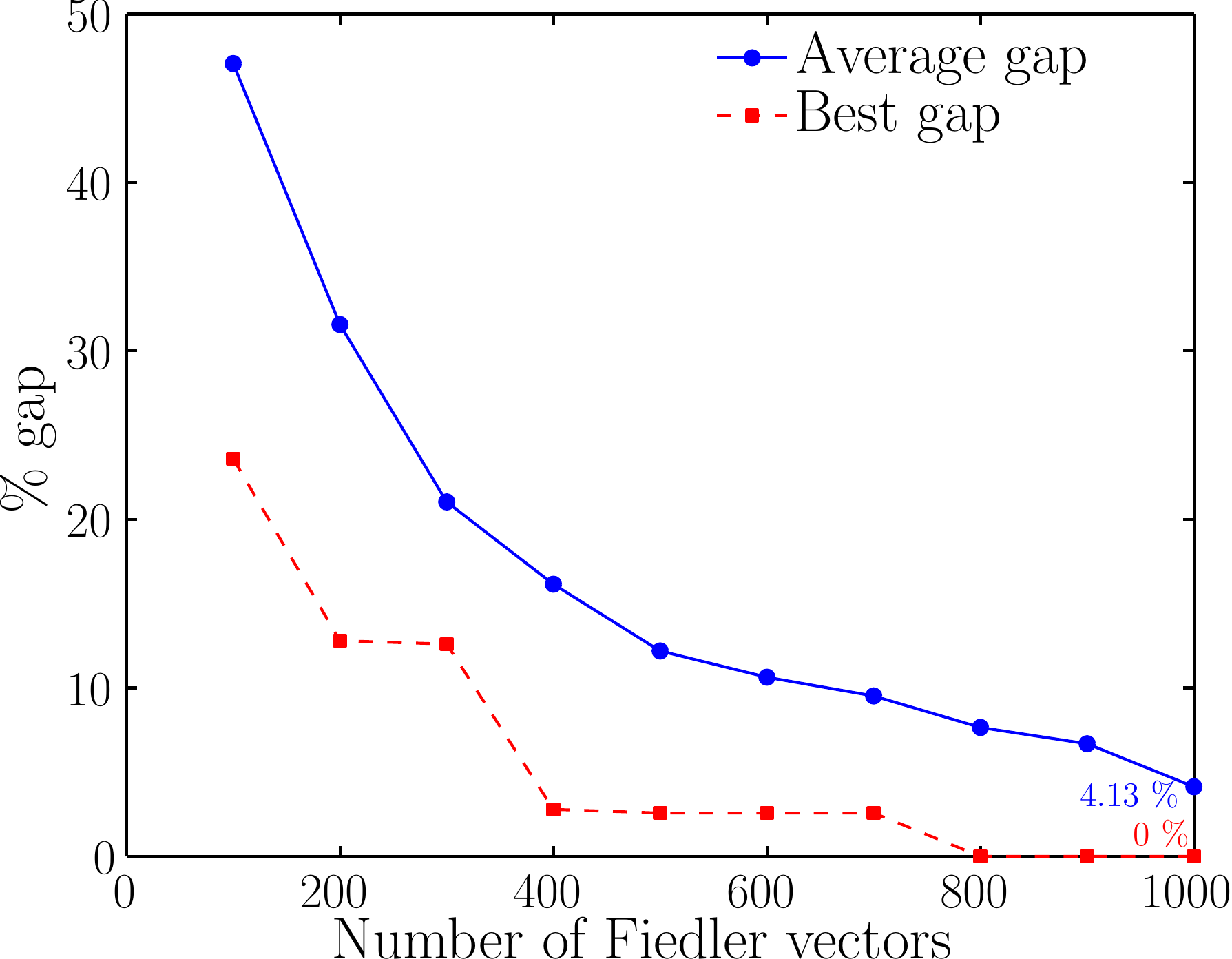}}
	\subfigure[9 nodes]{
	\includegraphics[scale=0.42]{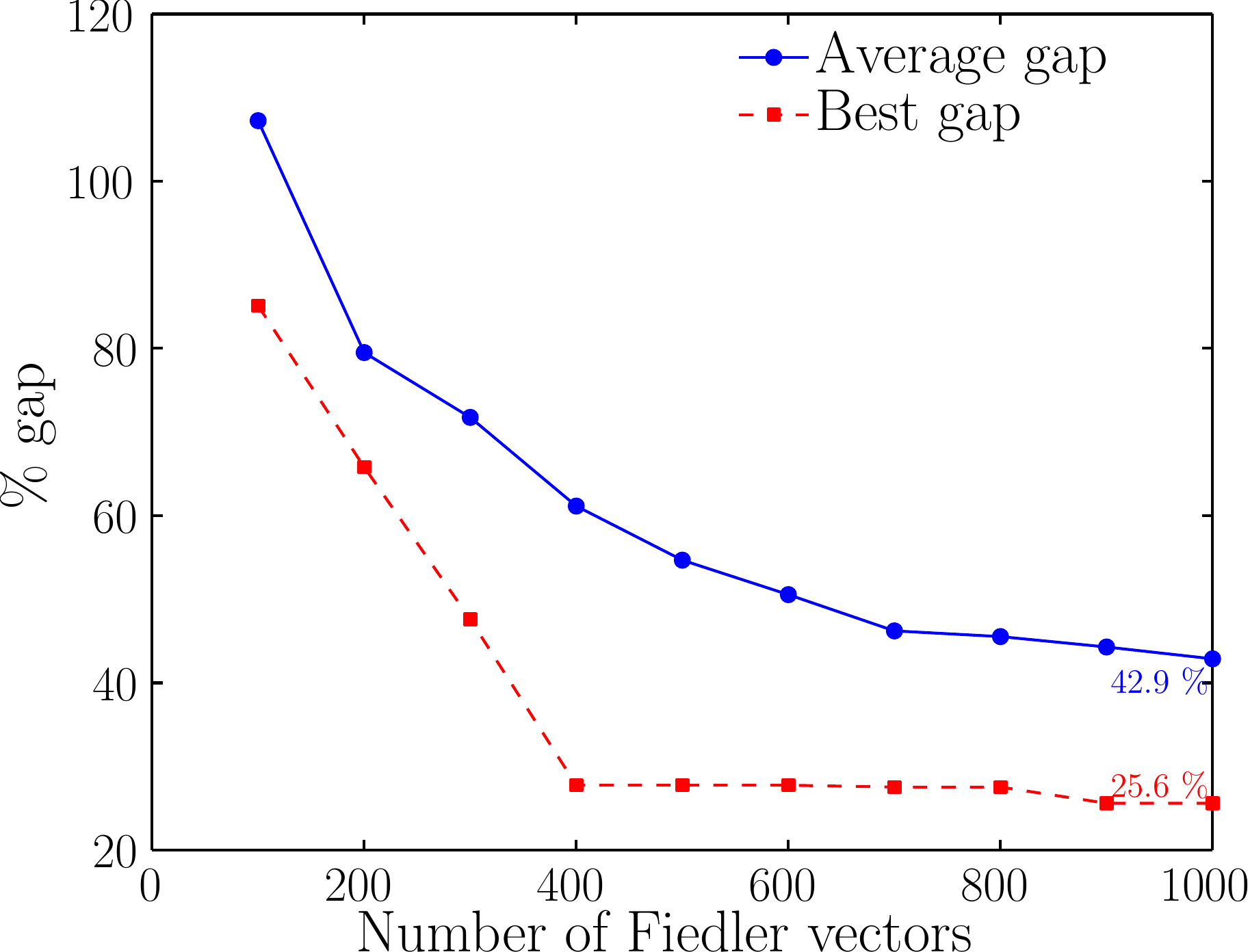}}
	\subfigure[10 nodes]{
	\includegraphics[scale=0.42]{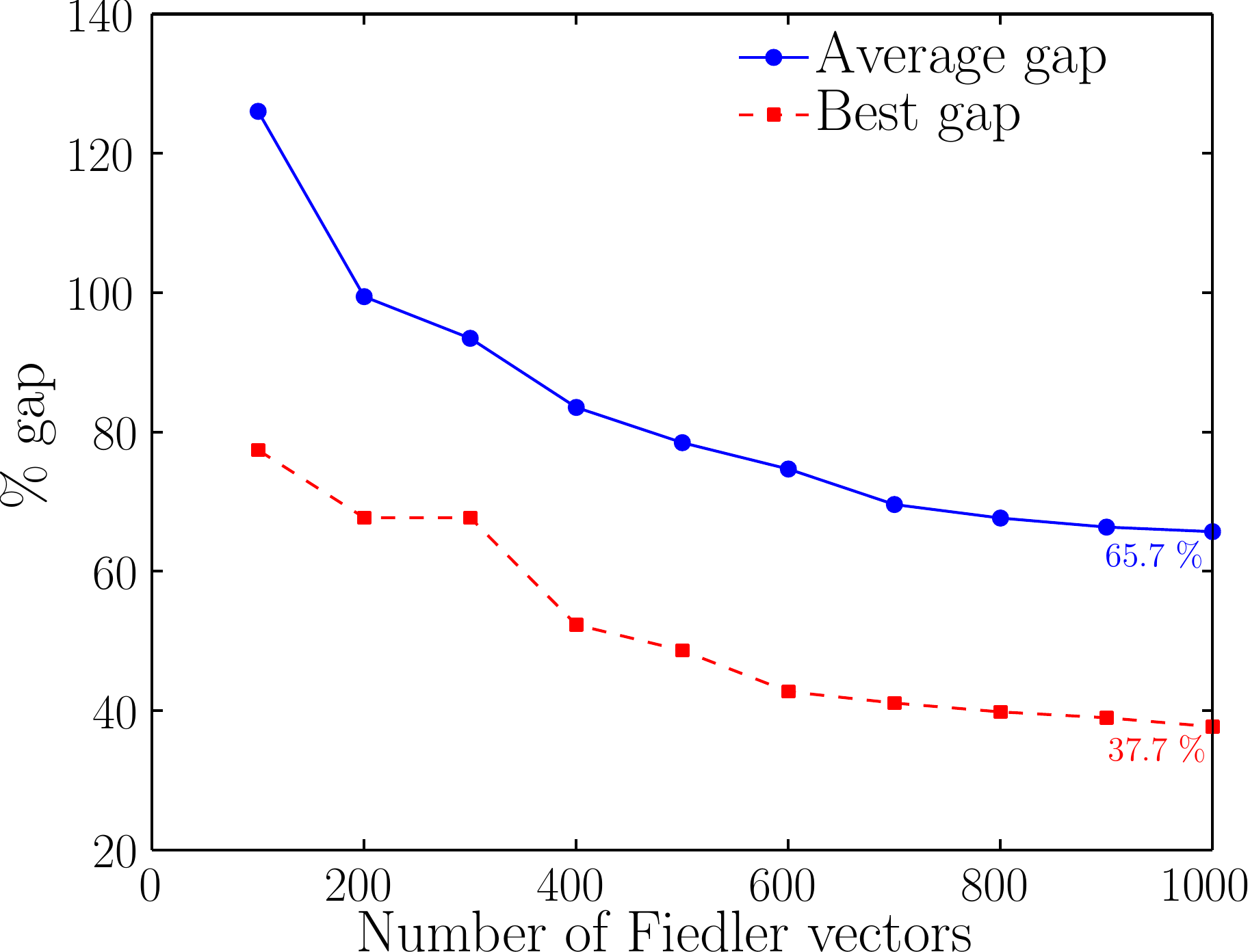}}
	\subfigure[12 nodes]{
	\includegraphics[scale=0.42]{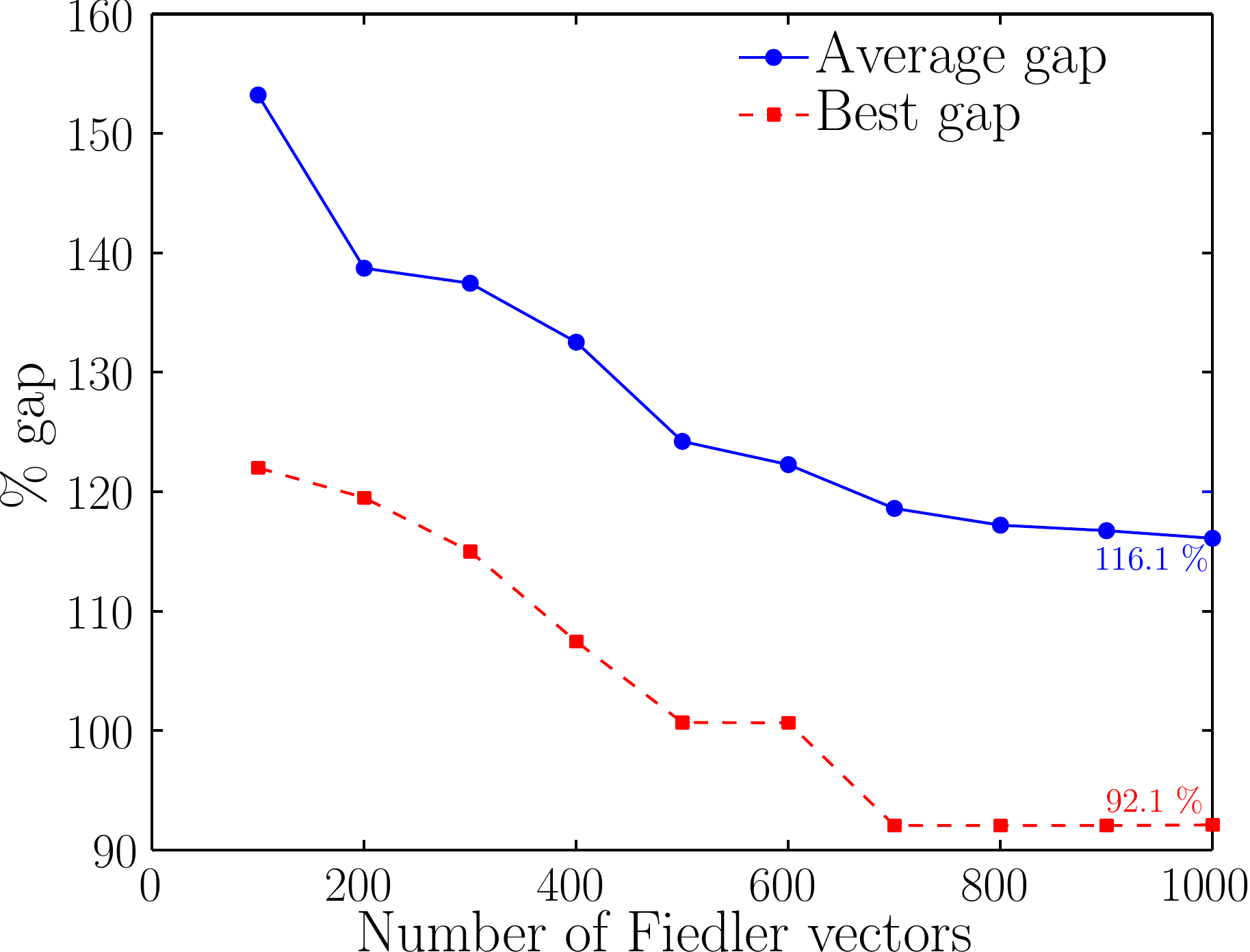}}
	\caption{Plot of the percent deviation of the upper bounds 
	obtained by relaxing the semi-definite constraint 
	using the Fiedler vectors of good solutions from the best known feasible solution. Average 
	gap corresponds to the average value evaluated over ten random instances and 
   the best gap corresponds to the instance for which the percent gap was minimum. Source: \cite{nagarajan2015maximizing}.}
	\label{Fig:UB_performance_1}
\end{figure}

% 
% \subsection{Outer approximation of semi-definite constraint with discrete 
% Fiedler vectors}
% \label{subsec:UB_2}

%-----------------------------------------;
%  Algorithms for optimal solutions       ;
%-----------------------------------------;
\section{Algorithms for determining maximum algebraic connectivity}
\label{sec:exact_algo}
In this section, we focus on developing algorithms based on
cutting plane techniques to obtain 
optimal solutions for the problem of maximizing algebraic 
connectivity (\textbf{BP}).  

In principle, the available MISDP solvers in Matlab can be employed to solve the problem ${\cal F}_1$ in \eqref{eq:F1}.
A well known state-of-the-art solver employed for
solving MISDPs is the SEDUMI \cite{sturm1999using} toolbox which 
can be accessed with the YALMIP user interface \cite{lofberg2004yalmip}.
However, even on a powerful workstation, the time to compute an optimal solution using these solvers
was in the order of hours for instances involving at most eight nodes and  
couldn't handle instances involving nine nodes or more. (for 8 and 9 nodes, the number of
feasible solutions are 262,144 and 4,782,969 respectively). 

There is a need for developing algorithms that provide tight upper bounds, in case the 
optimal solution cannot be computed efficiently. The cutting plane technique can be used 
to provide a monotonically decreasing sequence of upper bounds that converge to the 
optimal value of algebraic connectivity. In the previous section,
we discussed in detail an efficient way to approximate the feasible set 
using Fiedler vectors of feasible solutions to obtain tight upper bounds. 
Based on the cutting plane techniques, one can always further tighten the upper bound 
and eventually obtain optimal solutions. Therefore, after a brief introduction to the 
concepts of cutting plane techniques, we propose three cutting plane algorithms 
to solve the problem of maximizing algebraic connectivity to optimality.

\subsection{Cutting plane techniques}
In combinatorial optimization problems, cutting plane 
method generally refers to an iterative refinement 
of the feasible set by means of valid linear inequalities
or ``cuts" or ``cutting planes". The procedure of adding 
cutting planes to obtain optimal solutions are popularly 
used for solving MILPs. In the early sixties, Gomory in his papers 
\cite{gomory1958outline,gomory1960algorithm} 
proposed to solve integer
programs by using cutting planes, thus reducing an integer
programming problem to the solution of a sequence of linear programs. 
Later, in the early 1990s, Ceria et. al., in their paper \cite{ceria1995combining}
introduced a branch-and-cut approach to solve MILPs
which effectively combined the usage of Gomory cuts 
with the branch-and-bound procedure. 

Cutting plane method for MILPs with a maximization 
objective works as follows: Solve
the MILP by relaxing the integrality constraints to obtain an 
easily solvable linear program. Since this
is a relaxation, the optimum value obtained will be an
upper bound to the original MILP. 
If the optimal solution obtained for the relaxed MILP is not an 
integer solution, then there is guaranteed to exist a linear 
inequality or a ``valid inequality'' or a ``cutting plane" or simply a cut that separates this optimal
solution from the convex hull of the feasible set of MILP.
Finding such a cutting plane is the ``separation problem". 
An improved relaxation to the MILP can be constructed by adding the cut
to the existing relaxation. The linear inequality is satisfied by the optimal 
solution of the MILP; however, it is not satisfied by the non-integral optimal 
solution of the relaxed linear program. The optimal value of the 
relaxed linear program provides a tighter upper bound to the MILP.
Solving a sequence of such linear programs with monotonically decreasing upper bound 
until an integer solution is found is the essence of the
``cutting plane method". Having a polynomial time
solvable separation problem for any MILP is not trivial. However, in 
the literature, there are many separation heuristics for specific problems,
where these heuristics are not guaranteed to generate cutting planes 
for every solution of the relaxed MILP. 

In this dissertation, we extend the idea of the standard 
cutting plane method for MILPs to solve the proposed
MISDP problem. Instead of relaxing the binary constraints
in formulation ${\cal F}_1$, we relax the semi-definite 
constraint using a finite number of Fiedler vectors and solve the corresponding MILP 
as discussed in the previous section. 
To enforce connectivity in the feasible solutions for the MILP, 
we invoke the multicommodity flow formulation as discussed in formulation
${\cal F}_2$. Clearly, the solution obtained by solving the
MILP need not be feasible for the MISDP since the semi-definite constraint
can be violated. Hence, we add a valid inequality which 
eliminates the current integral solution to obtain an  
augmented MILP. Solving a sequence of such augmented MILPs
terminates when the current solution is also feasible
for the MISDP. 

In the remainder of this section, we provide a detailed
discussion on developing three algorithms based on the cutting
plane techniques as discussed above. Firstly, we discuss 
a cutting plane algorithm, where a sequence of MILPs are 
solved by relaxing the semi-definite constraint using a finite number of 
Fiedler vectors. Secondly,
we provide a bisection algorithm to reduce the MISDP to 
a sequence of BSDPs and discuss the gains in the computational efficiency. 
Thirdly, we discuss an iterative primal-dual algorithm
based on the Lagrangian relaxation of the semi-definite constraint.

%++++++++++++++++++++++++++++++++++++++++++++++++++;
%  Section: Cutting plane (outer approximation) ;
%++++++++++++++++++++++++++++++++++++++++++++++++++;
\subsection{$EA_1$: Algorithm to compute maximum algebraic connectivity }
\label{Sec:CP_algo}
$EA_1$ ($EA$ stands for an algorithm that computes an optimal solution exactly)
involves the construction of successively tighter polyhedral approximations 
of the positive semi-definite set corresponding the maximum algebraic connectivity problem 
given in formulation ${\cal F}_1$.

%In fact, one may utilize the dual feasible solutions of the first approach in the 
%construction of the polyhedral approximations.
If one were to store the Fiedler vectors of some feasible solutions (spanning trees), 
one can relax the semi-definite constraint in ${\cal F}_1$ as follows: 
$$ \sum_{e \in E} x_e Q_i \cdot L_e - \gamma Q_i \cdot (I_n -e_0 \otimes e_0) \geq 0, \; \; i = 1, \ldots, N, $$
where $N$ is the pre-specified number of constraints used in the 
termination criteria and $Q_i, \; i=1, 2, \ldots, N$ are the dyads 
associated with the Fiedler vectors corresponding to the feasible solutions. 
If one were to directly approach the solution, one may pick a bunch of random feasible 
solutions and construct the associated $Q_i$'s from their Fiedler vectors. In order to have a tighter
initial relaxation of the feasible set, one can also construct the $Q_i$'s from the 
feasible solutions as discussed in section \ref{subsec:UB_1}. At the end of this section, we
shall discuss the computational efficiency of $EA_1$ 
by choosing such special Fiedler vectors for the relaxation of the semi-definite constraint. 
One may then perform the following iteration to obtain optimal solution:
\begin{enumerate}
\item Solve the following MILP using as follows:

{\begin{equation*}
		\begin{array}{ll}
			 & \max \gamma \\
 				\text{s.t.} & \sum x_e Q_i \cdot L_e - \gamma Q_i \cdot (I_n -e_0\otimes e_0) \geq 0, \ \textrm{ for }i = 1,\ldots, N, \\
			& \sum_{e \in E} x_e \leq q, \\
		  &\sum_{e \in \delta(S)} x_e \geq 1, \ \ \forall \ S \subset V, \\
			& x_e \in \{0, 1\}^{|E|}.
		\end{array}
\end{equation*}}
One may observe that the exponential number of cutset constraints can be replaced with the multicommodity
flow formulation as discussed in formulation ${\cal F}_2$.

\item Check if the optimal solution $x^*$ to the above MILP satisfies the semi-definite constraint:
$$ \sum_{e \in E} x_e^* L_e - \gamma (I_n - e_0 \otimes e_0) \succeq 0. $$
If not, one can construct a cut associated with the negative eigenvalue 
of $\sum_{e \in E} x_e^* L_e - \gamma (I_n - e_0 \otimes e_0)$ by first 
determining the corresponding eigenvector $v_{N+1}$ and constructing a 
semi-definite $Q_{N+1} = v_{N+1} \otimes v_{N+1}$. We can augment the 
MILP with the following scalar linear constraint which cuts off this undesirable solution:
$$ \sum_{e \in E} x_e Q_{N+1} \cdot L_e - \gamma Q_{N+1} \cdot (I_n -e_0 \otimes e_0) \geq 0, $$
which is clearly not satisfied when $x_e = x_e^*$, but is satisfied by the optimal solution.

\item One may then solve the augmented MILP using dual simplex algorithm.

\item This procedure is iterated until $x_e^*$ satisfies the semi-definite 
constraint. Hence, $x_e^*$ is an optimal solution.
\end{enumerate}

%++++++++++++++++++++++++++++++++++++++++++++++++++;
%  Section: Cutting plane (iterative primal-dual) ;
%++++++++++++++++++++++++++++++++++++++++++++++++++;
\subsection{$EA_2$: Algorithm to compute maximum algebraic connectivity }
\label{Sec:PD_algo}

%----------------------------------------------;
%  Algorithm: Iterative_primal_dual algorithm ;
%----------------------------------------------;
\begin{algorithm}[h!]
		  %\small
\caption{\textbf{: Iterative primal-dual algorithm ($EA_2$)}}
\label{Algo:Iterative_primal_dual}
\begin{algorithmic}[1]
\STATE Input: A primal feasible solution
\STATE Let $P$ := Given primal feasible solution. Let the Fiedler vector of $P$ be denoted as $v_P$ and its corresponding eigenvalue represented as $\gamma_P$
\STATE primalCost $\gets$ $\gamma_P$
\STATE dualCost $\gets$ $\infty$
\STATE DualGap $\gets$ dualCost-primalCost
\IF{DualGap $>$ $0$}
\STATE Use $v_P$ to obtain another primal solution, $P^*$, by solving the following dual problem:

{\begin{equation*}
		\begin{array}{ll}
				  {\mathrm{dualCost}}_{\mathit{P^*}} = & \max \   v_P \cdot (\sum_{e \in E} x_e L_e)v_P \\
			\text{subject to} &  \sum_{e \in \delta(S)} x_e \geq 1, \ \ \forall \ S \subset V, \\
			& \sum_{e \in E} x_e \leq q, \\
			& x_e \in \{0, 1\}^{|E|}.
		\end{array}
\end{equation*}}

\STATE $P_{t}\gets P^*$
\STATE $Cuts \gets \emptyset$
\WHILE{$\gamma_{P} > \gamma_{P_t}$}
				\STATE Augment $Cuts$ with the following constraint:
                \begin{equation*}
                v_{P_{t}} \cdot (\sum_{e \in E} x_e L_e)v_{P_t} \geq \gamma_{P}
                \end{equation*}
                \STATE Find $P^*$ again by solving the above dual problem with all the additional constraints in $Cuts$.
                \STATE $P_t\gets P^*$
				\ENDWHILE
\STATE \STATE $P$ $\gets$ $P_t$
\STATE primalCost $\gets$ $\gamma_{P_t}$
\STATE dualCost $\gets$ $\min$ (dualCost,dualCost$_{P_t}$)
\STATE DualGap $\gets$ dualCost-primalCost
\STATE \textbf{Termination criterion:} \textit{if} DualGap $>0$ {\it return} to line 7, \textit{else }exit with $P$ as the optimal primal solution.
\ENDIF 
\end{algorithmic}
\end{algorithm}

$EA_2$ is a cutting plane algorithm 
based on the iterative primal-dual method
as outlined in Algorithm \ref{Algo:Iterative_primal_dual}. 
In this approach, we start with a feasible solution to the 
primal problem and iteratively
update this feasible solution with a new solution by solving a
related dual problem. The current feasible solution to the
primal problem is only updated with a new solution if the
algebraic connectivity of the new solution is greater than
the algebraic connectivity of the current feasible solution.
On the other hand, if it is certain that the new solution
found using the dual problem is not optimum, a cutting plane is augmented 
to the dual and the dual problem 
is solved again ({\it refer to lines 11-12 of the algorithm}). 
% Valid inequalities
% are inequalities that hold for the optimal solutions but may
% not hold for relaxed optimal solutions or suboptimal feasible
% solutions. 
The dual problem is resolved with additional cutting planes
until it produces a new solution that is at least
as good as the current primal feasible solution
({\it refer to lines 10-14 of the algorithm}). The algorithm eventually
terminates when the dual cost equals the algebraic connectivity
of the best known primal solution 
({\it refer to line 20 of the algorithm}). A feature of this algorithm is that
the solutions from the dual problem can be continually
used to improve the primal feasible solution while
continuously decreasing the optimal dual cost and hence 
the upper bound.

In the following discussion, we discuss the formulation
of the dual problem related to the primal and some efficient
ways to solve the same. We also outline how to generate
cutting planes if the solution to the dual problem does
not produce an optimal solution.

We form the dual problem by relaxing the semi-definite constraint,
$$\sum_{e \in E} x_e L_e \succeq \gamma (I_n - e_0 \otimes e_0),$$
and penalizing the objective with a dual variable $Q \in \mathbb{R}^{|V| \times |V|}$ if the constraint
is violated. Let ${\cal T}$ be the set of networks on $(V,E,w_e)$
which are connected and have at most $q$ edges from $E$.
Then, one may express the dual function $\Pi(Q)$,
with its domain being $Q \succeq 0$
and $Q \cdot (I_n - e_0 \otimes e_0) = 1$.
One may compute $\Pi(Q)$ for every $Q$ in its domain as:
$$\Pi(Q) = \max_{x \in {\cal T}}  [\sum_{e \in E} x_e (Q \cdot L_e)]. $$
The computation of $\Pi(Q)$ may be carried out using the
greedy algorithms for spanning trees (which are the simplest of the connected
networks) given in \cite{cormen2001introduction}, \cite{lawler2001combinatorial}
mimicking Prim's or Kruskal algorithm. The property of connectivity
is taken into account by the algorithm and hence, is simple and yet
efficient. Since $\Pi(Q)$ is a dual function, it is automatically
an upper bound for the maximum algebraic connectivity for every $Q$ 
in its domain. In our approach, at any iteration of the algorithm,
the $Q$ we pick to solve the dual problem corresponds to the best
known feasible solution, $P$, available to the primal problem,
$i.e.$, $Q$ is chosen to be equal to $v_P\otimes v_P$ where $v_P$
is the Fiedler vector corresponding to $P$. Note that such a choice
of $Q$ always satisfies the constraints $Q \succeq 0$, $Q \cdot (I_n - e_0 \otimes e_0) = 1$
and is therefore always feasible to the dual. If a solution (say,
an optimal tree denoted by $P^*$) that solves the dual problem has an
algebraic connectivity greater than the algebraic connectivity of the
primal solution $P$, then the primal solution is replaced with the
optimal tree ($i.e.$, $P:=P^*$) and a new iteration is started again.
If algebraic connectivity of $P^*$ is less than that of $P$, then the
dual problem is augmented with the following cutting plane and
solved again. This procedure is repeated until either the dual problem
finds a tree with a greater algebraic connectivity or the dual cost
equals the primal cost in which case the algorithm terminates. The
cutting plane that is added is:
$$v_{P^*} \cdot (\sum_{e \in E} x_e L_e) v_{P^*} \geq \lambda_2(L(P)),$$
where $v_{P^*}$ denotes the Fiedler vector corresponding to
the tree $P^*$. Observe that the above inequality is violated
if $x$ is chosen to be $P^*$ since 
		  $v_{P^*} \cdot L({P^*})v_{P^*} < \lambda_2(L(P))$.
However, from Rayleigh's inequality, the optimal solution
to the primal problem always satisfies the above inequality.

\begin{remark}
The outer iteration of the algorithm ($lines~~ 6-21$)
terminates when the dual gap becomes zero. If at the end of an outer
iteration, the dual gap is not zero, several dual problems are solved
until a tree with better algebraic connectivity is found. In the worst
case, the number of dual problems that need to be solved in an outer
iteration will be at most equal to the number of feasible structures
available. Since during every outer iteration, the increment in the
algebraic connectivity is positive and the number of dual problems
that need to be solved is bounded, the algorithm will eventually
terminate with an optimal solution in finite steps.
\end{remark}

%++++++++++++++++++++++++++++++++++++++++++++++++++++++++++;
%  Section: Computational results for Iter PD and CP algos ;
%++++++++++++++++++++++++++++++++++++++++++++++++++++++++++;
\subsection{Performance of algorithms $EA_1$ and $EA_2$}
\label{Sec:results1}
The MISDP formulation ${\cal F}_1$
was implemented using Matlab's toolboxes, SeDuMi 
and YALMIP which are state-of-the-art semi-definite solvers widely used
among the researchers in the area of semi-definite programing.
The proposed exact algorithms were implemented
in \texttt{C++} programing language and
the resulting MILP's were solved using CPLEX 12.2 with 
the default solver options.
All computational results in this paper were implemented on
a Dell Precision T5500 workstation (Intel Xeon E5630 processor @ 2.53GHz,
12GB RAM). \\

\noindent
\textbf{Construction of random instances:}
Random weighted adjacency matrix, $A$, for each instance was generated using $A = (M \circ R) + (M \circ R)^T$
where $\circ$ denotes the Hadamard product of magic square ($M$) and a
randomly generated square matrix ($R$) with zero diagonal entries. The entries of $R$ 
are the pseudorandom values drawn from the standard uniform distribution on the 
open interval (0,1) \cite{MATLAB2010}. 
The term $A_{ij}$ corresponds to the edge weight which may be chosen to
connect nodes $i$ and $j$. Every random cost matrix was chosen such that 
the maximum spanning tree's algebraic connectivity was greater than the 
algebraic connectivity of all the star graphs (spanning tree with $|V|$ nodes such that the 
internal node has a degree equal to $|V|-1$).
This ensured that the optimal solutions were non-trivial connected graphs. 
Adjacency matrices corresponding to the ten
weighted complete graphs of eight nodes are shown in Appendix \ref{ch:appendix}.

Corresponding to the weighted adjacency matrices in Appendix \ref{ch:appendix},
the optimal spanning trees with maximum algebraic connectivity
are shown in Figure \ref{Fig:Optimal_graphs_n8}.

In Table \ref{Table:Yalmip_Cplex_n_8_ch2}, for eight node networks, we compare the performance 
of the proposed algorithms implemented in CPLEX with the performance of directly 
solving the MISDP formulation ${\cal F}_1$ in MATLAB's SDP solver. 
On an average, the two proposed algorithms performed better than the SDP solver 
in Matlab. Moreover, the $EA_2$ based on iterative primal 
dual method performed 1.2 times faster than the $EA_1$ based on the
polyhedral relaxation of the semi-definite constraint. Also, we observed that
the MISDP solver in MATLAB ceased to reduce the 
gap between the upper and lower bounds it maintained during its 
branch-and-bound routine for networks with nine nodes and hence
was practically impossible to solve. The proposed algorithms solved the nine
node problem to optimality, but the computation time was in the order of many hours 
(8 to 9 hours). The optimal solutions for the problem with nine nodes
are shown in Table \ref{Table:improved_EA1_n9}.

%-------------------------------------------------------------------;
%  Table: Comparison of Yalmip and Cplex solving times   ;
%-------------------------------------------------------------------;
\begin{table}[h!]
%\scriptsize
  \centering
  \caption{Comparison of CPU time to solve MISDP formulation
  using Matlab's SDP solver ($T_1$) with $EA_1$ ($T_2$) and 
  $EA_2$ ($T_3$) solved using CPLEX solver for networks with 8 nodes.}
    \label{Table:Yalmip_Cplex_n_8_ch2}
    {
  \begin{tabular}{ccccc} \toprule
   	Instance No. & Optimal & \textbf{$T_1$} & \textbf{$T_2$} & \textbf{$T_3$} \\
    & solution  & (seconds) & (seconds)& (seconds)  \\
	\cmidrule(r){1-5}
    1  & 22.8042 & 1187.07  & 428.45 & 610.31  \\
    2  & 24.3207 &  2771.24 & 1323.58  & 1003.56 \\
    3  & 26.4111 & 1173.02 & 630.39  & 655.32  \\
    4  & 28.6912  & 559.15 & 631.08 & 495.89 \\
    5  & 22.5051 & 715.61 & 515.51 & 608.78 \\
    6  & 25.2167 & 947.16 & 1515.15 & 801.10 \\
    7  & 22.8752 & 1139.56 & 1371.69 & 860.07 \\
    8  & 28.4397 & 753.48 & 564.80 & 274.93 \\
    9  & 26.7965 & 1127.46 & 824.64 & 1287.26 \\
   10  & 27.4913 & 862.81 & 383.88 & 213.48 \\
	\cmidrule(r){1-5}
	Avg. & & 1123.35 & 818.40 & 680.62 \\
   \bottomrule
  \end{tabular}
  }
\end{table}

%-----------------------------------------------------------------;
%  Figure 3: Polyhedral approximations, Instance - 3;
%-----------------------------------------------------------------;
\begin{figure}[h!]
  \centering
  \includegraphics[scale=0.50]{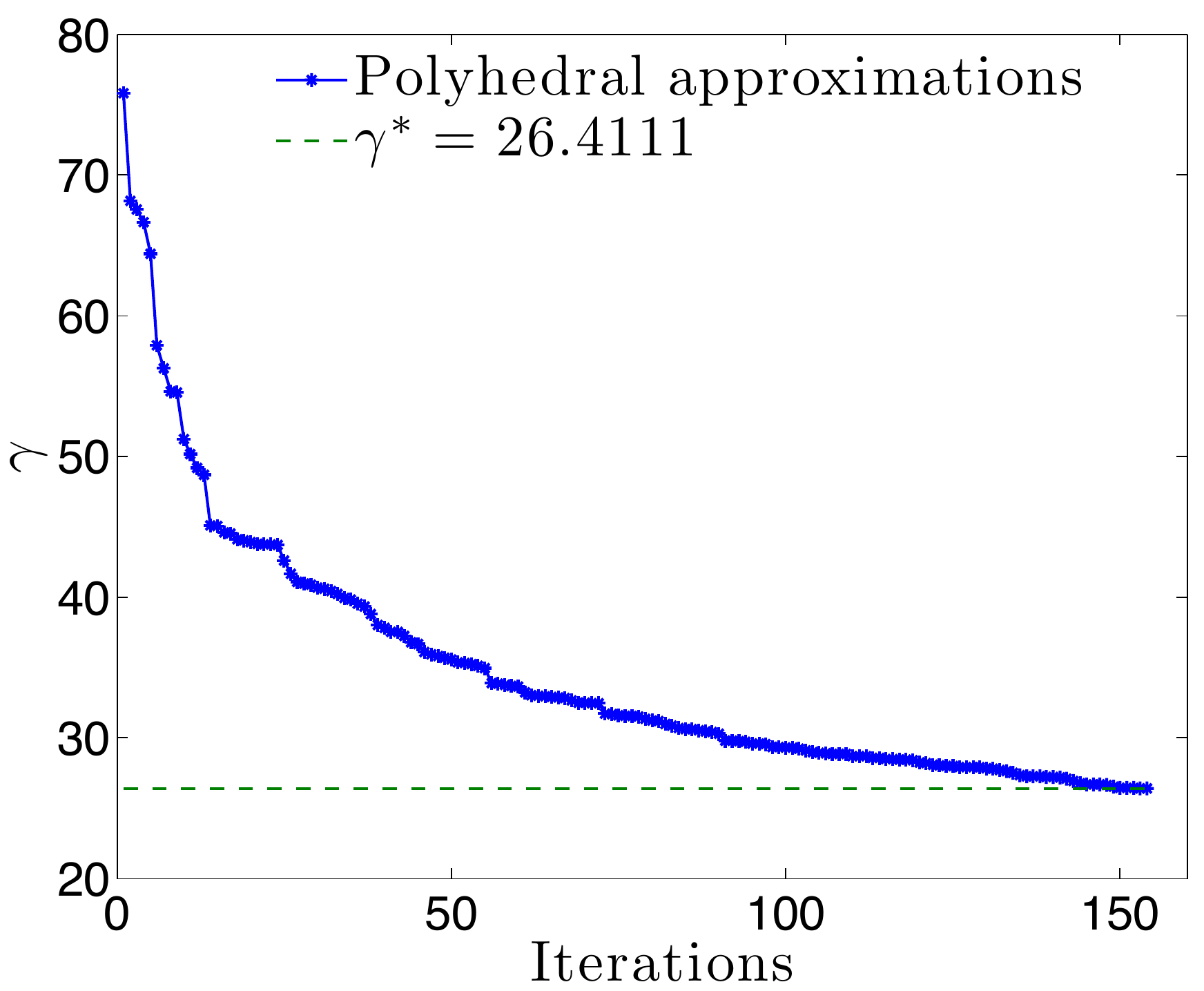}
  \caption{$EA_1$ based on polyhedral approximation of the feasible set: 
	Plot of the upper bound on the 
	algebraic connectivity versus iterations for
  instance 3 given in Table \ref{Table:Yalmip_Cplex_n_8_ch2}. Note that
  the construction of successively tighter polyhedral approximations
  of the feasible semi-definite set reduces the upper bound
  and finally terminates at the optimal solution with maximum
  algebraic connectivity ($\gamma^*$).}
   \label{Fig: Cutting_plane_n8_instance3}
\end{figure}

%---------------------------------------------------------;
%  Figure 2: Primal dual Gap for Instance - 3;
%---------------------------------------------------------;
\begin{figure}[h!]
  \centering
  \includegraphics[scale=0.50]{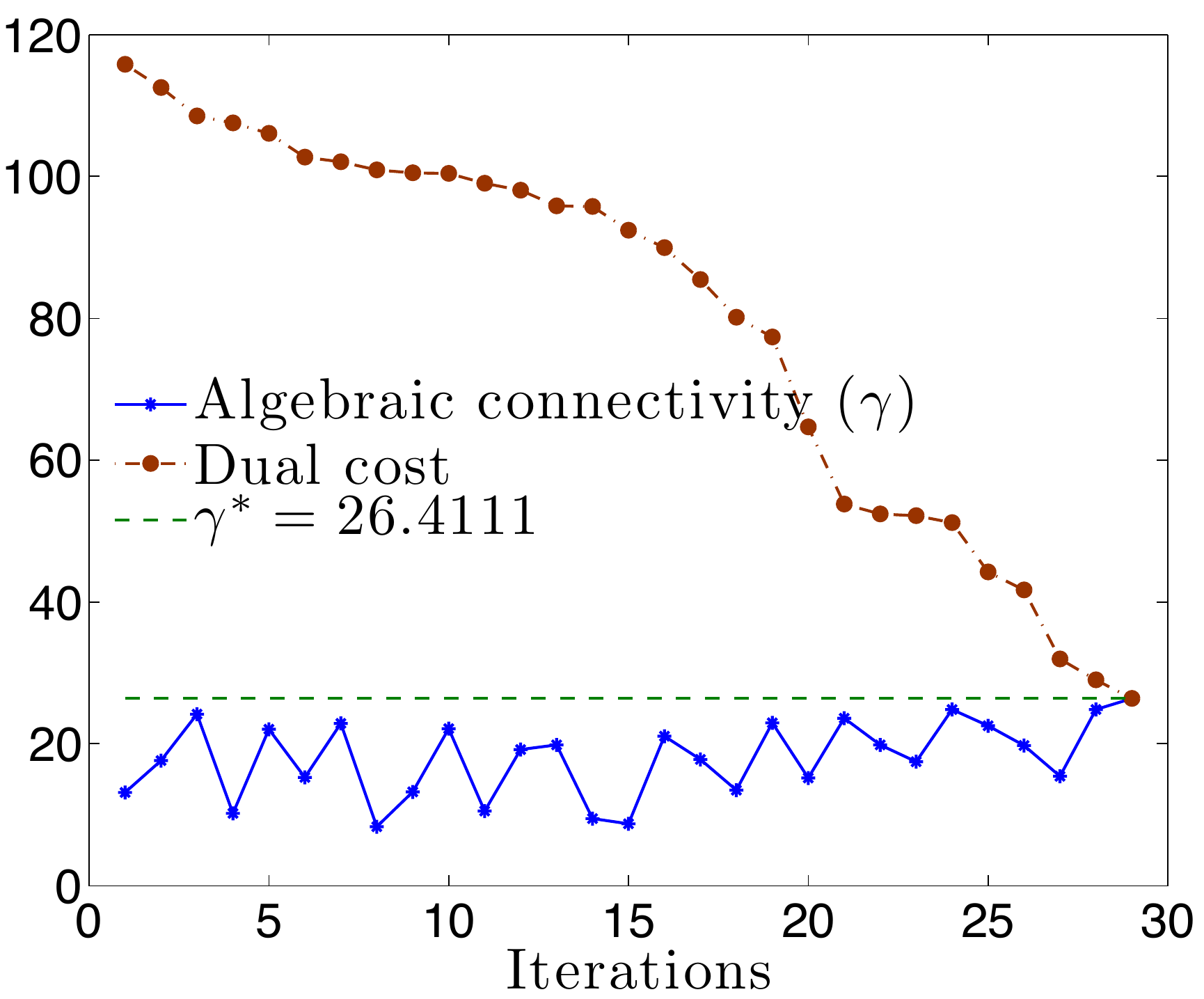}
  \caption{$EA_2$ based on iterative primal-dual method: 
	Plot of algebraic connectivity of primal feasible solutions and dual cost versus iterations
  for instance 3 given in Table \ref{Table:Yalmip_Cplex_n_8_ch2}.
  Note that this algorithm terminates when the dual cost equals
  the maximum algebraic connectivity ($\gamma^*$). Source: \cite{nagarajan2012algorithms}}
   \label{Fig:Primal_dual_gap_n_8_instance3}
\end{figure}

% %----------------------------------------------------------;
% %  Figure 1: Optimal graphs for random instance - n=8      ;
% %----------------------------------------------------------;
% \begin{figure}[h!]
%   \centering
%   \includegraphics[scale=0.50]{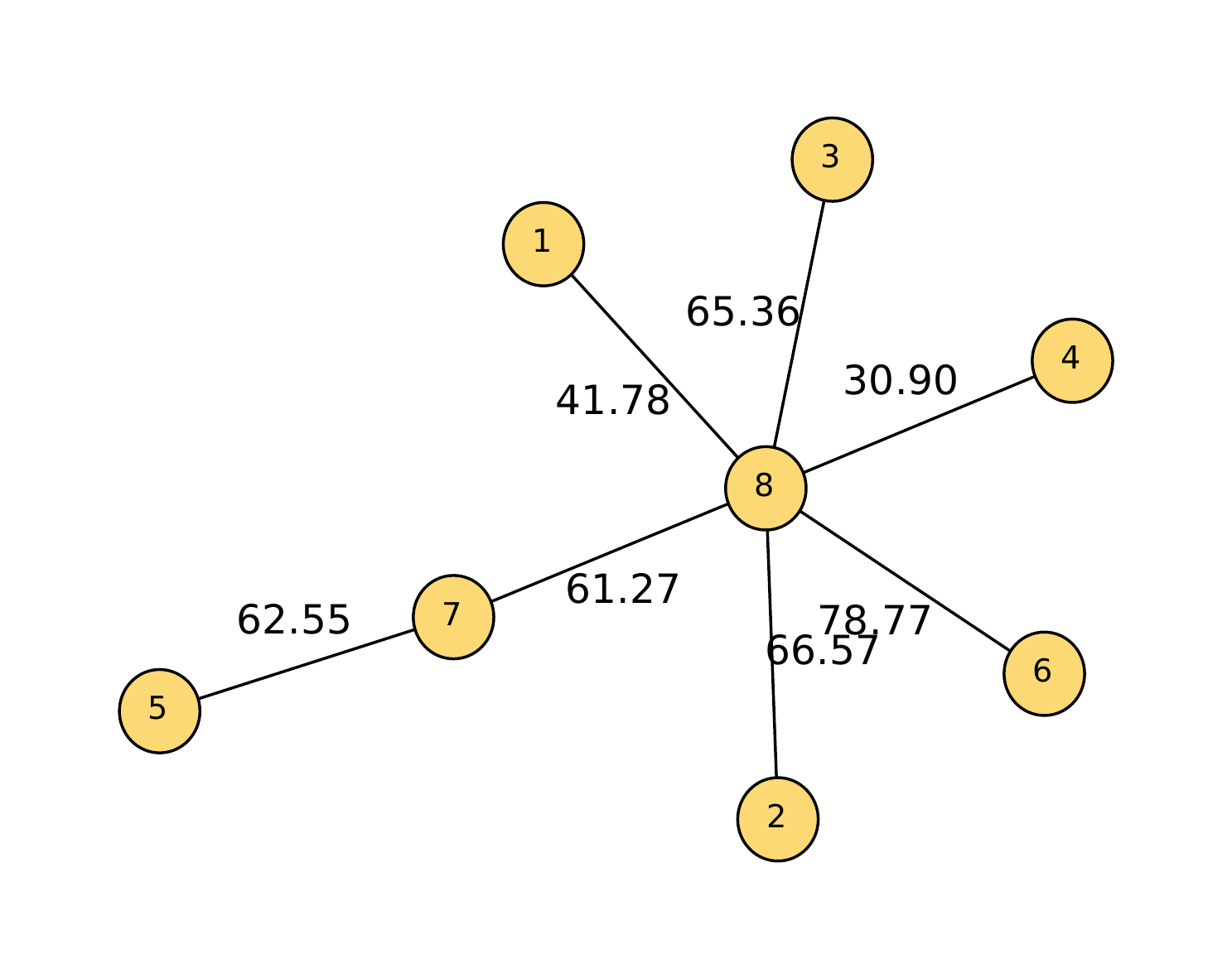}
%   \caption{Optimal graph with maximum second eigenvalue
%   ($\gamma^*=26.4111$) for instance 3 given in Table
%   \ref{Table:Yalmip_Cplex_n_8_ch2} with 8 nodes.}
%    \label{Fig:Optimal_graph_n8_3}
% \end{figure}
%-----------------------------------------------------;
%  Figure 1: Optimal graphs for random instance - n=8 ;   
%-----------------------------------------------------;
\begin{figure}[htp]
	\centering
	\subfigure[$\gamma^*$ = 22.8042]{
	\includegraphics[scale=0.32]{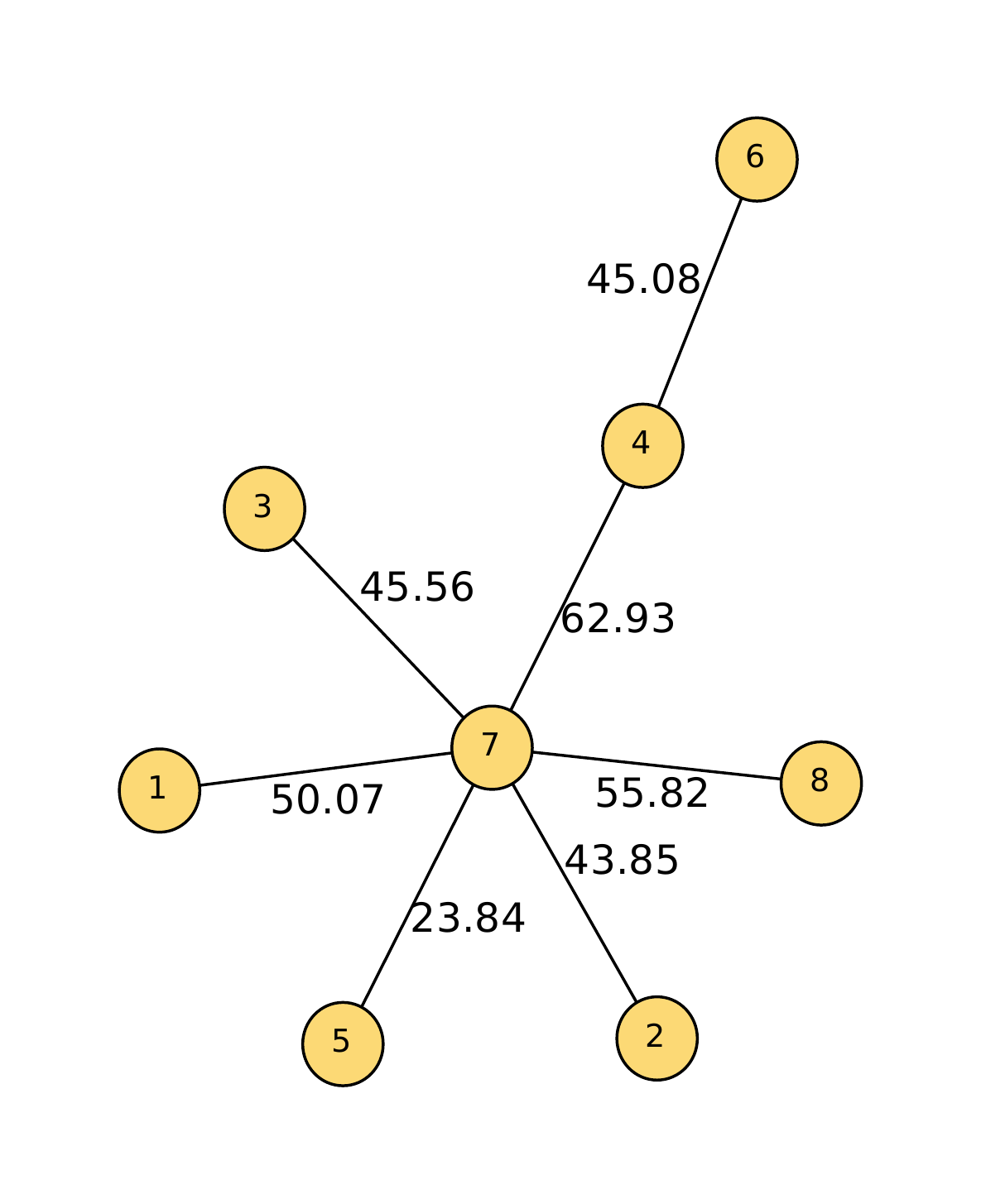}}
	\subfigure[$\gamma^*$ = 24.3207]{
	\includegraphics[scale=0.32]{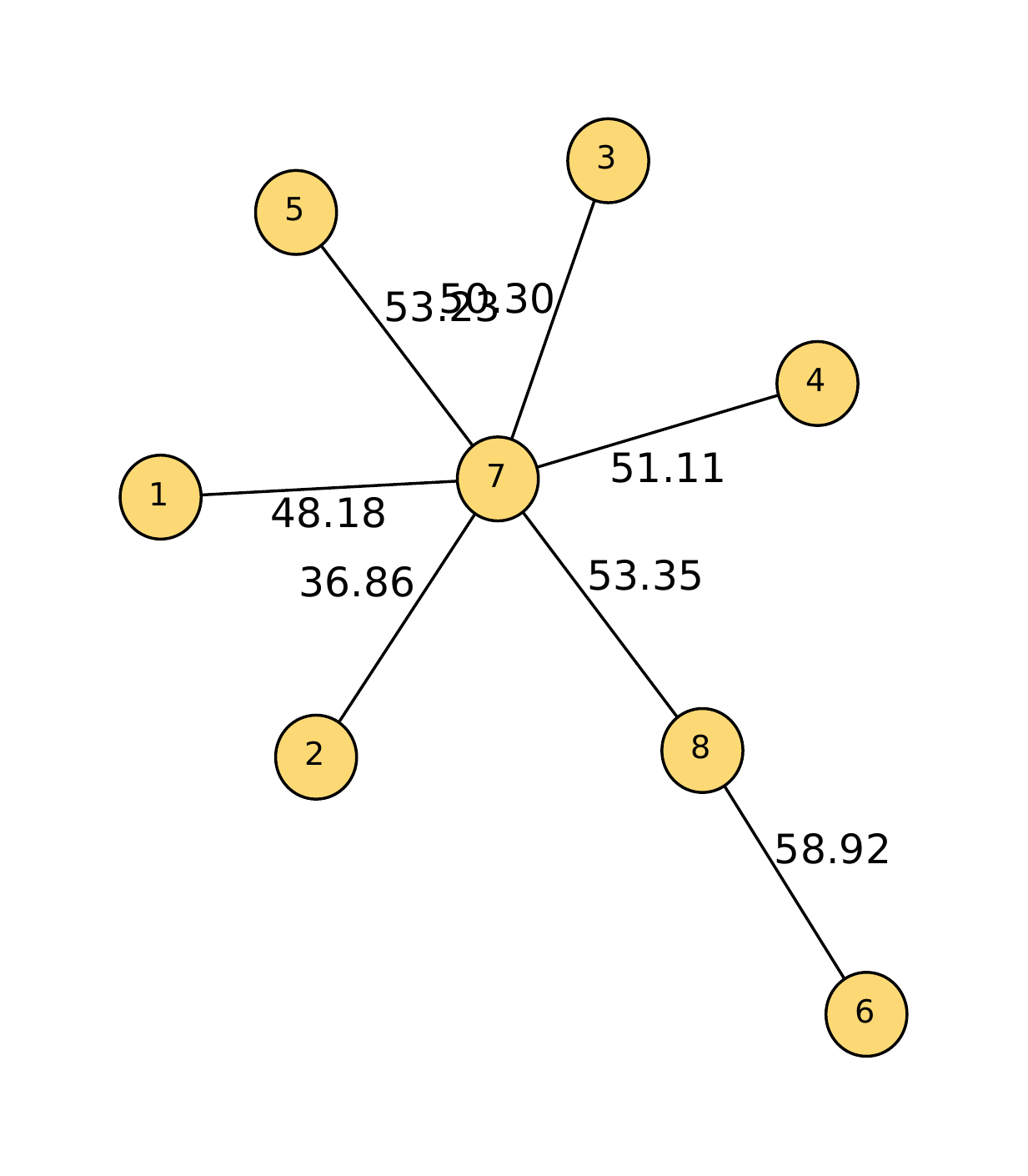}}
	\subfigure[$\gamma^*$ = 26.4111]{
	\includegraphics[scale=0.32]{Figures/ch2/Optimal_graph_n8_3}}
	\subfigure[$\gamma^*$ = 28.6912]{
	\includegraphics[scale=0.32]{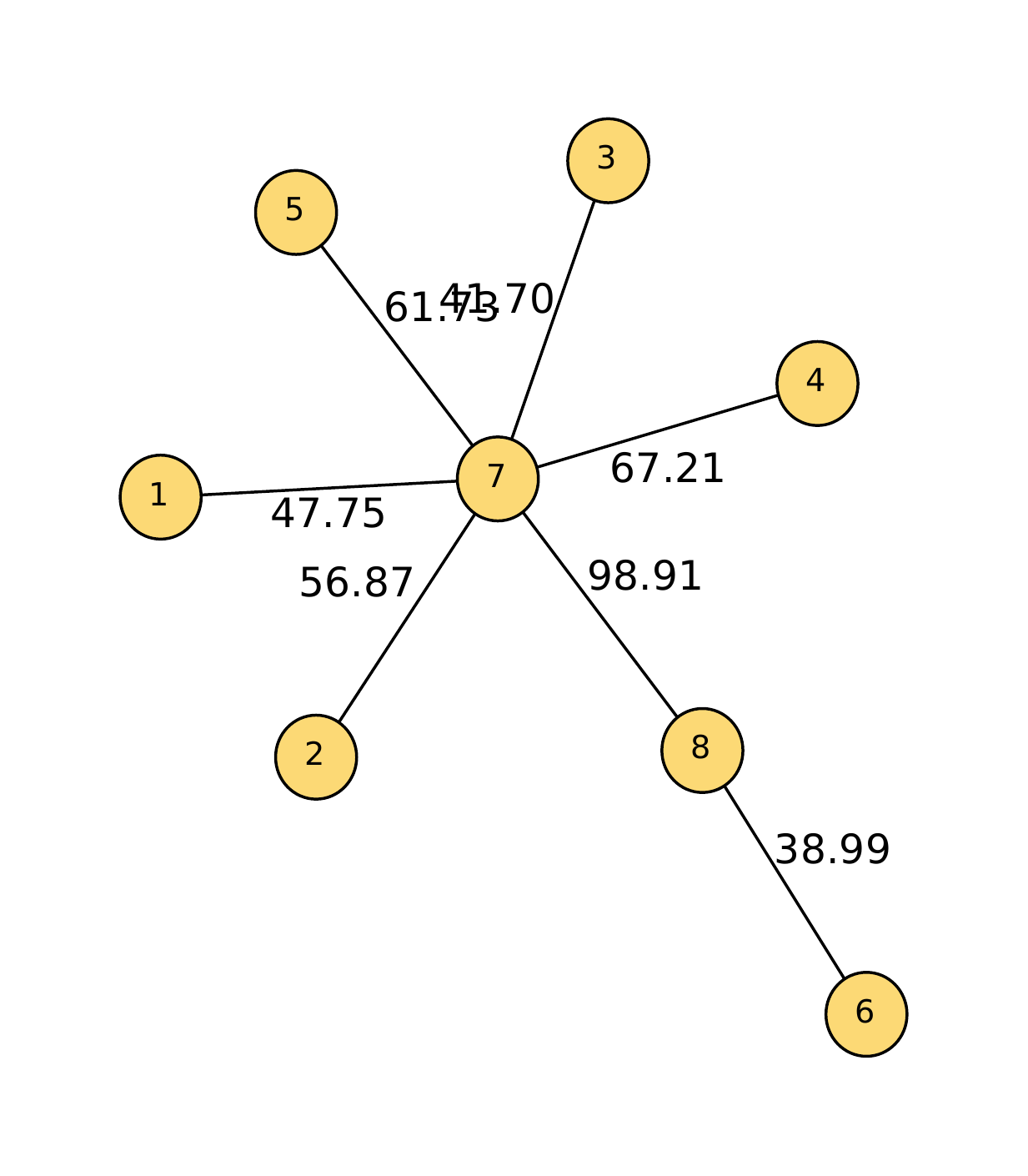}}
	\subfigure[$\gamma^*$ = 22.5051]{
	\includegraphics[scale=0.32]{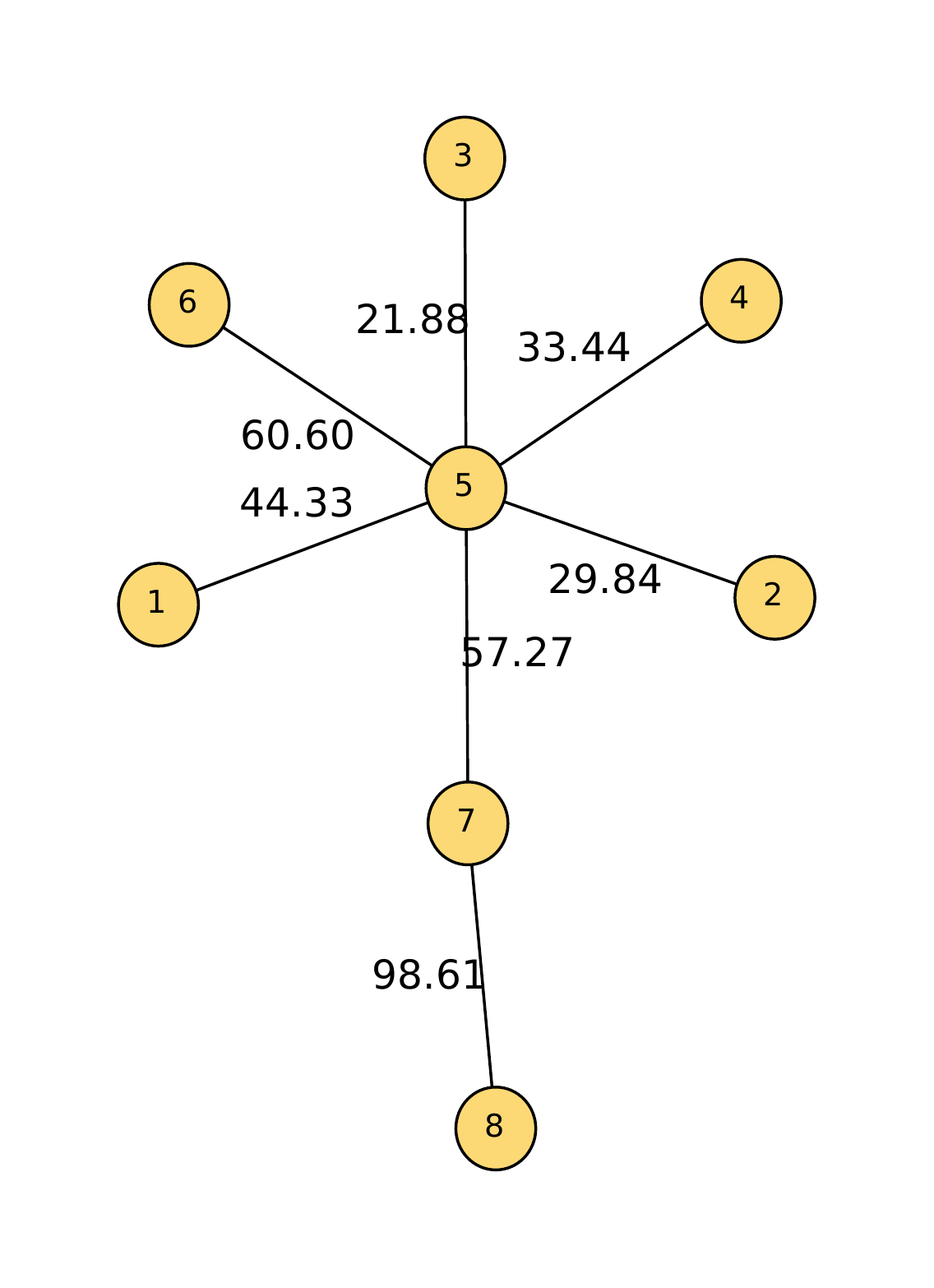}}
	\subfigure[$\gamma^*$ = 25.2167]{
	\includegraphics[scale=0.32]{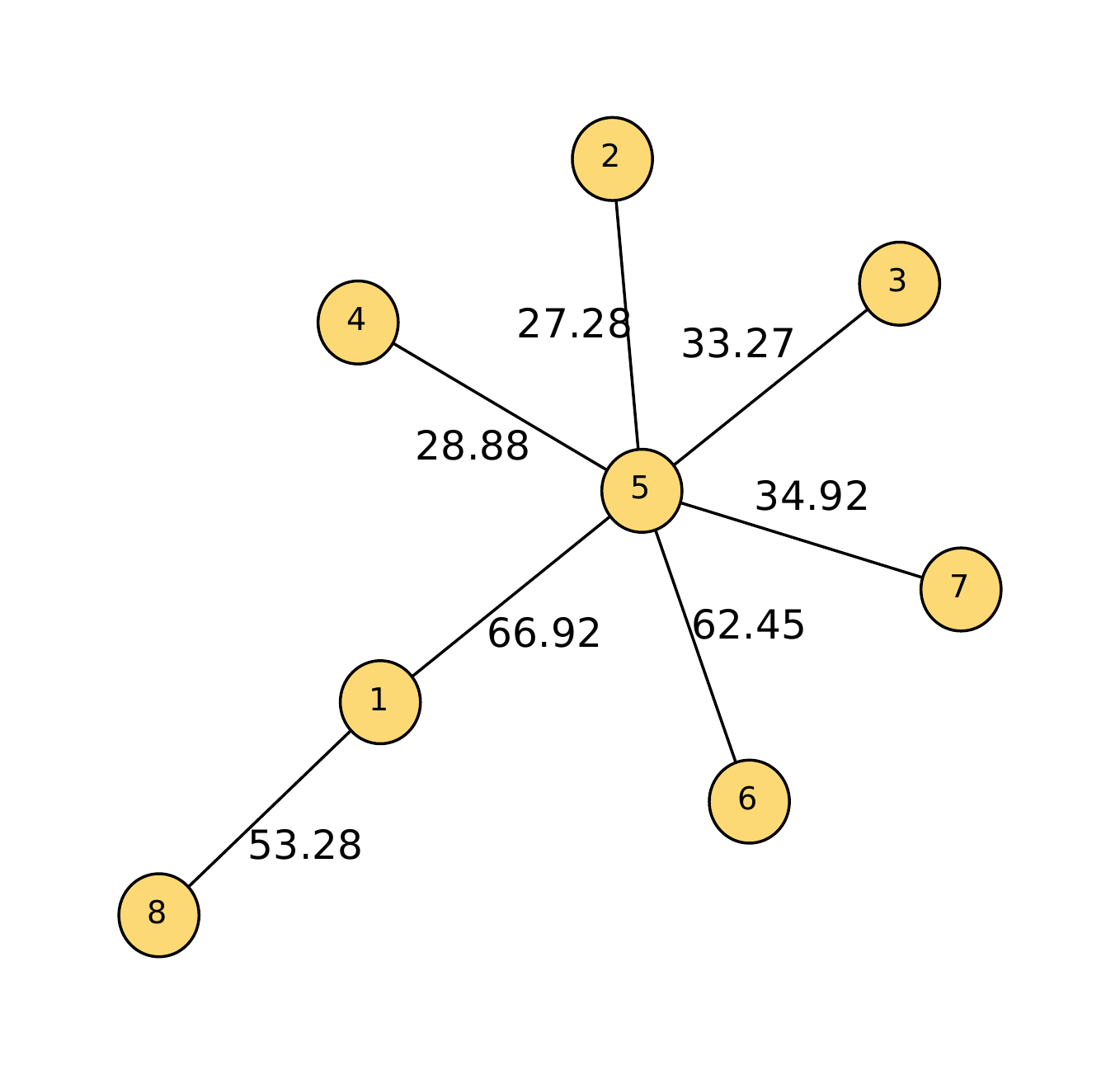}}
	\subfigure[$\gamma^*$ = 22.8752]{
	\includegraphics[scale=0.32]{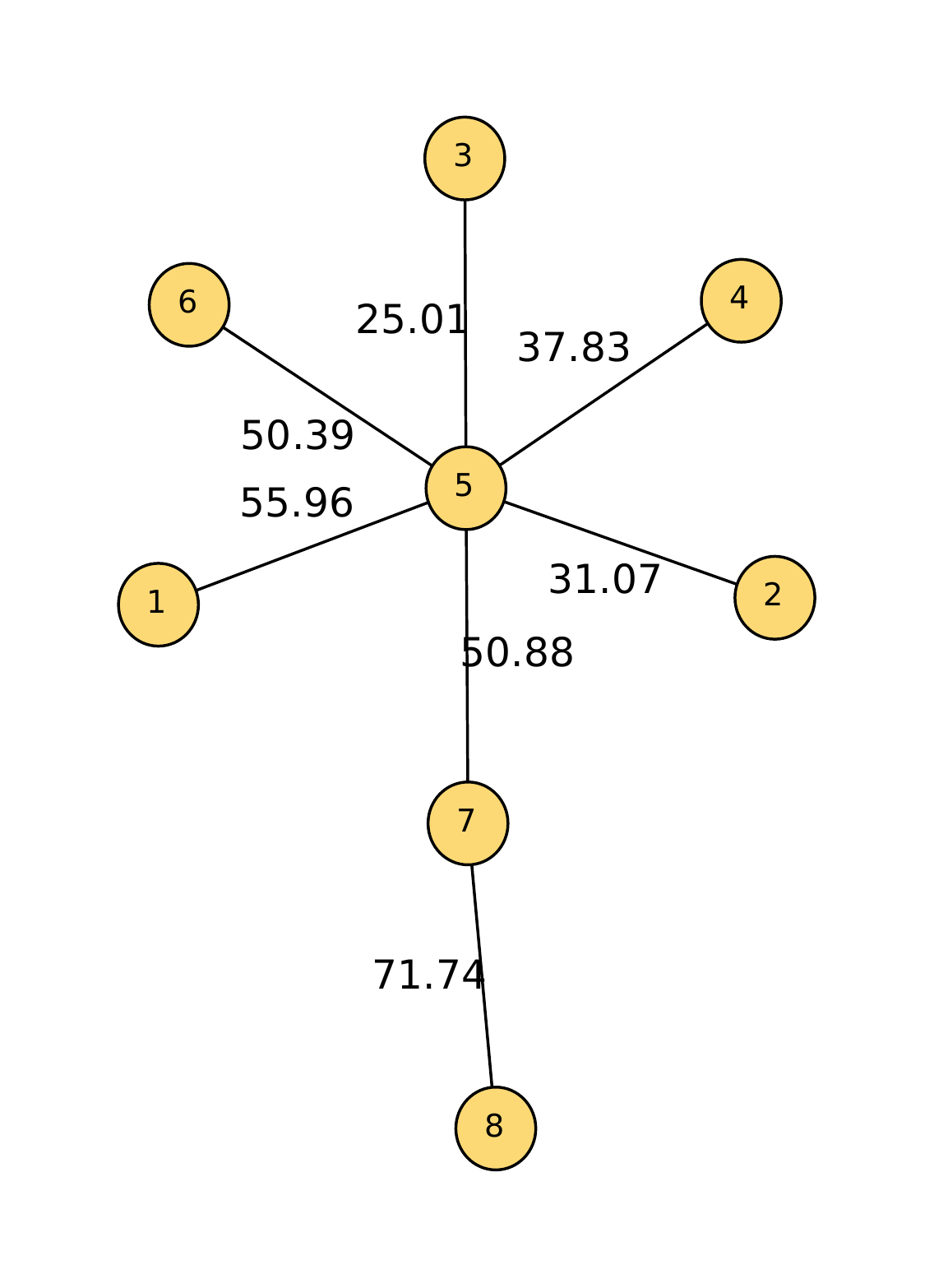}}
	\subfigure[$\gamma^*$ = 28.4397]{
	\includegraphics[scale=0.32]{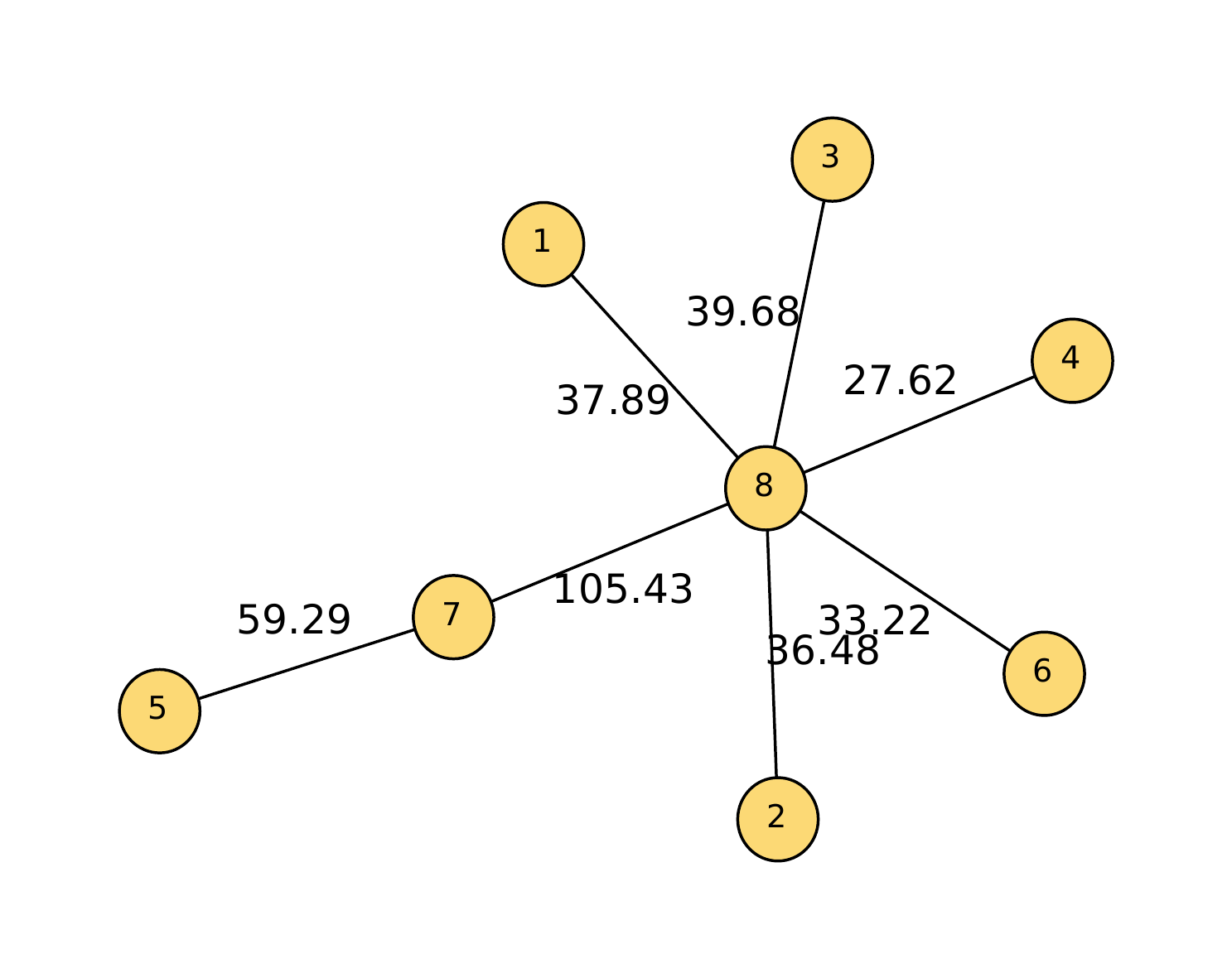}}
	\subfigure[$\gamma^*$ = 26.7965]{
	\includegraphics[scale=0.32]{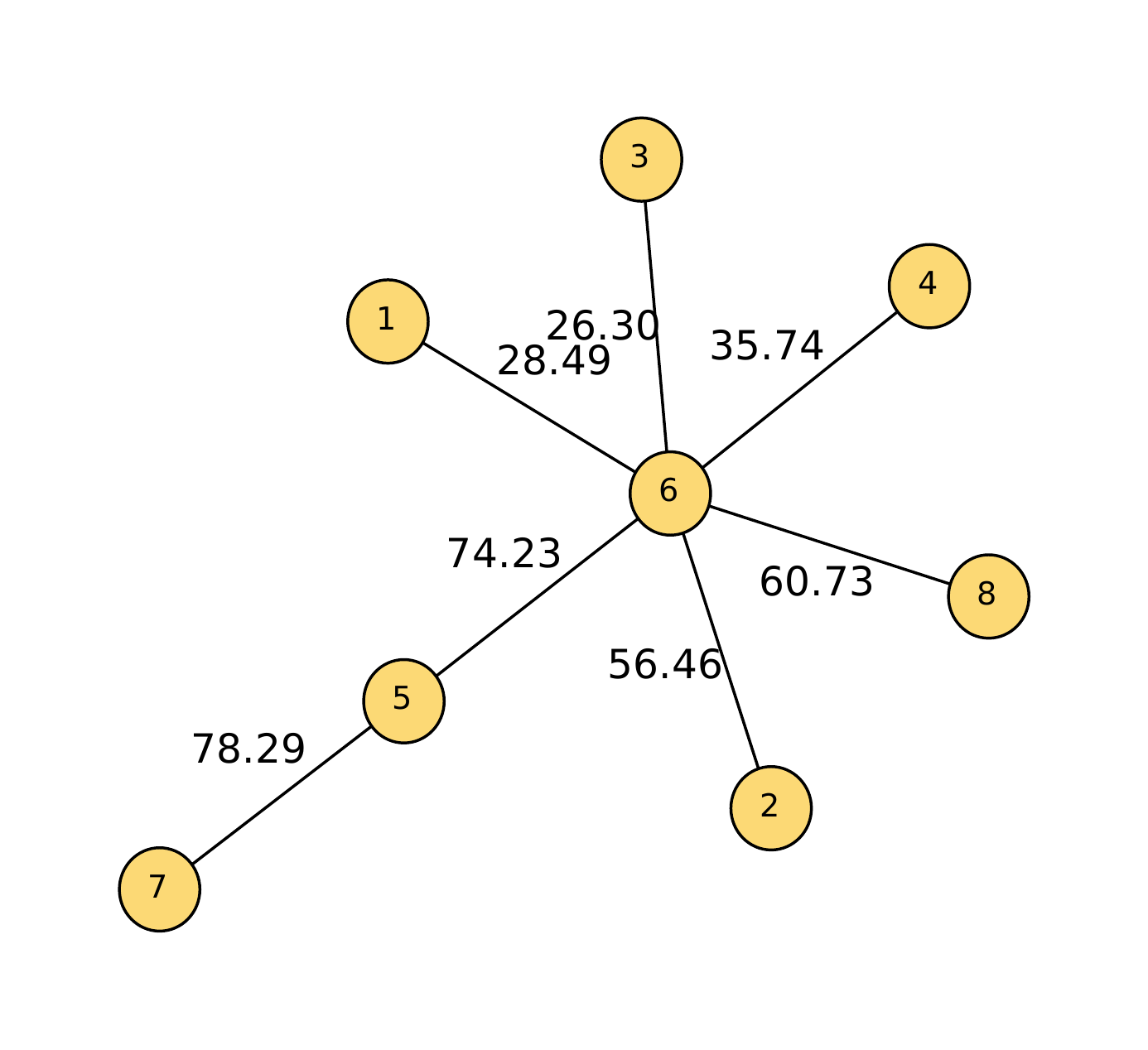}}
	\subfigure[$\gamma^*$ = 27.4913]{
	\includegraphics[scale=0.32]{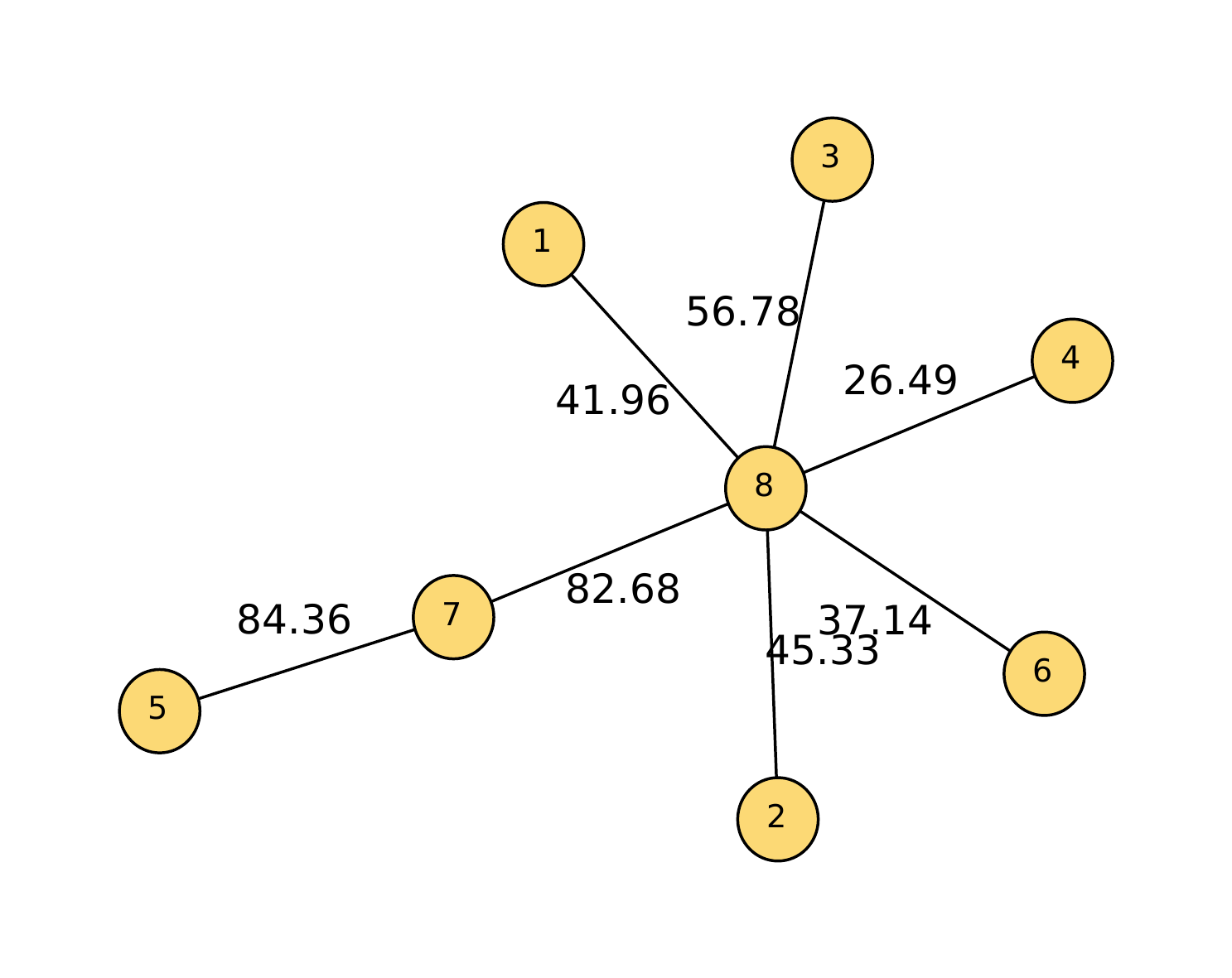}}
	\caption{Optimal networks of eight nodes with maximum algebraic connectivity 
	for the random instances shown in Table \ref{Table:Yalmip_Cplex_n_8_ch2} and the corresponding 
	adjacency matrices in Appendix \ref{ch:appendix}.}
	\label{Fig:Optimal_graphs_n8}
\end{figure}

In $EA_1$, twenty Fiedler vectors of random spanning trees
were used to relax the semi-definite constraint. The sequence of upper
bounds obtained by this algorithm for instance
3 can be seen in Figure \ref{Fig: Cutting_plane_n8_instance3}.
For this instance, the algorithm terminates with
an optimal solution after choosing approximately 150 feasible
spanning trees out of the possible 262144 feasible
solutions which we think is quite reasonable.

The exit criterion used for $EA_2$ can be clearly 
observed in Figure \ref{Fig:Primal_dual_gap_n_8_instance3}. The dual
cost which is also an upper bound on the optimal
solution continuously gets better with the augmentation of
cutting planes and finally exits when the dual
gap goes to zero. In this algorithm, the 
augmented dual problems were solved using the dual-simplex method.

%++++++++++++++++++++++++++++++++++++++++++++++++++;
%  Section: BSDP approach ;
%++++++++++++++++++++++++++++++++++++++++++++++++++;
\subsection{$EA_3$: Algorithm to compute maximum algebraic connectivity }
\label{Sec:BSDP}
The earlier sections dealt with two algorithms which 
synthesized optimal networks with eight nodes in 
a reasonable amount of time and was a huge improvement over the 
existing methods to handle MISDPs. However, the computation time 
for solving nine node problems to optimality was large. 
Therefore, we propose a different approach for finding an optimal solution in this section
by casting the algebraic connectivity problem as the following decision problem:
Is there a connected network $x$ with at most $q$ edges from $E$ such that
the algebraic connectivity of the network is at least equal to a
pre-specified value ($\hat{\gamma}$)?

One of the advantages of posing this question is that the resulting problem
turns out to be a Binary Semi-Definite Problem (BSDP) and correspondingly,
the tools associated with construction of cutting planes
are more abundant when compared to MILPs. Also, with further
relaxation of the semi-definite constraint, 
it can be solved using CPLEX, a high performance solver for ILPs.

The above decision problem can be mathematically posed as a BSDP by marking any vertex (say $r\in V$) in
this graph as a root vertex and then choosing to find a feasible tree
that minimizes the degree of this root vertex 
\footnote{There are several ways to
formulate the decision problem as a BSDP. 
We chose to minimize the degree of a node as it seems to produce reasonably
good feasible solutions in every iteration.}.
In this formulation, the only decision variables would be the binary variables
denoted by $x_e$. Therefore, the resulting BSDP is the following:

{\begin{equation}
	\label{eq:F2_degree}
		\begin{array}{ll}
			 \min & \sum_{e \in \delta(r)} x_e, \\
			 \text{s.t.} & \sum_{e \in E} x_e L_e \succeq \hat{\gamma} (I_n - e_0 \otimes e_0),\\
			& \sum_{e \in E} x_e \leq q, \\
		  &\sum_{e \in \delta(S)} x_e \geq 1, \ \ \forall \ S \subset V, \\
			& x_e \in \{0, 1\}^{|E|}.
		\end{array}
\end{equation}}
where, $\delta(r)$ denotes a cutset defined as
$\delta(r) = \{e=(r,j): \ j \in V \setminus {r} \}$.

If we can solve this BSDP efficiently, then we can use a bisection algorithm
to find an optimal solution that will maximize the algebraic connectivity.
In order to solve the BSDP using CPLEX, we do the following: we first consider the
relaxation of the semi-definite constraint 
by taking a finite subset of the infinite number of
linear constraints from the semi-infinite program, but however add
cutset constraints to ensure that the desired network is always connected.
These cutset constraints defined by the inequalities $$\sum_{e\in \delta(S)} x_e \geq 1, \quad 
 \forall S\subset V,$$ require that there is at least one edge chosen from the cutset of 
any subset $S$ (set of edges from $S$ to its complement $\bar{S}$ in $V$). 
If the solution to the relaxed BSDP does not satisfy the semi-definite constraint, 
we add an eigenvalue cut that ensures that this solution will not be chosen again and then solve 
the augmented but a relaxed BSDP again. This cutting plane procedure is continued until a feasible solution is found.
The idea of this procedure is to construct successively tighter
polyhedral approximations of the feasible set corresponding to the desired
level of algebraic connectivity which is very similar to the procedure discussed in $EA_1$. 
Clearly, the algebraic connectivity of the
feasible solution we have is a lower bound for the original MISDP. 
Hence, we increment the value of $\hat{\gamma}$ to the best known lower bound 
plus an epsilon value and continue to solve the BSDP. This procedure of 
bisection is repeated until the BSDP gets infeasible, which implies that we 
have an optimal solution to the MISDP. The pseudo code of this procedure is 
outlined in Algorithm \ref{Algo:minimize_degree}.
Finding an eigenvalue cut that removes the infeasible solution at each iteration is clearly shown in this
algorithm.

%-----------------------------------------;
%  Algorithm: For minimize degree problem ;
%-----------------------------------------;
\begin{algorithm}[h!]
\caption{\textbf{: $EA_3$ (BSDP approach)}}
%\footnotesize
Let $\mathfrak{F}$ denote a set of cuts which must be
satisfied by any feasible solution
\label{Algo:minimize_degree}
\begin{algorithmic}[1]
		\STATE Input: Graph $G=(V,E,w_e)$, $e\in E$, a root vertex, $r$, and a finite number of Fiedler vectors, $v_i, i=1 \ldots M$
        \STATE Choose a maximum spanning tree as an initial feasible solution, $x^*$
		\STATE $\hat{\gamma} \gets \lambda_2(L(x^*))$
		\LOOP
		\STATE $\mathfrak{F}$$ \gets \emptyset$
		\STATE Solve:
		  {\begin{equation}
				  \begin{array}{ll}
						\min & \sum_{e \in \delta(r)} x_e, \\
						\text{s.t.} & \sum_{e \in E} x_e ({v_i} \cdot L_e {v_i}) \geq \hat{\gamma} \quad \forall i=1,..,M,  \\
					  & \sum_{e \in E} x_e \leq q, \\
					 &\sum_{e \in \delta(S)} x_e \geq 1, \ \ \forall \ S \subset V, \\
					  & x_e \in \{0, 1\}^{|E|}, \\
        			&x_e \ \textrm{satisfies the constraints in }\mathfrak{F}.
				  \end{array}
		  \end{equation}}

		\IF{ the above ILP is infeasible}
				\STATE \textbf{break loop} \COMMENT{$x^*$ is the optimal solution with maximum algebraic connectivity}
		\ELSE   \STATE Let $x^*$ be an optimal solution to the above ILP. Let $\gamma^*$ and $v^*$ be the algebraic connectivity 
		and the Fiedler vector corresponding to $x^*$ respectively.
		\IF{$\sum_{e \in E} x_e^* L_e \nsucceq \gamma^* (I_n - e_0 \otimes e_0)$}
                        %\STATE Find the eigenvector $v^*$ corresponding to a negative eigenvalue of $L(x^*) - \gamma^* (I_n - e_0 \otimes e_0)$.
						\STATE Augment $\mathfrak{F}$ with a constraint $\sum_{e \in E} x_e ({v^*} \cdot L_e {v^*}) \geq {\gamma^*} $.
						\STATE Go to step 6.
				\ENDIF
		\ENDIF
		\STATE $\hat{\gamma} \gets \hat{\gamma} + \epsilon$ \COMMENT{let $\epsilon$ be a small number}
		\ENDLOOP
\end{algorithmic}
\end{algorithm}

%=======================================================;
%  Subsection: Computational results for BSDP approach  ;
%=======================================================;
\subsection{Performance of $EA_3$}
\label{Sec:results2}
All the computations in this section were performed with the same computer specifics as mentioned in 
section \ref{Sec:results1}.
In $EA_3$ (Algorithm \ref{Algo:minimize_degree}), for a given random complete graph,
we chose maximum spanning tree as an initial feasible solution 
since it was computationally inexpensive to evaluate using the standard greedy algorithms.
%Also, we observed that the maximum spanning trees have a reasonably high value
%of algebraic connectivity, though not necessarily optimal ones.  
For the bisection step, we assumed $\epsilon = 0.01$. 

In Table \ref{Table:EA3_n8}, we compare the computational performance
of solving a sequence of BSDPs (in formulation \eqref{eq:F2_degree}) with bisection technique directly using the MATLAB's
SDP solver with the proposed $EA_3$ based on cutting plane method implemented 
in CPLEX. Clearly, the computation time for the $EA_3$ is much faster (46.45 times)
than solving BSDPs directly in MATLAB. For the same set of random instances of eight nodes,
it is also worthy to note that, $EA_3$ performs computationally better
than solving MILPs using basic $EA_1$ and $EA_2$ in CPLEX as indicated
in Table \ref{Table:Yalmip_Cplex_n_8_ch2}.

For the problems with nine nodes, $EA_3$ significantly reduced 
the average computational time from eight hours to around five to six hours.

%-------------------------------------------------------------------;
%  Table: Comparison of proposed degree formulation in Cplex and Matlab   ;
%-------------------------------------------------------------------;
\begin{table}[h!]
%\scriptsize
  \centering
  \caption{Comparison of CPU time to directly solve the BSDPs 
  in bisection procedure using Matlab's SDP solver ($T_1$) with the proposed
  $EA_3$ using CPLEX solver ($T_2$) for networks with eight nodes.}
    \label{Table:EA3_n8}
    {
  \begin{tabular}{cccc} \toprule
    Instance No. & $\lambda_2^*$  & \textbf{$T_1$} & \textbf{$T_2$} \\
	 &  & (seconds)& (seconds)  \\
	\cmidrule(r){1-4}
    1  & 22.8042 &  15729.51  & 254.45  \\
    2  & 24.3207 &  3652.41  & 314.62  \\
    3  & 26.4111 & 3075.72  &  378.42   \\
    4  & 28.6912  & 23794.01 &  420.96  \\
    5  & 22.5051 &  10032.71  &  263.06 \\
    6  & 25.2167 & 20340.92 &  382.28  \\
    7  & 22.8752 & 16717.06 & 484.46   \\
    8  & 28.4397 & 16837.90 & 512.47   \\
    9  & 26.7965 & 44008.42 & 306.56  \\
   10  & 27.4913 & 7366.67 & 204.57 \\ 
	\cmidrule(r){1-4}
	Avg. & &15955.68 & 351.72 \\
   \bottomrule
  \end{tabular}
  }
\end{table}

%=======================================================;
%  Subsection: Computational results for BSDP approach  ;
%=======================================================;
\subsection{Performance of $EA_1$ with an improved relaxation of the semi-definite constraint}
\label{Sec:results3}
In section \ref{Sec:results1}, the performance of $EA_1$ was based on the initial relaxation 
of the semi-definite constraint using the Fiedler vectors of random feasible solutions. 
With such a relaxation, the initial upper bound
can be weak and hence can incur larger time to compute the optimal algebraic connectivity.
However, in section \ref{subsec:UB_1}, we discussed a method to provide tight 
upper bounds by relaxing the semi-definite constraint using the Fiedler vectors 
of feasible solutions with higher values of algebraic connectivity. Therefore, in this section, 
we discuss the performance of $EA_1$
with an improved initial relaxation of the semi-definite constraint 
as discussed in section \ref{subsec:UB_1}.

By choosing thousand Fiedler vectors of good feasible solutions 
to initially relax the semi-definite constraint, the
computation time to obtain optimal solutions are shown 
in Tables \ref{Table:improved_EA1_n8} and \ref{Table:improved_EA1_n9}.
$T_1$ corresponds to the time required to enumerate fifteen thousand
spanning trees from the maximum spanning tree and 
$T_2$ corresponds to $EA_1$'s time to compute optimal solutions with 
an improved initial relaxation.

Based on the results in Table \ref{Table:improved_EA1_n8} for the 
eight nodes problem, the average total computation time of $EA_1$ 
with an improved relaxation is \textit{eight} times faster than
the computation time of $EA_1$ without an improved relaxation. 
Also, the average total computation
time in Table \ref{Table:improved_EA1_n8} is \textit{two} times 
faster than the BSDP approach in $EA_3$.

Based on the results in Table \ref{Table:improved_EA1_n9} for the 
nine nodes problem, the average total computation time 
is around 2.9 hours with the best case instance being 26 minutes.
This is a great improvement compared to the eight hours of 
computation time for $EA_1$ without an
improved relaxation and compared to five hours of computation
time for $EA_3$. The optimal networks with maximum algebraic connectivity 
for networks with nine nodes are shown in Figure \ref{Fig:Optimal_graphs_n9}.

%-------------------------------------------------------------------;
% Performance of EA1 with better outer-approx:  n = 8
%-------------------------------------------------------------------;
\begin{table}[h!]
%\scriptsize
  \centering
  \caption{Performance of $EA_1$ with an improved relaxation of the 
positive semi-definite constraint for networks with eight nodes.}
    \label{Table:improved_EA1_n8}
    {
  \begin{tabular}{ccccc} \toprule
    Instance No. & $\lambda_2^*$  & \textbf{$T_1$} & \textbf{$T_2$} & \textbf{$T_1+T_2$}\\
	 &  & (seconds)& (seconds) & (seconds)  \\
	\cmidrule(r){1-5}
    1  & 22.8042 & 70  & 12 & 82  \\
    2  & 24.3207 & 70  & 142& 212  \\
    3  & 26.4111 & 70  & 9  & 79 \\
    4  & 28.6912 & 70  & 6  & 76 \\
    5  & 22.5051 & 70  & 7  & 77 \\
    6  & 25.2167 & 70  & 86 & 156 \\
    7  & 22.8752 & 70  & 20 & 90  \\
    8  & 28.4397 & 70  & 10 & 80  \\
    9  & 26.7965 & 70  & 39 & 109 \\
   10  & 27.4913 & 70  & 5  & 75  \\ 
	\cmidrule(r){1-5}
	Avg. & & & & 103.6 \\
   \bottomrule
  \end{tabular}
  }
\end{table}

%-------------------------------------------------------------------;
% Performance of EA1 with better outer-approx:  n = 9
%-------------------------------------------------------------------;
\begin{table}[h!]
%\scriptsize
  \centering
  \caption{Performance of $EA_1$ with an improved relaxation of the 
positive semi-definite constraint for networks with nine nodes.}
    \label{Table:improved_EA1_n9}
    {
  \begin{tabular}{ccccc} \toprule
    Instance No. & $\lambda_2^*$  & \textbf{$T_1$} & \textbf{$T_2$} & \textbf{$T_1+T_2$}\\
	 &  & (seconds)& (seconds) & (seconds)  \\
	\cmidrule(r){1-5}
    1  & 28.2168 & 170  & 4295 & 4465  \\
    2  & 26.3675 & 170  & 8093 & 8263  \\
    3  & 29.8184 & 170  & 5377   & 5547 \\
    4  & 25.8427 & 170  & 32788   & 32958 \\
    5  & 24.2756 & 170  & 8880   & 9050 \\
    6  & 30.0202 & 170  & 3981  & 4151 \\
    7  & 25.6410 & 170  & 20458  & 20628  \\
    8  & 26.9705 & 170  & 13796  & 13966  \\
    9  & 33.5068 & 170  & 2908  & 3078 \\
   10  & 31.7445 & 170  & 1417   & 1587  \\ 
	\cmidrule(r){1-5}
	Avg. & & & & 10369.3 \\
   \bottomrule
  \end{tabular}
  }
\end{table}
%-----------------------------------------------------;
%  Figure 1: Optimal graphs for random instance - n=9 ;   
%-----------------------------------------------------;
\begin{figure}[htp]
	\centering
	\subfigure[$\gamma^*$ = 28.2168]{
	\includegraphics[scale=0.30]{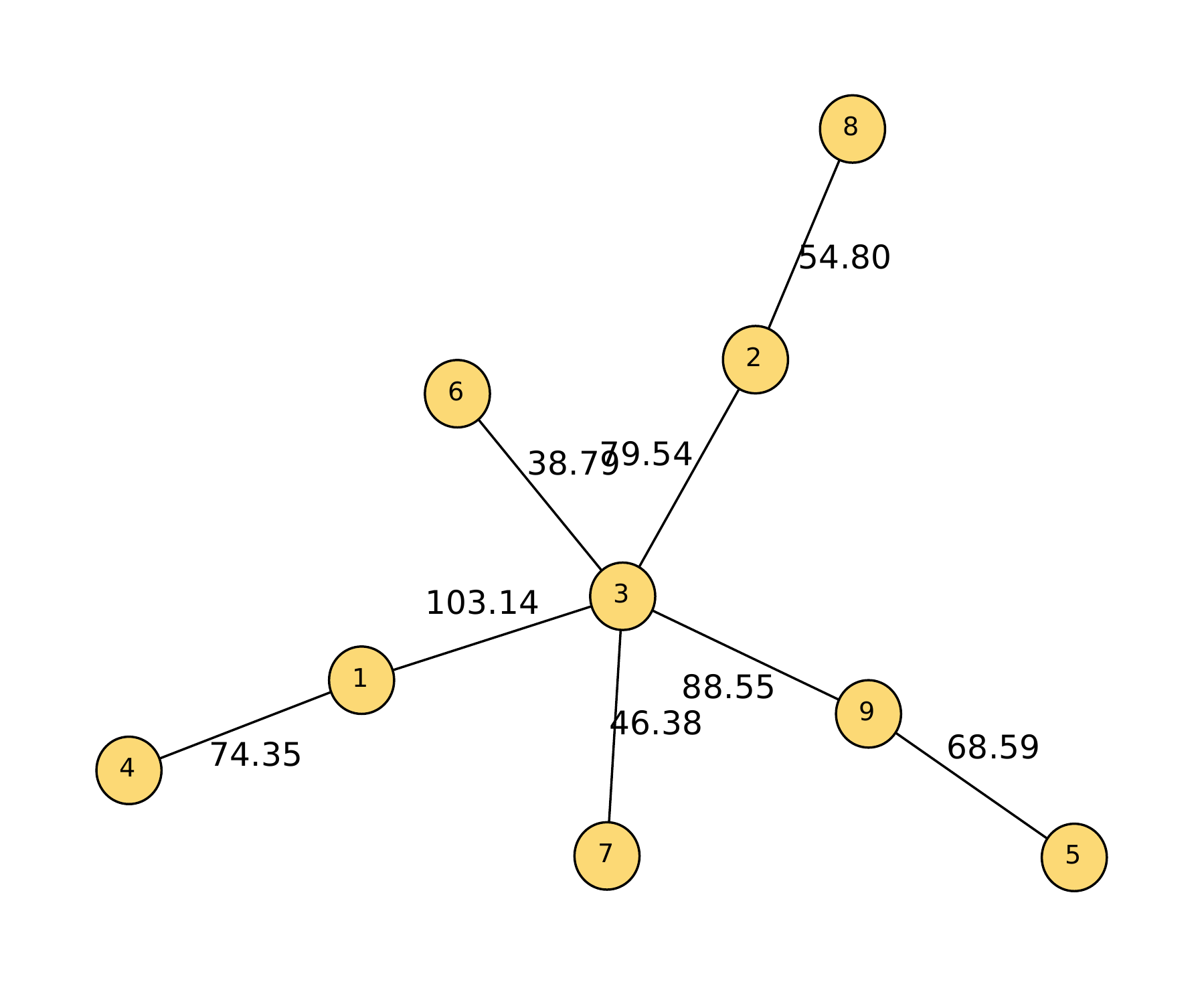}}
	\subfigure[$\gamma^*$ = 26.3675]{
	\includegraphics[scale=0.30]{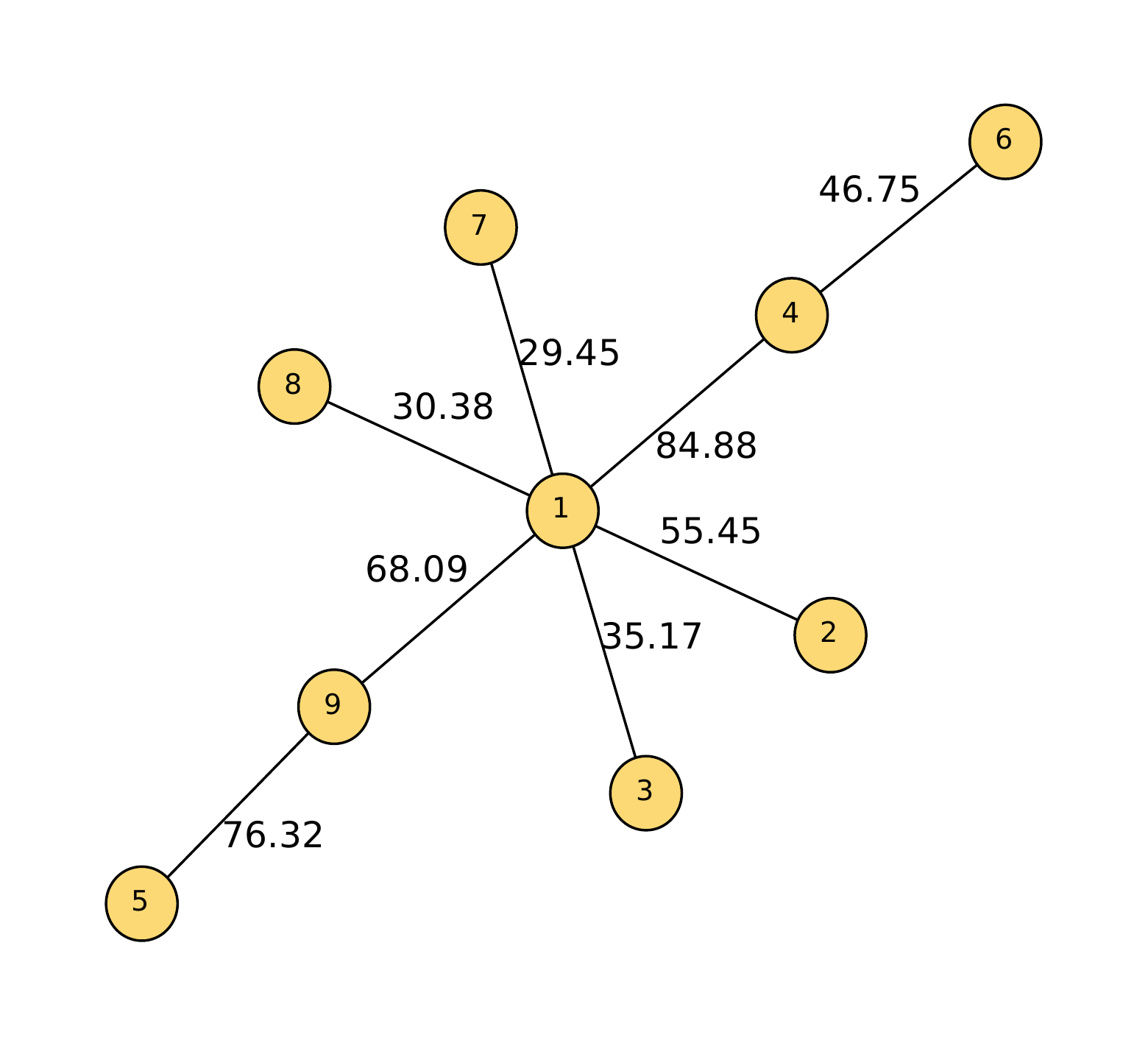}}
	\subfigure[$\gamma^*$ = 29.8184]{
	\includegraphics[scale=0.30]{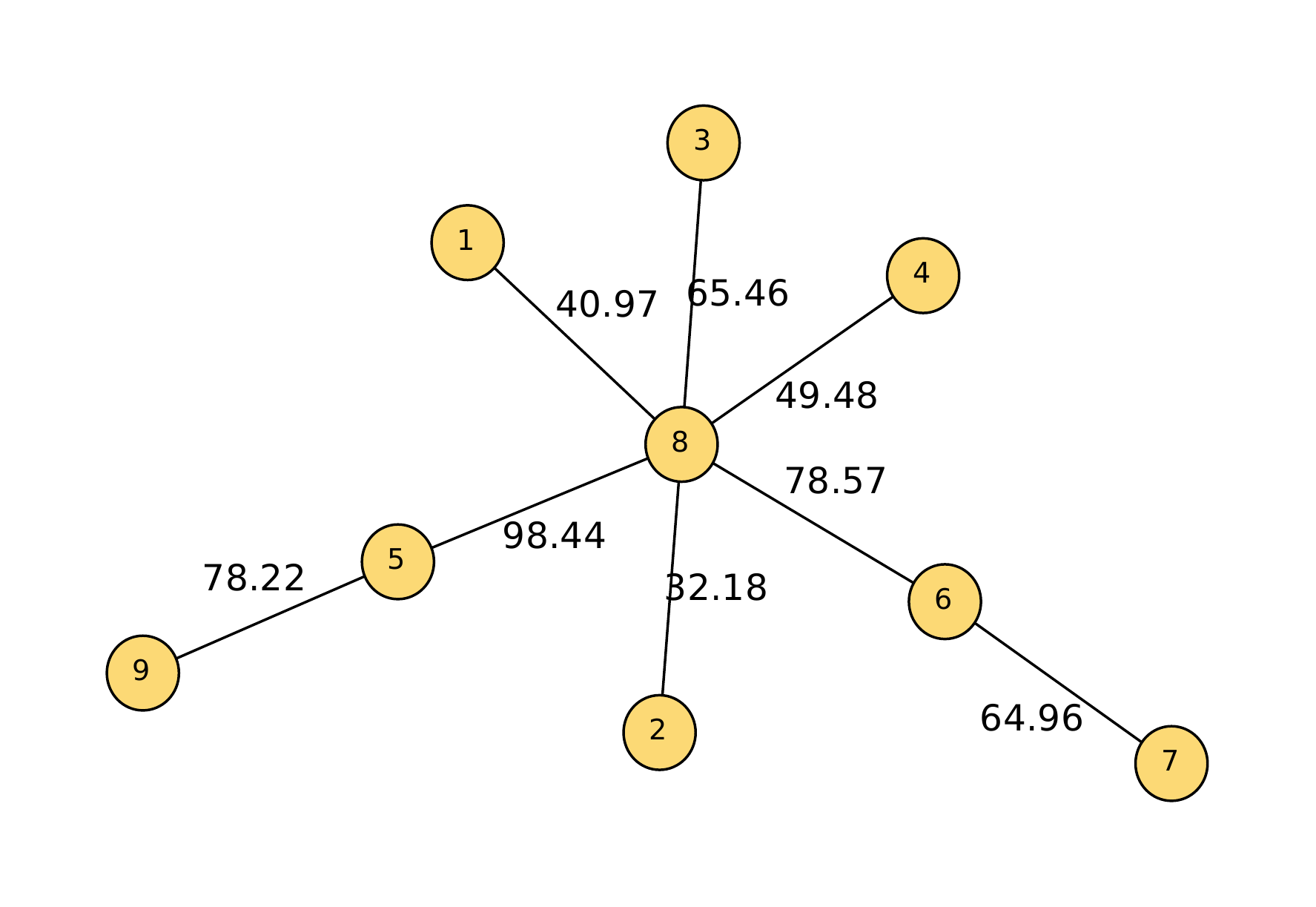}}
	\subfigure[$\gamma^*$ = 25.8427]{
	\includegraphics[scale=0.30]{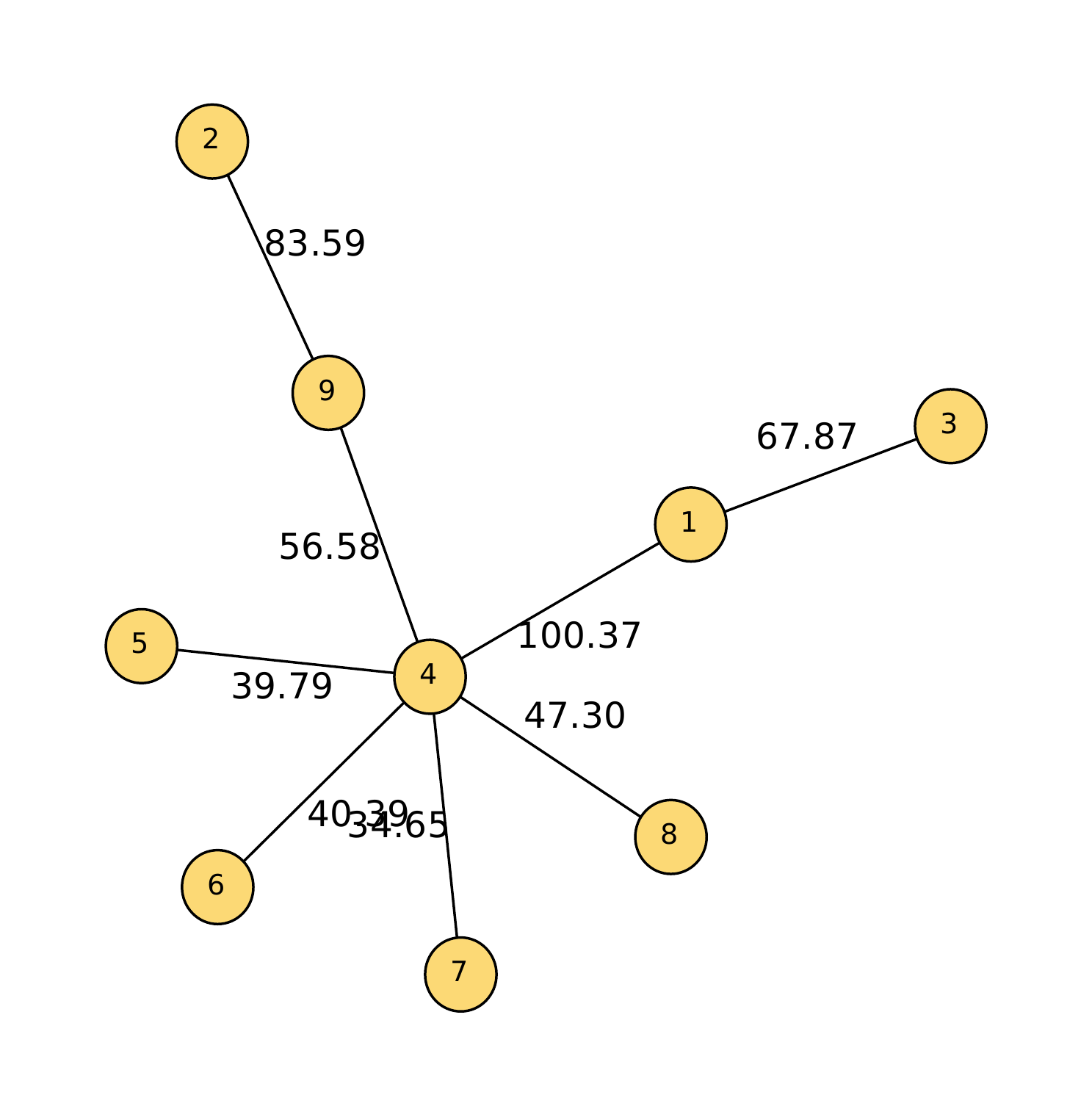}}
	\subfigure[$\gamma^*$ = 24.2756]{
	\includegraphics[scale=0.30]{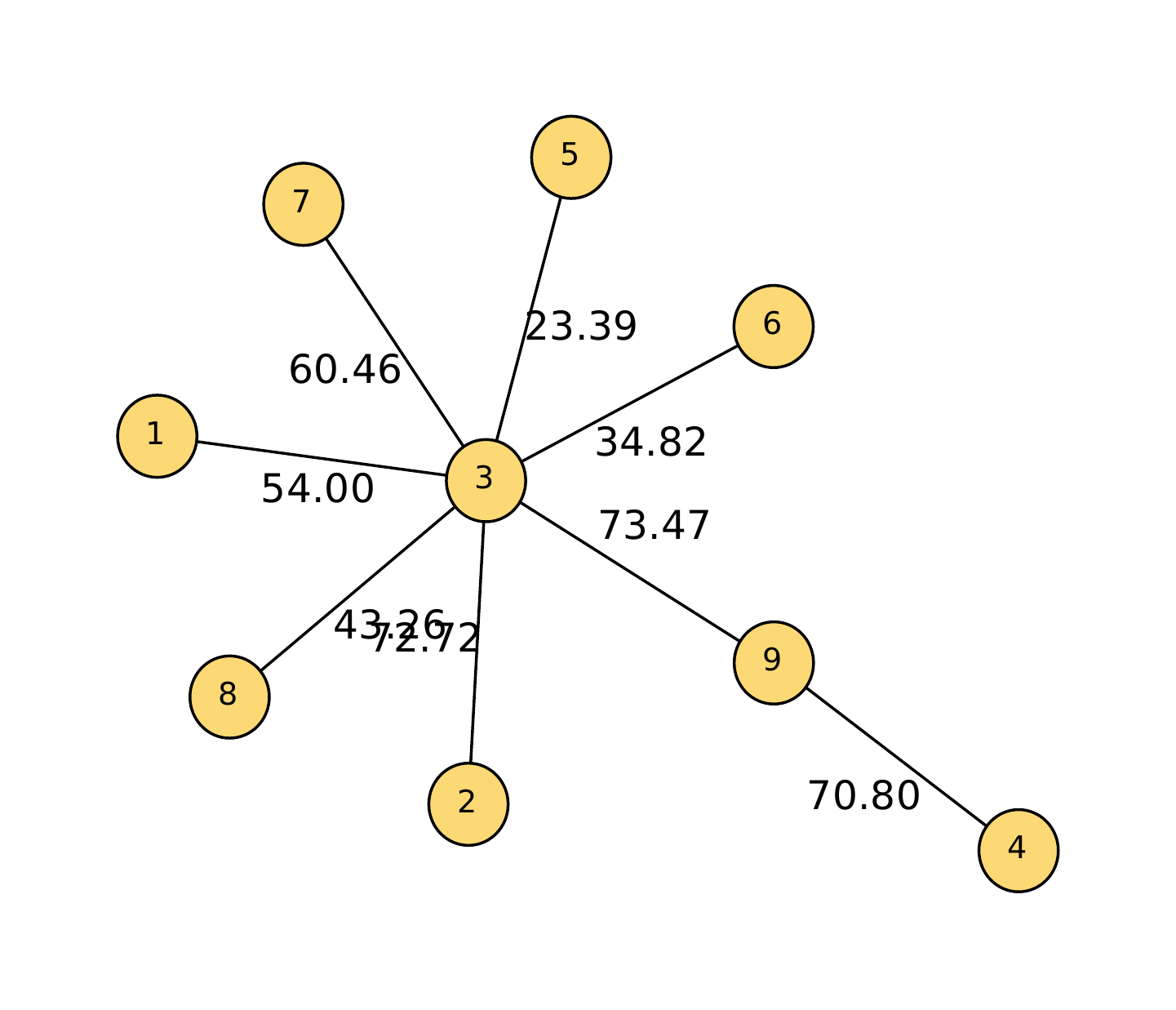}}
	\subfigure[$\gamma^*$ = 30.0202]{
	\includegraphics[scale=0.30]{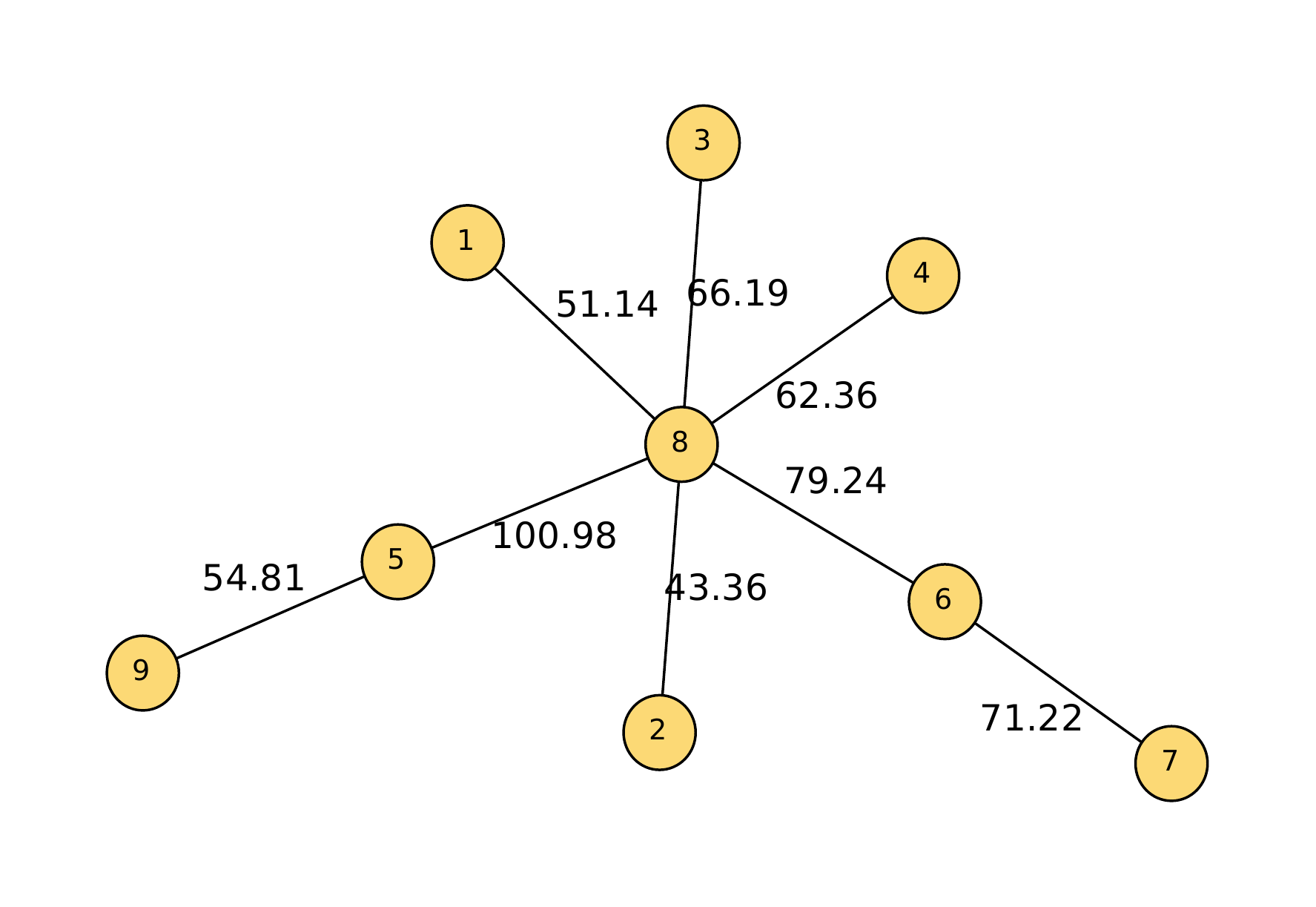}}
	\subfigure[$\gamma^*$ = 25.6410]{
	\includegraphics[scale=0.30]{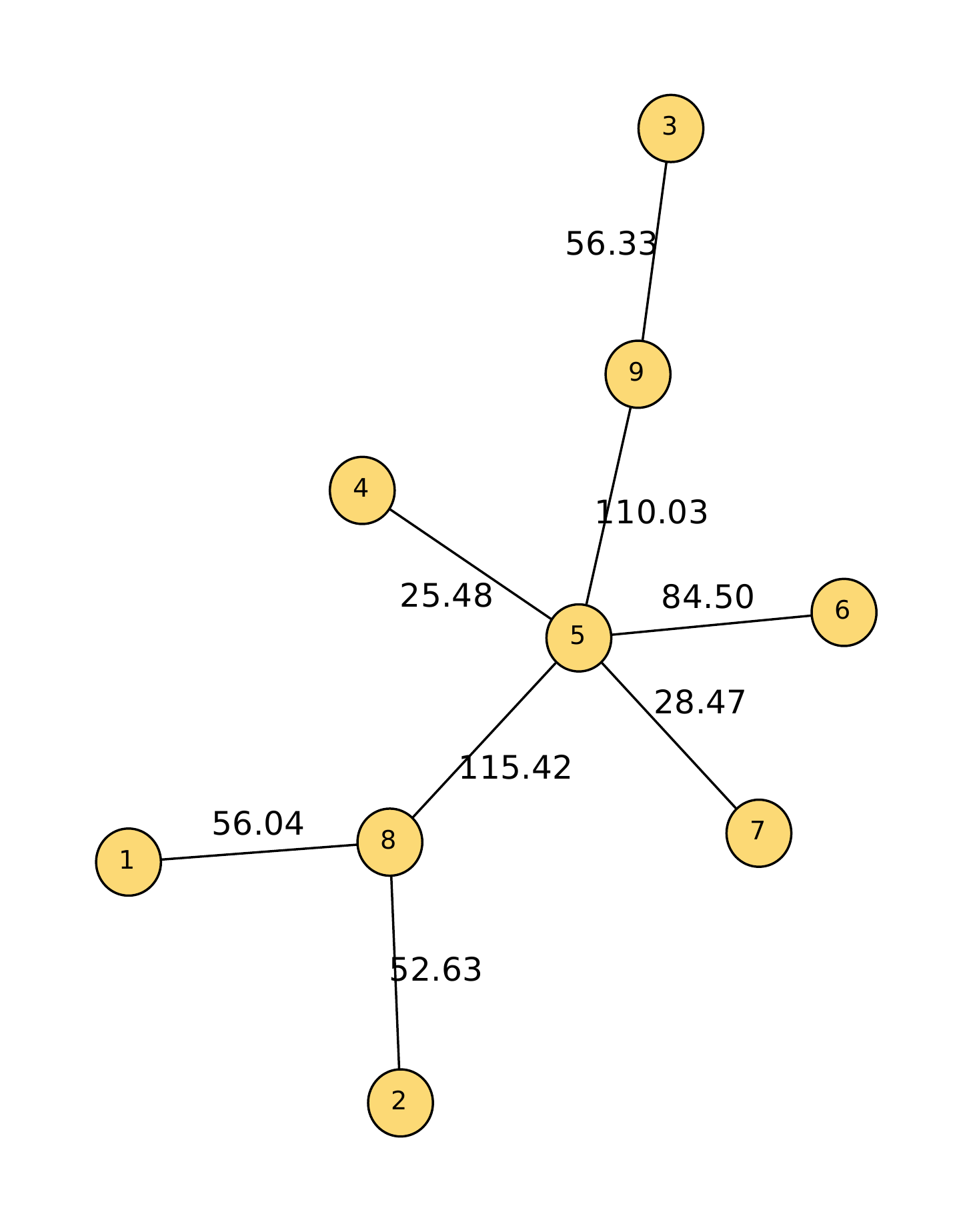}}
	\subfigure[$\gamma^*$ = 26.9705]{
	\includegraphics[scale=0.30]{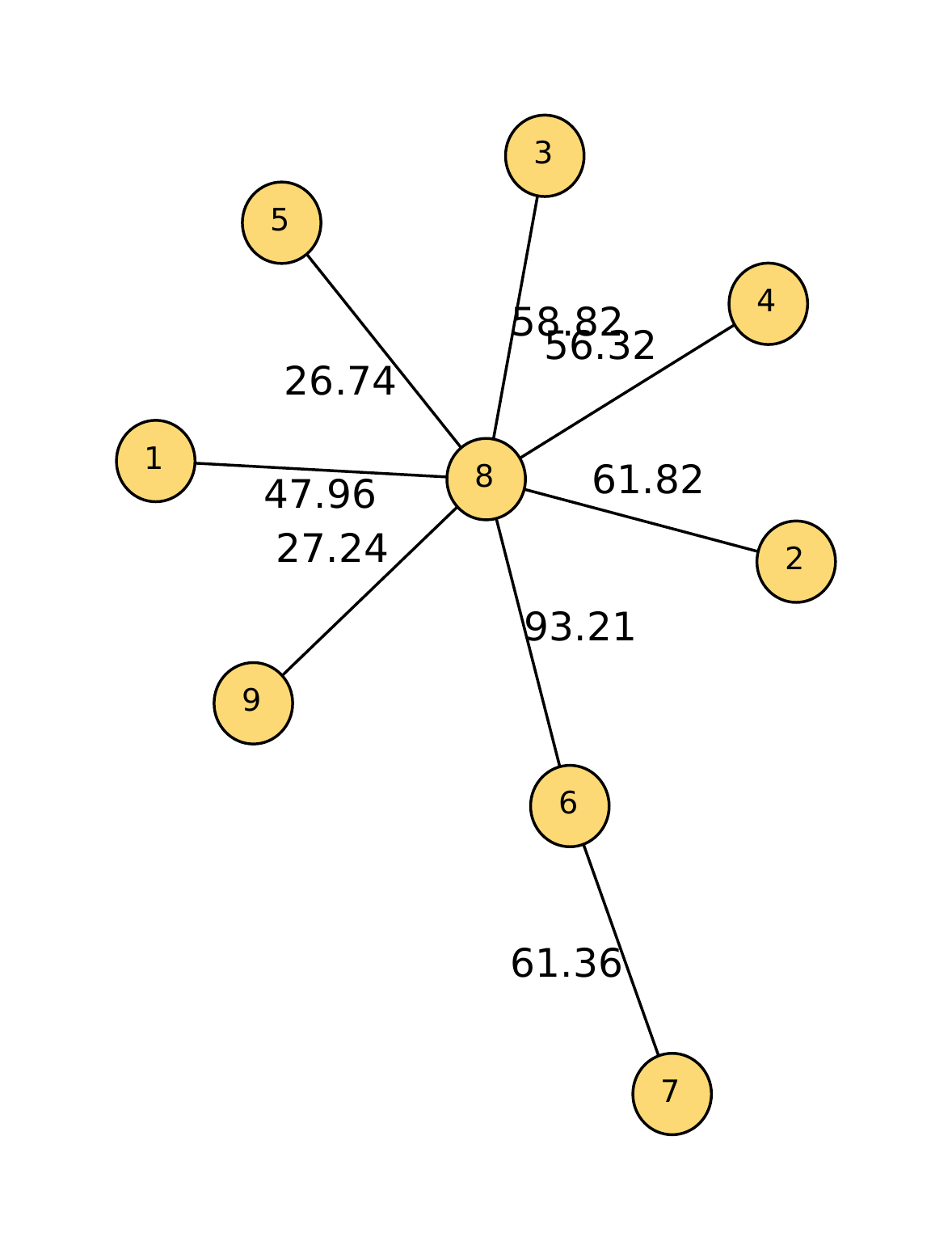}}
	\subfigure[$\gamma^*$ = 33.5068]{
	\includegraphics[scale=0.30]{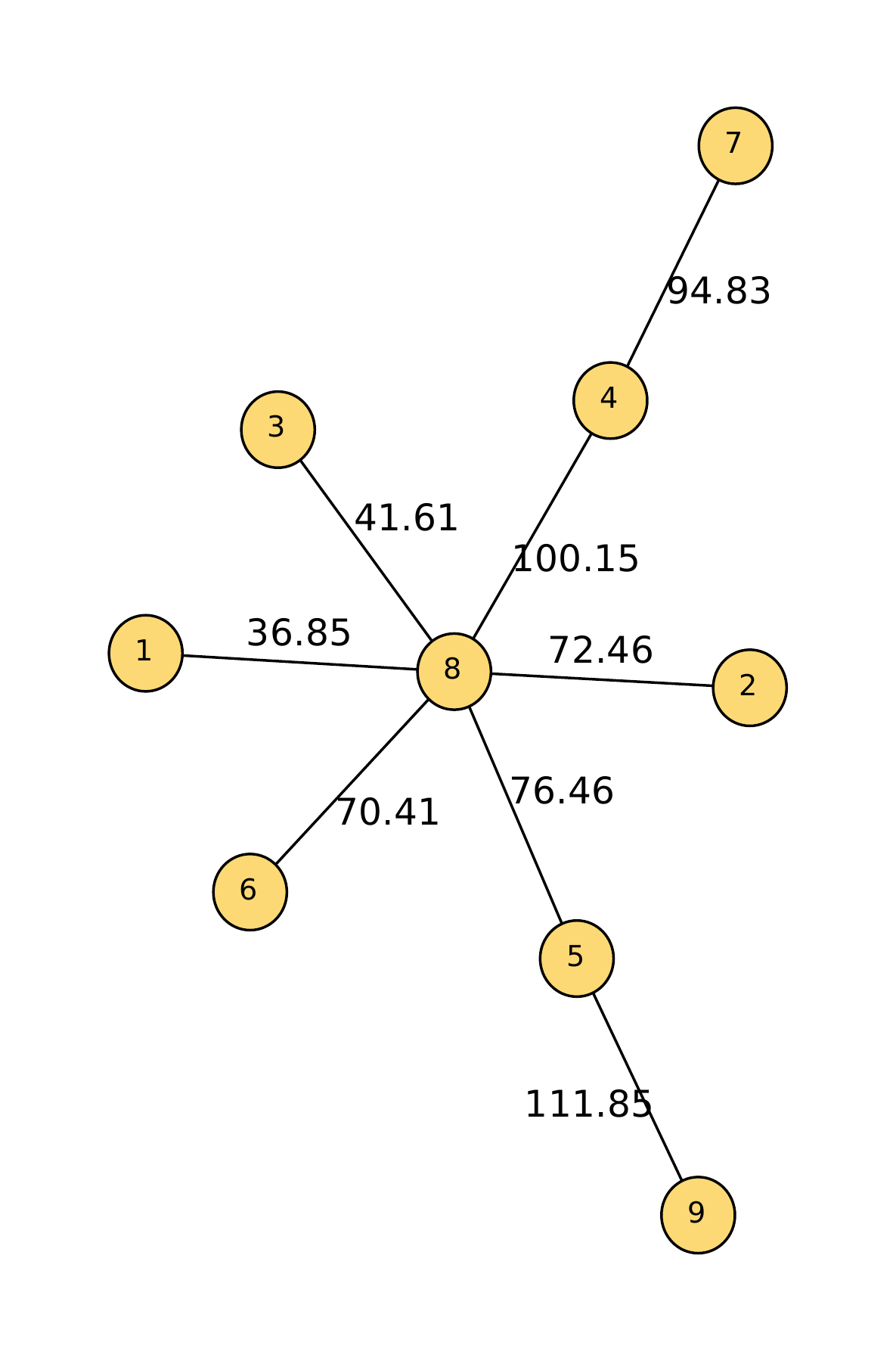}}
	\subfigure[$\gamma^*$ = 31.7445]{
	\includegraphics[scale=0.30]{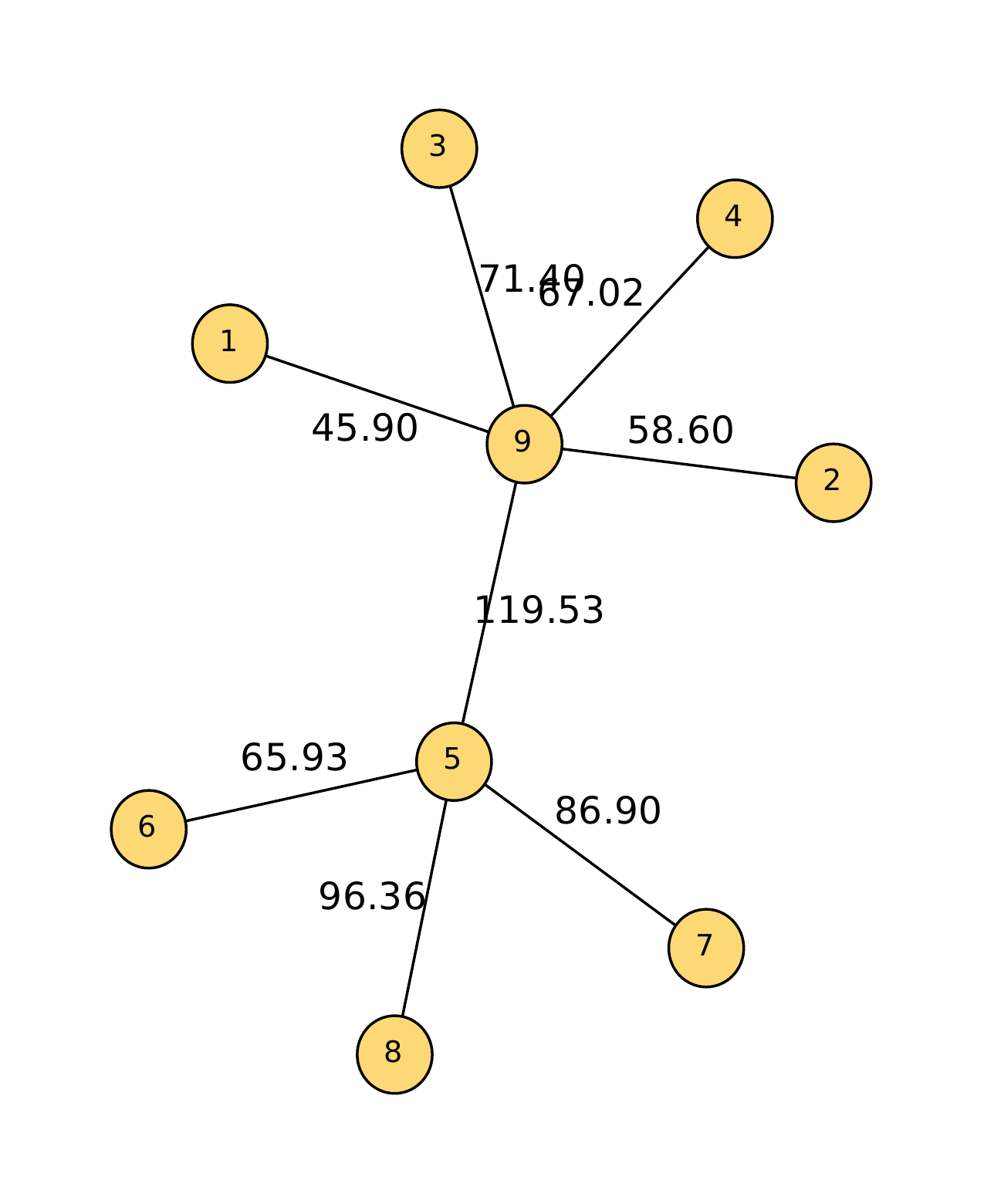}}
	\caption{Optimal networks of nine nodes with maximum algebraic connectivity 
	for the random instances shown in Table \ref{Table:improved_EA1_n9} and the corresponding 
	adjacency matrices in Appendix \ref{ch:appendix}.}  
	\label{Fig:Optimal_graphs_n9}
\end{figure}

%================================================;
%  Section: Neighborhood search heuristics		  ;
%================================================;
\section{Neighborhood search heuristics}
\label{Sec:heuristics}
A general approach to developing heuristics for NP-hard problems primarily involves the 
following two phases: a) Design of algorithms, also known as construction heuristics, 
that can provide an initial feasible solution for the problem,
and b) Design of a systematic procedure, also known as improvement heuristics, 
to iteratively modify this initial feasible solution to improve
its quality. 
Since the feasible solutions discussed in this dissertation mainly concern the 
construction of spanning trees, development 
of a construction heuristic for the proposed problem is quite trivial. 
However, it is non-trivial to improve a feasible solution to obtain another feasible solution
with better algebraic connectivity. 

In section \ref{Sec:BSDP}, we discussed an exact algorithm based on the BSDP
approach wherein every iteration of the bisection, we are guaranteed to obtain
a feasible solution with algebraic connectivity greater than or equal to a pre-specified value. 
However, the main drawback of this approach was the solving of MILPs of increasing 
complexity in every iteration without
any guarantee on a finite computation time.

However, there are several improvement heuristics available in the
literature for sequencing problem and traveling salesman type problems.
Some of them include neighborhood search methods, tabu search
\cite{gendreau1994tabu} and even genetic algorithms \cite{potvin1996genetic}. 
In this section, we focus on developing quick improvement heuristics for the 
problem of maximizing algebraic connectivity based on neighborhood search methods. 
The remainder of this section is organized as follows: we initially develop
$k$-opt heuristic and later an improved $k$-opt wherein the size of the search space in
the neighborhood of a feasible solution is reduced significantly. We conclude the section 
with the computational performance of the proposed heuristics for networks up to sixty nodes.

%++++++++++++++++++++++++++++++++++++++++++++++++++;
%  SubSection: K-opt ;
%++++++++++++++++++++++++++++++++++++++++++++++++++;
\subsection{$k$-opt heuristic}
\label{Subsec:kopt}

We consider a neighborhood search heuristic called 2-opt exchange heuristic,
a special case of a more general $k$-opt heuristic which
has been successfully used for solving traveling salesman problems \cite{Croes1958}.
We extend the idea of this heuristic to solve the problem of maximizing
algebraic connectivity. This  heuristic can also be easily extended to the 
problem of maximizing algebraic connectivity with resource constraints 
which will be discussed in the later sections. This section is primarily based on 
\cite{nagarajan2014heuristics}.

Any new feasible solution $\mathcal{T}_2$ for this
problem is said to be in the $k$-exchange neighborhood of a feasible solution
$\mathcal{T}_1$ if $\mathcal{T}_2$ is obtained by replacing
$k$ edges in $\mathcal{T}_1$. In case of 2-opt,
we start with a feasible solution,
which is a spanning tree satisfying the resource constraints, and iteratively perform
2-opt exchanges for every pair of edges in the initial spanning tree
until no such exchanges can be made while improving the solution. 
A 2-opt exchange on one such pair
of edges of a random spanning tree is shown in
Figure \ref{Fig:2_opt_trees}. An important issue to be addressed is to make
sure that the solutions resulting after 2-opt exchanges are also feasible.
Ensuring feasibility in the case of spanning trees is relatively easy as 
after removing 2 edges, we are guaranteed to have 3 connected
components ($C_1,C_2,C_3$); therefore, by suitably adding any 2 edges
connecting all the 3 components, one is guaranteed to 
obtain a spanning tree. 

%---------------------------------;
%   Figure :	2-opt exchange	;
%---------------------------------;
\begin{figure}[h!]
	\centering
	 \subfigure[Sample graph]{
	\includegraphics[scale=0.96]{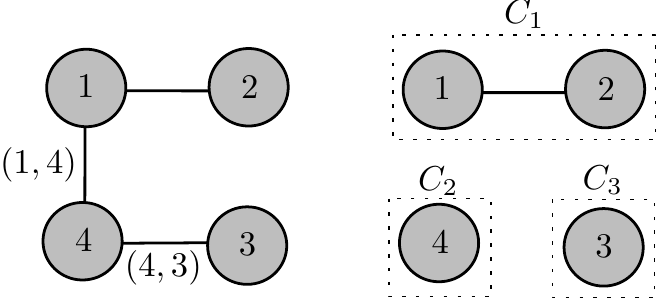}}
	\subfigure[2-opt feasible solutions]{
	\includegraphics[scale=0.96]{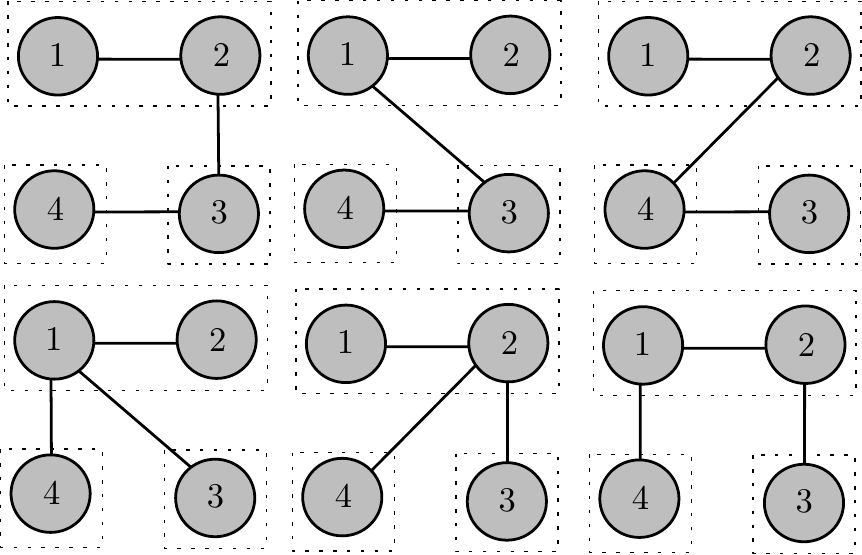}}
	\caption{This figure illustrates the 2-opt heuristic on an initial feasible solution, $\mathcal{T}_0$.
	After removing a selected pair of edges $\{(1,4)(4,3)\}$ from $\mathcal{T}_0$, the three connected components
   are shown in (a). Part (b) shows the 2-opt exchange on the connected components to obtain new feasible solutions. Source \cite{nagarajan2014heuristics}}
	\label{Fig:2_opt_trees}
\end{figure}

The new spanning tree (say, $\mathcal{T}_{2}$)
is considered for replacing the initial spanning tree ($\mathcal{T}_{1}$)
if it has a better algebraic connectivity than $\mathcal{T}_{1}$.
A pseudo-code of 2-opt heuristic is given in Algorithm \ref{algo:2opt_1}.
Note that this heuristic performs a 2-opt exchange on a given initial spanning
tree to obtain a new tree with better algebraic connectivity satisfying the
resource constraint. This procedure is 
repeated on the current feasible solution iteratively 
until no improvement is possible.

%-----------------------------------------------------------;
%  Algorithm:2-opt exchange for maximizing alg connectivity ;
%-----------------------------------------------------------;
\begin{algorithm}[!h]
%\footnotesize
    \begin{algorithmic}[1]
    \STATE $\mathcal{T}_0 \leftarrow$ Initial feasible solution
	 \STATE $\lambda_0 \leftarrow$ $\lambda_2(L(\mathcal{T}_0)))$
        \FOR{each pair of edges $\{(u_1,v_1),(u_2,v_2)\} \in \mathcal{T}_0$}
            \STATE Let $\mathcal{T}_{opt}$ be the best spanning tree in the 2-exchange neighborhood of $\mathcal{T}_{0}$ obtained by replacing edges $\{(u_1,v_1),(u_2,v_2)\}$ in $\mathcal{T}_{opt}$ with a different pair of edges.
				\IF {$\lambda_2(L(\mathcal{T}_{opt})) > \lambda_0$ and $\mathcal{T}_{opt}$ satisfies the resource constraint}
                \STATE $\mathcal{T}_0 \leftarrow \mathcal{T}_{opt} $
                \STATE $\lambda_0 \leftarrow \lambda_2(L(\mathcal{T}_{opt}))$
				\ENDIF
        \ENDFOR
    \STATE $\mathcal{T}_0$ is the best spanning tree in the solution space
	 with respect to the initial feasible solution
    \end{algorithmic}
\caption{: 2-opt exchange heuristic}
\label{algo:2opt_1}
\end{algorithm}

%++++++++++++++++++++++++++++++++++++++++++++++++++;
%  Subsection: Improved K-opt ;
%++++++++++++++++++++++++++++++++++++++++++++++++++;
\subsection{Improved $k$-opt heuristic}
\label{Subsec:kopt2}

%---------------------------------;
%   Figure :	2-opt exchange	;
%---------------------------------;
\begin{figure}[!h]
	\centering
	\includegraphics[scale=1.3]{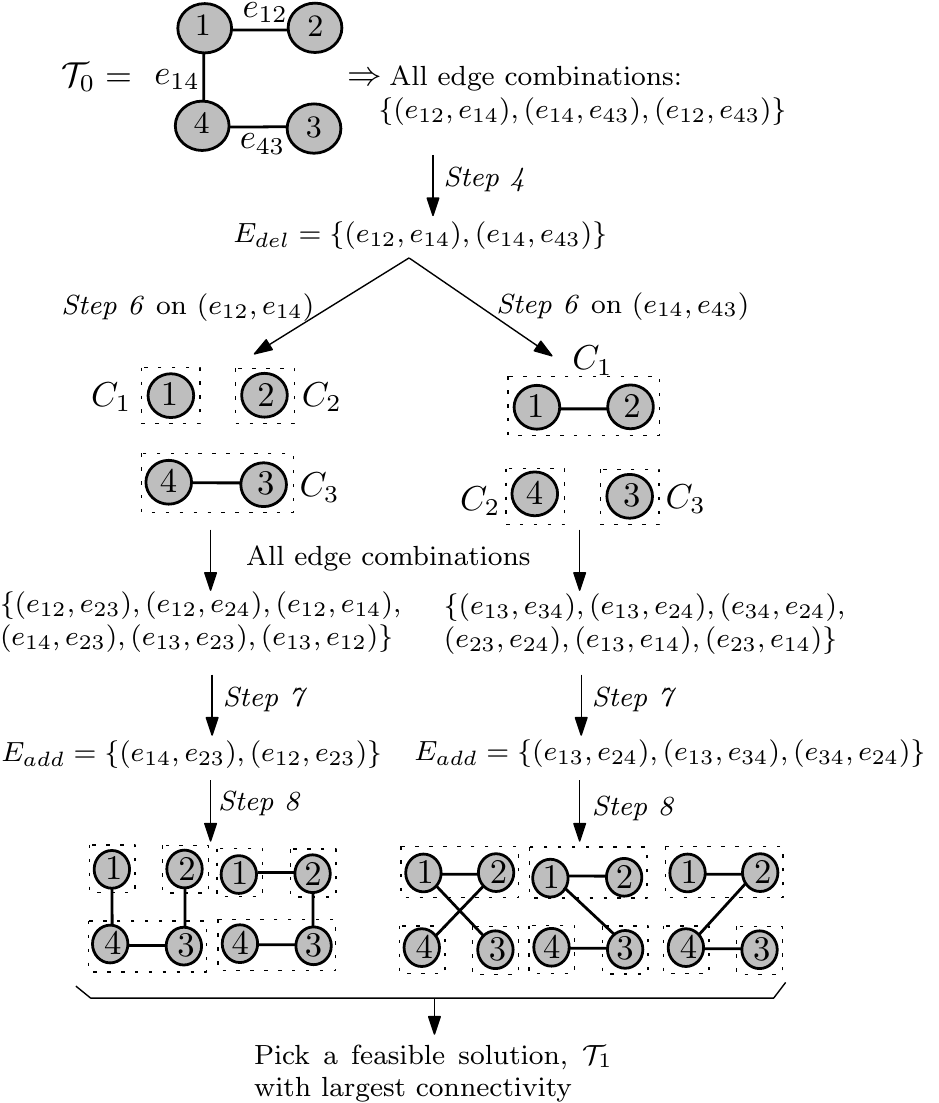}
	\caption{An example illustrating an improved 2-opt exchange heuristic for a network of 4 nodes. Source \cite{nagarajan2014heuristics}}
	\label{Fig:2_opt_trees}
\end{figure}

%-----------------------------------------------------------;
%  Algorithm:k-opt exchange for maximizing alg connectivity ;
%-----------------------------------------------------------;
\begin{algorithm}[h!]
	%\small
	\begin{algorithmic}[1]
	%	\STATE Input: $D$ (positive integer)
		\STATE $\mathcal{T}_0 \leftarrow$ Initial feasible solution	
		\STATE $\lambda_0 \leftarrow$ $\lambda_2(L(\mathcal{T}_0)))$
		\STATE $E_{del} \leftarrow$ Subset of $k$-edge combinations considered for possible deletion as obtained by the edge ranking procedure
		\FOR{each edge combination in $E_{del}$}
		\STATE Delete the $k$ edges present in the edge combination to obtain connected components $C_1,C_2,C_3,\ldots, C_{k+1}$
		\STATE $E_{add} \leftarrow$ Subset of $k$-edge combinations considered for possible addition as obtained by the edge ranking procedure
		\STATE Let $\mathcal{T}_{1}$ be the spanning tree which is feasible and has the largest connectivity obtained from adding the edges in an edge combination from $E_{add}$
				\IF {$\lambda_2(L(\mathcal{T}_{opt})) > \lambda_0$ and $\mathcal{T}_{opt}$ satisfies the resource constraint}
		\STATE $\mathcal{T}_0 \leftarrow \mathcal{T}_{1} $
		\STATE $\lambda_0 \leftarrow \lambda_2(L(\mathcal{T}_{1}))$
		\ENDIF
		\ENDFOR
		\STATE Output $\mathcal{T}_0$ as the (new) current solution
	\end{algorithmic}
	\caption{: $k$-opt exchange}
	\label{algo:2opt_2}
\end{algorithm}

As discussed in section \ref{Subsec:kopt}, 
there are two main steps in the $k$-opt exchange heuristic: 
Choosing a collection of $k$ edges to remove from the current solution and then reconnecting 
the resulting, disjoint components with a new collection of $k$ edges. Clearly, there are 
several combinations of $k$ edges that can be removed from ($or$ added to) a given solution, 
especially when the number of nodes in the graph is large. For example, while performing
3-opt heuristic on a network of 40 nodes with four connected components after deleting 
any three edges, there would be at least 16,000 combinations of edges which can be connected to 
form a spanning tree. Therefore, choosing an efficient procedure for the deletion and addition of edges is 
critical for developing a relatively fast algorithm.  In the following subsections,
we provide procedures for implementing these steps.

%\subsubsection{Selecting a collection of $k$-edge combinations to delete}
%\label{sub:edge_deletion_step}
\vspace{0.5cm}
\noindent 
\textbf{Selecting a collection of $k$-edge combinations to delete:}
The basic idea here is to list all the possible combinations of $k$ edges that 
can be deleted from the current feasible solution, assign a value for each 
combination, rank the combinations based on these values, and then choose a 
subset of these combinations for further processing. We assign a value to a 
combination of edges by first asking the following basic question: Which are 
the $k$ edges that needs to be deleted from the current solution 
$\mathcal{T}_0$ so that $\mathcal{T}_0$ (possibly) incurs the smallest 
reduction in the algebraic connectivity? To answer this question, let 
$\mathcal{T}_0 \setminus \{e\}$ denote the graph obtained
by deleting an edge $e$ from the graph $\mathcal{T}_0$ and by abuse of notation,
let the Laplacian of the graph, $\mathcal{T}_0 \setminus \{e\}$ be denoted
by $L(\mathcal{T}_0 - e)$. By variational characterization, we have the
following inequality:

\begin{subequations}
	\label{eq:edge_del}
	\begin{align}
		\lambda_2(L(\mathcal{T}_0-e)) &\leq v' L(\mathcal{T}_0-e) v \ ~\forall v\in \mathcal{V} \\
		&= v' L(\mathcal{T}_0) v -  v' L_e v \ ~\forall v\in \mathcal{V} \\
		&= v' L(\mathcal{T}_0) v -  w_{e}(v_i-v_j)^2 \ ~\forall v\in \mathcal{V}
	\end{align}
\end{subequations}
where, $\mathcal{V} := \{v:  \sum_i v_i = 0, \|v\|=1 \}$,
$w_{e}$ is the weight of edge $e=(i,j)$ and
$v_i$ represents the $i^{th}$ component of the vector $v$. One may
observe from the above inequality that by choosing an edge with
a minimum value of $w_{e}(v_i-v_j)^2$, the upper bound on the algebraic connectivity
of the graph, $\mathcal{T}_0 \setminus \{e\}$, is kept as high as possible.
Also, we numerically observed that, $w_{e}(v_i-v_j)^2$ was kept to a minimum
by choosing $v$ as the eigenvector corresponding to the maximum eigenvalue 
of $L(\mathcal{T}_0)$. Hence, for any combination of $k$ edges denoted by 
$S$, we assign a value given by $\sum_{e=(i,j)\in S} w_{e}(v_i-v_j)^2$. 
Then, we rank all the combinations based on the increasing values and 
choose a subset of these combinations that corresponds to the lowest values. 
In this work, the fraction of combinations that is considered for deletion 
is specified through a parameter called the edge deletion factor. 
The edge deletion factor is defined as the ratio of the number of 
$k$-edge combinations considered for deletion to the maximum number 
of possible $k$-edge combinations ($i.e.$, $\binom{n-1}{k}$). We 
will discuss more about this factor later in section \ref{Subsec:ch4_results}.

%\subsubsection{Selecting a collection of $k$-edge combinations to add}
%\label{sub:edge_addition_step}
\vspace{0.5cm}
\noindent 
\textbf{Selecting a collection of $k$-edge combinations to add:}
In the case of spanning trees, after removing $k$ edges, we are 
guaranteed to have a graph $\tilde{\mathcal{T}}_0$ with exactly $k+1$ 
connected components $\{C_1,C_2,C_3,\ldots, C_{k+1}\}$; therefore,
by suitably adding a collection of $k$ edges connecting all the $k+1$ 
components in $\tilde{\mathcal{T}}_0$,
one is guaranteed to obtain a spanning tree, $\mathcal{T}_1$. Also, we 
add these edges while ensuring that the resulting tree satisfies the 
diameter constraints. The new feasible solution $\mathcal{T}_{1}$
is considered for replacing $\mathcal{T}_{0}$ if it has a larger algebraic 
connectivity than $\mathcal{T}_{0}$.

As in the edge-deletion procedure, checking for every addition of $k$ 
edges may be computationally intensive for large instances. Therefore, 
we develop another edge ranking procedure for adding edges as follows: 
Let $\tilde{\mathcal{T}}_0 \cup \{e\}$ denote the graph obtained
by adding an edge $e=(i,j)$ to the graph $\tilde{\mathcal{T}}_0$ and
let the Laplacian of the graph,  $\tilde{\mathcal{T}}_0 \cup \{e\}$, be denoted
by $L(\tilde{\mathcal{T}}_0  + e)$. By variational characterization, we have the
following inequality:

\begin{subequations}
	\label{eq:edge_add}
	\begin{align}
		\lambda_2(L(\tilde{\mathcal{T}}_0+e)) &\leq v' L(\tilde{\mathcal{T}}_0+e) v \ ~ \forall v\in \mathcal{V} \\
		&= v' L(\tilde{\mathcal{T}}_0) v +  v' L_e v \ ~ \forall v\in \mathcal{V} \\
		&= v' L(\tilde{\mathcal{T}}_0) v +  w_{e}(v_i-v_j)^2 \ ~ \forall v\in \mathcal{V} .
	\end{align}
\end{subequations}

One may observe from the above inequality that by choosing an edge with
a maximum value of $w_{e}(v_i-v_j)^2$, the upper bound on the algebraic connectivity
of the graph, $\tilde{\mathcal{T}}_0 \cup \{e\}$ is kept as high as possible.
Just like the edge deletion step, let $v$ be the
eigenvector corresponding to the maximum eigenvalue of $L(\tilde{\mathcal{T}}_0)$.
Hence, for any combination of $k$ edges denoted by $S$, we assign a value 
given by $\sum_{e=(i,j)\in S} w_{e}(v_i-v_j)^2$. Then, we rank all the 
combinations based on decreasing values and choose a subset of these 
combinations that corresponds to the highest values. The number of 
combinations that are considered for addition is another parameter 
and can be specified based on the problem instances. 

A pseudo-code of the $k$-opt exchange is outlined in Algorithm \ref{algo:2opt_2}.
An illustration of such a procedure on one such pair ($k=2$) of edges 
for a spanning tree with 4 nodes is shown in Figure \ref{Fig:2_opt_trees}. 
This exchange is iteratively applied on the current solution until no improvements can be made.

%++++++++++++++++++++++++++++++++++++++++++++++++++;
%  SubSection: Computational results for k-opt heuristics ;
%++++++++++++++++++++++++++++++++++++++++++++++++++;
\subsection{Performance of $k$-opt and improved $k$-opt heuristic}
\label{Subsec:ch4_results}

All the computations in this section were performed with the same 
computer specifics as mentioned in section \ref{Sec:results1}.

In this section, we performed all the simulations for the case of $k$ equal to two and 
three, which we shall refer as 2-opt and 3-opt heuristics. The 2-opt 
(Algorithm \ref{algo:2opt_1}) and improved 2-opt 
heuristics (Algorithm \ref{algo:2opt_2}) were implemented in Matlab 
since the heuristics terminated in a reasonable
amount of time. But, we implemented the improved 3-opt heuristic 
(Algorithm \ref{algo:2opt_2}) in \texttt{C++} programming language 
which could handle up to sixty nodes in a reasonable amount of time. 

% For the simulation purposes, we just considered the diameter constraint 
% as a constraint on the resource and limited the diameter of the feasible solutions to four ($D=4$)
% in order to avoid trivial feasible solutions in the form of star graphs.

\vspace{0.5cm}
\noindent \textbf{Construction of initial feasible solution:}
Since the primary idea of the $k$-opt heuristic is to search in the 
neighborhood space of an initial feasible solution, it would be important 
to construct a good initial feasible solution. As we saw in the previous section
on exact algorithms (Figure \ref{Fig:Optimal_graphs_n8}), the networks with maximum algebraic
connectivity tend to be clustered and are low in diameter. With this intuition, the
procedure to construct an initial feasible solution is as follows: 
For a given complete weighted graph with $n$ nodes, 
sort the $n$ possible star graphs (two diameter graphs) in the decreasing 
order of the sum of the weights of the edges incident on the internal node (weighted degree) 
of the graph. After sorting, we observed that, performing $k$-opt exchange on
five of these ranked star graphs provided a great improvement in the
algebraic connectivity.

\vspace{0.5cm}
\noindent \textbf{Selecting edge deletion and edge addition factor:}
For the improved $k$-opt heuristic, we set the edge deletion factor
%(section \ref{sub:edge_deletion_step}) 
to be equal to 0.15 in all the simulations.
We chose this value based on the simulation results shown in Figure \ref{Fig:hindawi_plot}.
This figure shows the average algebraic connectivity of the final solution
(and the computation time) obtained using the improved $3-$opt heuristic as a function
of the edge deletion factor. We observed that there was not much improvement
in the quality of the feasible solutions beyond a value of 0.15
(for the edge deletion factor) even for large instances ($n=30,40$).
Hence, we chose 0.15 as the edge deletion factor. Also, we set the number
of combinations of edges to be added 
%(section \ref{sub:edge_addition_step}) 
to be at most equal to $5^k$. For improved 3-opt, this parameter was set to 125. We chose this 
value based on the simulation results shown in Figure \ref{Fig:hindawi_plot1}. 
For improved 2-opt, this parameter was set to 25 after performing similar simulations. 
\begin{figure}[h!]
	\centering
	\subfigure[]{
	\includegraphics[scale=0.415]{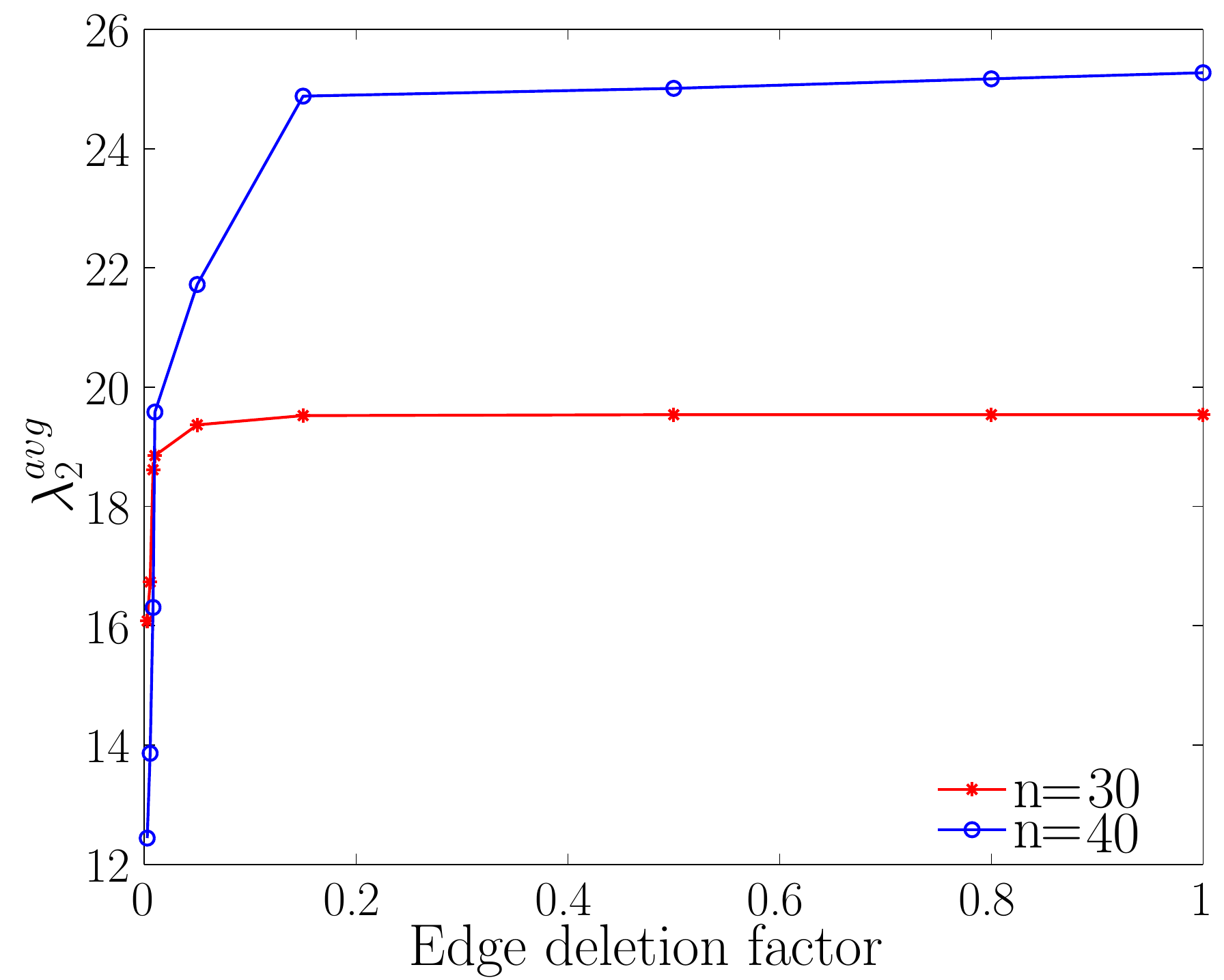}}
	\subfigure[]{
	\includegraphics[scale=0.42]{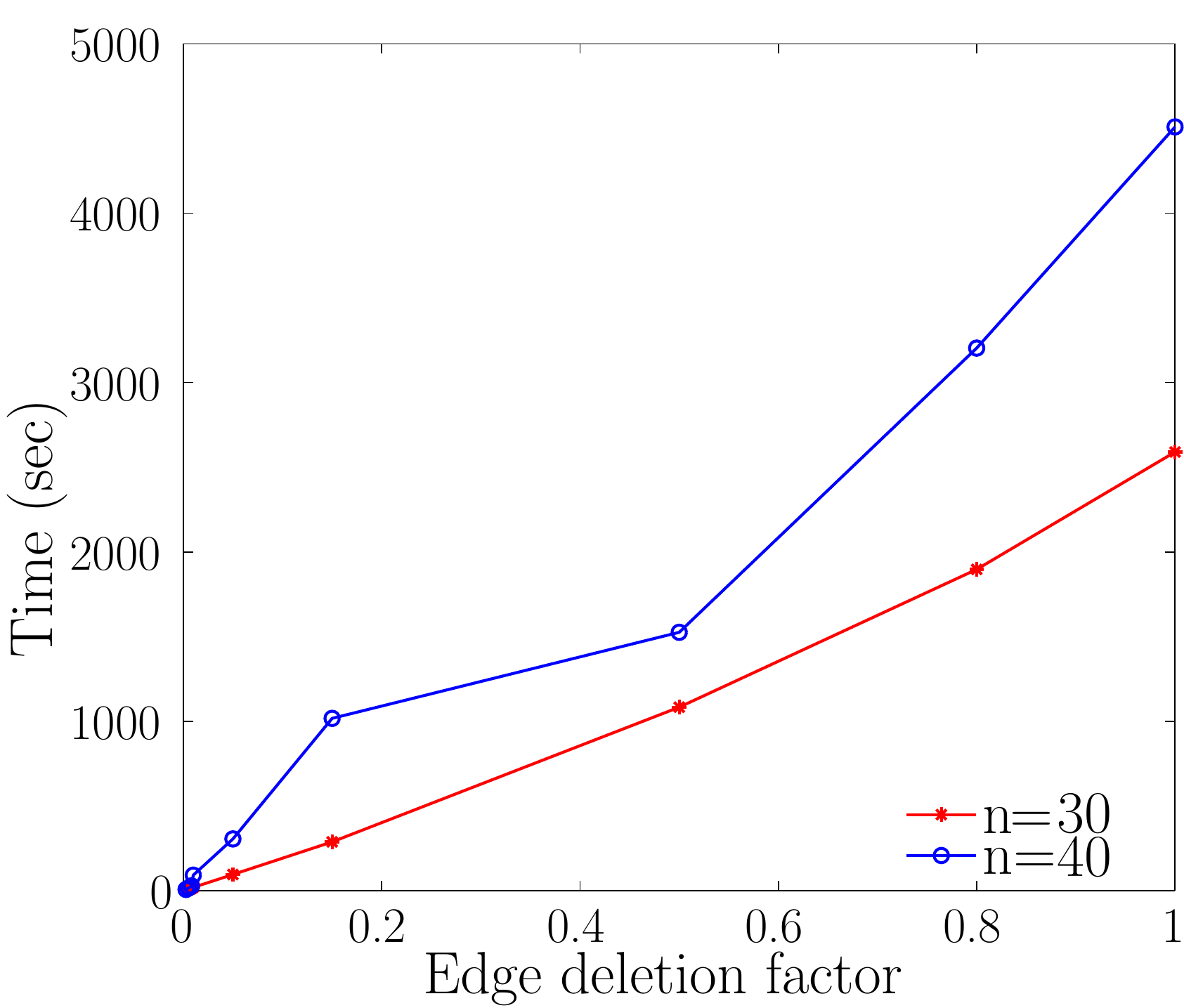}}
	\caption{Average values of the algebraic connectivity (a) and computation 
	times (b) obtained as a function of the edge deletion factor using the 
	improved 3-opt heuristic over ten instances. In these computations, 
	the maximum number of edge combinations considered for addition between 
	any two components was set to 125.}
	\label{Fig:hindawi_plot}
\end{figure}

\begin{figure}[h!]
	\centering
	\subfigure[]{
	\includegraphics[scale=0.42]{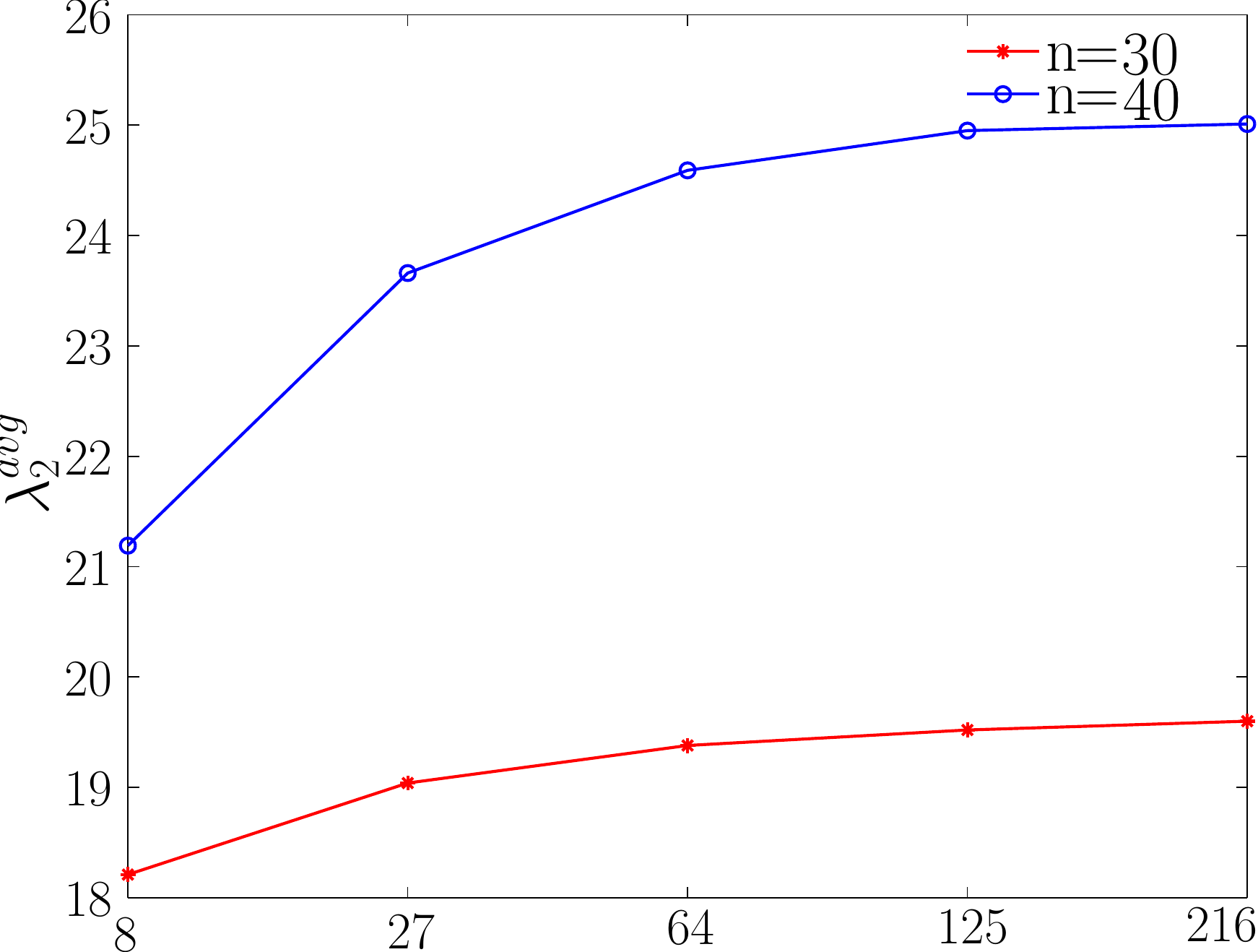}}
	\subfigure[]{
	\includegraphics[scale=0.42]{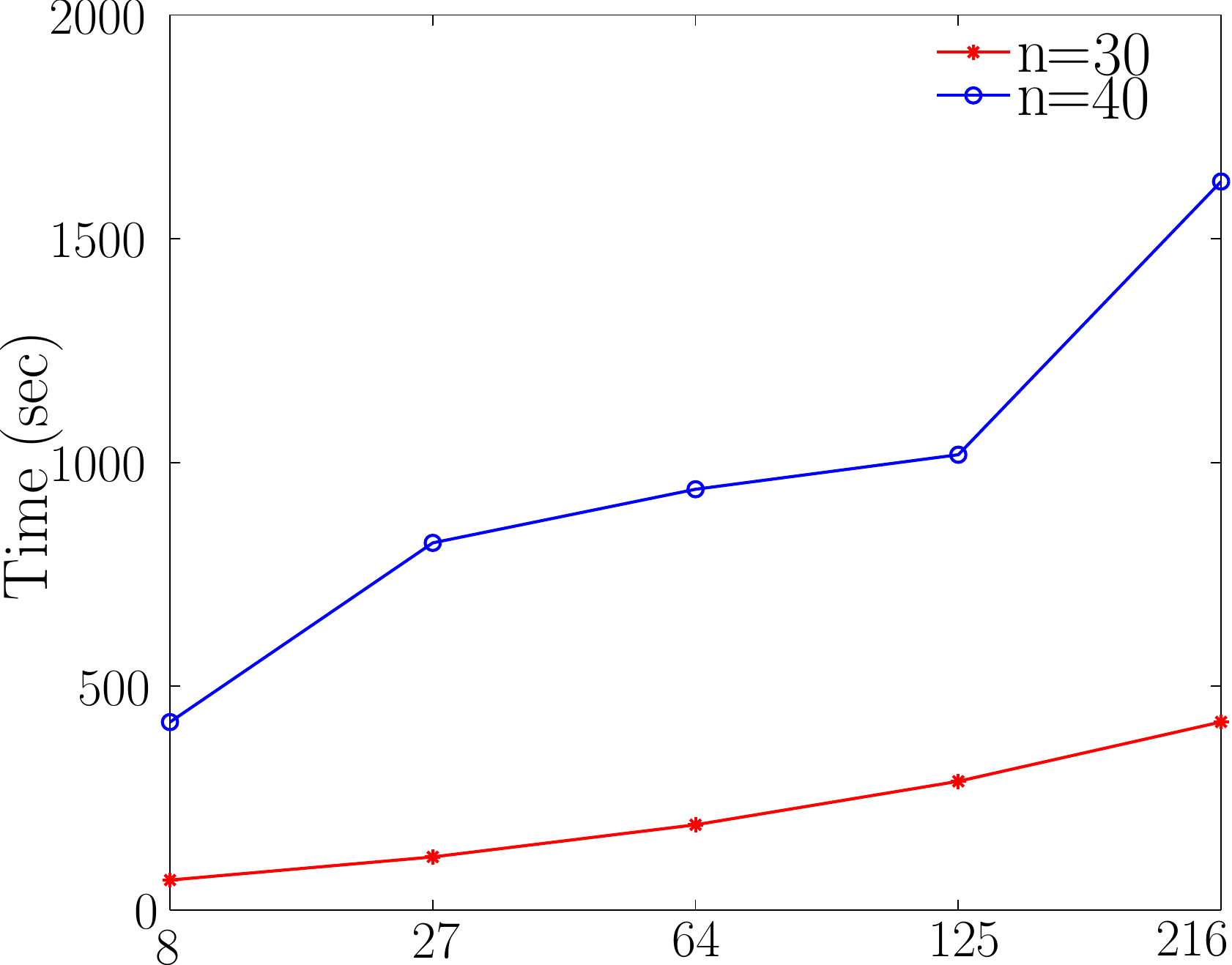}}
	\caption{The average algebraic connectivity (a) and computation times (b) obtained 
	as a function of the maximum number of edge combinations considered for addition 
	between any two components in the improved 3-opt heuristic over ten instances. 
	In these computations, the edge deletion factor was set to 0.15.}
	\label{Fig:hindawi_plot1}
\end{figure}

\vspace{0.5cm}
\noindent \textbf{Performance of $k$-opt and improved $k$-opt with respect to optimal
solutions:}
For the problem with 8 nodes, we define the solution quality of the proposed heuristic as 
$$ \mathrm{Solution \ quality} = \frac{\lambda_2^* - \lambda_2^{kopt}}{\lambda_2^*}\times 100$$ 
where $\lambda_2^{kopt}$ denotes the algebraic connectivity of the solution 
found by the $k$-opt heuristic and $\lambda_2^*$ represents the optimum. The 
results shown in Table \ref{opti_2opt_tabu} are for $10$ random instances generated for
networks with 8 nodes. Based on the results in Table \ref{opti_2opt_tabu},
it can be seen that the quality of solutions found by the $k$-opt ($k=2,3$) 
and the improved $k$-opt ($k=2,3$) heuristic were very good and gave optimal solutions for all 
the 10 random instances. Also, on an average, the computation time for the heuristics 
were less than 1.5 seconds to obtain the best feasible solution for the problem with 
8 nodes. An improvement in the computational time for improved $k$-opt heuristic can
be observed for larger instances as discussed in the later parts of this section. 
Instance $1$ of Table \ref{opti_2opt_tabu} is 
pictorially shown in Figure \ref{Fig:optimal_tree_n8_1}.

\begin{figure}[!h]
	\centering
	\subfigure[Complete network]{
	\includegraphics[scale=0.54]{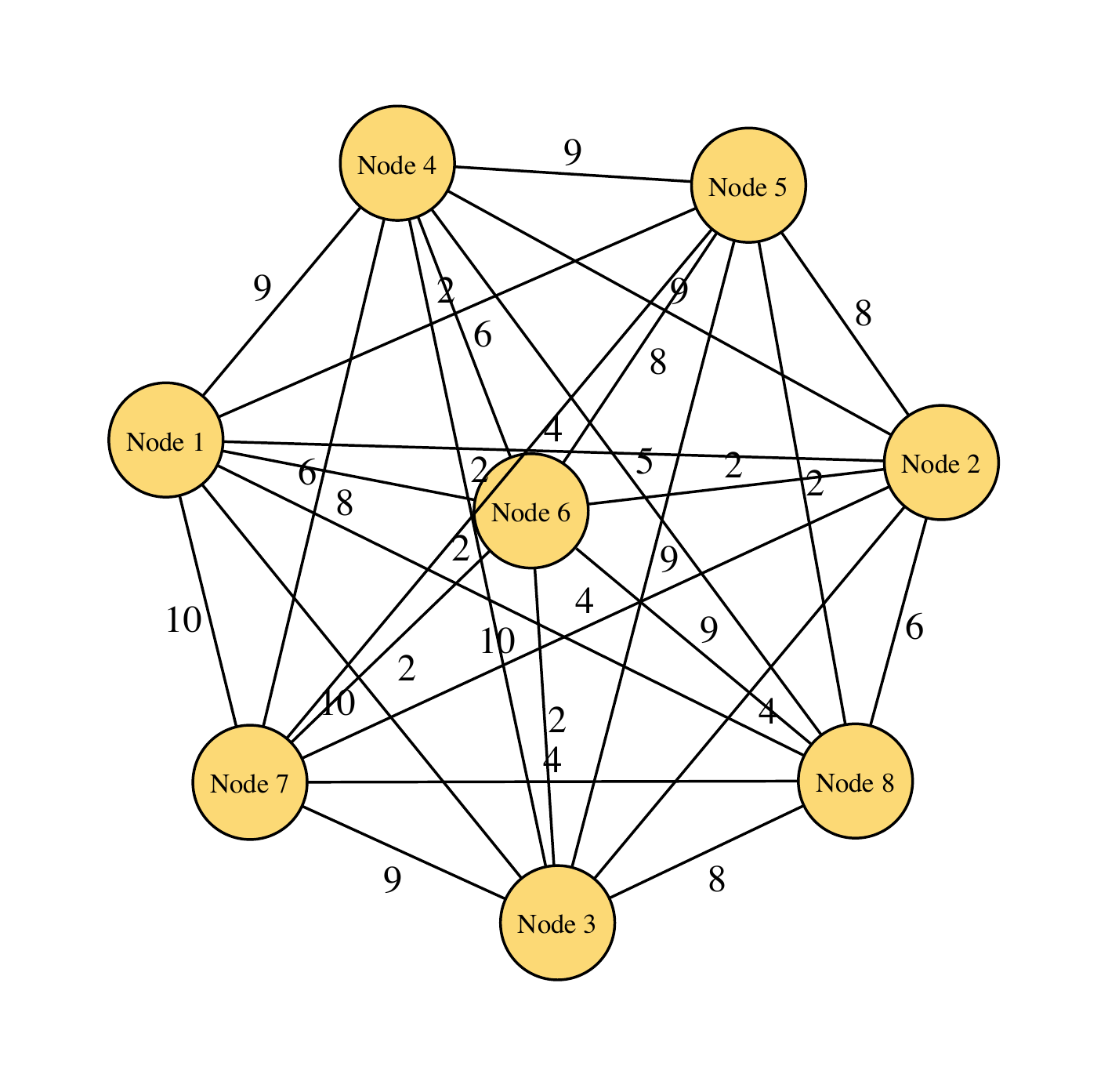}}
	\subfigure[$\lambda_2^{initial}$ = 2.2045]{
	\includegraphics[scale=0.40]{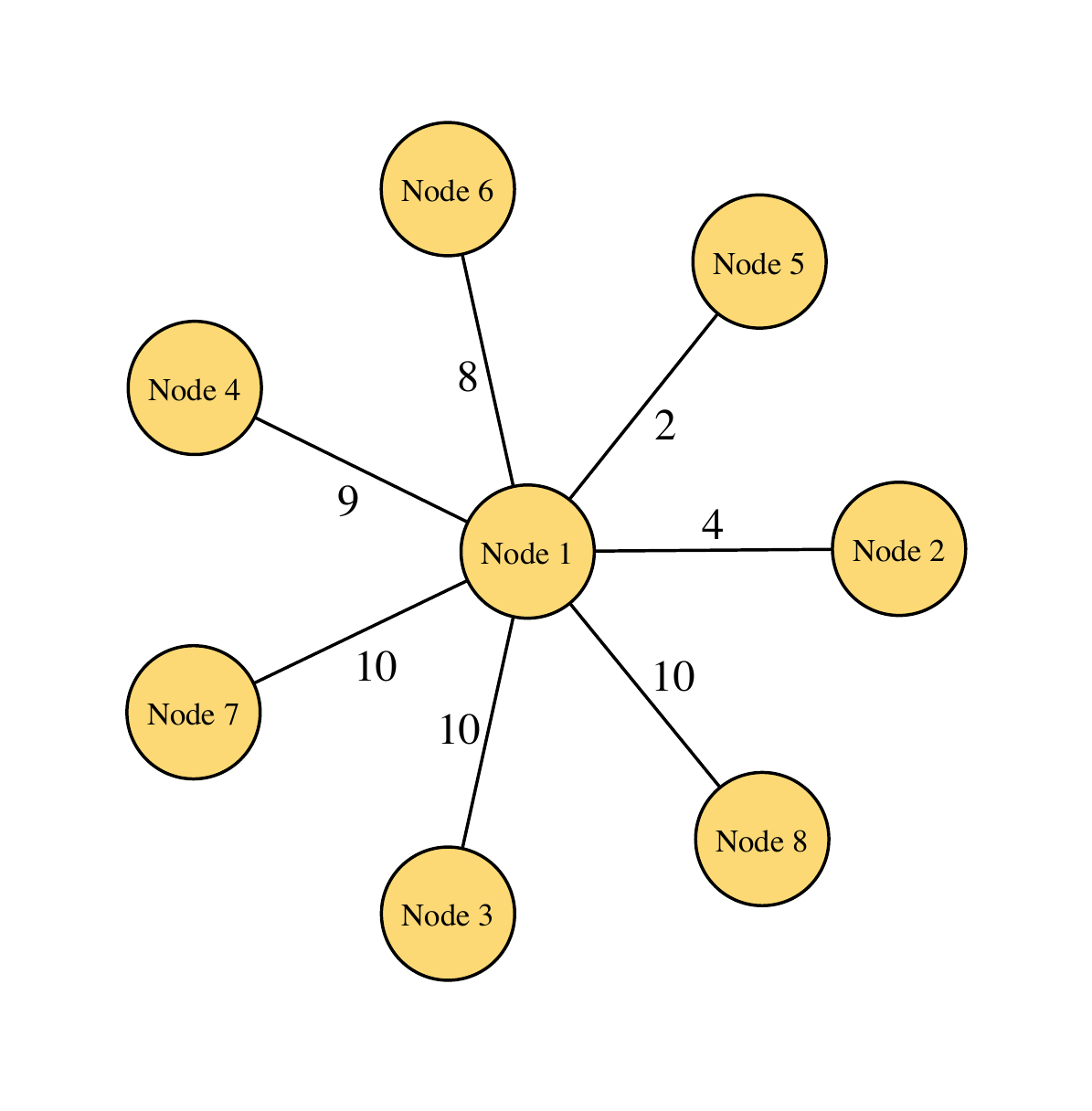}}
	\subfigure[$\lambda_2^{2opt}$,$\lambda_2^{3opt}$ = 3.9712]{
	\includegraphics[scale=0.40]{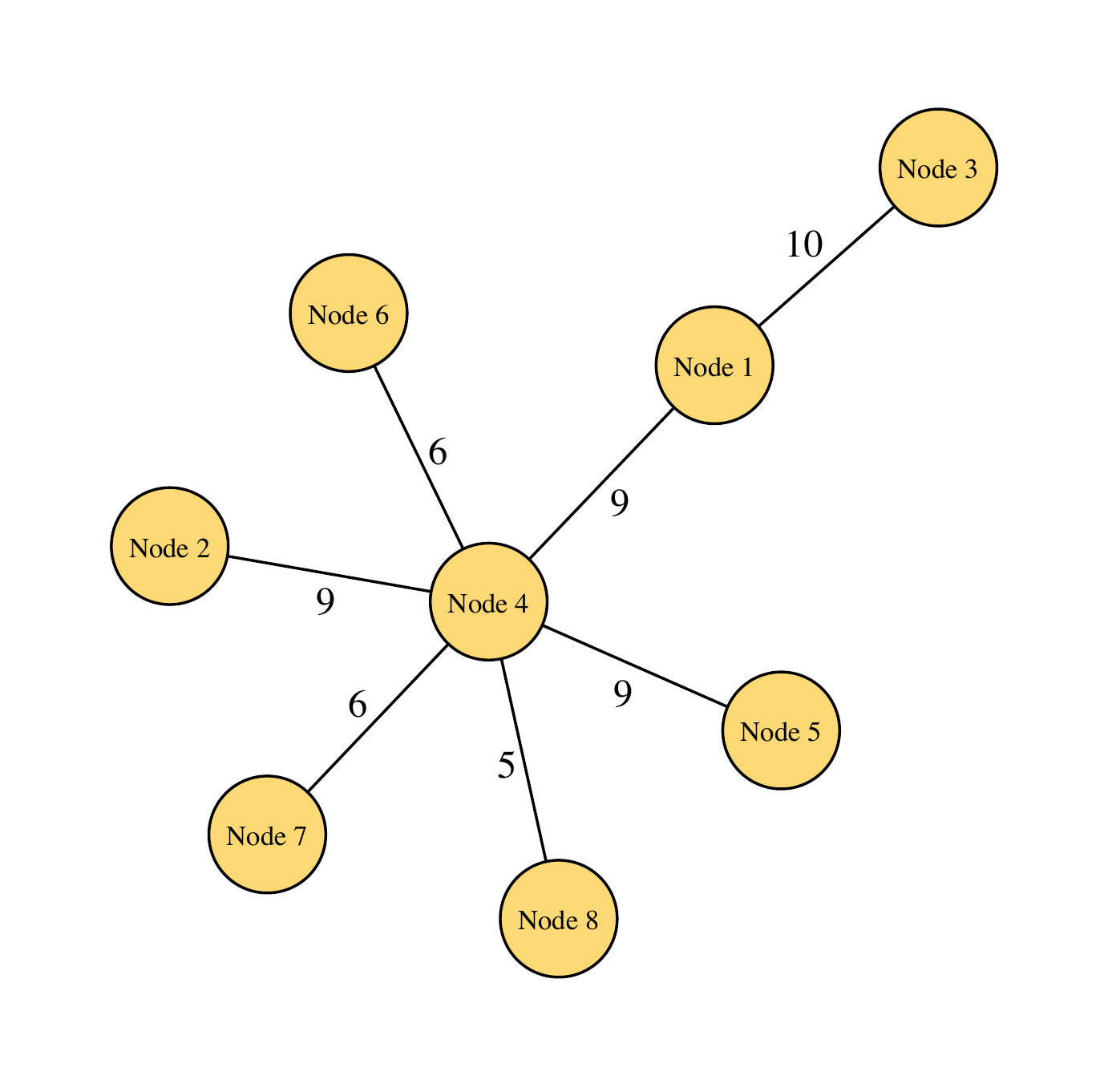}}
	\caption{A network with all possible edges connecting 8 nodes including edge weights 
	are shown in (a). (b) represents the initial feasible solution which
	is a star graph. (c) represents an optimal network which also happens to be the
	solution found by the 2-opt and 3-opt heuristics.}
	\label{Fig:optimal_tree_n8_1}
\end{figure}

\begin{table}[h!]
		  \caption{Comparison of the quality of the solutions found by the $k$-opt heuristic (Algorithm \ref{algo:2opt_1}) 
for networks with 8 nodes. $\lambda_2^*$ is the optimal algebraic connectivity.}
\begin{center}
\begin{tabular}{cccccc}
\toprule
\multicolumn{1}{l}{} & \multicolumn{ 2}{c}{Optimal solution} & \multicolumn{ 3}{c}{$k$-opt, Improved $k$-opt ($k$=2,3)}  \\
\cmidrule(l{0.25em}r{0.25em}){2-3} \cmidrule(l{0.25em}r{0.25em}){4-6} 
No. & \multicolumn{1}{l}{$\lambda_2^*$} & Time & \multicolumn{1}{l}{$\lambda_2^{kopt}$} & Solution  & Time \\
&&(sec)&&quality& (sec) \\
\cmidrule(r){1-6}
1 & 3.9712 & 180.57 & 3.9712 & 0.00 & 1.1  \\
2 & 4.3101 & 408.10 & 4.3101 & 0.00 & 2.1  \\
3 & 3.9297 & 621.85 & 3.9297 & 0.00 & 1.3  \\
4 & 3.5275 & 216.79 & 3.5275 & 0.00 & 2.3  \\
5 & 3.8753 & 470.63 & 3.8753 & 0.00 & 0.8  \\
6 & 3.7972 & 342.14 & 3.7972 & 0.00 & 1.2  \\
7 & 3.7125 & 377.47 & 3.7125 & 0.00 & 1.7  \\
8 & 3.9205 & 313.12 & 3.9205 & 0.00 & 1.6  \\
9 & 3.7940 & 341.84 & 3.7940 & 0.00 & 2.3  \\
10 & 3.8923 & 316.86 & 3.8923 & 0.00 & 2.1 \\
\cmidrule(r){1-6}
%\multicolumn{1}{l}{Avg.} & \multicolumn{1}{l}{} & \multicolumn{1}{l}{} & \multicolumn{1}{l}{} &   \\
Avg. &&358.43 &&0.00 & 1.5 \\
 \bottomrule
\end{tabular}
\end{center}
\label{opti_2opt_tabu}
\end{table}

\vspace{0.5cm}
\noindent \textbf{Performance of $k$-opt and improved $k$-opt for large instances:}
For problems with larger instances ($n \geq 10$), in Table \ref{tab:summary1}, we analyze the 
improvement in the computation time of improved 2-opt heuristic with respect to the standard 2-opt heuristic 
and also compare their solution qualities. 

For a given initial feasible solution, the neighborhood 
search space for the improved $2$-opt is a
subset of the neighborhood search space for the standard $2$-opt. 
Hence, we define the reduction in the value of the algebraic connectivity 
of improved 2-opt from the standard 2-opt as 
$$ \mathrm{percent \ reduction} = \frac{\lambda_2^{2opt} - \lambda_2^{2opt_{imp}}}{\lambda_2^{2opt}} \times 100$$ 
where $\lambda_2^{2opt}$ is the algebraic connectivity of a solution obtained 
from the 2-opt heuristic and $\lambda_2^{2opt_{imp}}$ is the algebraic connectivity of a solution obtained 
from the improved 2-opt heuristic. From the results in Table \ref{tab:summary1}, it can be seen 
that the improved 2-opt heuristic performed almost as good as the 2-opt heuristic without much 
reduction in the quality of the solution but with 
a very remarkable improvement in the computational time to obtain the feasible solution. Therefore,
it can be observed that the greedy procedure of deletion and addition of the edges 
based on the variational characterization of the eigenvalues has reduced the 
neighborhood search space very effectively. 

%------------------------------------------------------------------------;
%  Table: Comparison of heuristics for larger instances - upto 50 nodes  ;
%------------------------------------------------------------------------;
\begin{table}[htbp]
\centering
\caption{Comparison of 2-opt with improved 2-opt heuristic solutions for various
	problem sizes. Here, the solution quality was averaged over ten random instances for each $n$. }
\begin{tabular}{cccc}
	\toprule
$n$ & \multicolumn{1}{c}{2-opt} & \multicolumn{2}{c}{Improved 2-opt}  \\
\cmidrule(l{0.25em}r{0.25em}){2-2} \cmidrule(l{0.25em}r{0.25em}){3-4} 
 \multicolumn{ 1}{c}{}  & Time & Percent & Time  \\
 \multicolumn{ 1}{c}{}  & (sec) & reduction & (sec)  \\
\cmidrule(r){1-4}
10 &  0.88 & 0.00 & 0.10 \\
15 &  8.45 & 0.00 & 0.52 \\
20 &  60.47 & 0.00 & 1.65 \\
25 &  240.57 & 0.60 & 3.59  \\
30 &  1533.98 & 0.81 & 12.13 \\
35 &  3468.75 & 0.79 & 37.85 \\
40 &  5899.62 & 1.20 & 57.28 \\
45 &  8897.69 & 1.16 &  116.09 \\
50 &  10089.31 & 1.27 & 139.99  \\
55 &  12980.78 & 1.80 & 350.83 \\
60 &  16001.02 & 2.01 & 505.36 \\
	\bottomrule
\end{tabular}
\label{tab:summary1}
\end{table}

In Table \ref{tab:summary_table}, we further study the improvement 
in the solution quality of the improved 3-opt heuristic with respect to the 
improved 2-opt heuristic. For this purpose, 
we define the percent improvement as follows: 
$$ \mathrm{percent \ improvement} = \frac{\lambda_2^{3opt_{imp}} - \lambda_2^{2opt_{imp}}}{\lambda_2^{3opt_{imp}}} \times 100$$ 
where $\lambda_2^{3opt_{imp}}$ is the algebraic connectivity of a solution obtained 
from the improved 3-opt heuristic and $\lambda_2^{2opt_{imp}}$ is the algebraic connectivity of a solution obtained 
from the improved 2-opt heuristic. From the results in Table \ref{tab:summary_table}, it can be seen 
that the improved 3-opt heuristic performed consistently better than the 
improved 2-opt, though the quality of solution was quite comparable in an average sense.
It can also be observed that there were instances where the improvement in the solution
quality of the improved 3-opt from the improved 2-opt heuristic was up to around 18 percent for large instances.

In summary, computational results suggested that the improved 3-opt heuristic performed 
the best while the improved 2-opt heuristic provided a good trade-off between 
finding good feasible solutions and the required computation time.
Figure \ref{Fig:heuristics_n40} illustrates the solutions obtained 
from the improved 2-opt and $3$-opt search heuristic for a network with 40 nodes.

%------------------------------------------------------------------------;
%  Table: Comparison of heuristics for larger instances - upto 50 nodes  ;
%------------------------------------------------------------------------;
\begin{table}[h!]
\centering
\caption{Comparison of improved 3-opt and improved 2-opt heuristic solutions for various
	problem sizes. The percent improvement values were 
	averaged over ten random instances for each $n$. }
\begin{tabular}{ccccc}
	\toprule
$n$ & \multicolumn{1}{c}{Improved 2-opt} & \multicolumn{3}{c}{Improved 3-opt}   \\
\cmidrule(l{0.25em}r{0.25em}){2-2} \cmidrule(l{0.25em}r{0.25em}){3-5} 
 \multicolumn{ 1}{c}{}  & Time & Time & Average & Maximum \\
 \multicolumn{ 1}{c}{}  & (sec) & (sec) & percent improvement & percent improvement  \\
\cmidrule(r){1-5}
10 &  0.10  & 0.29    & 0.00  & 0.00  \\
15 &  0.52  & 3.06    & 0.01  & 0.08  \\
20 &  1.65  & 16.32   & 0.27  & 2.73   \\
25 &  3.59  & 60.38   & 0.60  & 4.92   \\
30 &  12.13 & 274.93  & 2.07  & 7.56    \\
35 &  37.85 & 480.15  & 2.02  & 12.59   \\
40 &  57.28 & 1016.99 & 5.62  & 17.89    \\
45 &  116.09& 2309.60 & 7.10  & 15.41   \\
50 &  139.99& 4219.17 & 1.38  & 5.10    \\
55 &  350.83& 6798.34 & 6.98  & 17.56 \\
60 &  505.36& 8974.46 & 7.97  & 16.92\\
	\bottomrule
\end{tabular}
\label{tab:summary_table}
\end{table}

%---------------------------------;
%   Figure :		;
%---------------------------------;
\begin{figure}[!h]
	\centering
	\subfigure[$\lambda_2^{2opt_{imp}} = 21.4088$]{
	\includegraphics[scale=0.37]{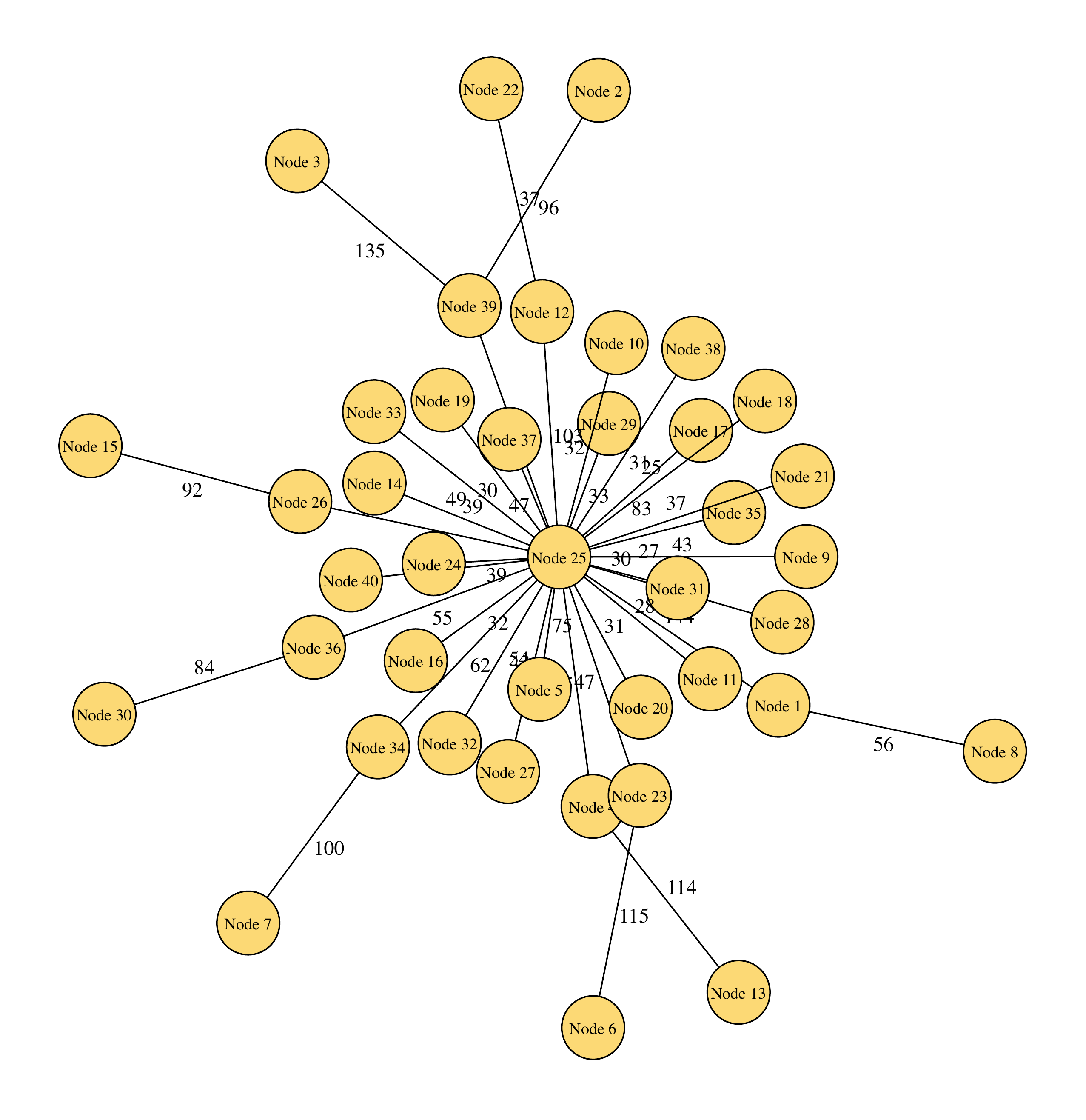}}
	\subfigure[$\lambda_2^{3opt_{imp}} = 26.0731$]{
	\includegraphics[scale=0.37]{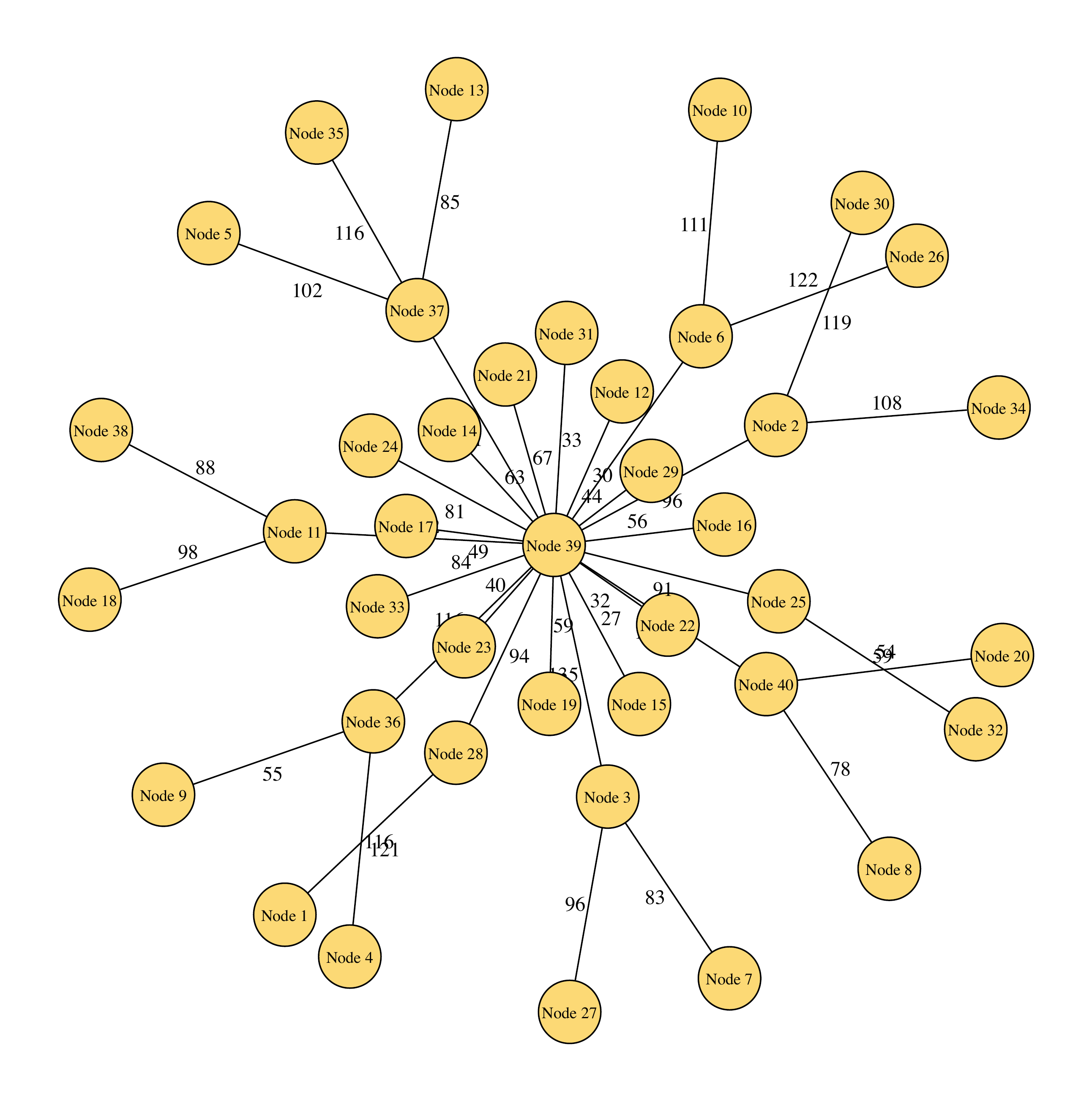}}
	\caption{Improved 2-opt and 3-opt exchange heuristic solutions for a network with 40 nodes. Source: \cite{nagarajan2012synthesizing,nagarajan2014heuristics}}
	\label{Fig:heuristics_n40}
\end{figure}

%!TEX root = ../tamuthesis.tex
%%%%%%%%%%%%%%%%%%%%%%%%%%%%%%%%%%%%%%%%%%%%%%%%%%%
%
%  New template code for TAMU Theses and Dissertations starting Fall 2012.  
%  For more info about this template or the 
%  TAMU LaTeX User's Group, see http://www.howdy.me/.
%
%  Author: Wendy Lynn Turner 
%	 Version 1.0 
%  Last updated 8/5/2012
%
%%%%%%%%%%%%%%%%%%%%%%%%%%%%%%%%%%%%%%%%%%%%%%%%%%%
%%%%%%%%%%%%%%%%%%%%%%%%%%%%%%%%%%%%%%%%%%%%%%%%%%%%%%%%%%%%%%%%%%%%%%
%%                           SECTION III
%%%%%%%%%%%%%%%%%%%%%%%%%%%%%%%%%%%%%%%%%%%%%%%%%%%%%%%%%%%%%%%%%%%%%

% \chapter{\uppercase{Maximizing robustness of networks under resource constraints}}
% \newcommand\blfootnote[1]{%
% \begingroup
% \renewcommand\thefootnote{}\footnote{#1}%
% \addtocounter{footnote}{-1}%
% \endgroup 
% }
%\chapter{\uppercase {Algorithms for the maximization of algebraic connectivity under resource constraints}}
\chapter{\uppercase {Maximization of algebraic connectivity under resource constraints}}

\label{ch3}

% % \chapter[\uppercase{Chapter Title}]{\uppercase{*Maximizing robustness of networks under resource constraints} }
% % \label{Chap:thesis}
% \blfootnote{*Reprinted with permission from 1) Algorithms for Finding Diameter-constrained Graphs with Maximum Algebraic Connectivity by H.Nagarajan, S.Rathinam, S.Darbha and K.R.Rajagopal, 2012. \emph{Dynamics of Information Systems: Mathematical Foundations}, 121--135, Copyright 2012 by Springer. \\ 2. Synthesizing robust communication networks for UAVs with resource constraints by H.Nagarajan, S.Rathinam and S.Darbha, 2012. \emph{Dynamic Systems and Control Conference, 2012, 2012}. 2524--2533, Copyright 2012 by ASME.}

We discussed earlier 
the variant of \textbf{BP} involving the construction of an 
adhoc infrastructure network with UAVs that can be central to 
civilian and military applications. We also alluded briefly to the 
desirable attributes of the UAV adhoc network such as: a) Lower diameter to 
minimize latency in communicating data/information across the network, 
b) A limit/budget on the power consumed by the UAVs due to their limited 
battery capacities and c) High isoperimetric number so that the 
bottlenecking in a network can only occur at higher data rates while at the same time
be robust to node and link failures. 

In addition to the bound on the number of communication links, treating the 
requirements on diameter and power consumption as the constraints
on the resources, a variant of the \textbf{BP} that arises in the UAV application
may be posed as follows: Given a collection of UAVs which can serve as backbone nodes, 
how should they be arranged and connected so that 
\begin{itemize}
\item[(i)] the convex hull of the projections of their locations
on the ground spans a minimum area of coverage,
\item[(ii)] the resources such as the diameter of the network, total UAV power consumption for maintaining connectivity  and the total number of communication links employed are within their respective prescribed bounds, and
\item[(iii)] algebraic connectivity of the network is maximum among all possible networks satisfying the constraints (i) and (ii).
\end{itemize}

Since each of these resource constraints makes the problem much harder,
we separately formulate the diameter and power consumption constraint in the 
forthcoming sections. Hence, in the remainder of this section, we shall discuss the respective 
mathematical formulations and extend the 
algorithms based on cutting plane techniques as discussed in the 
in section \ref{sec:exact_algo} to synthesize optimal networks. The derivations and algorithms in this chapter
are primarily extracts from \cite{nagarajan2015synthesizing,nagarajan2012dscc}.

%++++++++++++++++++++++++++++++++++++++++++++++++++;
%  Section: Diameter constraints ;
%++++++++++++++++++++++++++++++++++++++++++++++++++;
\section{Maximization of algebraic connectivity with diameter constraint}
\label{Sec:dia_cons}

%================================================;
%  Subsection: Formulation of diameter constraint		  ;
%================================================;
\subsection{Problem formulation}
\label{Subsec:form_dia}
Based on the notation defined in section \ref{Sec:ch1_notation}, 
the problem of choosing at most $q$ edges from $E$ so that the
algebraic connectivity of the augmented network is maximized and the
diameter of the network is within a given constant ($D$) can be posed as follows:

\begin{equation}
		\label{eq:formulation1}
		\begin{array}{ll}
	  		\gamma^* = &\max  \lambda_2(L(x)), \\
			\text{s.t.} & \sum_{e \in E} x_e \leq q, \\
			& \delta_{uv}(x) \leq D \quad \forall u,v \in V, \\
			& x_e \in \{0, 1\}^{|E|}.
		\end{array}
\end{equation}
where $\delta_{uv}(x) $ represents the number of edges on the shortest path
joining the two nodes $u$ and $v$ in the network with an incident vector $x$. 
In the above formulation, there are two challenges that need to be overcome before one can
pose the above problem as a MISDP. First,
the objective is a non-linear function of the edges in the network; secondly,
the diameter constraint as stated in formulation \eqref{eq:formulation1} requires one to
implicitly compute the number of edges in the shortest path joining any
two vertices. 
We have already discussed in section \ref{Sec:ch1_notation} of section \ref{ch2}, how to 
address the above non-linear problem as a MISDP which is as follows:  

\begin{equation}
		\label{eq:formulation2}
		\begin{array}{ll}
	  		\gamma^* = &\max  \gamma, \\
			\text{s.t.} & \sum_{e \in E} x_e L_e \succeq \gamma (I_n - e_0 \otimes e_0),\\
			& \sum_{e \in E} x_e \leq q, \\
			& \delta_{uv}(x) \leq D \quad \forall u,v \in V, \\
			& x_e \in \{0, 1\}^{|E|}.
		\end{array}
\end{equation}

%-----------------------------------------------------;
%   Figure :	Illustration of constrained path model	;
%-----------------------------------------------------;
\begin{figure}[!h]
	\centering
	\subfigure[Original graph augmented with a source node]{
	\includegraphics[scale=0.6]{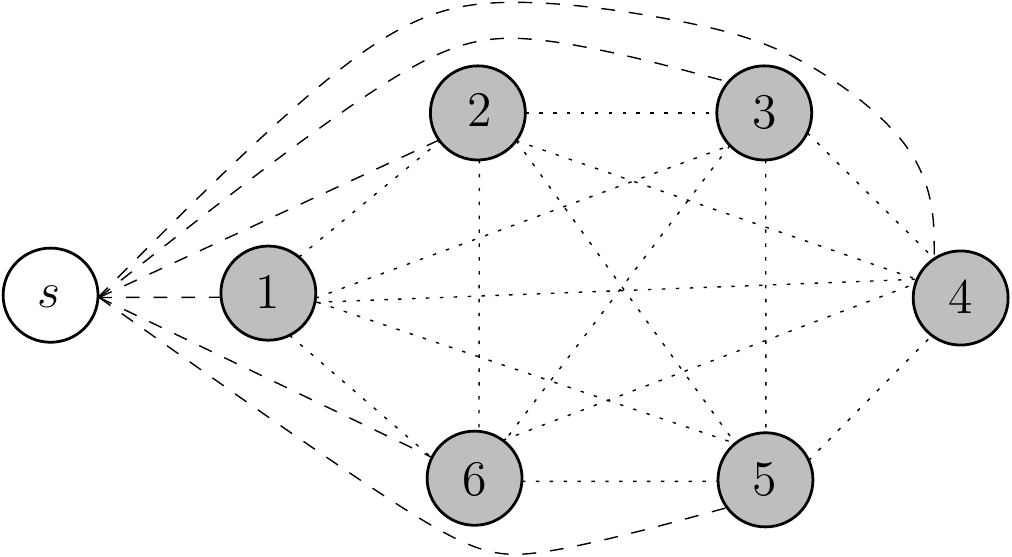}}
	\subfigure[Feasible solution]{
	\includegraphics[scale=0.6]{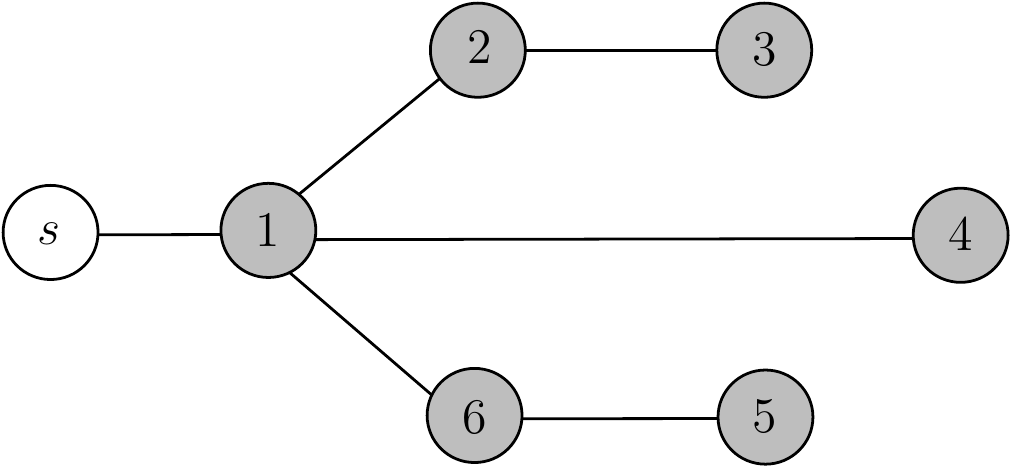}}
	\caption{Illustration of an addition of the source node ($s$)
	to the original (complete) graph represented by shaded nodes. If one were given that
	the diameter of the original graph must be at most $D=4$, then restricting
	the length of each of the paths from the source node to $(D/2)+1 = 3$, and
   allowing only one incident edge on $s$ will suffice as shown in (b). Source: \cite{nagarajan2012diameter}}
	\label{Fig:cons_path_model}
\end{figure}

The next difficulty one needs to address stems from the diameter constraints
formulated in \eqref{eq:formulation1}. To simplify the presentation,
let us limit our search of an optimal network to the set of all the spanning
trees. Also, let the parameter $D$ which limits the diameter of the network
be an even number. Then, it is well known \cite{gouveia2003network} that a
spanning tree has a diameter no more than an even integer ($D$) if and only
if there exists a central node $p$ such that the path from $p$ to any other
node in the graph consists of at most $D/2$ edges. If the central
node $p$ is given, then one can use the multicommodity flow formulation
\cite{magnanti1995optimal} to keep track of the number of edges present
in any path originating from node $p$. However, since $p$ is not known
a priori, a common way to address this issue is to augment the network
with a source node ($s$) and connect this source node to each of the
remaining vertices in the network with an edge
(refer to figure \ref{Fig:cons_path_model}). If one were to find a
spanning tree in this augmented network such that there is only one edge
incident on the source node and the path from the source node to any
other node in the graph consists of at most $\frac{D}{2}+1$ edges, the
diameter constraint	 for the original network will be naturally satisfied.

In order to impose the diameter constraints formulated in \eqref{eq:formulation1},
we add a source node ($s$) to the graph $(V, E)$ and add an edge
joining $s$ to each vertex in $V$, $i.e.$, $\tilde{V}=V \cup \{s\}$ and
$\tilde{E} = E \cup (s,j) ~~ \forall j \in V $. We then construct a tree spanning all the nodes
in $\tilde{V}$ while restricting the length of the path from $s$ to any other node in $\tilde{V}$. The
additional edges emanating from the source node are used only to formulate
the diameter constraints, and they do not play any role in determining the
algebraic connectivity of the original graph.

Constraints representing a spanning tree are commonly formulated in the
literature using the multicommodity flow formulation. In this formulation,
a spanning tree is viewed as a network which facilitates the flow of a
unit of commodity from the source node to each of the remaining
vertices in $\tilde{V}$. A commodity can flow directly between two
nodes if there is an edge connecting the two nodes in the network.
Similarly, a commodity can flow from the source node to a vertex $v$
if there is a path joining the source node to vertex $v$ in the network.
One can guarantee that all the vertices in $V$ are connected to the
source node by constructing a network that allows for a distinct unit
of commodity to be shipped from the source node to each vertex in $V$.
To formalize this further, let a distinct unit of commodity (also
referred to as the $k^{th}$ commodity) corresponding to the $k^{th}$ 
vertex be shipped from the source node. Let $f_{ij}^k$ be the $k^{th}$
commodity flowing from node $i$ to node $j$. Then, the constraints
which express the flow of the commodities from the source node to
the vertices can be formulated as follows:
\begin{subequations}
\label{eq:MCF_cons}
	\begin{align}
	\label{eq:F1_flow_2}
	\sum_{j \in \tilde{V} \setminus \{s\}} (f^k_{ij}-f^k_{ji}) = 1 ~~ \forall k \in V ~ \text{and} ~ i = s, \\
       	\label{eq:F1_flow_3}
        \sum_{j \in \tilde{V}} (f^k_{ij}-f^k_{ji}) = 0 ~~ \forall i,k \in V ~ \text{and} ~ i \neq  k, \\
        	\label{eq:F1_flow_4}
        \sum_{j \in \tilde{V}} (f^k_{ij}-f^k_{ji}) = -1 ~~ \forall i,k \in V ~ \text{and} ~ i =  k, \\
        	\label{eq:F1_flow_5}
         f^k_{ij} + f^k_{ji} \leq x_{e} ~~ \forall  ~ e:=(i,j) \in \tilde{E}, \forall k \in V, \\
        \label{eq:F1_flow_6}
        \sum_{e \in \tilde{E}} x_{e} = |\tilde{V}| -1, \\
         0 \leq ~  f^k_{ij} ~ \leq 1 ~~ \forall i,j \in \tilde{V},\forall k \in V, \\
            x_{e} \in \{0,1\} ~~ \forall e \in \tilde{E}. % \quad x_e =1 ~~ \forall e \in E_0
	\end{align}
\end{subequations}

Constraints (\ref{eq:F1_flow_2}) through (\ref{eq:F1_flow_4}) state that
each commodity must originate at the source node and terminate at its
corresponding vertex. Equation (\ref{eq:F1_flow_5}) states that
the flow of commodities between two vertices is possible only if there is an edge joining the two vertices.
Constraint (\ref{eq:F1_flow_6}) ensures that the number of edges in the chosen network corresponds to that of a spanning tree.
An advantage of using this formulation is that one now has access
directly to the number of edges on the path joining the source node
to any vertex in the graph. That is, $\sum_{(i,j)\in \tilde{E}} f^k_{ij}$
denotes the length of the path from $s$ to $k$. Therefore, the diameter
constraints now can be expressed as:

\begin{subequations}
\label{eq:path_cons}
	\begin{align}
         \sum_{(i,j)\in \tilde{E}} f^k_{ij} \leq (D/2+1) ~~ \forall  k \in V,  \\
         \sum_{j\in V} x_{sj} = 1.
	\end{align}
\end{subequations}

To summarize, the MISDP for the network synthesis problem with diameter
constraints is:

\begin{equation}
		\label{eq:formulation3}
		\begin{array}{ll}
	  		\gamma^* = &\max  \gamma, \\
			\text{s.t.} & \sum_{e \in E} x_e L_e \succeq \gamma (I_n - e_0 \otimes e_0),\\
			& \sum_{e \in E} x_e \leq q, \\
			& \mathrm{Constraints \ in \ \eqref{eq:MCF_cons} \ and \ \eqref{eq:path_cons}},\\
			& x_e \in \{0, 1\}^{|E|}.
		\end{array}
\end{equation}

Note that the formulation in \eqref{eq:formulation3} is for the case when the desired network is a
spanning tree and the bound on the diameter of the spanning tree is an even
number. Using the results in \cite{gouveia2003network}, similar formulations
can also be stated for more general networks with no restrictions on the
parity of the bound. In this section, we will concentrate on the formulation
presented in \eqref{eq:formulation3}.

%================================================;
%  Subsection: Cutting plane algo for dia constraint		  ;
%================================================;
\subsection{Algorithm for determining maximum algebraic connectivity with diameter constraint}
\label{Subsec:dia_cons_algo}
In order to pose the problem as a BSDP, let the specified 
level of connectivity be $\hat{\gamma}$. The decision problem can be 
mathematically formulated as follows: Is there an incident vector $x$ such that

\begin{equation}
		\begin{array}{ll}
				  & \sum_{e \in E} x_e L_e \succeq \hat{\gamma} (I_n - e_0 \otimes e_0),\\
			& \sum_{e \in E} x_e \leq q, \\
			& \mathrm{Constraints \ in \ \eqref{eq:MCF_cons} \ and \ \eqref{eq:path_cons}},\\
			& x_e \in \{0, 1\}^{|E|} \ ?
		\end{array}
\end{equation}

The above problem can be posed as a BSDP by marking any vertex in $V$ as a root vertex $r$
and then choosing to find a feasible tree
that minimizes the degree of this root vertex \footnote{There are several ways to
formulate the decision problem as a BSDP. For example, one can also aim to minimize
the total weight of the augmented graph defined as $\sum_e w_ex_e$. We chose to
minimize the degree of a node as it gave reasonably good computational results.}.
In this formulation, the only decision variables would be the binary variables
denoted by $x_e$ and the flow variables denoted by $f^k_{ij}$. Therefore, the BSDP we have is the following:

\begin{equation}
	\label{eq:F2_degree}
		\begin{array}{ll}
	  		& \min \sum_{e \in \delta(r)} x_e, \\
			\text{s.t.} & \sum_{e \in E} x_e L_e \succeq \hat{\gamma} (I_n - e_0 \otimes e_0),\\
			& \sum_{e \in E} x_e \leq q, \\
			& \mathrm{Constraints \ in \ \eqref{eq:MCF_cons} \ and \ \eqref{eq:path_cons}},\\
			& x_e \in \{0, 1\}^{|E|}.
		\end{array}
\end{equation}

As expected, the cutting plane algorithm for the above BSDP in conjunction with bisection techniques to solve
the original MISDP \eqref{eq:formulation1} to optimality is in very similar lines as discussed in 
Algorithm \ref{Algo:minimize_degree}. Hence, we present just the pseudo code of the procedure in Algorithm  
\ref{Algo:minimize_degree_with_dia} without discussing the details.

%-----------------------------------------;
%  Algorithm: For minimize degree problem ;
%-----------------------------------------;
\begin{algorithm}[h!]
\caption{\textbf{:Algorithm for determining maximum algebraic connectivity with diameter constraint}}
%\footnotesize
Let $\mathfrak{F}$ denote a set of cuts which must be
satisfied by any feasible solution
\label{Algo:minimize_degree_with_dia}
\begin{algorithmic}[1]
		\STATE Input: Graph $G=(V,E,w_e)$, $e\in E$, a root vertex $r$, diameter $D$ and a finite number of Fiedler vectors, $v_i, i=1 \ldots M$
        \STATE Choose a maximum spanning tree as an initial feasible solution, $x^*$
		\STATE $\hat{\gamma} \gets \lambda_2(L(x^*))$
		\LOOP
		\STATE $\mathfrak{F}$$ \gets \emptyset$
		\STATE Solve:
		  {\begin{equation}
				  \begin{array}{ll}
						\min & \sum_{e \in \delta(r)} x_e, \\
						\text{s.t.} & \sum_{e \in E} x_e ({v_i} \cdot L_e {v_i}) \geq \hat{\gamma} \quad \forall i=1,..,M,  \\
%						\text{s.t.} & {v_i}'(L(x)){v_i} \geq \hat{\gamma} {v_i}'(I_n - e_0 \otimes e_0){v_i} \quad \forall i=1,..,M,  \\
					  & \sum_{e \in E} x_e \leq q, \\
				%	 &\sum_{e \in \delta(S)} x_e \geq 1, \ \ \forall \ S \subset V, \\
					  & x_e \in \{0, 1\}^{|E|}, \\
					& \mathrm{Constraints \ in \ \eqref{eq:MCF_cons} \ and \ \eqref{eq:path_cons}},\\
        			&x_e \ \textrm{satisfies the constraints in }\mathfrak{F}.
				  \end{array}
		  \end{equation}}

			\IF{ the above ILP is infeasible}
					\STATE \textbf{break loop} \COMMENT{$x^*$ is the optimal solution with maximum algebraic connectivity}
			\ELSE   \STATE Let $x^*$ be an optimal solution to the above ILP. Let $\gamma^*$ and $v^*$ be the algebraic connectivity 
			and the Fiedler vector corresponding to $x^*$ respectively.
			\IF{$\sum_{e \in E} x_e^* L_e \nsucceq \gamma^* (I_n - e_0 \otimes e_0)$}
	                        %\STATE Find the eigenvector $v^*$ corresponding to a negative eigenvalue of $L(x^*) - \gamma^* (I_n - e_0 \otimes e_0)$.
							\STATE Augment $\mathfrak{F}$ with a constraint $\sum_{e \in E} x_e ({v^*} \cdot L_e {v^*}) \geq {\gamma^*} $.
							\STATE Go to step 6.
					\ENDIF
			\ENDIF
			\STATE $\hat{\gamma} \gets \hat{\gamma} + \epsilon$ \COMMENT{let $\epsilon$ be a small number}
			\ENDLOOP
\end{algorithmic}
\end{algorithm}

%%-----------------------------------------------------------------------------------;
%%  Table: Comparison of Cplex solving times  for different diameter bounds- 6 nodes ;
%%-----------------------------------------------------------------------------------;
\begin{table}[htbp]
\caption{Comparison of computational time (CPU time) of the proposed algorithm for different
limits on the diameter of the graph and $\gamma^*$ is the optimal algebraic connectivity.
The algorithm was implemented in CPLEX for instances involving  6 nodes.}
\begin{center}
\begin{tabular}{ccccc}
\toprule
\multicolumn{ 1}{c}{\textbf{Instance}} & \multicolumn{ 2}{c}{\textbf{diameter $\leq$ 4}} & \multicolumn{ 2}{c}{\textbf{no diameter constraint}} \\
\cmidrule(l{0.25em}r{0.25em}){2-3}  \cmidrule(l{0.25em}r{0.25em}){4-5}
\multicolumn{ 1}{c}{} & \multicolumn{1}{c}{$\gamma^*$} & $T_1$ & \multicolumn{1}{c}{$\gamma^*$} & $T_2$ \\
\multicolumn{ 1}{c}{} & \multicolumn{1}{c}{} & (sec) & \multicolumn{1}{c}{} & (sec) \\
\cmidrule(r){1-5}
    1   & 39.352 & 7 & 559.539 & 8 \\
    2   & 39.920 & 4 & 546.915 & 8 \\
    3   & 67.270 & 6 & 765.744 & 6 \\
    4   & 50.262 & 10 & 713.925 & 5 \\
    5   & 31.218 & 8 & 569.959 & 4 \\
    6   & 52.344 & 8 & 662.326 & 7 \\
    7   & 35.513 & 7 & 637.331 & 6 \\
    8   & 38.677 & 7 & 704.89 & 6 \\
    9   & 46.427 & 11 & 574.132 & 5 \\
   10   & 40.945 & 7 & 597.241 & 5 \\
   11   & 36.770 & 10 & 586.950 & 9 \\
   12   & 42.885 & 6 & 587.027 & 5 \\
    13   & 30.880 & 8 & 569.482 & 10 \\
    14   & 47.583 & 3 & 543.145 & 6 \\
    15   & 37.277 & 4 & 517.401 & 9 \\
    16   & 37.439 & 11 & 704.228 & 8 \\
    17   & 51.434 & 10 & 639.456 & 3 \\
    18   & 42.476 & 3 & 620.974 & 10 \\
    19   & 29.934 & 3 & 576.275 & 4 \\
    20   & 46.980 & 6 & 536.366 & 6 \\
    21   & 25.955 & 6 & 630.748 & 9 \\
    22   & 49.220 & 6 & 601.309 & 4 \\
   23   & 53.282 & 6 & 607.615 & 6 \\
    24   & 45.909 & 5 & 524.214 & 6 \\
    25   & 48.120 & 3 & 549.210 & 3 \\
    \bottomrule
\end{tabular}
\end{center}
\label{Table:Yalmip_Cplex_n_6}
\end{table}

%%-----------------------------------------------------------------------------------;
%%  Table: Comparison of Cplex solving times  for different diameter bounds- 8 nodes ;
%%-----------------------------------------------------------------------------------;
\begin{table}[htbp]
\caption{Comparison of computational time (CPU time) of the proposed algorithm for different
limits on the diameter of the graph and $\gamma^*$ is the optimal algebraic connectivity.
The algorithm was implemented in CPLEX for instances involving  8 nodes.}
\begin{center}
\begin{tabular}{ccccccc}
\toprule
\multicolumn{ 1}{c}{\textbf{Instance}} & \multicolumn{ 2}{c}{\textbf{diameter $\leq$ 4}} & \multicolumn{ 2}{c}{\textbf{diameter $\leq$ 6}} & \multicolumn{ 2}{c}{\textbf{no diameter constraint}} \\
\cmidrule(l{0.25em}r{0.25em}){2-3} \cmidrule(l{0.25em}r{0.25em}){4-5} \cmidrule(l{0.25em}r{0.25em}){6-7}
\multicolumn{ 1}{c}{} & \multicolumn{1}{c}{$\gamma^*$} & $T_1$ & \multicolumn{1}{c}{$\gamma^*$} & $T_2$ & \multicolumn{1}{c}{$\gamma^*$} & $T_3$ \\
\multicolumn{ 1}{c}{} & \multicolumn{1}{c}{} & (sec) & \multicolumn{1}{c}{} & (sec) & \multicolumn{1}{c}{} & (sec) \\
\cmidrule(r){1-7}
    1   &  66.1636  &  298.10  &  93.0846  &  184.26  &  631.739  & 495.23 \\
    2   &  39.2994  &  477.34  &  54.3061 &  416.43  &  631.883  & 980.98 \\
    3   & 44.8588   &  803.45  &  45.9793  &  634.54  &  604.213 & 4253.01 \\
    4   &  66.5337   &   394.02  &  78.7357 &  221.21  &  757.490  & 815.01 \\
    5   &  33.8383  &    519.28  &  53.8226  &  480.23  &  755.205 & 706.25 \\
    6   &  46.6083  &  1033.09  &  75.6113  &  349.12  &  513.994  & 586.34 \\
    7   &  51.1379  &  781.07   &  63.3915  &  385.17  &  550.717  & 949.30 \\
    8   & 42.8026   &  931.50   &  77.4458  &  319.51  &  807.108  & 333.93 \\
    9   &  58.1182  &   489.43   &  84.7166  &  348.82  &  769.641  & 482.55 \\
   10   &  50.5110  &   492.11   &  54.3155  &  323.33  &  646.711 & 1789.64 \\
   11   &  43.6888  &   791.01   &  107.1820  &  212.34  &  729.171  & 472.71 \\
   12   &  47.5213  &  693.13  &  82.2919  &  219.20  &  655.867  & 1061.16 \\
    13   &  42.4918  &  468.44  &  53.2514  &  698.21  &  698.129  & 1421.38 \\
    14   &  41.1752  &  445.26  &  48.9485  &  261.18  &  523.118  & 977.67 \\
    15   &  44.8202   &  518.13  &  63.8735  &  509.77  &  639.540  & 504.42 \\
    16   &  40.1853   &  540.19  &  72.1540  &  396.25  &  690.719  & 661.91 \\
    17   &  66.6196  &   480.70  &  108.0970  &  254.47  &  735.361  & 476.87 \\
    18   &  62.9801  &   499.78  &  69.1063  &  233.33  &  622.840  & 1372.58 \\
    19   &  40.7602  &  542.69  &  54.9466  &  343.04  & 650.096  & 236.65 \\
    20   &  60.1121  &  607.19   &  81.2138  &  209.15  &  607.008  & 590.38 \\
    21   &  66.3578  &  588.31   &  80.3600  &  408.78  &  609.370  & 730.82 \\
    22   &  42.8765  &   776.38  &  75.5561  &  458.80  &  666.251  & 734.43 \\
   23   &  42.7949  &   400.03   &  62.8144  &  638.11  &  444.903  & 942.26 \\
    24   &  63.1568  &   590.91   &  73.7841  &  333.03  &  680.411  & 804.27 \\
    25   &  31.3830  &   232.18   &  44.6972  &  231.16  &  630.107  & 818.93 \\
    \bottomrule
\end{tabular}
\end{center}
\label{Table:Yalmip_Cplex_n_8}
\end{table}

%++++++++++++++++++++++++++++++++++++++++++++++++++;
%  Section: Computational results for diamter constraints ;
%++++++++++++++++++++++++++++++++++++++++++++++++++;
\subsection{Performance of proposed algorithm}
\label{Sec:ch3_results1}
All the computations in this section were performed with the same computer specifics as mentioned in 
section \ref{Sec:results1}.

As discussed in earlier sections, the semi-definite programming toolboxes in Matlab
could not be used to solve the proposed formulation with the semi-definite and diameter constraints
even for instances with 6 nodes primarily due to the inefficient memory management.
However, due to the combinatorial explosion resulting from
the increased size of the problem, the proposed algorithm with CPLEX solver could
provide optimal solutions in a reasonable amount of run time for instances upto 8 nodes.

We shall now compare the computational times of the proposed algorithm  to obtain optimal solutions
for different values of the bound on the diameter. The results shown in Tables
(\ref{Table:Yalmip_Cplex_n_6}) and (\ref{Table:Yalmip_Cplex_n_8})  are for $25$ random instances generated for
networks with 6 and 8 nodes, respectively. Based on the results in Table (\ref{Table:Yalmip_Cplex_n_6}),
we observed that the average run time for obtaining optimal solution for the 6 nodes problem with diameter constraint
was (average $T_1$) $6.6s$ and without diameter constraint was (average $T_2$) $6.3s$.
Based on the results in Table (\ref{Table:Yalmip_Cplex_n_8}),
we observed that the average run time for the problem without diameter constraints
(average $T_3= 927.95s$) was 1.61 times greater than the average run time for the problem
with diameter $\leq 4$ (average $T_1= 575.75s$) and 2.56 times greater than the average run time for the problem
with diameter $\leq 6$ (average $T_2= 362.77s$). Optimal networks with various diameters
corresponding to instances $1$ and $2$ of Table (\ref{Table:Yalmip_Cplex_n_8}) with 8 nodes may be found in
Figure \ref{Fig:optimal_tree_n8_1}.

%------------------------------------------------;
%   Figure: 8 nodes with diameter constraints	 ;
%------------------------------------------------;
\begin{figure}[htp]
	\centering
	\subfigure[Instance 1, diameter $\leq$ 4]{
	\includegraphics[scale=0.25]{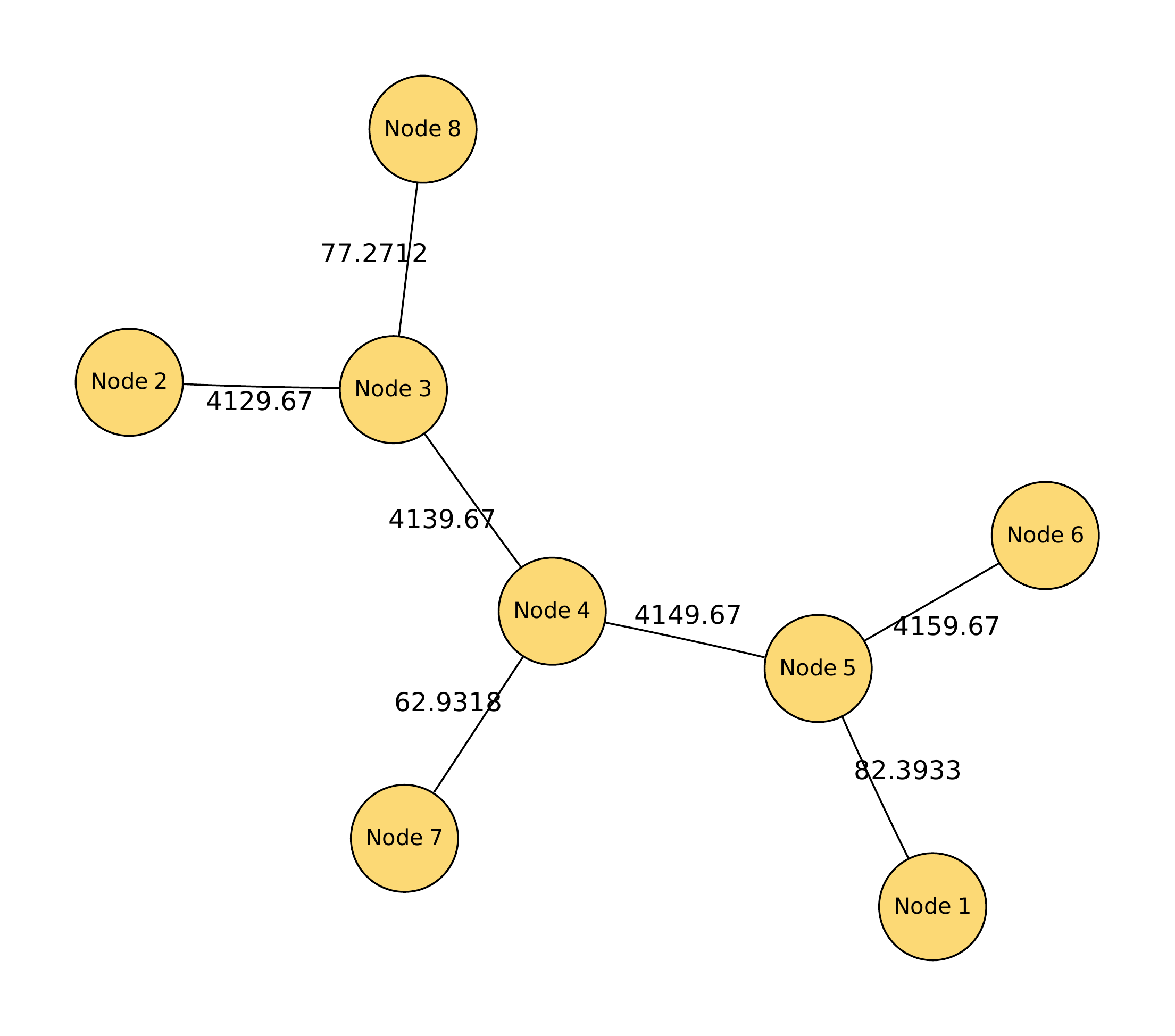}}
	\subfigure[Instance 2, diameter $\leq$ 4]{
	\includegraphics[scale=0.25]{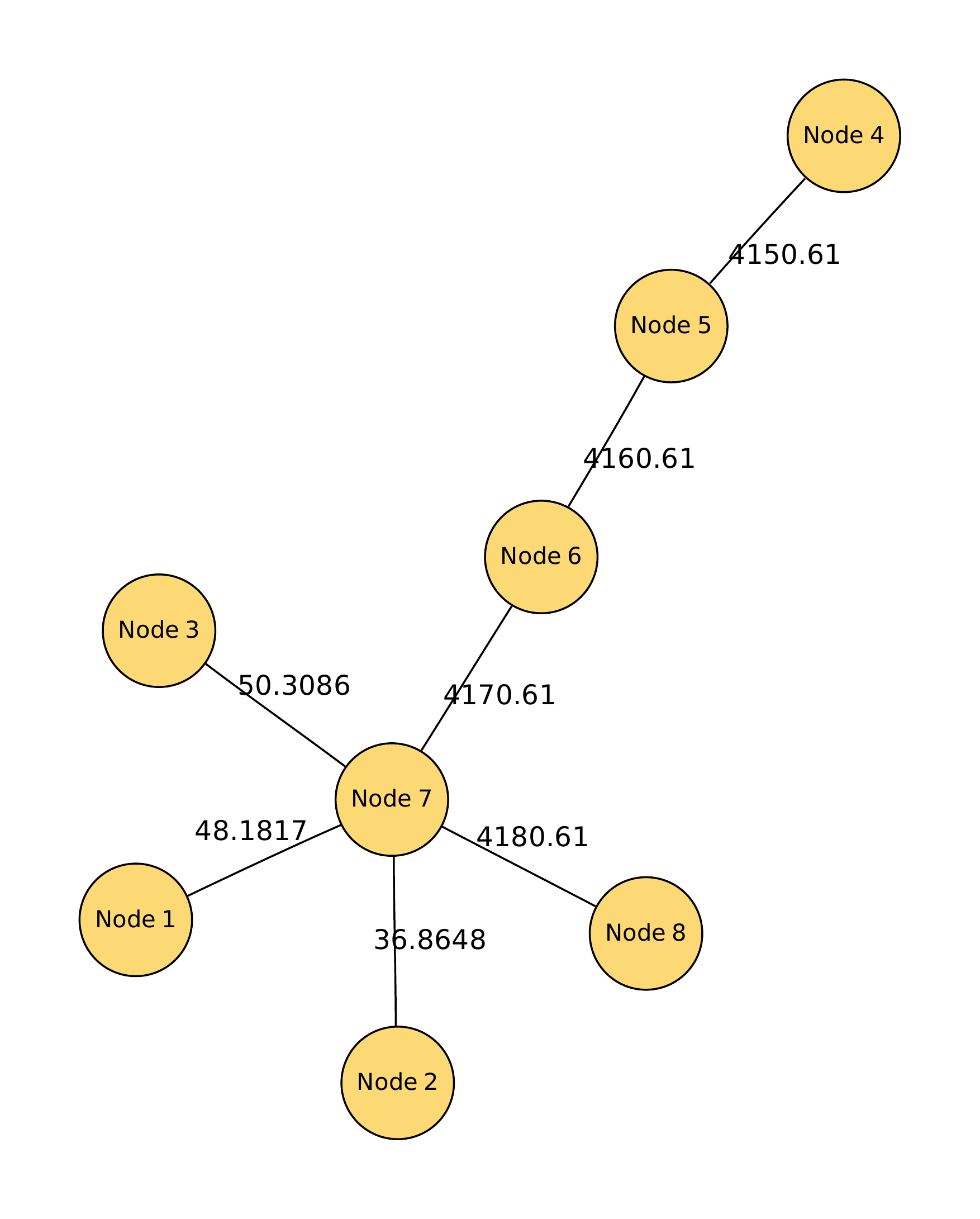}}
	\subfigure[Instance 1, diameter $\leq$ 6]{
	\includegraphics[scale=0.25]{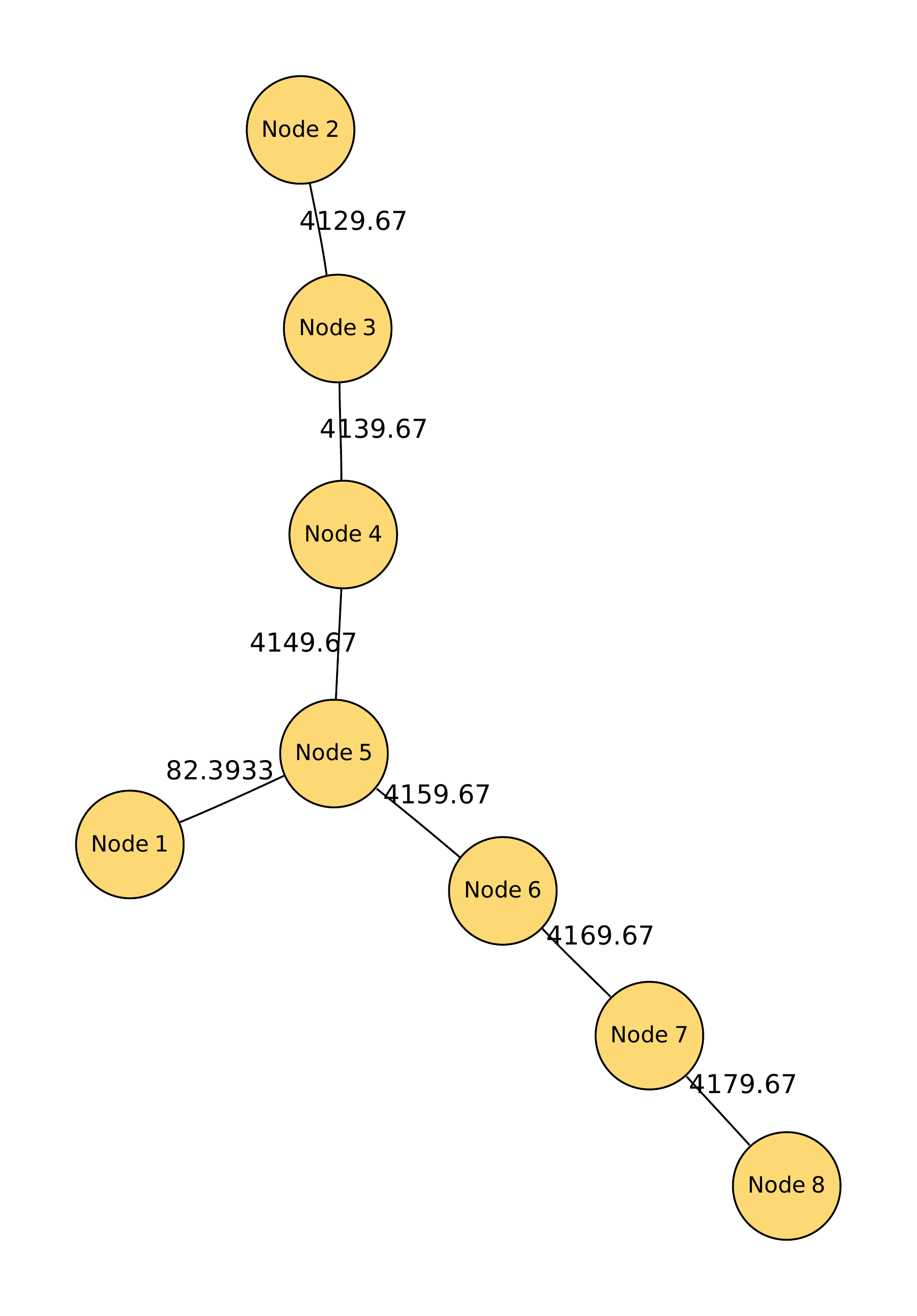}}
	\subfigure[Instance 2, diameter $\leq$ 6]{
	\includegraphics[scale=0.31]{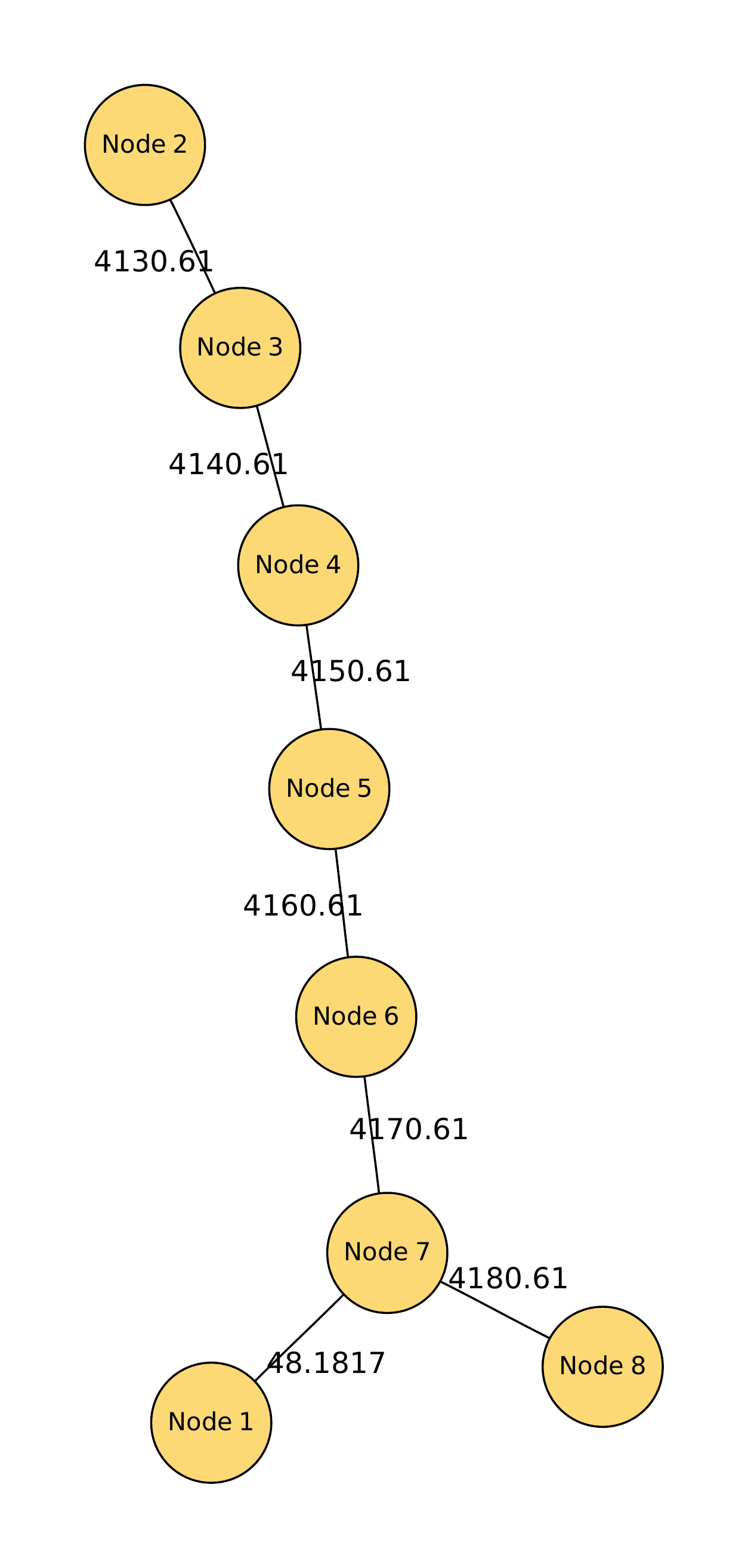}}
	\subfigure[Instance 1, no diameter constraint]{
	\includegraphics[scale=0.28]{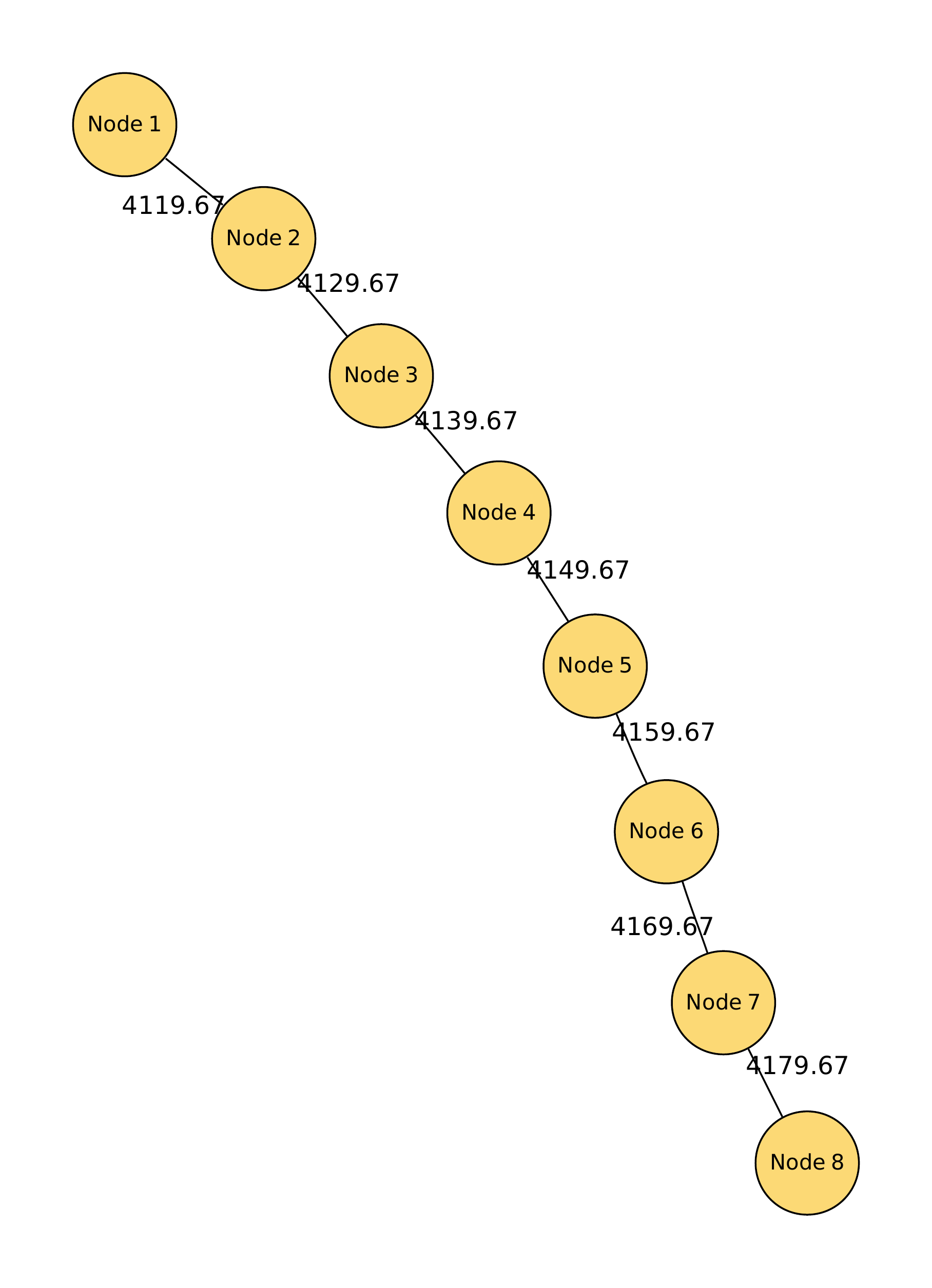}}
	\subfigure[Instance 2, no diameter constraint]{
	\includegraphics[scale=0.28]{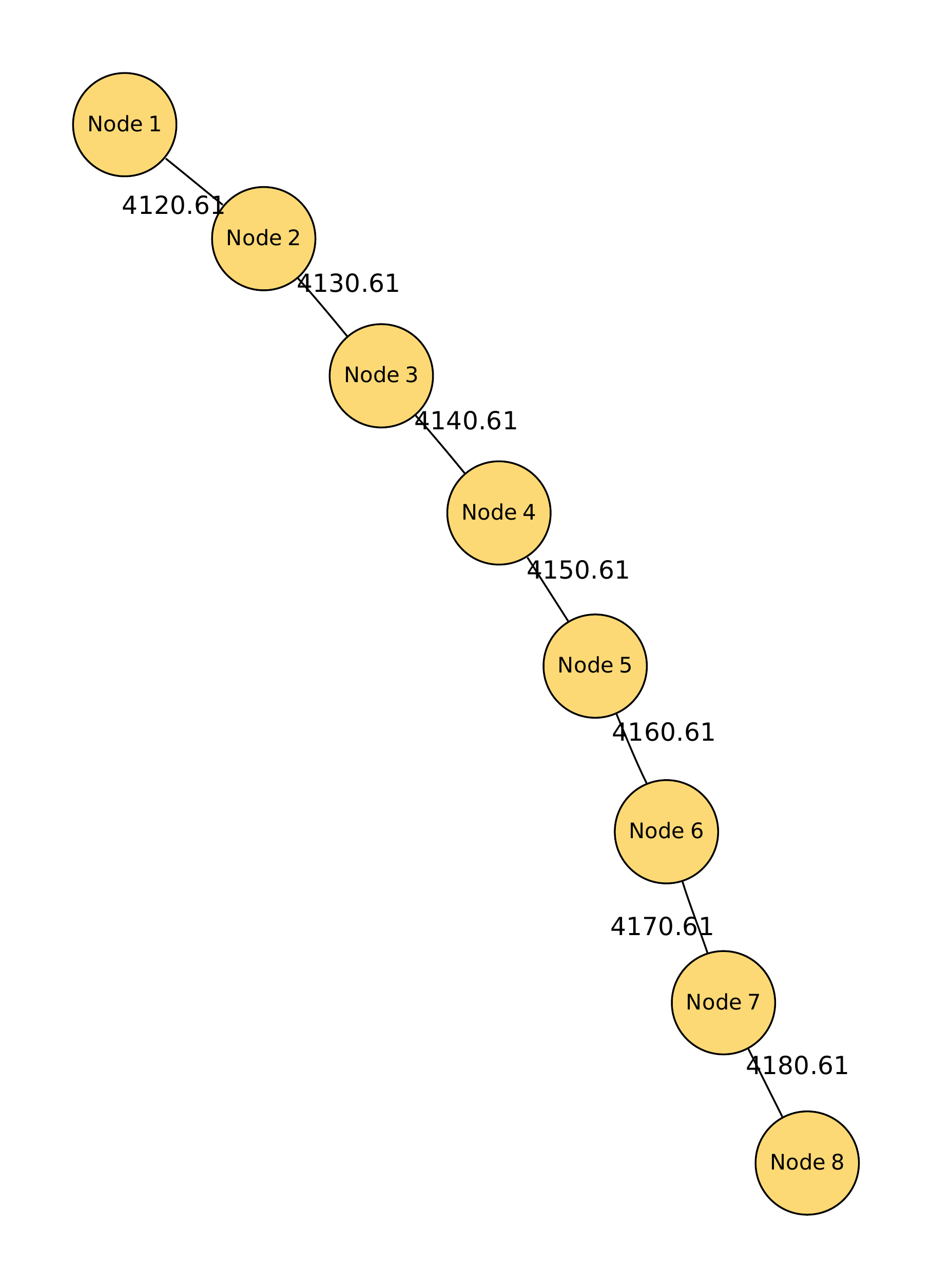}}
	\caption{ (a),(c),(e) correspond to optimal networks with maximum
	algebraic connectivity subject to various diameter constraints for
	instance 1 (from Table (\ref{Table:Yalmip_Cplex_n_8})).
	Similar plots for instance $2$ are also shown in (b),(d),(e).}
	\label{Fig:optimal_tree_n8_1}
\end{figure}

%++++++++++++++++++++++++++++++++++++++++++++++++++;
%  Section: Power constraints ;
%++++++++++++++++++++++++++++++++++++++++++++++++++;
\section{Maximization of algebraic connectivity with power consumption constraint}
\label{Sec:power_cons}
In this section, we mathematically formulate the total power consumed by the UAV 
network as a function of the eigenvalues of the Laplacian and later pose the problem of 
synthesizing a robust backbone UAV network subject to power consumption
constraint as a MISDP problem. Essentially, the problem is to determine the backbone UAV network with maximum algebraic 
connectivity subject to the constraints on the total power consumed by the network 
and the number of communication links.

\subsection{Related literature}
The idea of using UAVs to communicate data has been proposed in the literature; for example,
the use of UAVs as relays in disaster areas has been proposed in
\cite{connectivityUAV07}, \cite{FrewJINT08Comm}, \cite{MANET2009}, \cite{relay09}, \cite{relay10} to
facilitate a mobile communication network connecting the emergency responders,
control towers and different agencies, thereby enabling a timely exchange of
information between the relevant entities. A similar architecture was 
envisioned in \cite{RadioNavigation2011} for GPS denied navigation of UAVs. Employing some UAVs
primarily as data transmitters has several advantages. For instance, each
vehicle may not have the high-power transmitter and antennas to
communicate to the ground station, and even if it does, such direct
links are not suitable for environments with obstructions or
non-line-of-sight communications\cite{FrewJINT08Comm}. In addition,
the UAVs may be operating in dynamic environments where regular cellular
towers are either damaged or non-existent. For these reasons,
researchers have proposed meshing architectures
\cite{FrewJINT08Comm},\cite{heirarchical2000}. In this architecture, UAVs with
a higher communication capability act as mobile
base stations (also referred to as backbone nodes) and its primary job
is to connect the individual vehicles or the regular nodes with limited
communication capability to the control stations. A typical example
of such a network is shown in Figure \ref{Fig:uav_backbone}.
Various objectives have been considered in the recent work on optimization of networks
with backbone nodes; for example,  in  \cite{srinivas2009construction}, the objective is
to minimize the number of mobile backbone nodes so that
all the regular nodes are connected; in \cite{craparo2011throughput}, the objective is to
optimally chose the location of mobile backbone nodes so as to maximize the number of regular
nodes achieving a minimum throughput. However, in \cite{srinivas2009construction,craparo2011throughput},
either the backbone nodes are not allowed to communicate with each other or the connectivity among the backbone
nodes is enforced by requiring a minimum number of communication links that connect them.
\begin{figure}[!h]
	\centering
	\includegraphics[scale=1]{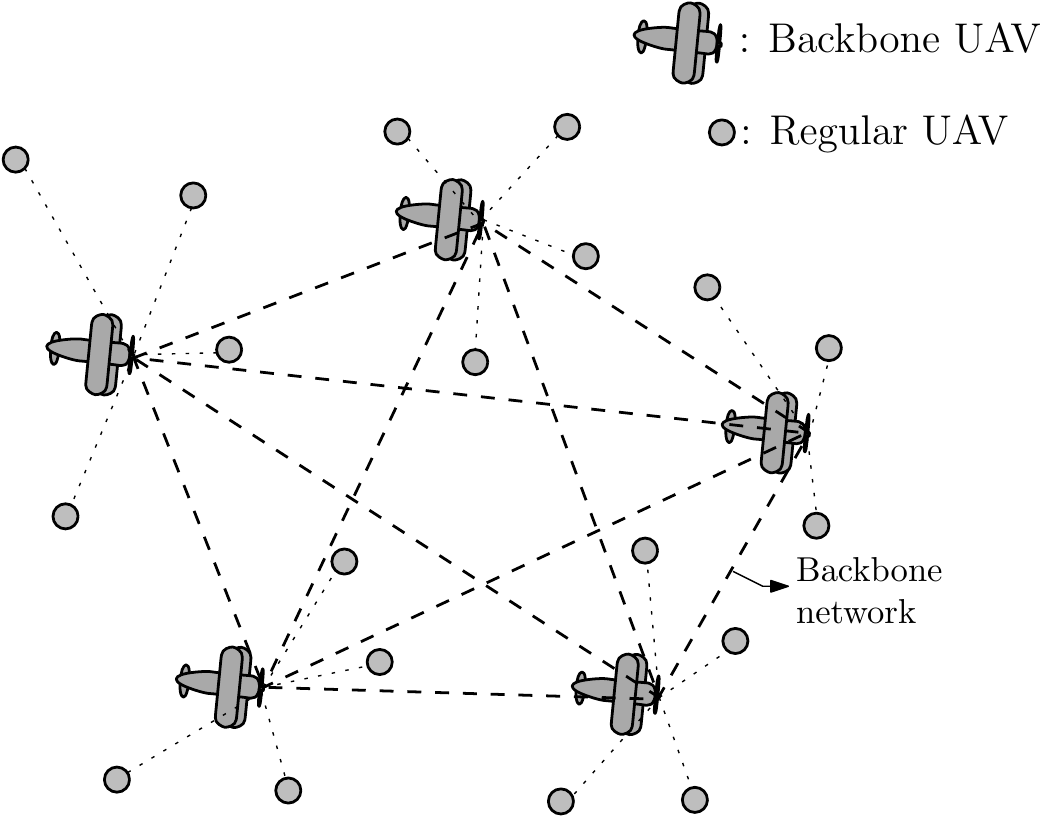}
	\caption{A typical representation of the UAV backbone network where
	backbone UAVs/nodes provide communication support to the regular nodes
	and each regular node is assigned to one backbone node as shown.}
	\label{Fig:uav_backbone}
\end{figure}

In this work, we consider another objective to optimize the network of mobile nodes; this objective
better reflects the robustness of connectivity among the backbone nodes due to random or unexpected
failure of communication networks.

The problem dealing with maximization of algebraic connectivity
subject to wiring cost constraint considered by Varshney
\cite{varshney2010distributed} is closely related to the proposed problem. Varshney proposes
the use of reverse convex program to find a relaxed solution and proposes the use
of rounding to get a feasible solution from the relaxed solution. Their work
lack the development of a systematic procedure to obtain optimal network. Also, the numerical
results presented in \cite{varshney2010distributed} are limited to instances of problems
with at most 7 nodes.  

Therefore, in this section, we propose an algorithm based on cutting plane technique 
to determine an optimal network for the problem of maximizing
algebraic connectivity subject to power consumption constraint. 
Lastly, we apply a 2-opt heuristic (developed in section \ref{Subsec:kopt}) to
find feasible solutions, and use a simple bound on the optimal algebraic connectivity
stemming from the resource constraint to estimate the quality of the feasible
solutions. 

% The {\it novelty} of the contributions lies in the development of systematic procedures as well as heuristics
% for the resource constrained algebraic connectivity problem.

%++++++++++++++++++++++++++++++++++++++++++++++++++++++++++++++;
%  Section: Proposed formulation and numerical implementation  ;
%++++++++++++++++++++++++++++++++++++++++++++++++++++++++++++++;

\subsection{Mathematical formulation of the power consumption constraint}
\label{Subsec:power_cons}
In this subsection, we first formulate the power consumption constraint, 
and in the subsequent subsections, we pose the algebraic connectivity problem with all the 
resource constraints as a Mixed Integer Semi-Definite Program.

One may sometimes have the choice of positioning UAVs that serve as backbone nodes.
In this case, there is a natural problem of determining the positions of the UAVs such
that the convex hull of the projection of the locations of these UAVs on to two dimensions
is at least equal to the minimum area of coverage, $A_0$.  Since the total power consumed
by these UAVs is an important consideration, as defined in the introduction, if there 
is a communication link between the $i^{th}$ and $j^{th}$ UAV, the power consumed is 
given by $P_{ij} :=\alpha_{ij} d_{ij}^2$. The total power consumed by the collection 
of backbone UAVs is $\sum_{(i,j) \in E} P_{ij}x_{ij}$. To formulate the power 
consumption constraint, we now pose the following subproblem: Given the network 
topology of the backbone UAVs, what would be an optimal placement of the nodes 
such that
\begin{itemize}
\item[a.] the total power consumed is minimum, and
\item[b.] the projected area of the convex hull of the backbone UAVs in the ground plane in is at least $A_0$?
\end{itemize}
Clearly, if one were to solve this subproblem, one can solve the original problem of synthesizing the network of backbone UAVs by considering only those topologies that result in the total power consumption within the specified budget, $P_{max}$, and then picking one network topology among them with the maximum algebraic connectivity. Hence, we shall discuss the formulation of this subproblem in the
remainder of this section.

Suppose the location of the $i^{th}$ UAV is given by $(a_i, b_i, c_i)$, so that
for a given topology of communication (provided by the set $E_g$ of edges), the
total power consumed may be written as:

%---------------------------------;
%   Equation :		;
%---------------------------------;
\begin{subequations}
	\label{eq:}
	\begin{align}
	\sum_{(i,j) \in E_g} P_{ij} & = \sum_{(i,j) \in E_g} \alpha_{ij} ((a_i - a_j)^2 + (b_i - b_j)^2 +(c_i-c_j)^2), \\
 &= {\mathbf a} \cdot \sum_{e=(i,j) \in E_g} \alpha_{ij} L_e \mathbf{a} + \mathbf{b} \cdot \sum_{e=(i,j) \in E_g} \alpha_{ij} L_e
\mathbf{b} + \mathbf{c} \cdot \sum_{e=(i,j) \in E_g} \alpha_{ij} L_e \mathbf{c},
	\end{align}
\end{subequations}
where $L_e$ is the local Laplacian matrix corresponding to the edge $e$ 
and $\mathbf{a}$, $\mathbf{b}$ and ${\mathbf c}$ are the vectors whose
$i^{th}$ components provide respectively  the $a$, $b$ and $c$ coordinates of the $i^{th}$ UAV.

In order to improve spatial spread, we will require that the area of the convex hull
of the projections of UAVs' locations on the horizontal plane be at least a
specified amount, say $A_0$. Without loss of generality, assuming that the origin is at the
centroid of the convex hull as shown in figure \ref{Fig:convex_hull} (for a simple case of five UAVs),
we have the following constraints,
$\mathbf{1}\cdot\mathbf{a}=0$ and $\mathbf{1}\cdot\mathbf{b}=0$ where $\mathbf{1} \in \mathbb{R}^n$ is
a vector of ones. Since we are dealing with the convex hull of the projected locations of
the UAVs, one may number the projected locations and order them appropriately,
so that the area may be triangulated with the centroid being one of the
vertices of every triangle in the triangulation and  that  the area of the
convex hull may be expressed as a bilinear function: $A(\mathbf a, \mathbf b)$ has the the property that
$A(\mathbf{a}, \mathbf{b}) = -A( \mathbf{b}, \mathbf{a})$.
Hence, for some skew-symmetric matrix, $\Omega$, one may express the area as:
$$A(\mathbf a, {\mathbf b}) = \mathbf {a} \cdot \Omega \mathbf b. $$
For the case of five nodes shown in Figure \ref{Fig:convex_hull}, the skew symmetric 
matrix is given by: 
\begin{align*}
		  	 {\Omega} &= \frac{1}{2} \begin{pmatrix}
		0 & 1 & 0 & 0 & -1 \\
		-1 & 0 & 1 & 0 & 0 \\
		0 & -1 & 0 & 1 & 0 \\
		0 & 0 & -1 & 0 & 1 \\
		1 & 0 & 0 & -1 & 0 \\
		\end{pmatrix}
\end{align*}
Since $\Omega$ is skew-symmetric, $\mathbf{b} \cdot \Omega \mathbf{b} = 0$,
the component of the vector $\mathbf{a}$ along $\mathbf{b}$ will
not contribute to the projected area. Therefore, we may require
that $\mathbf{a} \cdot \mathbf{b} = 0$ so that $\mathbf{a}$
contributes fully to the projected area.

%---------------------------------;
%   Figure :		;
%---------------------------------;
\begin{figure}[!h]
	\centering
	\includegraphics[scale=0.9]{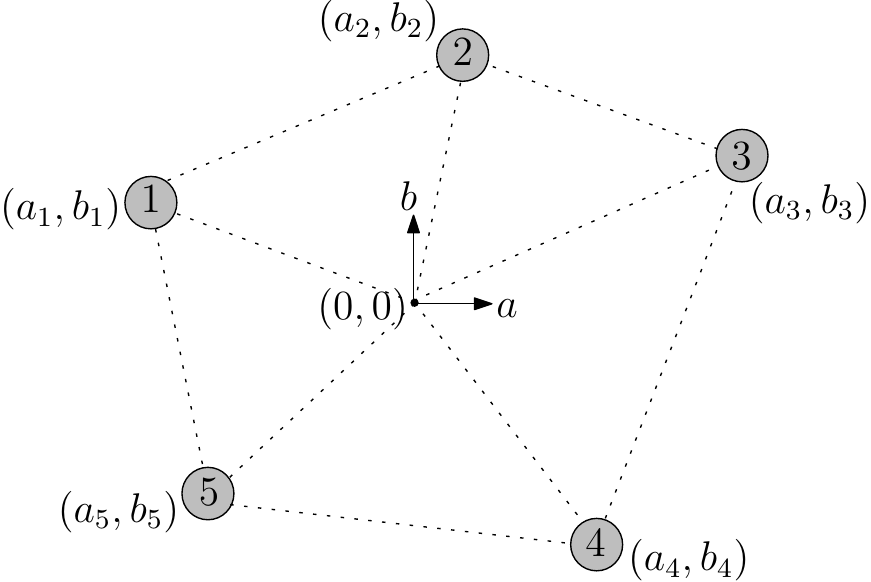}
	\caption{Convex hull of the projections of five UAVs' locations on the horizontal plane
	 with the centroid of the area at the origin.}
	\label{Fig:convex_hull}
\end{figure}

Imposing the non-linear constraint $\mathbf{a} \cdot \Omega \mathbf{b} \geq A_0$ is
hard; for this reason, we will alternatively specify the constraint on the spread of 
UAVs indirectly through requiring the variance in their coordinates to be at least $R^2$ units.
Hence, we recast the problem for locating the UAVs so that the total power consumption is a minimum as:

%-------------------------------------------;
%  Minimize power consumption formulation	  ;
%-------------------------------------------;
\begin{align*}
		  \mathrm{Minimize} \ \ &P = \mathbf{a} \cdot L \mathbf{a} + \mathbf{b} \cdot L \mathbf{b} + \mathbf{c} \cdot L \mathbf{c} \\
		  \mathrm{subject \ to} \ \
		  &\mathbf{1}\cdot\mathbf{a}=0, \ \mathbf{1}\cdot\mathbf{b}=0 \\
		  &\mathbf{a} \cdot \mathbf{b} = 0, \\
		  &\mathbf{a} \cdot \mathbf{a} \geq R^2,  \ \mathbf{b} \cdot \mathbf{b} \geq R^2.
\end{align*}

Therefore, by the variational characterization of eigenvalues, the minimum total power consumed for a given network
topology of the UAVs is given by: \[R^2(\lambda_2(L) + \lambda_3(L)),\]
where the optimal ${\mathbf c}$ is along the vector ${\mathbf 1}$
(corresponding to the zero eigenvalue of $L$), $\mathbf{a}$
and $\mathbf{b}$ along the eigenvectors corresponding to the second and third smallest
eigenvalues of $L$. In other words, the optimal location of UAVs is such that they must lie in the
same plane (i.e., with their $c$ coordinates being the same) and their $a$ and $b$ coordinates must lie
along the eigenvectors corresponding to the second and third smallest eigenvalues of the Laplacian
for the specified network topology so that the total power consumption of the communication network is a minimum.

%===================================================================;
%  Subsection: Basic problem of maximizing algebraic connectivity   ;
%===================================================================;
\subsection{Problem of maximizing algebraic connectivity with power consumption constraint}
\label{Subsec:}
As we discussed in section \ref{Subsec:power_cons}, we showed that, given
a network topology for the UAVs, an optimal placement of the nodes would be
along the second and third eigenvector directions and that the minimum power
consumed would be $R^2(\lambda_2(L) + \lambda_3(L))$ where $L$ is the
Laplacian of that particular topology.
Naturally, one would also be interested in synthesizing a network topology ($x$) which
connects all the UAVs and such that a) the topology is robust/well-connected against
random failure of links and b) the total power consumed is bounded by a prescribed
upper bound ($\tilde{P}_{max}$), i.e,
\begin{align*}
	\lambda_2(L(x)) + \lambda_3(L(x)) &\leq P_{max}
\end{align*}
where $P_{max} = \tilde{P}_{max}/R^2$.

%---------------------------------;
%   Figure : UAV positioning		;
%---------------------------------;
\begin{figure}[!h]
	\centering
	\subfigure[8 nodes]{
			  \includegraphics[scale=0.53]{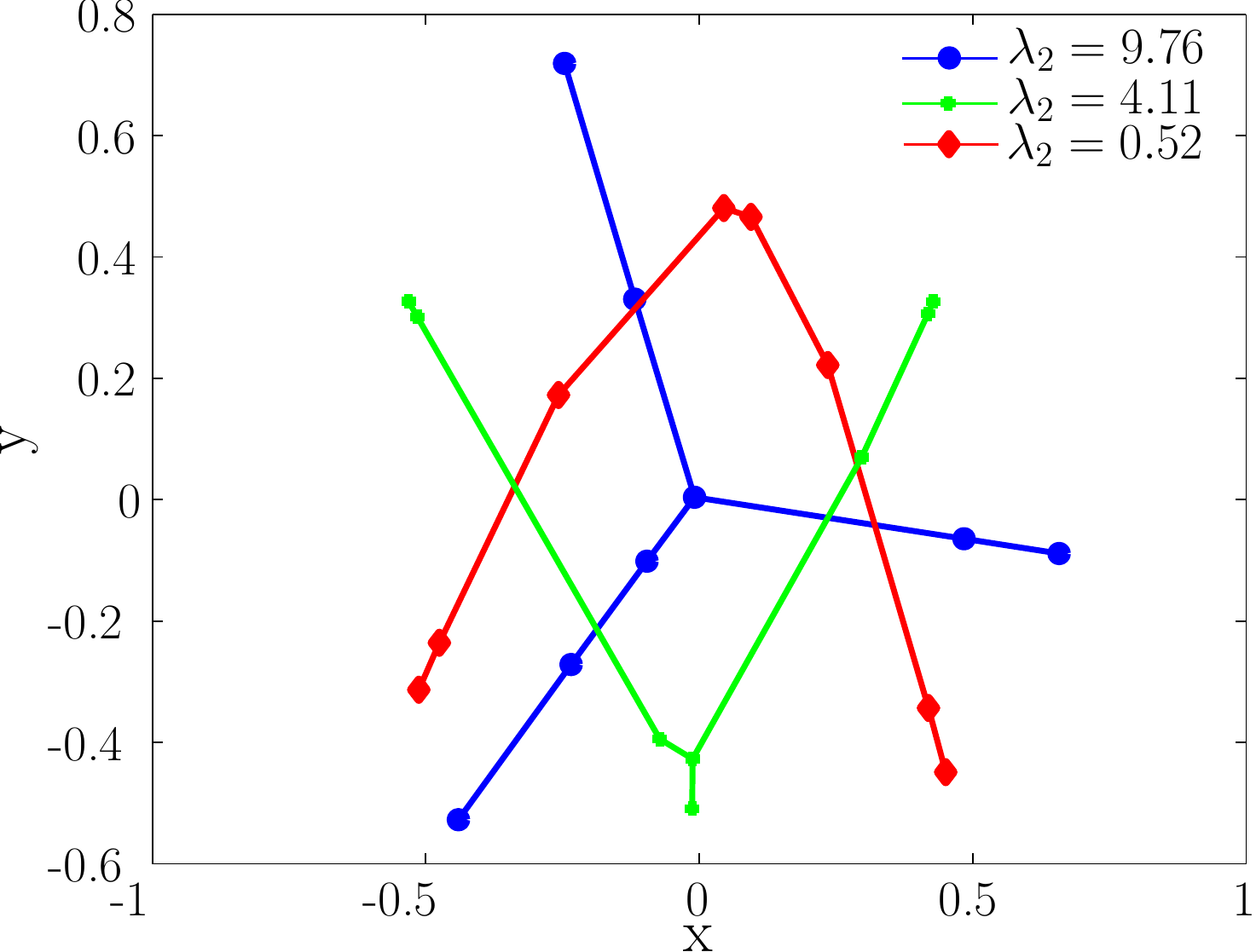}}
	\subfigure[20 nodes]{
			  \includegraphics[scale=0.53]{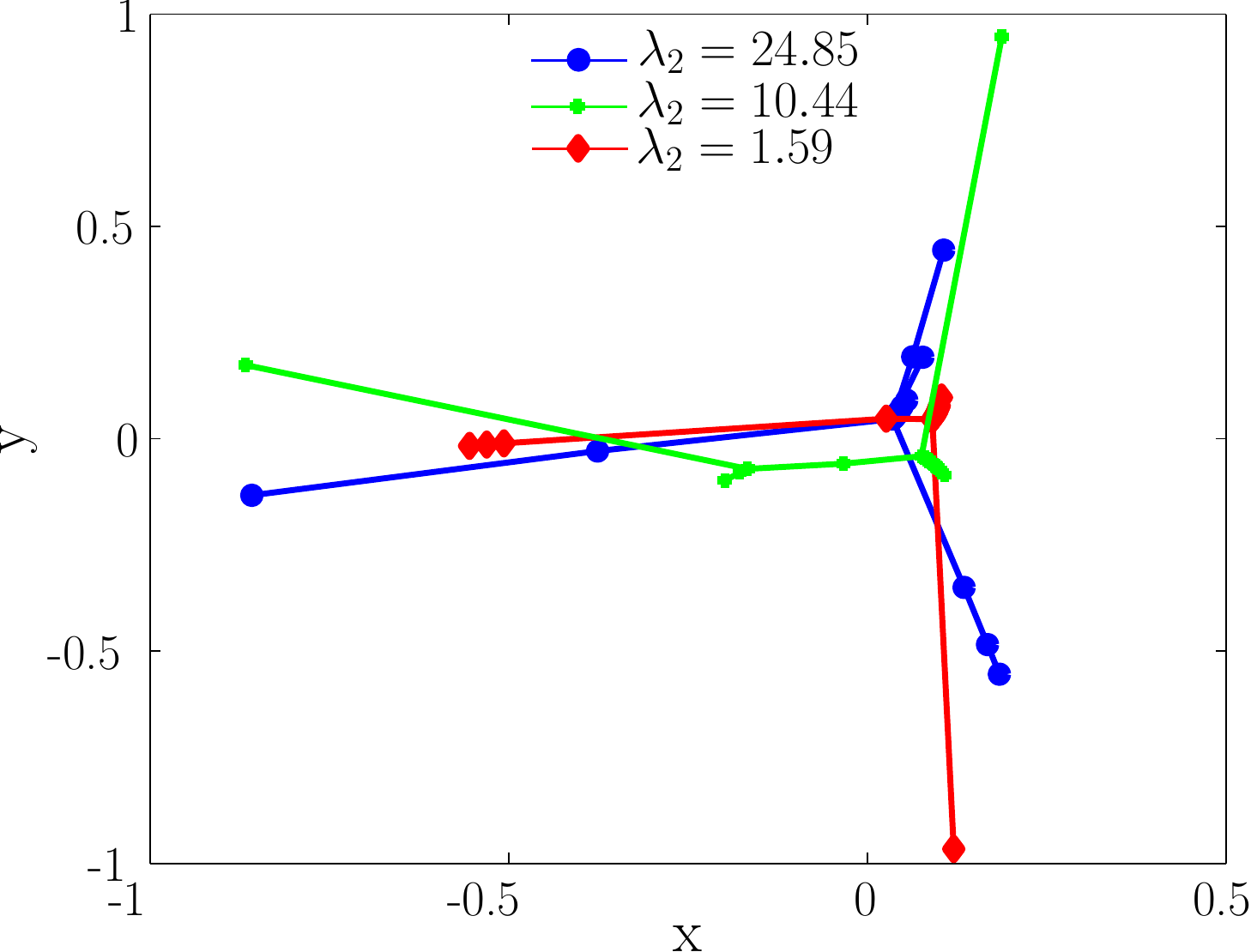} }
	\caption{This figure represents the positioning of UAVs for various objective values subject to
			  power consumption constraint. Maximizing
	$\lambda_2(L)$ indicates that the UAV locations are more uniformly distributed with well connected
	topologies.}
	\label{Fig:uav_positions}
\end{figure}

%--------------------------------------------------;
%   Figure : Rigid body rotation about centroid		;
%--------------------------------------------------;
\begin{figure}[htp]
	\centering
	\subfigure[$\lambda_2 = 9.76$]{
			  \includegraphics[scale=0.15]{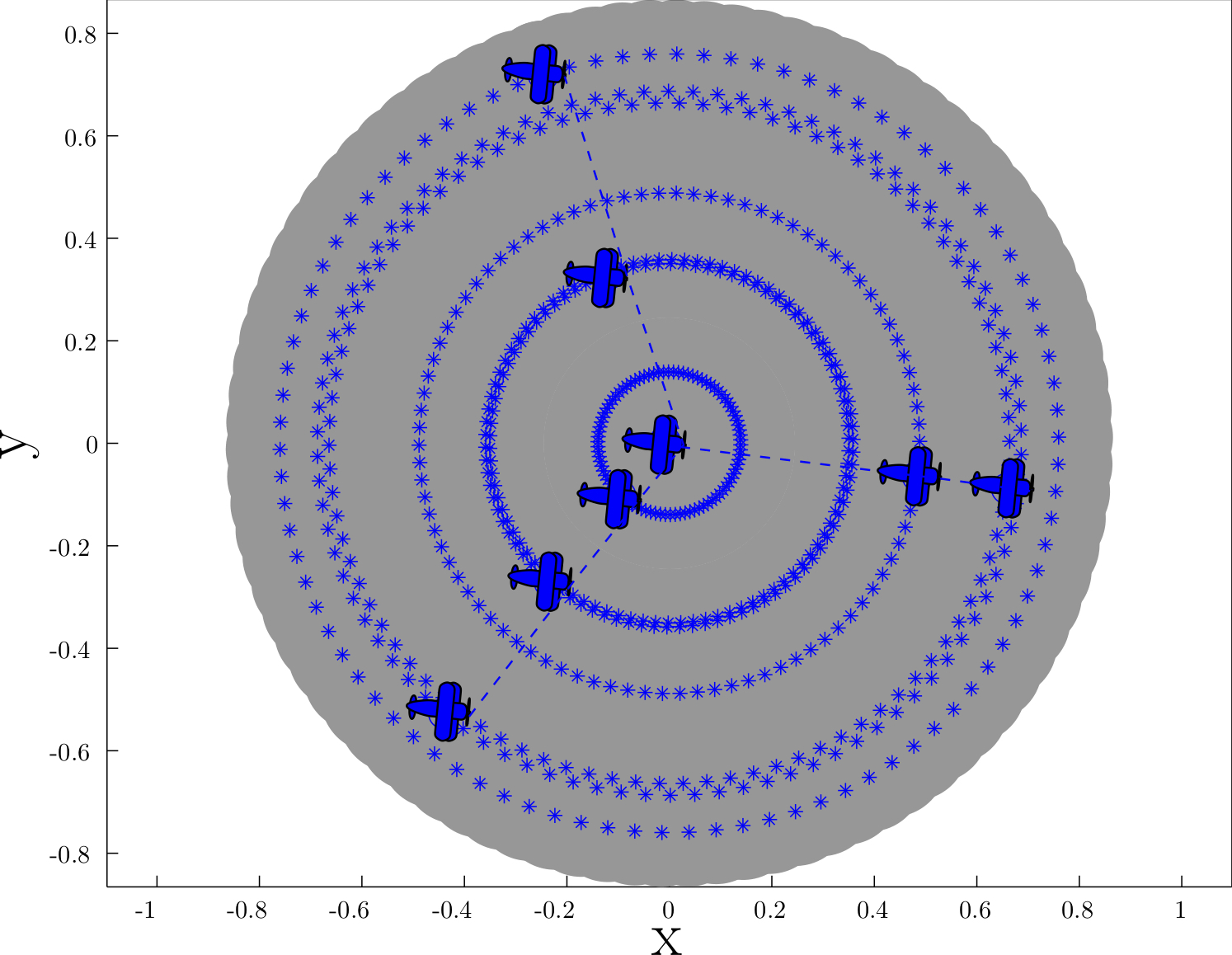}}
	\subfigure[$\lambda_2 = 4.11$]{
			  \includegraphics[scale=0.15]{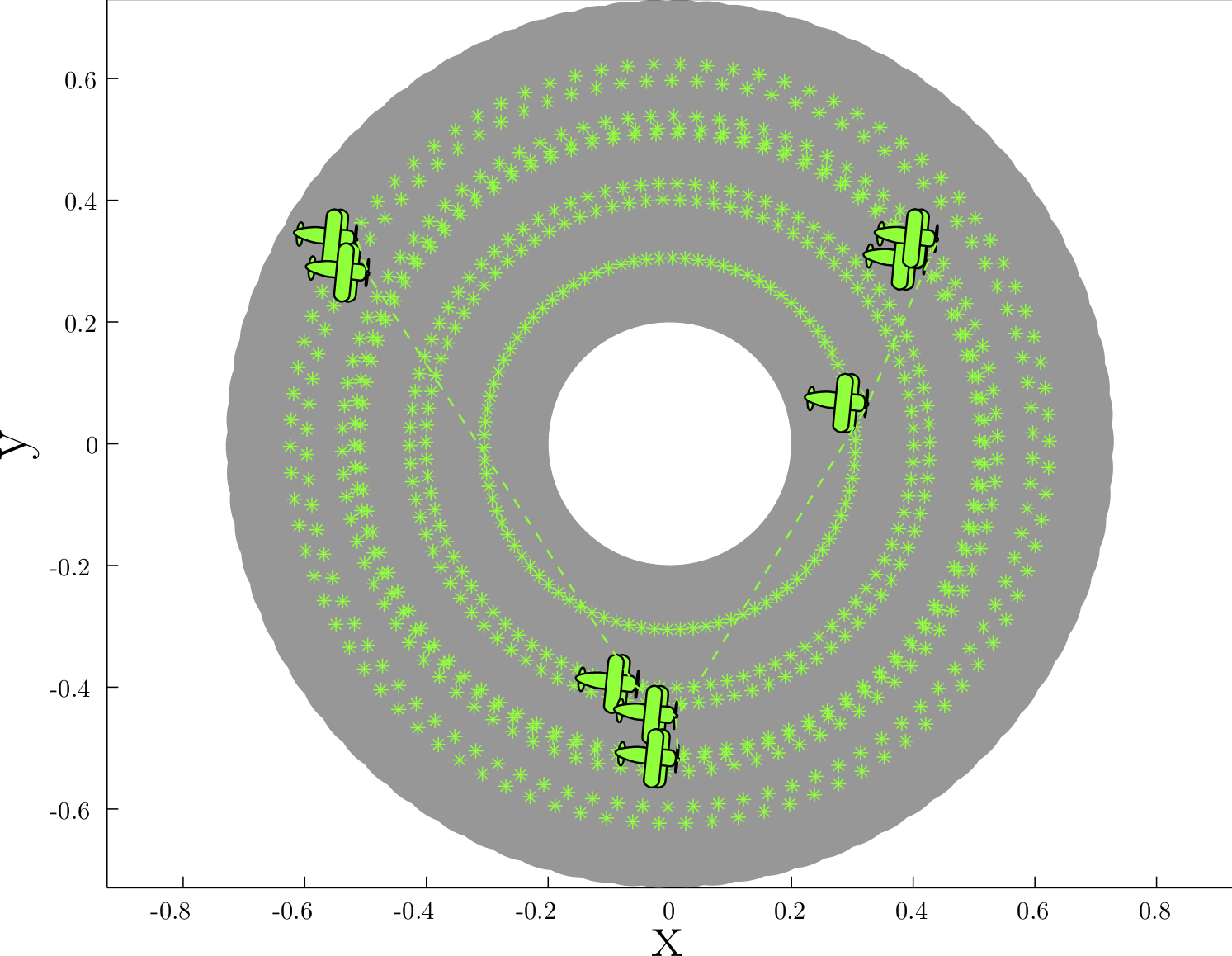} }
	\subfigure[$\lambda_2 = 0.52$]{
			  \includegraphics[scale=0.15]{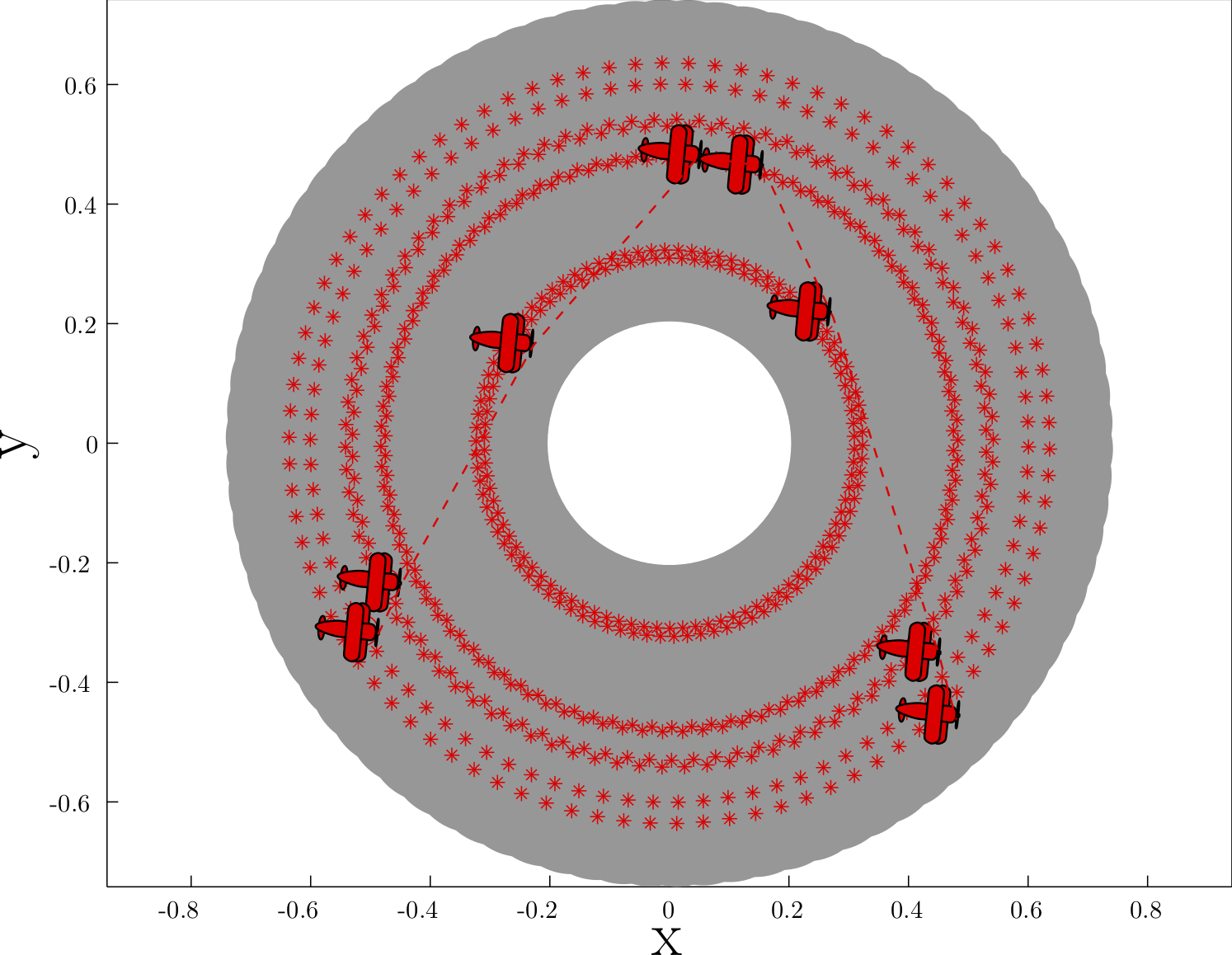} }
	\caption{This figure represents the trajectories of the UAVs when the backbone UAV network (8 nodes) is subject to
			  a rigid body rotation by 360 degrees about their respective centroids. Radius of communication
			  of 0.1 was chosen for all the UAVs. Note that, the network corresponding to largest $\lambda_2$ value
           has the maximum coverage unlike the networks with with lower $\lambda_2$. Source: \cite{nagarajan2012dscc,nagarajan2015synthesizing} }
	\label{Fig:uav_rotation}
\end{figure}

In order to synthesize a well-connected topology,
we chose to maximize the $\lambda_2$ or the algebraic connectivity
of the weighted Laplacian of the network where weights
of the edges correspond to the proportionality constant, $\alpha_{ij}$. As discussed
in the introduction, algebraic connectivity has been extensively used in the literature as a measure for
robustness of networks.  This objective of maximizing algebraic connectivity is reasonable
since we observed that the topology of networks with higher $\lambda_2$ tend to spread the UAV 
locations better. Sample feasible networks for 8 and 20 nodes with
varying values of $\lambda_2$ are shown in
figure \ref{Fig:uav_positions}. In the case of 8 nodes, networks with lower $\lambda_2$ are weakly connected
since a random removal of any node/edge can disconnect the entire network. Networks with lower $\lambda_2$
seem to have a higher diameter which can incur more delays in the communication of data among UAVs.
The situation is similar in the case of 20 nodes in figure \ref{Fig:uav_positions}.

Based on the model of the power constraint and the notation defined earlier,
the problem of choosing at most $q$ (positive integer) edges from $E$ so that
the algebraic connectivity of the augmented network is maximized
can be posed as follows:

%-------------------------------------------;
%  Optimization prob: Non-linear MI problem ;
%-------------------------------------------;
% \begin{eqnarray*}
% 	\max \ \lambda_2(L(x)),
% \end{eqnarray*}
% %\vspace{1.2cm}
% \begin{subequations}
% 	\label{eq:F1}
% 	\begin{align}
% \mathrm{subject \ to} \ \ \ \ \ \lambda_2(L(x)) + \lambda_3(L(x)) \leq P_{max}, \\
% 	\sum_{e \in E} x_e \leq q, \quad
% 	x \in \{0, 1\}^{|E|}.
% 	\end{align}
% \end{subequations}

\begin{equation}
		\label{eq:ch3_F1}
		\begin{array}{ll}
	  		&\max  \lambda_2(L(x)), \\
		\text{s.t.} &  \lambda_2(L(x)) + \lambda_3(L(x)) \leq P_{max},\\
			& \sum_{e \in E} x_e \leq q, \\
			& x \in \{0, 1\}^{|E|}.
		\end{array}
\end{equation}

Since the objective of this problem is non-linear, it can be converted to a more tractable but an
equivalent MISDP with the non-linear power consumption constraint
as follows:

%---------------------------------------------------------;
%  Optimization prob: Mixed SD binary formulation (MISDP) ;
%---------------------------------------------------------;
\begin{equation}
		\label{eq:ch3_F2}
		\begin{array}{ll}
	  		&\max  \gamma, \\
		\text{s.t.} &  L(x) \succeq \gamma (I_n - e_0 \otimes e_0), \\
			&  \lambda_2(L(x)) + \lambda_3(L(x)) \leq P_{max},\\
			& \sum_{e \in E} x_e \leq q, \\
			& x \in \{0, 1\}^{|E|}.
		\end{array}
\end{equation}

% \begin{eqnarray*}
% 	\max \ \gamma,
% \end{eqnarray*}
% %\vspace{-1.2cm}
% \begin{subequations}
% 	\label{eq:F2}
% 	\begin{align}
% \label{eq:F2_1}
% \mathrm{subject \ to} \ \ \ \ \ L(x) \succeq \gamma (I_n - e_0 \otimes e_0), \quad \\
% \label{eq:F2_2}
% \lambda_2(L(x)) + \lambda_3(L(x)) \leq P_{max}, \\
% 	\sum_{e \in E} x_e \leq q, \quad
% 	x \in \{0, 1\}^{|E|}.
% 	\end{align}
% \end{subequations}
%where $e_0 = \frac{1}{\sqrt{n}}\sum_{i=1}^n e_i$ such that $e_0 \cdot e_0 = 1$. 
The correctness of this formulation exactly follows the proof given in
section \ref{Subsec:F1}. 

%================================================;
%  Subsection: Cutting plane algo for power constraint		  ;
%================================================;
\subsection{Algorithm for determining maximum algebraic connectivity with power consumption constraint}
\label{Sec:cutting}
Currently, efficient tools for solving MISDPs are not available.
In this section, we extend the idea of the algorithm proposed for solving
the \textbf{BP}, which was based on the cutting plane method. 
The basic idea of this method is to find an outer approximation (relaxation) of the 
feasible set of the MISDP problem and solve the optimization problem over the  outer 
approximation (which we refer to as a relaxed problem). If the optimal solution for 
the relaxed problem is feasible for the MISDP problem, it is also clearly optimal for 
the MISDP problem; otherwise, one must refine the outer approximation, e.g., via the 
introduction of additional linear inequalities (referred to as cuts). One may then 
iteratively refine the outer approximation until the optimal solution of the outer 
approximation is also feasible for the MISDP.

We initially tried solving the MISDP problem by relaxing the non-convex power consumption 
constraint and adding linear inequalities to cut off optimal solutions of the relaxed semi-definite programs. 
We used Matlab and state-of-the-art semi-definite solvers such as the Sedumi for this implementation 
and found that it could not handle problems of size greater than 5 nodes. Hence, we opted 
to pose this problem as a MILP problem so that the available
high performance solvers such as CPLEX can be used.

It is important to understand the sources of difficulty when implementing a 
cutting plane algorithm for the MISDP problem under consideration. For every vector
$v \in \Re^n$ such that $v \cdot \mathbf{1} = 0$ and $\|v\|_2 = 1$, the semi-definiteness requirement is equivalent to 
$$\sum_{e \in E} x_e v \cdot L_e v \geq \gamma. $$ 
In essence, for every such vector $v$, there is a linear constraint in $x_e$ and $\gamma$ and 
since the number of vectors $v$ satisfying $v \cdot \mathbf{1} = 0$ and $\|v\|_2 = 1$ is 
uncountably infinite, the semi-definite requirement is equivalent to an uncountable number of linear constraints in 
the discrete variables $x_e, \; e \in E$ and $\gamma$. This constraint makes the use of 
standard ILP tools difficult. One can pick any finite subset of 
these linear constraints to construct a polyhedral outer approximation. 

The non-convex nature of the power consumption constraint in \eqref{eq:ch3_F2} makes 
it difficult to be taken care of directly by the standard ILP tools. For this reason, 
we relax this constraint and provide a method for the construction of ``cuts'' that cut 
off any feasible solution of the relaxed problem that is not feasible for the MISDP.  

The schema for solving  the MISDP  is as follows:
\begin{itemize}
\item {\bf Step 0:} {\bf Initialization:} Pick a finite set of unit vectors, 
		  say $v_i, \; i=1, 2, \ldots, M$ that are perpendicular to $\mathbf{1}$, 
		  and the polyhedral outer approximation, $\mathcal{P}_0$ is the feasible set of the inequalities:
$$\sum_{e \in E } x_e v_i \cdot L_e v_i \geq \gamma, \; \; i=1, 2, \ldots, M, $$
and $x_e \in \{0,1\}, \; e \in E$. 
\item {\bf Step 1:} {\bf Refinement of Polyhedral Outer Approximation:}  This step involves the developing of 
``cuts''. Since the initial polyhedral approximation relaxes the semi-definite constraint and the power consumption constraint \eqref{eq:ch3_F2}, we outline a method to find the linear inequalities
that cut off solutions that are not feasible for either of these constraints.  
\begin{itemize}
\item[a.] {\bf Cut for the semi-definite constraint violation:}  A violation of semi-definite constraint can result in a graph being disconnected. However, we augment the constraints from a multicommodity flow formulation 
\cite{magnanti1995optimal} in order to ensure that the optimal solution of the relaxed problem is not disconnected. 

If the semi-definiteness requirement \eqref{eq:ch3_F2} is violated by the optimal 
solution given by $(x_e^*,\gamma^*), e \in E$, (which we assume is connected now) one 
may readily use the eigenvalue cut, i.e., if
$$\sum_{e \in E} x_e^* L_e - \gamma^* (I_n - e_0 \otimes e_0) \nsucceq 0, $$
then there is at least one eigenvalue of the matrix on the left hand side of the 
above inequality that is negative. Hence, if one were to consider the 
corresponding normalized eigenvector, say $v$, then 
$$  \sum_{e \in E} x_e^* v \cdot L_e v  < \gamma^*, $$
and hence, one may refine the polyhedral outer approximation by augmenting an 
additional constraint that must be satisfied by any feasible solution to MISDP:
\begin{equation}
		  \label{eq:valid_ineq_1}
	\sum_{e \in E} x_e v \cdot L_e v \geq \gamma.
\end{equation}
This additional constraint ensures that the solution $x_e^*, \; e \in E$ 
that was optimal for the relaxed problem will not be feasible now for the 
augmented set of inequalities and the feasible set of the augmented set of 
inequalities is a refined outer approximation. Also, it can be easily proved 
that the inequality \eqref{eq:valid_ineq_1} is a valid inequality for the original
problem based on the variational characterization of the eigenvalues. 

\item[b.] {\bf Cut for the power consumption constraint violation:}
If the constraint on power consumption in \eqref{eq:ch3_F2} is violated by 
$x_e^*$ where $e \in E^* \subset E$, one may 
introduce a constraint requiring that not all the edges of the optimal 
solution may be used, and can introduce a branch according to 
$$\sum_{e \in E^*} x_e \leq \sum_{e \in E^*} x_e^* -1, $$ or 
$$\sum_{e \in E^*} x_e \geq \sum_{e \in E^*} x_e^*+1. $$
Since we seek spanning trees in the numerical examples, we only require 
the former constraint, namely
\begin{equation}
		  \label{eq:valid_ineq_2}
\sum_{e \in E^*} x_e \leq \sum_{e \in E^*} x_e^*-1,
\end{equation}
to be enforced in the algorithm. It can again be easily proved that 
inequality \eqref{eq:valid_ineq_2} is a valid inequality as follows:
Since \eqref{eq:valid_ineq_2} is an inequality on the number of edges in the spanning tree, 
let $\tau_i$ be the $i^{th}$ spanning tree among $n^{n-2}$ possible 
spanning trees and $E_{\tau_i}$ be the edges in $\tau_i^{th}$ spanning tree. Then we know that 
$$ 0 \leq |E_{\tau_i} \cap E_{\tau_j}| \leq n-2  \ \ \ \forall \ i,j=1,\ldots ,n^{n-2}, i \neq j.$$
From this, it is clear that 
$$ \sum_{e \in E_{\tau_i}} x_e \leq n-2$$ uniquely eliminates $\tau_i$ retaining 
all other spanning trees valid. 

\end{itemize} 
\item{\bf Step 2:} Solve the relaxed problem, i.e., solve the optimization problem over the feasible set of 
the refined approximation using ILP solvers to get an updated solution $x_e^*, \; e \in E$ and go to Step 1. 
\end{itemize}
The pseudo code of this procedure is outlined in
Algorithm \ref{Algo:wiring_cost_algo}. The Algorithm \ref{Algo:wiring_cost_algo}
is guaranteed to terminate in finite number of iterations since the number of feasible solutions
for this problem is finite ($n^{n-2}$ for a problem with $n$ nodes). 
The cut for eliminating solutions that do not satisfy the semi-definite constraint is shown in steps 7 through 11 of Algorithm \ref{Algo:wiring_cost_algo}. Step 12 of Algorithm \ref{Algo:wiring_cost_algo}
 corresponds to the cut for eliminating solutions that violate the power consumption constraint.
%\item {\bf Refinement:} Set $v_{M+1} = v$, $\mathcal{P}_0$ and increment $M$ by one

%----------------------------------------------;
%  Algorithm: Wiring cost algorithm ;
%----------------------------------------------;
\begin{algorithm}[h]
%\small
%\caption{\textbf{: Maximizing algebraic connectivity subject to resource constraints}}
\caption{\textbf{:Algorithm for determining maximum algebraic connectivity with power constraint}}
\label{Algo:wiring_cost_algo}
\textit{Notation:} Let $\hat{I} =(I_n - e_0 \otimes e_0)$.

Let $\mathfrak{F}$ denote a set of cuts which must be satisfied by any feasible solution
\vspace{0.1cm}
\begin{algorithmic}[1]
\STATE Input: A graph $G=(V,E)$, a weight ($w_e$) for each edge $e\in E$, $P_{max}$, and a finite number of Fiedler vectors, $v_i, i=1 \ldots M$
\STATE Choose any spanning tree, $x_0$ such that $\lambda_2(L(x_0))+\lambda_3(L(x_0)) > P_{max}$
\STATE $x^* \gets x_0$
\STATE $\mathfrak{F}$$ \gets \emptyset$
\WHILE{$\lambda_2(L(x^*))+\lambda_3(L(x^*)) > P_{max}$}
%\STATE $j \gets 1$
\STATE Solve:

		  {\begin{equation*}
				  \begin{array}{ll}
						\max & \gamma, \\
						\text{s.t.} & \sum_{e \in E} x_e ({v_i} \cdot L_e {v_i}) \geq {\gamma} \quad \forall i=1,..,M,  \\
					  & \sum_{e \in E} x_e \leq q, \\
					 &\sum_{e \in \delta(S)} x_e \geq 1, \ \ \forall \ S \subset V, \\
					  & x_e \in \{0, 1\}^{|E|}, \\
        			&x_e \ \textrm{satisfies the constraints in }\mathfrak{F}.
				  \end{array}
		  \end{equation*}}

Let $(x^*,\gamma^*)$ be an optimal solution to the above problem.
\IF{$\sum_{e \in E} x_e^* L_e \nsucceq \gamma^* \hat{I}$}
\STATE Find the Fiedler vector $v^*$ corresponding to  $x^*$.
\STATE Augment $\mathfrak{F}$ with a constraint ${v^*}\cdot L(x^*){v^*} \geq \gamma^*$.
\STATE Go to step 6.
\ENDIF
\STATE If $\lambda_2(L(x^*))+\lambda_3(L(x^*)) \nleq P_{max}$, augment $\mathfrak{F}$ with a cut
${\mathbf 1} \cdot x \leq {\mathbf 1} \cdot x^* -1$.
\STATE Go to step 6.

\ENDWHILE
\end{algorithmic}	
\end{algorithm}

%++++++++++++++++++++++++++++++++++++++++++++++++++;
%  Section: Computational results for power constraints ;
%++++++++++++++++++++++++++++++++++++++++++++++++++;
%\subsection{Computational results for problem of maximizing algebraic connectivity with power constraints}
\subsection{Performance of proposed algorithm}
\label{Sec:ch3_results2}
In this section, we discuss the computational performance of the proposed
algorithm (Algorithm \ref{Algo:wiring_cost_algo}) to solve 
the problem of maximizing algebraic connectivity 
subject to the consumption constraint. All the computations in this 
section were performed with the same computer specifics as mentioned in 
section \ref{Sec:results1}.

In order to solve the MISDP in step 6 of the algorithm (\ref{Algo:wiring_cost_algo}), 
we used the Sedumi solver in Matlab and found that they could
not handle problems with more than five nodes. However, solving 
the same MISDP using the proposed cutting plane algorithm performed
comparatively better using the CPLEX solver and could handle up to eight nodes though the
computation time was in the order of hours.

Table \ref{Table:exact_algo_n7} shows the optimal solutions for ten random instances 
with seven nodes. For the case of 7 nodes, we assumed $P_{max}$ equal to fifteen 
to ensure that the problem had a feasible solution. From this table, it can be observed that the average
computational time to obtain an optimal solution based on the proposed 
algorithm was around seven hours. 
Even though the algorithm \eqref{Algo:wiring_cost_algo} provides successive tighter polyhedral approximations
with the augmentation of valid inequalities, the convergence to an optimal 
solution is very slow. The reason for the slow convergence can be attributed to the 
strength of the cuts added due to the violation of the power consumption constraint. These
cuts are merely solution elimination constraints which eliminates only the current 
infeasible integral solution. Generating more valid and stronger cuts at every iteration of the 
algorithm can possibly reduce the computation time to obtain optimal solutions. 
A sample network with maximum algebraic connectivity satisfying the
power consumption constraint for instance $\#1$ of Table \ref{Table:exact_algo_n7} is
shown in Figure \ref{Fig:wiring_cost_n7_1}.

%-------------------------------------------------------------------;
%  Table: Computational time for wiring cost problem with 7 nodes   ;
%-------------------------------------------------------------------;
\begin{table}[h!]
  \caption{Computational performance of algorithm \eqref{Algo:wiring_cost_algo} to solve
  the problem of maximizing algebraic connectivity subject to the power consumption constraint. 
  $T_1$ corresponds to the CPU time taken by CPLEX solver to 
  solve instances with 7 nodes. Note that $\lambda_2^*+\lambda_3^*$ represents the power incurred
  by each network with optimal connectivity as indicated under $\lambda_2^*$. $P_{max}$ is chosen to 
  be equal to fifteen for all the instances.} 
    \label{Table:exact_algo_n7}
\begin{center}
\begin{tabular}{cccc}
\toprule
% \cmidrule(r){3-8}
\multicolumn{ 1}{c}{\textbf{Instance}} & $\lambda_2^*$ &  $\lambda_2^*+\lambda_3^*$  & $T_1$\\
\multicolumn{ 1}{c}{} & & & (sec)\\
\cmidrule(r){1-4}
1 & 7.1278 & 14.7192 & 12291.28 \\
2 & 7.1457 & 14.9988 & 10350.02 \\
3 & 6.7300 & 14.8166 & 58481.36 \\		
4 & 6.9879 & 14.9829 & 58845.25 \\
5 & 7.2684 & 14.8568 & 28891.07  \\
6 & 6.4437 & 14.9999 & 26543.17 \\
7 & 7.0472 & 14.9261 & 17218.10  \\
8 & 7.0047 & 14.9225 & 27589.19  \\
9 & 7.0526 & 14.6940 & 16413.76  \\
10 & 7.1569 & 14.5328 & 12353.42\\
\cmidrule(r){1-4}
Avg. & & & 26894.41\\
\bottomrule
\end{tabular}
\end{center}
\end{table}

%-----------------------------------------------------------;
%   Figure 2: Complete graph and optimal tree for n=10		;
%-----------------------------------------------------------;
\begin{figure}[htp]
	\centering
	\subfigure[Complete graph for $n=7$]{
	\includegraphics[scale=0.6]{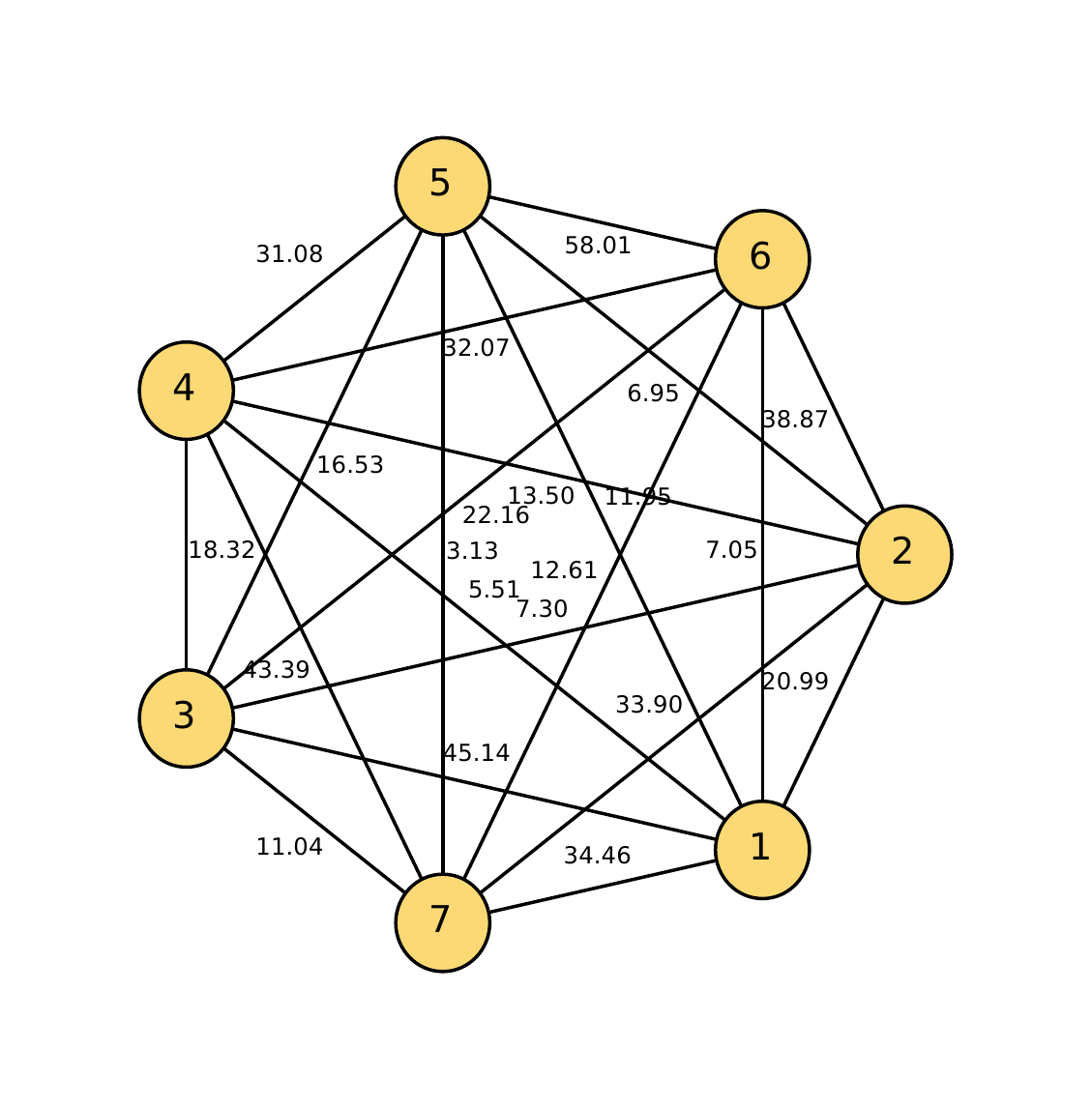}}
	\subfigure[Optimal network for $n=7$]{
			  \includegraphics[scale=0.6]{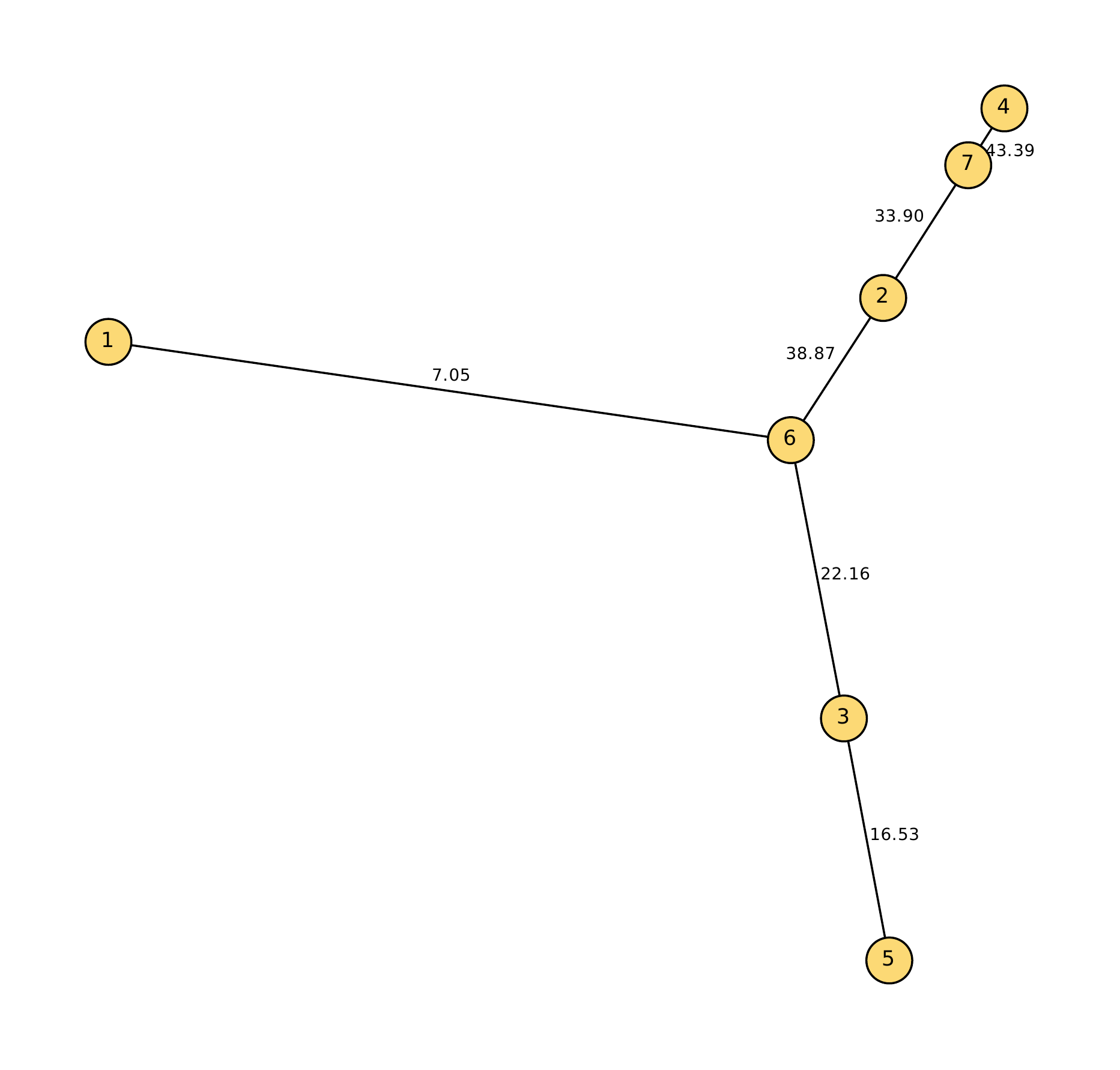}}
	\caption{In this figure, part (a) represents a complete graph of 7 nodes with random
	edge weights. Part (b) represents an optimal network with maximum algebraic 
	connectivity ($\lambda_2^* = 7.1278$) synthesized from the complete 
	graph which satisfies the power consumption constraint 
	($\lambda_2^*+\lambda_3^* \leq 15$). Note that
	the locations of the nodes in (b) are aligned in the second and 
	third eigenvector directions.}
   \label{Fig:wiring_cost_n7_1}
\end{figure}

\subsection{Lower bounds based on the BSDP approach}
\label{Sec:ch3_results3}
In section \ref{Sec:ch3_results2}, we observed that the convergence of the 
algorithm \eqref{Algo:wiring_cost_algo} to optimality was very slow 
and was in the order of many hours. Hence, in this section, we modify the proposed 
algorithm based on the techniques developed in the earlier sections on BSDP approach 
to obtain quick lower bounds and corresponding feasible solutions.

The MISDP in Step 6 of the algorithm (\ref{Algo:wiring_cost_algo}) can also 
be solved to optimality based on the BSDP approach as discussed in section \ref{Sec:BSDP}.
The basic idea of this approach is to solve the feasibility problem where we 
are interested to obtain a network with a specified level of connectivity which 
satisfies the power consumption constraint. 
For completeness, we summarize the algorithm to obtain lower bounds for the problem of 
maximizing algebraic connectivity with power consumption constraint in \eqref{Algo:power_LBnd}.
Clearly, at every iteration of this algorithm, $\hat{\gamma}$ serves as the lower bound whose value 
monotonically increases until the optimality is reached. Correspondingly, $x^*$ at every
iteration serves as the feasible network satisfying the power consumption constraint.  

%-----------------------------------------;
%  Algorithm: For minimize degree problem ;
%-----------------------------------------;
\begin{algorithm}[h!]
\caption{\textbf{: Lower bounding algorithm (BSDP approach)}}
%\footnotesize
Let $\mathfrak{F}$ denote a set of cuts which must be
satisfied by any feasible solution
\label{Algo:power_LBnd}
\begin{algorithmic}[1]
		\STATE Input: Graph $G=(V,E,w_e)$, $e\in E$, a root vertex, $r$, $P_{max}$ and a finite number of Fiedler vectors, $v_i, i=1 \ldots M$
		\STATE Choose any spanning tree, $x^*$ such that $\lambda_2(L(x^*))+\lambda_3(L(x^*)) \leq P_{max}$
		\STATE $\hat{\gamma} \gets \lambda_2(L(x^*))$
		\LOOP
		\STATE $\mathfrak{F}$$ \gets \emptyset$
		\STATE Solve:
		  {\begin{equation}
				  \begin{array}{ll}
						\min & \sum_{e \in \delta(r)} x_e, \\
						\text{s.t.} & \sum_{e \in E} x_e ({v_i} \cdot L_e {v_i}) \geq \hat{\gamma} \quad \forall i=1,..,M,  \\
					  & \sum_{e \in E} x_e \leq q, \\
					 &\sum_{e \in \delta(S)} x_e \geq 1, \ \ \forall \ S \subset V, \\
					  & x_e \in \{0, 1\}^{|E|}, \\
        			&x_e \ \textrm{satisfies the constraints in }\mathfrak{F}.
				  \end{array}
		  \end{equation}}

		\IF{ the above ILP is infeasible}
				\STATE \textbf{break loop} \COMMENT{$x^*$ is an optimal solution with maximum algebraic connectivity}
		\ELSE   \STATE Let $x^*$ be an optimal solution to the above ILP. Let $\gamma^*$ and $v^*$ be the algebraic connectivity 
		and the Fiedler vector corresponding to $x^*$ respectively.
		\IF{$\sum_{e \in E} x_e^* L_e \nsucceq \gamma^* (I_n - e_0 \otimes e_0)$}
                        %\STATE Find the eigenvector $v^*$ corresponding to a negative eigenvalue of $L(x^*) - \gamma^* (I_n - e_0 \otimes e_0)$.
						\STATE Augment $\mathfrak{F}$ with a constraint $\sum_{e \in E} x_e ({v^*} \cdot L_e {v^*}) \geq {\gamma^*} $.
						\STATE Go to step 6.
				\ENDIF
		\ENDIF
		\IF{$\lambda_2(L(x^*))+\lambda_3(L(x^*)) \nleq P_{max}$} 
		\STATE augment $\mathfrak{F}$ with a cut ${\mathbf 1} \cdot x \leq {\mathbf 1} \cdot x^* -1$. 
		\ELSE
		\STATE $\hat{\gamma} \gets \hat{\gamma} + \epsilon$ \COMMENT{let $\epsilon$ be a small number}
		\STATE Go to step 6.
		\ENDIF
		\ENDLOOP
\end{algorithmic}
\end{algorithm}

Since for this particular problem, we know that $\lambda_2(L(x)) + \lambda_3(L(x)) \leq P_{max}$ and 
$\lambda_2(L(x)) \leq \lambda_3(L(x))$, a trivial upper bound on the algebraic connectivity would be
$$\lambda_2(L(x)) \leq \frac{P_{max}}{2}.$$ Hence, we use this upper bound to corroborate 
the quality of the lower bounds obtained for larger instances. 

\noindent
\textbf{Construction of an initial feasible solution:}
As we discussed in the lower bounding procedure in \eqref{Algo:power_LBnd}, construction of an
initial feasible solution is the first step. For the problem we have considered in this section, 
an initial feasible solution is any spanning tree whose power incurred will be less than a 
given value of upper bound. Though the eigenvalues of the
Laplacian are non-linear functions of the edge weights of the graph, we observed that
the values of $\lambda_2 + \lambda_3$ are reasonably low for spanning trees with relatively low
edge weights as seen in figure \ref{Fig:enumeration_n6}. Since we found that
enumerating a fixed number of spanning trees (10000 trees) starting from a minimum spanning tree
using the algorithm discussed in \cite{gabow1986efficient} was computationally easier, the best initial
feasible solution which satisfies the power consumption constraint was chosen from these enumerated trees.
%---------------------------------------;
%   Figure 3 : Enumeration of trees, n=6	;
%---------------------------------------;
\begin{figure}[!h]
	\centering
	\includegraphics[scale=0.5]{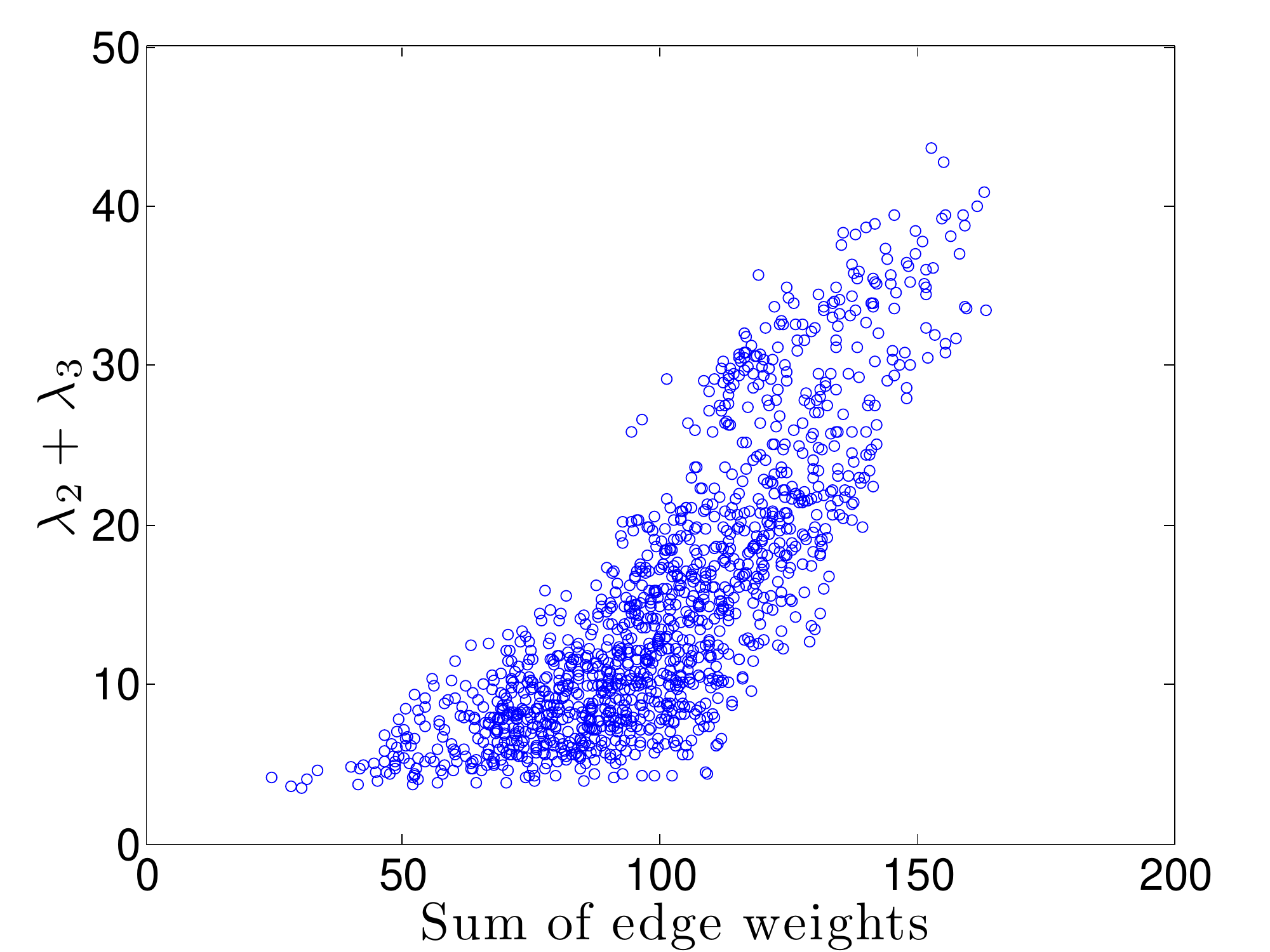}
	\caption{Enumeration of all spanning trees for a random instance with six nodes. It can be observed that
	spanning trees with lesser sum of edge weights incur lesser power consumption.}
	\label{Fig:enumeration_n6}
\end{figure}

\vspace{0.5cm}
\noindent
\textbf{Quality of lower bounds:}
Computationally, we observed that the proposed lower bounding procedure based on the BSDP
approach provided very good quality lower bounds. 
For the seven nodes problem, we limited the computation time of the lower bounding 
algorithm to three minutes. As shown in Table \ref{Table:LBnd_n7}, on an average, the lower 
bound obtained was within 3.5 \% from the optimal solution. This was indeed a tremendous 
improvement in terms of the computation time compared to the algorithm discussed in the 
previous section. Certainly, the computation time of the lower bounding procedure depends 
on the value chosen for $P_{max}$. As expected, higher the value of $P_{max}$, 
larger would be the computation time for the BSDP 
approach since the number of bisection steps would increase. 

We also tested the computational performance of the lower bounding procedure for 
the case of ten nodes with $P_{max}$ equal to thirty. The computation time was limited to 
three minutes. As shown in Table \ref{Table:LBnd_n10}, 
on an average, the lower bound obtained was within 15.2 \% from the upper bound as described earlier. 
Since, this percent gap was with respect to the upper bound, we expect this gap to further reduce when evaluated 
with respect to the optimal solutions.

%-------------------------------------------------------------------;
%  Table: Computational time for wiring cost problem with 7 nodes   ;
%-------------------------------------------------------------------;
\begin{table}[h!]
  \caption{	Quality of lower bounds obtained based on the BSDP approach 
for the problem with \emph{seven} nodes. $\lambda_2^{LB}$ represents the 
lower bound obtained by terminating the algorithm \eqref{Algo:power_LBnd} in three minutes. The value of $P_{max}$ is equal to fifteen.} 
    \label{Table:LBnd_n7}
\begin{center}
\begin{tabular}{ccccc}
\toprule
% \cmidrule(r){3-8}
\multicolumn{ 1}{c}{\textbf{Instance}} & \multicolumn{ 2}{c}{\textbf{Optimal solution}} & \multicolumn{ 2}{c}{\textbf{Lower bound}} \\
\cmidrule(l{0.25em}r{0.25em}){2-3}  \cmidrule(l{0.25em}r{0.25em}){4-5}
\multicolumn{ 1}{c}{} & $\lambda_2^*$ &  $\lambda_2^*+\lambda_3^*$  &  ${\lambda_2}^{LB}$ & $\frac{\lambda_2^* - {\lambda_2}^{LB}}{\lambda_2^*}$ \\
\multicolumn{ 1}{c}{} & & & & (\% gap)\\
\cmidrule(r){1-5}
1 & 7.1278 & 14.7192  & 7.1067& 0.3  \\
2 & 7.1457 & 14.9988  & 6.8177& 4.6 \\
3 & 6.7300 & 14.8166  & 6.4897& 3.6 \\		
4 & 6.9879 & 14.9829  & 6.6520& 4.8 \\
5 & 7.2684 & 14.8568  & 7.2608& 0.1 \\
6 & 6.4437 & 14.9999  & 6.3397& 1.6 \\
7 & 7.0472 & 14.9261  & 6.8744& 2.5 \\
8 & 7.0047 & 14.9225  & 6.7295& 3.9 \\
9 & 7.0526 & 14.6940  & 6.2335& 11.6 \\
10 & 7.1569 & 14.5328 & 6.9836& 2.4 \\
\cmidrule(r){1-5}
Avg. & & & & 3.5 \\
\bottomrule
\end{tabular}
\end{center}
\end{table}

%-------------------------------------------------------;
%  Table: Suboptimal soln for WC problem  (15 nodes) 	;
%-------------------------------------------------------;
\begin{table}[h!]
	\centering
	\caption{Quality of lower bounds obtained based on the BSDP approach 
	for the problem with \emph{ten} nodes. $\lambda_2^{LB}$ represents the 
	lower bound obtained by terminating the algorithm \eqref{Algo:power_LBnd} in three minutes.
 	Note that $\frac{P_{max}}{2}$ is an upper bound on the optimal solution and 
	value of $P_{max}$ is equal to thirty.}
	\label{Table:LBnd_n10}
	\begin{tabular}{cccc} \toprule
			  \textbf{Instances} & ${\lambda_2}^{LB}$  & $\frac{\frac{P_{max}}{2}-{\lambda_{2}}^{LB}}{\frac{P_{max}}{2}}$ & ${\lambda_2}^{LB}$+ ${\lambda_3}^{LB}$  \\
	&  &(\% gap) &  \\
\cmidrule(r){1-4}
	1  & 12.9106 & 14.0 & 29.176   \\
	2  & 12.4978 & 16.7 & 29.204  \\
	3  & 12.3475 & 17.7 & 29.623   \\
	4  & 12.7198 & 15.2 & 29.329   \\
	5  & 12.7786 & 14.8 & 29.393   \\
	6 & 12.2478 & 18.3 & 29.381   \\
	7  & 11.3301 & 24.5 & 28.804   \\
	8  & 14.0376 & 6.4 & 28.632  \\
    9  & 12.7420 & 15.1 & 28.062  \\
	10  & 13.5049 & 10.0 & 29.187   \\ 
\cmidrule(r){1-4}
Avg. && 15.2& \\
	\bottomrule
	\end{tabular}
\end{table}

\subsection{Performance of $2$-opt heuristic}
\label{Sec:ch3_results4}
In this section, we discuss the performance of 2-opt heuristic to obtain feasible 
solutions for the problem of maximizing algebraic connectivity under the power consumption 
constraint. Section \ref{Subsec:kopt} has dealt in detail with the $k$-opt heuristic for 
synthesizing feasible solutions for the \textbf{BP}. Since the extension of this heuristic 
for incorporating an additional constraint on the power consumption is quite straight forward, we do not 
delve into the details. Instead, for completeness, we summarize the 2-opt heuristic
including the additional resource constraint in \eqref{algo:alg_conn_2opt}.

%-----------------------------------------------------------;
%  Algorithm:2-opt exchange for maximizing alg connectivity ;
%-----------------------------------------------------------;
\begin{algorithm}[htp]
% \small
    \begin{algorithmic}[1]
    \STATE $\mathcal{T}_0 \leftarrow$ Initial feasible solution satisfying resource constraints
	 \STATE $\lambda_0 \leftarrow$ $\lambda_2(L(\mathcal{T}_0)))$
	 \STATE Input: $P_{max}$
        \FOR{each pair of edges $\{(u_1,v_1),(u_2,v_2)\} \in \mathcal{T}_0$}
            \STATE Let $\mathcal{T}_{opt}$ be the best spanning tree in the 2-exchange neighborhood of $\mathcal{T}_{0}$ obtained by replacing edges $\{(u_1,v_1),(u_2,v_2)\}$ in $\mathcal{T}_{opt}$ with a different pair of edges.
				\IF {$\lambda_2(L(\mathcal{T}_{opt})) > \lambda_0$ and $\lambda_2(L(\mathcal{T}_{opt})) + \lambda_3(L(\mathcal{T}_{opt}))\leq P_{max}$}
                \STATE $\mathcal{T}_0 \leftarrow \mathcal{T}_{opt} $
                \STATE $\lambda_0 \leftarrow \lambda_2(L(\mathcal{T}_{opt}))$
				\ENDIF
        \ENDFOR
    \STATE $\mathcal{T}_0$ is the best spanning tree in the solution space
	 with respect to the initial feasible solution
    \end{algorithmic}
\caption{: 2-opt exchange heuristic}
\label{algo:alg_conn_2opt}
\end{algorithm}

\vspace{0.5cm}
\noindent
\textbf{Solution quality of 2-opt heuristic:}
In Table \ref{Table:wiring_cost_n_7}, we present the solution quality of 
the 2-opt heuristic solutions with respect to the optimal solutions for the
problem instances of seven nodes. We define the solution quality or percent gap as 
$$\frac{\lambda_2^* - \lambda_2^{2opt}}{\lambda_2^*}* 100$$
where $\lambda_2^*$ is the algebraic connectivity of the optimal solution and $\lambda_2^{2opt}$ is the
algebraic connectivity of the 2-opt heuristic solution.
It is clear from Table \ref{Table:wiring_cost_n_7} that the average percent gap of the 2-opt 
solution from the optimal solution was around one percent and many of the 2-opt solutions were indeed optimal.
Also, we empirically observed that the 2-opt heuristic solutions were optimal for 60 \% of
the random (100) instances. Therefore, from this short numerical study, we 
observed that the performance of 2-opt heuristic was phenomenal since it could generate 
feasible solutions within one percent gap from the optimal solutions within a few seconds of
the CPU time.

In Table \ref{Table:2opt_large_n}, we present the scalability of 2-opt 
heuristic solutions for instances up to 25 nodes. Though we could not obtain
optimal solutions for larger instances, as discussed earlier in section \ref{Sec:ch3_results3}, we 
used the trivial upper bound on the optimal $\lambda_2$ which stems from the power consumption constraint.
Hence, the percent gap in Table \ref{Table:2opt_large_n} is given by
$$\frac{\frac{P_{max}}{2} - \lambda_2^{2opt}}{ \frac{P_{max}}{2} }* 100.$$ The average
percent gap was an average value evaluated over ten random instances for each size of the
problem. $P_{max}$ values in Table \ref{Table:2opt_large_n} were chosen randomly such that 
there is existed a feasible solution.
Again we observed that the 2-opt heuristic performed very well for larger instances
and also the average percent gap reduced with the increase in the size of the problem.
Certainly, the percent gap depends on the values of $P_{max}$ chosen, that is, the larger the 
value of $P_{max}$, larger would be the percent gap. 
A sample network satisfying the power consumption constraint found by the 
2-opt heuristic can be seen in figure \ref{Fig:2opt_n25}.

%-----------------------------------------;
%   Figure 4 : 2opt solution for 25 nodes	;
%-----------------------------------------;
\begin{figure}[htp]
	\centering
	\includegraphics[scale=0.5]{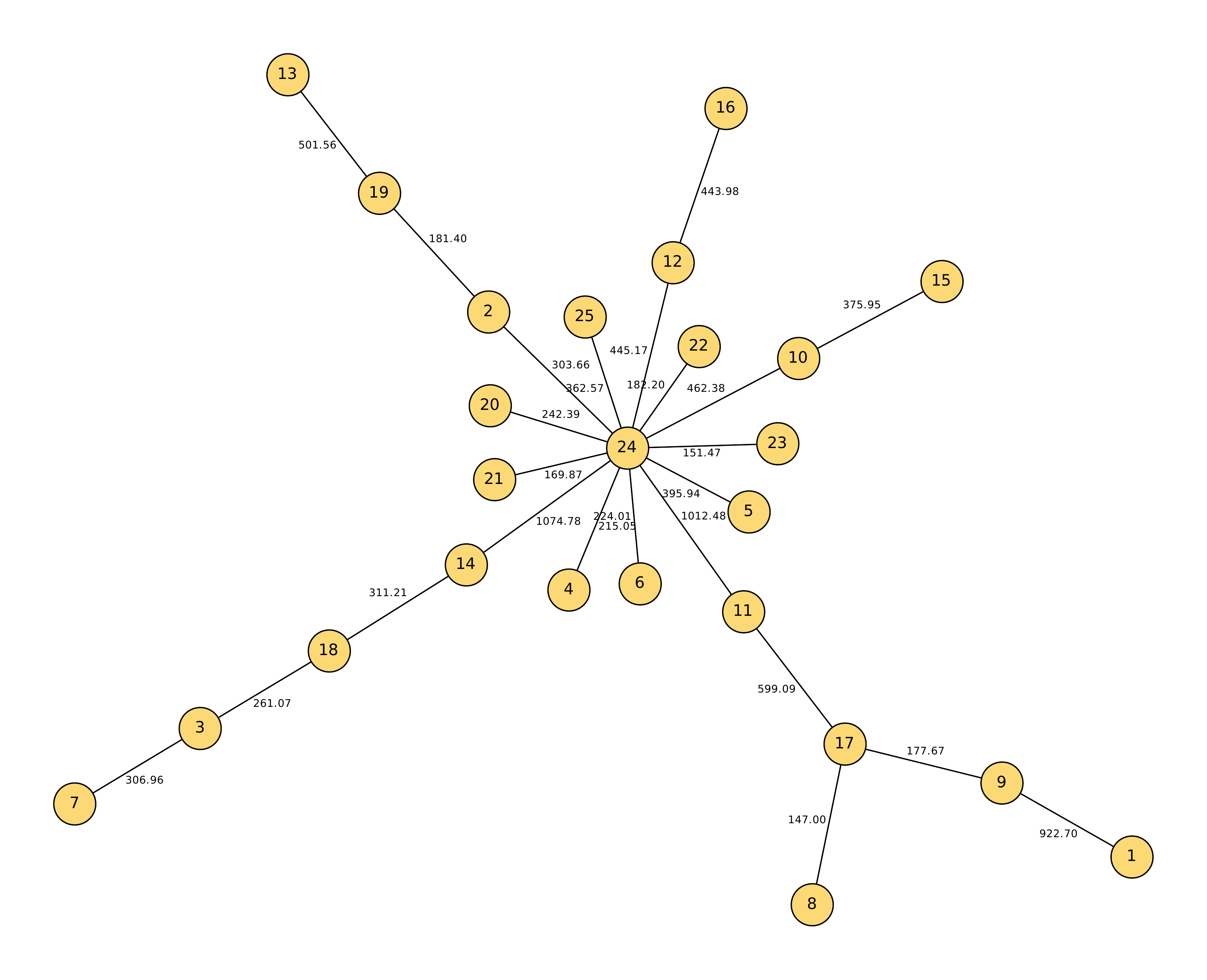}
	\caption{2-opt heuristic solution for a problem with 25 nodes and random edge weights. 
	For the shown network, $\lambda_2^{2opt} = 49.9379$ (percent gap = 0.12) 
	and $P_{max} = 100$. This figure is just a representation of the connectivity of the network and 
	does not necessarily represent the location of nodes.} 
	\label{Fig:2opt_n25}
\end{figure}

%-------------------------------------------------------------------;
%  Table: Computational time for wiring cost problem with 7 nodes   ;
%-------------------------------------------------------------------;
\begin{table}[htbp]
% \footnotesize
  \caption{2-opt heuristic solutions for
  the problem of maximizing algebraic connectivity with power consumption constraint.
  The results in this table are for instances with seven nodes.
	Note that $\lambda_2^*+\lambda_3^*$ represents the total power incurred
  by each network with optimal connectivity as indicated under $\lambda_2^*$}
    \label{Table:wiring_cost_n_7}
\begin{center}
\begin{tabular}{cccccc}
\toprule
\multicolumn{1}{c}{\textbf{Instances}} & \multicolumn{2}{c}{\textbf{Optimal solution}} & \multicolumn{3}{c}{\textbf{2-opt solution}} \\
 \cmidrule(l{0.25em}r{0.25em}){2-3}\cmidrule(l{0.25em}){4-6}
% \cmidrule(r){3-8}
 \multicolumn{1}{c}{} & $\lambda_2^*$ &  $\lambda_2^*+\lambda_3^*$ & $\lambda_2^{2opt}$ & $\lambda_2^{2opt}+\lambda_3^{2opt}$ &\% gap \\
\cmidrule(r){1-6}
1 & 7.1278 & 14.7192  & 7.1278 & 14.7192 & 0.00 \\
2 & 7.1457 & 14.9988  & 7.0880 & 14.9557 & 0.81 \\
3 & 6.7300 & 14.8166  & 6.7300 & 14.8166 & 0.00 \\
4 & 6.9879 & 14.9829  & 6.7837 & 14.9440 & 2.92 \\
5 & 7.2684 & 14.8568  & 7.2684 & 14.8568 & 0.00 \\
6 & 6.4437 & 14.9999  & 6.3817 & 14.9933 & 0.96 \\
7 & 7.0472 & 14.9261  & 7.0472 & 14.9261 & 0.00 \\
8 & 7.0047 & 14.9225  & 7.0047 & 14.9225 & 0.00 \\
9 & 7.0526 & 14.6940  & 6.6520 & 14.0760 & 5.68 \\
10 & 7.1569 & 14.5328  & 7.1569 & 14.5328 & 0.00 \\
\cmidrule(r){1-6}
Avg. & & & & & 1.04 \\
\bottomrule
\end{tabular}
\end{center}
\label{tab:heuristics_9nodes}
\end{table}

%-------------------------------------------------------------;
%  Table: Performance of 2-opt heuristic for larger instances ;
%-------------------------------------------------------------;
\begin{table}[htp]
% \footnotesize
%  \small
  \centering
  \caption{2-opt heuristic solutions for the problem of maximizing algebraic
	connectivity with power consumption constraint. Corresponding to every $n$, the
	value of $\lambda_2^{2opt}$ and the percent gap is averaged over ten random instances.}
    \label{Table:2opt_large_n}
    {
  \begin{tabular}{cccc} \toprule
	$n$ & \multicolumn{1}{c}{$P_{max}$} & \multicolumn{1}{c}{Average} & Average\\
	& & \multicolumn{1}{c}{$\lambda_2^{2opt}$} & $\%$ gap \\
	\cmidrule(r){1-4}
	8 & 20 & 9.2964 & 7.04 \\
	9 & 20 & 9.6658 & 3.34 \\
	10 & 20 & 9.8182 & 1.82 \\
	12 & 25 & 12.3150 & 1.48 \\
	15 & 30 & 14.9268 & 0.49 \\
	20 & 50 & 24.8734  & 0.51\\
	25 & 100 & 49.9549 & 0.10\\
	 \bottomrule
  \end{tabular}
  }
\end{table}

%!TEX root = ../tamuthesis.tex
%%%%%%%%%%%%%%%%%%%%%%%%%%%%%%%%%%%%%%%%%%%%%%%%%%%
%
%  New template code for TAMU Theses and Dissertations starting Fall 2012.  
%  For more info about this template or the 
%  TAMU LaTeX User's Group, see http://www.howdy.me/.
%
%  Author: Wendy Lynn Turner 
%	 Version 1.0 
%  Last updated 8/5/2012
%
%%%%%%%%%%%%%%%%%%%%%%%%%%%%%%%%%%%%%%%%%%%%%%%%%%%
%%%%%%%%%%%%%%%%%%%%%%%%%%%%%%%%%%%%%%%%%%%%%%%%%%%%%%%%%%%%%%%%%%%%%%
%%                           SECTION III
%%%%%%%%%%%%%%%%%%%%%%%%%%%%%%%%%%%%%%%%%%%%%%%%%%%%%%%%%%%%%%%%%%%%%

\chapter{\uppercase{Conclusions}}
\label{ch4}

In this dissertation\footnote{Other papers of H. Nagarajan whose topics were excluded from this dissertation include 
\cite{nagarajan2011enforcing,nakshatrala2016numerical,nagarajan2017optimal}.}, 
we aimed at understanding the relevance of a simplified 
version of an open problem in system realization theory which has 
several important applications in disparate fields of engineering. The basic
problem in the context of mechanical systems we considered was as follows: 
Given a collection of masses and a set of linear springs with a specified cost and 
stiffness, a resource constraint in terms of a budget on the total cost, the problem 
was to determine an optimal connection of masses and springs so that the resulting 
structure was as stiff as possible. Under certain assumptions, we showed that the 
the structure is stiff when the second non-zero natural frequency of the interconnection 
is maximized. 

We also aimed at understanding the relevance of the variants of this problem 
in deploying UAVs for civilian and military applications. In particular, 
we were interested in synthesizing a communication network among the UAVs 
subject to resource and performance constraints. Some of the important 
resource constraints considered were: limit on the maximum number of
communication links,  power consumed and maximum latency in routing the information
between any pair of UAVs in the network. As a performance objective, we
considered algebraic connectivity (second non-zero eigenvalue of the network's Laplacian) 
as the measure since it determines the convergence rate of consensus 
protocols and error attenuation in UAV formations. 

The mechanical/UAV network synthesis problem, formulated as a 
Mixed Integer Semi-Definite Problem (MISDP), had scarce literature on 
the development of systematic procedures to solve this problem. To address 
this void in the literature, we developed \emph{novel} algorithms to obtain 
optimal solutions and upper bounds 
for moderate sized problems and fast heuristic algorithms 
to obtain good sub-optimal solutions for larger problems. 

We posed the problem of maximizing algebraic connectivity as three 
equivalent formulations: MISDP formulation, MISDP formulation with 
connectivity constraints and Fiedler vector formulation as MILP.
We observed that the binary relaxation of the MISDP formulation 
did not necessarily satisfy the cutset constraints and hence 
invoked the multicommodity flow formulation to ensure that the 
connectivity requirements were satisfied. We also posed this problem 
equivalently as a MILP using the Fiedler vectors of the feasible solutions
since there are not many efficient MISDP solvers available. 
We observed that the binary relaxations of these formulations
were very weak (up to 128 percent deviation from the optimal solution 
for eight nodes problem). However, owing to the various useful features of the 
three equivalent formulations, we developed effective 
methods for obtaining upper bounds and optimal solutions. 

Relaxing the feasible set by outer approximating the semi-definite constraint 
in the MISDP formulation with a finite number of Fiedler vectors would naturally 
lead to an upper bound on the maximum algebraic connectivity. Based on this idea,
we proposed a procedure to effectively enumerate the Fiedler vectors of the 
feasible solutions to obtain tight upper bounds. We observed that the 
upper bound was tighter when the semi-definite constraint was relaxed with 
Fiedler vectors of solutions with higher values of
algebraic connectivity. With thousand such Fiedler 
vectors used for relaxation, the average percent deviation of the 
upper bound from the optimal solution 
was within 4.13\% (best deviation = 0\%) for the eight nodes problem 
and was within 42.9\% (best deviation = 25.6\%) 
for nine nodes problem. For the problem with ten and twelve nodes, 
the average percent deviation of the upper bound from the best known feasible solution 
was within 65.7\% (best deviation = 37.7\%) and 116.1\% (best deviation = 92.1\%) 
respectively. However, the main drawback of this procedure is the 
enumeration of good feasible solutions. If the construction of 
feasible solutions is non-trivial, then this procedure would not be 
effective to obtain tight upper bounds.

We also proposed three cutting plane algorithms to 
solve the proposed MISDP to optimality. 
Firstly, algorithm $EA_1$ 
was based on the construction of successively tighter polyhedral approximations 
of the positive semi-definite set. 
Secondly, iterative primal-dual algorithm $EA_2$ 
considered the Lagrangian relaxation of the semi-definite constraint where 
the primal feasible solution was updated iteratively with a better solution 
obtained by solving the related dual problem. 
Thirdly, algorithm $EA_3$ was based on the Binary Semi-Definite Program (BSDP) approach 
in conjunction with cutting plane and bisection techniques. Computationally, 
we observed that the proposed algorithms implemented in CPLEX performed much better 
than the available MISDP solvers in Matlab. In particular, though the performance of
$EA_1$ and $EA_2$ were comparable, an improved relaxation of the semi-definite constraint
in $EA_1$ with good Fiedler vectors tremendously reduced the computation time to obtain 
optimal solutions. Computationally, $EA_1$ with an improved relaxation of the feasible set performed
at least eight times better than the standard $EA_1$ and $EA_2$ and at least two times 
better than $EA_3$. However, without an a priori knowledge of good Fiedler vectors to relax the feasible set, 
$EA_3$ performed computationally better than $EA_1$ and $EA_2$ for problems up to nine nodes. 
Another useful feature of $EA_3$ was the continually improving lower bound with a
corresponding feasible solution at every bisection step. This was very useful
to obtain quick feasible solutions with a good lower bound for the problem 
of maximizing algebraic connectivity under power consumption constraint.
 
We also developed quick improvement heuristics for the 
problem of maximizing algebraic connectivity based on neighborhood search methods.
In particular, we extended the idea of the well known $k$-opt search 
which has been successfully implemented for traveling salesman problems. $k$-opt 
search aims to iteratively search for better solutions by performing an 
exchange of edges in each iteration. The standard 2-opt (two edges exchanged in every iteration) 
search performed very well and provided optimal solutions for
problems with up to nine nodes. However, for larger problems ($n \geq 15$), owing to the
exponential rise in the number of edge deletion and addition combinations,
standard 2-opt was very slow. Hence, we proposed an improved $k$-opt search 
where the search space was significantly but effectively reduced 
based on the variational characterization of eigenvalues. Computational results suggested 
that the improved 3-opt search performed the best 
while the improved 2-opt search provided a good trade-off between finding good solutions and 
the required computation time.

Finally, we proposed algorithms to address the variants of \textbf{BP} subject to resource 
constraints such as, the diameter constraint and the power consumption constraint.
We posed the problem of maximizing algebraic connectivity of a network as a MISDP
and the diameter of the graph was formulated using a multicommodity
flow formulation. We provided computational results for
problems involving seven and eight nodes under varying limits on the diameter of the graph.
Even though the proposed algorithm was an improvement over state-of-the-art
MISDP solvers, there is definitely a need for faster algorithms
that can handle more number of vertices. 

We posed the problem of maximizing algebraic connectivity of a network as a MISDP
and mathematically formulated the power consumption constraint by relating it to the
second and third eigenvalues of the networks's Laplacian. We proposed 
an algorithm to obtain optimal solutions based on cutting plane method.
Though this algorithm was an improvement over the existing MISDP
solvers, it could handle only smaller instances (up to seven nodes) without much guarantee on the run
time. We employed the BSDP approach in conjunction with the bisection technique 
to obtain quick lower bounds from the associated feasible solutions. 
Terminating the algorithm in three minutes, the average percent 
deviation of the lower bound from the optimal solution for seven 
nodes problem was 3.5\%. For the problem with ten nodes, the lower bound
obtained was within 15.2\% from the upper bound (a simple bound on the optimal algebraic connectivity
stemming from the power consumption constraint). 
Lastly, we applied 2-opt heuristic to find good feasible solutions. For the
seven nodes problem, the average percent deviation of the 2-opt solution 
from the optimal solution was within 1.04\%. For larger problem sizes (up to 25 nodes),
2-opt heuristic performed very well with respect to the values of $P_{max}$
chosen.

%%%%%%%%%%%%%%%%%%%%%%%%%%%%%%%%%%%%%%%%%%%%%%%%%%%
%
%  New template code for TAMU Theses and Dissertations starting Fall 2012.  
%  For more info about this template or the 
%  TAMU LaTeX User's Group, see http://www.howdy.me/.
%
%  Author: Wendy Lynn Turner 
%	 Version 1.0 
%  Last updated 8/5/2012
%
%%%%%%%%%%%%%%%%%%%%%%%%%%%%%%%%%%%%%%%%%%%%%%%%%%%

\begin{appendices}
%\titleformat{\chapter}{\centering\normalsize}{APPENDIX \thechapter}{0em}{\vskip .5\baselineskip\centering}
%\renewcommand{\appendixname}{APPENDIX}

%%%%%%%%%%%%%%%%%%%%%%%%%%%%%%%%%%%%%%%%%%%%%%%%%%%
%
%  New template code for TAMU Theses and Dissertations starting Fall 2012.  
%  For more info about this template or the 
%  TAMU LaTeX User's Group, see http://www.howdy.me/.
%
%  Author: Wendy Lynn Turner 
%	 Version 1.0 
%  Last updated 8/5/2012
%
%%%%%%%%%%%%%%%%%%%%%%%%%%%%%%%%%%%%%%%%%%%%%%%%%%%

%%%%%%%%%%%%%%%%%%%%%%%%%%%%%%%%%%%%%%%%%%%%%%%%%%%%%%%%%%%%%%%%%%%%%%
%%                           APPENDIX A 
%%%%%%%%%%%%%%%%%%%%%%%%%%%%%%%%%%%%%%%%%%%%%%%%%%%%%%%%%%%%%%%%%%%%%

%\phantomsection

\chapter{APPENDIX}
\label{ch:appendix}
All the computational results in section \ref{sec:exact_algo} on algorithms for computing 
optimal solutions are based on the weighted adjacency matrices shown below.

\noindent
\textbf{Random weighted adjacency matrices for \textit{eight} nodes problem}
%--------------------------------------------;
%  Equation: Adjacency matrix for instance-1 ;
%--------------------------------------------;
\begin{align*}	
		  \scriptsize
  %\label{Eqn:Adj_matrix_instance_10}
  A_{1} =
  \left(\begin{array}{cccccccc}
0 & 4.561 & 19.020 & 37.537 & 82.393 & 18.295 & 50.073 & 5.511 \\ 
4.561 & 0 & 50.358 & 2.819 & 5.916 & 34.933 & 43.855 & 44.377 \\ 
19.020 & 50.358 & 0 & 16.268 & 11.806 & 2.159 & 45.568 & 77.271 \\ 
37.537 & 2.819 & 16.268 & 0 & 28.642 & 45.083 & 62.932 & 24.352 \\ 
82.393 & 5.916 & 11.806 & 28.642 & 0 & 2.590 & 23.840 & 13.704 \\ 
18.295 & 34.933 & 2.159 & 45.083 & 2.590 & 0 & 4.041 & 35.791 \\ 
50.073 & 43.855 & 45.568 & 62.932 & 23.840 & 4.041 & 0 & 55.830 \\ 
5.511 & 44.377 & 77.271 & 24.352 & 13.704 & 35.791 & 55.830 & 0 
    \end{array}\right)
\end{align*}

%--------------------------------------------;
%  Equation: Adjacency matrix for instance-2 ;
%--------------------------------------------;
\begin{align*}	
		  \scriptsize
  %\label{Eqn:Adj_matrix_instance_10}
  A_{2} =
  \left(\begin{array}{cccccccc}
 0 & 7.991 & 19.023 & 40.147 & 46.093 & 9.834 & 48.182 & 39.823 \\ 
7.991 & 0 & 82.412 & 17.293 & 26.714 & 31.590 & 36.865 & 22.808 \\ 
19.023 & 82.412 & 0 & 34.046 & 22.715 & 18.902 & 50.309 & 14.671 \\ 
40.147 & 17.293 & 34.046 & 0 & 25.462 & 10.701 & 51.117 & 34.138 \\ 
46.093 & 26.714 & 22.715 & 25.462 & 0 & 38.596 & 53.231 & 16.664 \\ 
9.834 & 31.590 & 18.902 & 10.701 & 38.596 & 0 & 13.779 & 58.921 \\ 
48.182 & 36.865 & 50.309 & 51.117 & 53.231 & 13.779 & 0 & 53.351 \\ 
39.823 & 22.808 & 14.671 & 34.138 & 16.664 & 58.921 & 53.351 & 0 \\ 
    \end{array}\right)
\end{align*}

%--------------------------------------------;
%  Equation: Adjacency matrix for instance-3 ;
%--------------------------------------------;
\begin{align*}	
		  \scriptsize
  %\label{Eqn:Adj_matrix_instance_10}
  A_{3} =
  \left(\begin{array}{cccccccc}
 0 & 5.449 & 13.087 & 39.460 & 14.189 & 26.056 & 30.279 & 41.788 \\ 
5.449 & 0 & 23.490 & 18.772 & 24.992 & 43.876 & 14.074 & 66.580 \\ 
13.087 & 23.490 & 0 & 13.379 & 44.093 & 11.845 & 45.530 & 65.366 \\ 
39.460 & 18.772 & 13.379 & 0 & 28.403 & 54.327 & 68.801 & 30.908 \\ 
14.189 & 24.992 & 44.093 & 28.403 & 0 & 31.147 & 62.558 & 8.237 \\ 
26.056 & 43.876 & 11.845 & 54.327 & 31.147 & 0 & 21.427 & 78.777 \\ 
30.279 & 14.074 & 45.530 & 68.801 & 62.558 & 21.427 & 0 & 61.276 \\ 
41.788 & 66.580 & 65.366 & 30.908 & 8.237 & 78.777 & 61.276 & 0 
    \end{array}\right)
\end{align*}

%--------------------------------------------;
%  Equation: Adjacency matrix for instance-10 ;
%--------------------------------------------;
\begin{align*}	
		  \scriptsize
  %\label{Eqn:Adj_matrix_instance_10}
  A_{4} =
  \left(\begin{array}{cccccccc}
0 & 3.166 & 10.819 & 69.610 & 7.771 & 35.867 & 47.759 & 11.385 \\ 
3.166 & 0 & 23.452 & 26.608 & 13.743 & 63.817 & 56.875 & 12.734 \\ 
10.819 & 23.452 & 0 & 16.165 & 30.174 & 46.717 & 41.704 & 66.899 \\ 
69.610 & 26.608 & 16.165 & 0 & 5.841 & 57.495 & 67.210 & 14.102 \\ 
7.771 & 13.743 & 30.174 & 5.841 & 0 & 63.502 & 61.732 & 23.618 \\ 
35.867 & 63.817 & 46.717 & 57.495 & 63.502 & 0 & 11.427 & 38.997 \\ 
47.759 & 56.875 & 41.704 & 67.210 & 61.732 & 11.427 & 0 & 98.913 \\ 
11.385 & 12.734 & 66.899 & 14.102 & 23.618 & 38.997 & 98.913 & 0 
    \end{array}\right)
\end{align*}

%--------------------------------------------;
%  Equation: Adjacency matrix for instance-5 ;
%--------------------------------------------;
\begin{align*}	
		  \scriptsize
  %\label{Eqn:Adj_matrix_instance_10}
  A_{5} =
  \left(\begin{array}{cccccccc}
0 & 2.544 & 18.566 & 23.983 & 44.333 & 11.513 & 47.634 & 8.196 \\ 
2.544 & 0 & 17.548 & 20.902 & 29.848 & 56.828 & 16.094 & 45.784 \\ 
18.566 & 17.548 & 0 & 20.030 & 21.883 & 21.306 & 19.583 & 13.961 \\ 
23.983 & 20.902 & 20.030 & 0 & 33.448 & 50.940 & 7.763 & 22.462 \\ 
44.333 & 29.848 & 21.883 & 33.448 & 0 & 60.604 & 57.279 & 7.599 \\ 
11.513 & 56.828 & 21.306 & 50.940 & 60.604 & 0 & 19.492 & 7.163 \\ 
47.634 & 16.094 & 19.583 & 7.763 & 57.279 & 19.492 & 0 & 98.613 \\ 
8.196 & 45.784 & 13.961 & 22.462 & 7.599 & 7.163 & 98.613 & 0 
    \end{array}\right)
\end{align*}

%--------------------------------------------;
%  Equation: Adjacency matrix for instance-6 ;
%--------------------------------------------;
\begin{align*}	
		  \scriptsize
  %\label{Eqn:Adj_matrix_instance_10}
  A_{6} =
  \left(\begin{array}{cccccccc}
0 & 3.368 & 5.354 & 64.684 & 66.925 & 28.203 & 41.094 & 53.284 \\ 
3.368 & 0 & 34.119 & 8.390 & 27.285 & 35.904 & 11.076 & 51.050 \\ 
5.354 & 34.119 & 0 & 33.155 & 33.273 & 28.636 & 34.563 & 59.182 \\ 
64.684 & 8.390 & 33.155 & 0 & 28.884 & 20.305 & 43.513 & 15.110 \\ 
66.925 & 27.285 & 33.273 & 28.884 & 0 & 62.458 & 34.925 & 3.265 \\ 
28.203 & 35.904 & 28.636 & 20.305 & 62.458 & 0 & 4.674 & 27.095 \\ 
41.094 & 11.076 & 34.563 & 43.513 & 34.925 & 4.674 & 0 & 45.437 \\ 
53.284 & 51.050 & 59.182 & 15.110 & 3.265 & 27.095 & 45.437 & 0   
    \end{array}\right)
\end{align*}

%--------------------------------------------;
%  Equation: Adjacency matrix for instance-7 ;
%--------------------------------------------;
\begin{align*}	
		  \scriptsize
  %\label{Eqn:Adj_matrix_instance_10}
  A_{7} =
  \left(\begin{array}{cccccccc}
 0 & 5.721 & 8.828 & 22.020 & 55.966 & 5.384 & 34.178 & 43.546 \\ 
5.721 & 0 & 17.823 & 18.462 & 31.074 & 26.090 & 18.068 & 28.879 \\ 
8.828 & 17.823 & 0 & 23.527 & 25.014 & 48.801 & 40.533 & 53.078 \\ 
22.020 & 18.462 & 23.527 & 0 & 37.835 & 38.275 & 4.024 & 19.766 \\ 
55.966 & 31.074 & 25.014 & 37.835 & 0 & 50.395 & 50.884 & 11.786 \\ 
5.384 & 26.090 & 48.801 & 38.275 & 50.395 & 0 & 12.491 & 35.477 \\ 
34.178 & 18.068 & 40.533 & 4.024 & 50.884 & 12.491 & 0 & 71.750 \\ 
43.546 & 28.879 & 53.078 & 19.766 & 11.786 & 35.477 & 71.750 & 0 
    \end{array}\right)
\end{align*}

%--------------------------------------------;
%  Equation: Adjacency matrix for instance-8 ;
%--------------------------------------------;
\begin{align*}	
		  \scriptsize
  %\label{Eqn:Adj_matrix_instance_10}
  A_{8} =
  \left(\begin{array}{cccccccc}
0 & 1.537 & 12.505 & 45.077 & 68.271 & 6.608 & 20.672 & 37.893 \\ 
1.537 & 0 & 76.166 & 11.996 & 10.903 & 25.450 & 57.973 & 36.482 \\ 
12.505 & 76.166 & 0 & 37.794 & 22.848 & 20.843 & 15.406 & 39.688 \\ 
45.077 & 11.996 & 37.794 & 0 & 37.311 & 29.056 & 36.097 & 27.623 \\ 
68.271 & 10.903 & 22.848 & 37.311 & 0 & 63.989 & 59.293 & 4.220 \\ 
6.608 & 25.450 & 20.843 & 29.056 & 63.989 & 0 & 12.757 & 33.223 \\ 
20.672 & 57.973 & 15.406 & 36.097 & 59.293 & 12.757 & 0 & 105.431 \\ 
37.893 & 36.482 & 39.688 & 27.623 & 4.220 & 33.223 & 105.431 & 0 
    \end{array}\right)
\end{align*}

%--------------------------------------------;
%  Equation: Adjacency matrix for instance-9 ;
%--------------------------------------------;
\begin{align*}	
		  \scriptsize
  %\label{Eqn:Adj_matrix_instance_10}
  A_{9} =
  \left(\begin{array}{cccccccc}
 0 & 7.473 & 13.871 & 74.945 & 59.785 & 28.499 & 36.559 & 41.392 \\ 
7.473 & 0 & 63.104 & 1.118 & 18.255 & 56.460 & 30.670 & 28.415 \\ 
13.871 & 63.104 & 0 & 21.090 & 12.332 & 26.304 & 31.328 & 38.784 \\ 
74.945 & 1.118 & 21.090 & 0 & 34.870 & 35.743 & 13.807 & 6.835 \\ 
59.785 & 18.255 & 12.332 & 34.870 & 0 & 74.240 & 78.291 & 8.182 \\ 
28.499 & 56.460 & 26.304 & 35.743 & 74.240 & 0 & 13.607 & 60.731 \\ 
36.559 & 30.670 & 31.328 & 13.807 & 78.291 & 13.607 & 0 & 100.509 \\ 
41.392 & 28.415 & 38.784 & 6.835 & 8.182 & 60.731 & 100.509 & 0
    \end{array}\right)
\end{align*}

%--------------------------------------------;
%  Equation: Adjacency matrix for instance-10 ;
%--------------------------------------------;
\begin{align*}	
		  \scriptsize
  %\label{Eqn:Adj_matrix_instance_10}
  A_{10} =
  \left(\begin{array}{cccccccc}
0 & 4.673 & 11.233 & 47.921 & 20.123 & 5.275 & 11.570 & 41.965 \\ 
4.673 & 0 & 59.460 & 26.490 & 24.895 & 48.453 & 49.937 & 45.337 \\ 
11.233 & 59.460 & 0 & 20.843 & 21.083 & 33.312 & 3.120 & 56.785 \\ 
47.921 & 26.490 & 20.843 & 0 & 23.790 & 14.368 & 57.961 & 26.491 \\ 
20.123 & 24.895 & 21.083 & 23.790 & 0 & 63.058 & 84.360 & 10.774 \\ 
5.275 & 48.453 & 33.312 & 14.368 & 63.058 & 0 & 6.137 & 37.142 \\ 
11.570 & 49.937 & 3.120 & 57.961 & 84.360 & 6.137 & 0 & 82.681 \\ 
41.965 & 45.337 & 56.785 & 26.491 & 10.774 & 37.142 & 82.681 & 0  
    \end{array}\right)
\end{align*}

\noindent
\textbf{Random weighted adjacency matrices for \textit{nine} nodes problem}
%--------------------------------------------;
%  Equation: Adjacency matrix for instance-1 ;
%--------------------------------------------;
\begin{align*}	
		  \scriptsize
  A_{1} =
  \left(\begin{array}{ccccccccc}
0 & 51.109 & 103.141 & 74.350 & 3.664 & 13.229 & 15.797 & 18.230 & 30.797\tabularnewline
51.109 & 0 & 79.543 & 7.805 & 19.555 & 18.661 & 25.386 & 54.808 & 67.820\tabularnewline
103.141 & 79.543 & 0 & 25.047 & 4.786 & 38.796 & 46.383 & 6.685 & 88.554\tabularnewline
74.350 & 7.805 & 25.047 & 0 & 28.353 & 23.511 & 55.800 & 46.123 & 91.246\tabularnewline
3.664 & 19.555 & 4.786 & 28.353 & 0 & 39.345 & 74.242 & 116.722 & 68.593\tabularnewline
13.229 & 18.661 & 38.796 & 23.511 & 39.345 & 0 & 61.739 & 65.714 & 3.377\tabularnewline
15.797 & 25.386 & 46.383 & 55.800 & 74.242 & 61.739 & 0 & 6.930 & 25.114\tabularnewline
18.230 & 54.808 & 6.685 & 46.123 & 116.722 & 65.714 & 6.930 & 0 & 30.790\tabularnewline
30.797 & 67.820 & 88.554 & 91.246 & 68.593 & 3.377 & 25.114 & 30.790 & 0\tabularnewline
    \end{array}\right)
\end{align*}

%--------------------------------------------;
%  Equation: Adjacency matrix for instance-2 ;
%--------------------------------------------;
\begin{align*}	
		  \scriptsize
  A_{2} =
  \left(\begin{array}{ccccccccc}
0 & 55.451 & 35.171 & 84.885 & 5.505 & 20.855 & 29.453 & 30.388 & 68.093\tabularnewline
55.451 & 0 & 66.881 & 7.059 & 15.901 & 21.996 & 19.397 & 65.193 & 64.995\tabularnewline
35.171 & 66.881 & 0 & 11.186 & 20.066 & 14.621 & 61.816 & 69.104 & 45.769\tabularnewline
84.885 & 7.059 & 11.186 & 0 & 52.853 & 46.755 & 65.175 & 47.878 & 72.586\tabularnewline
5.505 & 15.901 & 20.066 & 52.853 & 0 & 14.783 & 39.858 & 15.650 & 76.328\tabularnewline
20.855 & 21.996 & 14.621 & 46.755 & 14.783 & 0 & 63.148 & 55.653 & 6.730\tabularnewline
29.453 & 19.397 & 61.816 & 65.175 & 39.858 & 63.148 & 0 & 3.325 & 11.846\tabularnewline
30.388 & 65.193 & 69.104 & 47.878 & 15.650 & 55.653 & 3.325 & 0 & 30.335\tabularnewline
68.093 & 64.995 & 45.769 & 72.586 & 76.328 & 6.730 & 11.846 & 30.335 & 0\tabularnewline
    \end{array}\right)
\end{align*}

%--------------------------------------------;
%  Equation: Adjacency matrix for instance-2 ;
%--------------------------------------------;
\begin{align*}	
		  \scriptsize
  A_{3} =
  \left(\begin{array}{ccccccccc}
0 & 34.962 & 106.416 & 83.430 & 3.962 & 19.667 & 16.809 & 40.972 & 23.189\tabularnewline
34.962 & 0 & 62.136 & 6.460 & 21.299 & 20.235 & 55.212 & 32.185 & 65.989\tabularnewline
106.416 & 62.136 & 0 & 3.817 & 19.672 & 35.388 & 38.132 & 65.466 & 31.264\tabularnewline
83.430 & 6.460 & 3.817 & 0 & 23.553 & 67.925 & 56.553 & 49.485 & 72.971\tabularnewline
3.962 & 21.299 & 19.672 & 23.553 & 0 & 64.328 & 41.192 & 98.448 & 78.225\tabularnewline
19.667 & 20.235 & 35.388 & 67.925 & 64.328 & 0 & 64.970 & 78.577 & 6.603\tabularnewline
16.809 & 55.212 & 38.132 & 56.553 & 41.192 & 64.970 & 0 & 5.339 & 19.096\tabularnewline
40.972 & 32.185 & 65.466 & 49.485 & 98.448 & 78.577 & 5.339 & 0 & 16.332\tabularnewline
23.189 & 65.989 & 31.264 & 72.971 & 78.225 & 6.603 & 19.096 & 16.332 & 0\tabularnewline
    \end{array}\right)
\end{align*}

%--------------------------------------------;
%  Equation: Adjacency matrix for instance-2 ;
%--------------------------------------------;
\begin{align*}	
		  \scriptsize
  A_{4} =
  \left(\begin{array}{ccccccccc}
0 & 37.880 & 67.875 & 100.379 & 6.067 & 8.170 & 32.228 & 30.653 & 34.148\tabularnewline
37.880 & 0 & 80.509 & 5.478 & 21.916 & 35.606 & 22.303 & 51.762 & 83.597\tabularnewline
67.875 & 80.509 & 0 & 9.380 & 22.829 & 41.847 & 43.423 & 38.497 & 80.200\tabularnewline
100.379 & 5.478 & 9.380 & 0 & 39.792 & 40.394 & 34.656 & 47.301 & 56.581\tabularnewline
6.067 & 21.916 & 22.829 & 39.792 & 0 & 82.544 & 76.565 & 82.045 & 15.671\tabularnewline
8.170 & 35.606 & 41.847 & 40.394 & 82.544 & 0 & 60.790 & 137.604 & 5.053\tabularnewline
32.228 & 22.303 & 43.423 & 34.656 & 76.565 & 60.790 & 0 & 4.473 & 16.443\tabularnewline
30.653 & 51.762 & 38.497 & 47.301 & 82.045 & 137.604 & 4.473 & 0 & 23.297\tabularnewline
34.148 & 83.597 & 80.200 & 56.581 & 15.671 & 5.053 & 16.443 & 23.297 & 0\tabularnewline
    \end{array}\right)
\end{align*}

%--------------------------------------------;
%  Equation: Adjacency matrix for instance-2 ;
%--------------------------------------------;
\begin{align*}	
		  \scriptsize
  A_{5} =
  \left(\begin{array}{ccccccccc}
0 & 74.434 & 54.004 & 49.180 & 3.425 & 23.574 & 14.026 & 46.679 & 27.705\tabularnewline
74.434 & 0 & 72.725 & 12.127 & 5.963 & 32.650 & 47.091 & 6.417 & 5.720\tabularnewline
54.004 & 72.725 & 0 & 6.283 & 23.399 & 34.824 & 60.464 & 43.262 & 73.479\tabularnewline
49.180 & 12.127 & 6.283 & 0 & 34.042 & 36.230 & 30.105 & 61.880 & 70.808\tabularnewline
3.425 & 5.963 & 23.399 & 34.042 & 0 & 41.705 & 37.664 & 71.445 & 28.397\tabularnewline
23.574 & 32.650 & 34.824 & 36.230 & 41.705 & 0 & 49.857 & 64.825 & 4.676\tabularnewline
14.026 & 47.091 & 60.464 & 30.105 & 37.664 & 49.857 & 0 & 6.500 & 13.624\tabularnewline
46.679 & 6.417 & 43.262 & 61.880 & 71.445 & 64.825 & 6.500 & 0 & 17.935\tabularnewline
27.705 & 5.720 & 73.479 & 70.808 & 28.397 & 4.676 & 13.624 & 17.935 & 0\tabularnewline
    \end{array}\right)
\end{align*}

%--------------------------------------------;
%  Equation: Adjacency matrix for instance-2 ;
%--------------------------------------------;
\begin{align*}	
		  \scriptsize
  A_{6} =
  \left(\begin{array}{ccccccccc}
0 & 64.068 & 10.484 & 82.702 & 5.059 & 17.211 & 41.722 & 51.143 & 34.027\tabularnewline
64.068 & 0 & 38.358 & 13.136 & 19.432 & 8.179 & 36.737 & 43.368 & 44.477\tabularnewline
10.484 & 38.358 & 0 & 4.736 & 19.992 & 35.610 & 68.747 & 66.199 & 100.487\tabularnewline
82.702 & 13.136 & 4.736 & 0 & 26.705 & 69.996 & 24.366 & 62.367 & 68.319\tabularnewline
5.059 & 19.432 & 19.992 & 26.705 & 0 & 6.220 & 42.855 & 100.982 & 54.818\tabularnewline
17.211 & 8.179 & 35.610 & 69.996 & 6.220 & 0 & 71.220 & 79.242 & 6.379\tabularnewline
41.722 & 36.737 & 68.747 & 24.366 & 42.855 & 71.220 & 0 & 4.100 & 13.469\tabularnewline
51.143 & 43.368 & 66.199 & 62.367 & 100.982 & 79.242 & 4.100 & 0 & 20.063\tabularnewline
34.027 & 44.477 & 100.487 & 68.319 & 54.818 & 6.379 & 13.469 & 20.063 & 0\tabularnewline
    \end{array}\right)
\end{align*}

%--------------------------------------------;
%  Equation: Adjacency matrix for instance-2 ;
%--------------------------------------------;
\begin{align*}	
		  \scriptsize
  A_{7} =
  \left(\begin{array}{ccccccccc}
0 & 84.178 & 40 & 94.496 & 3.252 & 19.661 & 19.108 & 56.048 & 40.033\tabularnewline
84.178 & 0 & 110.743 & 7.442 & 15.846 & 35.148 & 24.472 & 52.636 & 21.187\tabularnewline
40 & 110.743 & 0 & 19.213 & 19.871 & 9.568 & 67.812 & 51.830 & 56.333\tabularnewline
94.496 & 7.442 & 19.213 & 0 & 25.484 & 26.072 & 51.210 & 33.379 & 75.134\tabularnewline
3.252 & 15.846 & 19.871 & 25.484 & 0 & 84.510 & 28.471 & 115.426 & 110.035\tabularnewline
19.661 & 35.148 & 9.568 & 26.072 & 84.510 & 0 & 34.251 & 47 & 6.161\tabularnewline
19.108 & 24.472 & 67.812 & 51.210 & 28.471 & 34.251 & 0 & 3.757 & 9.776\tabularnewline
56.048 & 52.636 & 51.830 & 33.379 & 115.426 & 47 & 3.757 & 0 & 31.904\tabularnewline
40.033 & 21.187 & 56.333 & 75.134 & 110.035 & 6.161 & 9.776 & 31.904 & 0\tabularnewline
    \end{array}\right)
\end{align*}

%--------------------------------------------;
%  Equation: Adjacency matrix for instance-2 ;
%--------------------------------------------;
\begin{align*}	
		  \scriptsize
  A_{8} =
  \left(\begin{array}{ccccccccc}
0 & 46.103 & 105.745 & 61.239 & 5.627 & 18.101 & 24.459 & 47.970 & 58.582\tabularnewline
46.103 & 0 & 26.179 & 6.670 & 23.523 & 30.729 & 48.579 & 61.829 & 49.850\tabularnewline
105.745 & 26.179 & 0 & 7.958 & 26.819 & 26.925 & 39.978 & 58.829 & 59.187\tabularnewline
61.239 & 6.670 & 7.958 & 0 & 16.321 & 65.826 & 27.566 & 56.328 & 93.999\tabularnewline
5.627 & 23.523 & 26.819 & 16.321 & 0 & 19.572 & 19.077 & 26.750 & 87.654\tabularnewline
18.101 & 30.729 & 26.925 & 65.826 & 19.572 & 0 & 61.369 & 93.219 & 4.440\tabularnewline
24.459 & 48.579 & 39.978 & 27.566 & 19.077 & 61.369 & 0 & 5.917 & 14.002\tabularnewline
47.970 & 61.829 & 58.829 & 56.328 & 26.750 & 93.219 & 5.917 & 0 & 27.248\tabularnewline
58.582 & 49.850 & 59.187 & 93.999 & 87.654 & 4.440 & 14.002 & 27.248 & 0\tabularnewline
    \end{array}\right)
\end{align*}

%--------------------------------------------;
%  Equation: Adjacency matrix for instance-2 ;
%--------------------------------------------;
\begin{align*}	
		  \scriptsize
  A_{9} =
  \left(\begin{array}{ccccccccc}
0 & 49.524 & 47.001 & 97.199 & 5.978 & 14.685 & 32.794 & 36.858 & 34.073\tabularnewline
49.524 & 0 & 92.926 & 2.890 & 25.966 & 23.370 & 19.856 & 72.462 & 61.856\tabularnewline
47.001 & 92.926 & 0 & 20.361 & 10.725 & 26.954 & 27.427 & 41.613 & 39.825\tabularnewline
97.199 & 2.890 & 20.361 & 0 & 21.730 & 43.284 & 94.839 & 100.156 & 61.767\tabularnewline
5.978 & 25.966 & 10.725 & 21.730 & 0 & 52.430 & 78.240 & 76.468 & 111.859\tabularnewline
14.685 & 23.370 & 26.954 & 43.284 & 52.430 & 0 & 129.134 & 70.418 & 1.984\tabularnewline
32.794 & 19.856 & 27.427 & 94.839 & 78.240 & 129.134 & 0 & 2.300 & 16.639\tabularnewline
36.858 & 72.462 & 41.613 & 100.156 & 76.468 & 70.418 & 2.300 & 0 & 31.418\tabularnewline
34.073 & 61.856 & 39.825 & 61.767 & 111.859 & 1.984 & 16.639 & 31.418 & 0\tabularnewline
    \end{array}\right)
\end{align*}

%--------------------------------------------;
%  Equation: Adjacency matrix for instance-2 ;
%--------------------------------------------;
\begin{align*}	
		  \scriptsize
  A_{10} =
  \left(\begin{array}{ccccccccc}
0 & 73.479 & 78.550 & 57.077 & 2.770 & 16.830 & 27.284 & 13.703 & 45.902\tabularnewline
73.479 & 0 & 96.045 & 7.971 & 14.967 & 18.793 & 24.575 & 22.947 & 58.603\tabularnewline
78.550 & 96.045 & 0 & 9.560 & 14.197 & 14.033 & 69.942 & 64.482 & 71.409\tabularnewline
57.077 & 7.971 & 9.560 & 0 & 44.664 & 40.189 & 51.466 & 33.023 & 67.021\tabularnewline
2.770 & 14.967 & 14.197 & 44.664 & 0 & 65.935 & 86.901 & 96.362 & 119.533\tabularnewline
16.830 & 18.793 & 14.033 & 40.189 & 65.935 & 0 & 90.886 & 50.091 & 3.862\tabularnewline
27.284 & 24.575 & 69.942 & 51.466 & 86.901 & 90.886 & 0 & 3.593 & 20.033\tabularnewline
13.703 & 22.947 & 64.482 & 33.023 & 96.362 & 50.091 & 3.593 & 0 & 14.719\tabularnewline
45.902 & 58.603 & 71.409 & 67.021 & 119.533 & 3.862 & 20.033 & 14.719 & 0\tabularnewline
    \end{array}\right)
\end{align*}

\end{appendices}

%
%\include{manifolds}
%
%%\part{Nonlinear Models for Signal Acquisition and Processing}
%
%
%\include{sdp}
%
%\include{spin}
%
%\include{rpml}
%
%\include{kam}
%
%%\part{Conclusions}
%
%\include{conc}
%
%%\part{Appendices}
%
\appendix
%
%\include{spinproof}
%\include{rpmlproof}
%
%{{
\addcontentsline{toc}{chapter}{Bibliography}
\bibliography{references}
\bibliographystyle{ieeetr}
%}}

\end{document}